    \definecolor{urlcolor}{rgb}{0,0,0}
    \definecolor{linkcolor}{rgb}{.7,0.10,0.2}
    \definecolor{citecolor}{rgb}{.12,.54,.11}
\numberwithin{equation}{section}
\newtheorem{theorem}{Theorem}[section]
\newtheorem{corollary}[theorem]{Corollary}
\newtheorem{proposition}[theorem]{Proposition}
\newtheorem{proposition-definition}[theorem]{Proposition-Definition}
\newtheorem{lemma}[theorem]{Lemma}
\newtheorem{conjecture}[theorem]{Conjecture}
\theoremstyle{definition} 
\newtheorem{definition}[theorem]{Definition}
\newtheorem{theorem-definition}[theorem]{Theorem-Definition}
\theoremstyle{remark} 
\newtheorem{remark}[theorem]{Remark}
\newtheorem{example}[theorem]{Example}
\renewcommand{\implies}{\Rightarrow}
\renewcommand{\geq}{\geqslant}
\renewcommand{\leq}{\leqslant}
\renewcommand{\subset}{\subseteq}
\renewcommand{\setminus}{\smallsetminus}
\renewcommand{\tilde}{\widetilde}
\newcommand{\BD}{\mathbb{D}}
\newcommand{\BoC}{\mathbf{C}}
\newcommand{\BoG}{\mathbf{G}}
\newcommand{\BoN}{\mathbf{N}}
\newcommand{\BoQ}{\mathbf{Q}}
\newcommand{\BoZ}{\mathbf{Z}}
\newcommand{\CA}{\mathcal{A}}
\newcommand{\CD}{\mathcal{D}}
\newcommand{\CH}{\mathcal{H}}
\newcommand{\CL}{\mathcal{L}}
\newcommand{\CO}{\mathcal{O}}
\newcommand{\CP}{\mathcal{P}}
\newcommand{\CW}{\mathcal{W}}
\newcommand{\CX}{\mathcal{X}}
\newcommand{\FS}{\mathfrak{S}}
\newcommand{\FX}{\mathfrak{X}}
\newcommand{\SL}{\mathscr{L}}
\newcommand{\SP}{\mathscr{P}}
\newcommand{\rmc}{\mathrm{c}}
\newcommand{\cms}{/\!\!/}
\newcommand{\Fg}{\mathfrak{g}}
\newcommand{\GL}{\mathrm{GL}}
\newcommand{\Fh}{\mathfrak{h}}
\newcommand{\Frac}{\mathrm{Frac}}
\newcommand{\HO}{\mathrm{H}}
\newcommand{\Hom}{\mathrm{Hom}}
\newcommand{\IC}{\mathcal{IC}}
\newcommand{\id}{\mathrm{id}}
\newcommand{\IH}{\mathrm{IH}}
\newcommand{\Ind}{\mathrm{Ind}}
\newcommand{\Lie}{\mathrm{Lie}}
\newcommand{\loc}{\mathrm{loc}}
\newcommand{\rmm}{\mathrm{m}}
\newcommand{\prim}{\mathrm{prim}}
\newcommand{\pt}{\mathrm{pt}}
\newcommand{\Res}{\mathrm{Res}}
\newcommand{\RHom}{\mathrm{RHom}}
\newcommand{\sgn}{\mathrm{sgn}}
\newcommand{\Spec}{\mathrm{Spec}}
\newcommand{\Sym}{\mathrm{Sym}}
\newcommand{\rmSL}{\mathrm{SL}}
\newcommand{\Ft}{\mathfrak{t}}
\newcommand{\Tan}{\mathrm{T}}
\newcommand{\vir}{\mathrm{vir}}
\newcommand{\dd}{\mathbf{d}}
\title{Cohomological integrality for weakly symmetric representations of reductive groups}
\date{\today}
\author{Lucien Hennecart}
\address{Laboratoire Ami\'enois de Math\'ematique Fondamentale et Appliqu\'ee, CNRS UMR 7352, Universit\'e de Picardie Jules Verne, 33 rue Saint Leu, 80000 Amiens, France}\email{lucien.hennecart@u-picardie.fr}
\subjclass{14L30, 14L24, 13A50}
\begin{document}

\begin{abstract}
In this paper, we prove the integrality conjecture for quotient stacks arising from weakly symmetric representations of reductive groups. Our main result is a decomposition of the cohomology of the stack into finite-dimensional components indexed by some equivalence classes of cocharacters of a maximal torus. This decomposition enables the definition of new enumerative invariants associated with the stack, which we begin to explore.
\end{abstract}

\maketitle

\setcounter{tocdepth}{1}
\tableofcontents
\section{Introduction}

\subsection{Invariant Theory}

Given a connected reductive group $G$ and a finite-dimensional representation $V$ of $G$, the quotient stack $\FX=V/G$ and the Geometric Invariant Theory (GIT) quotient $\CX=V\cms G$ are fundamental objects in invariant theory \cite{mumford1994geometric, dieudonne1970invariant, popov1994invariant}, with significant applications to geometry and representation theory, e.g. \cite{nakajima1994instantons, reineke2008moduli,maulik2019quantum} for connections with Lie theory.

Recent research has continued to explore the geometry of the quotient $V/G$. For instance, in \cite{vspenko2017non, vspenko2021semi}, the authors study semiorthogonal decompositions of the coherent derived category $\CD(V/G)$ to obtain noncommutative resolutions of $V\cms G$ \cite{van2004non,lunts2010categorical,kuznetsov2015categorical}, using the framework of window categories developed by Halpern-Leistner \cite{halpern2015derived}. Their approach focuses on \emph{quasi-symmetric} representations of reductive groups, which are the representations for which the sum of weights on any line through the origin in the character lattice is zero.

In this paper, we consider the case where $V$ is a \emph{weakly symmetric} representation of $G$ (Definition~\ref{definition:symmetricrepresentation}). This encompasses the case of self-dual representations. We prove a decomposition of the (infinite-dimensional) cohomology vector space $\HO^*(V/G)$ into a finite collection of finite-dimensional, cohomologically graded vector spaces $\CP_{\lambda}$ indexed by cocharacters $\lambda\in X_*(T)$ of a maximal torus $T$ of $G$ (Theorem~\ref{theorem:cohintabsolute}). This decomposition, which employs parabolic induction (\S\ref{section:parabolicinductionrep}), is known as \emph{cohomological integrality} and can be seen as a cohomological version of a generalised Springer decomposition \cite{lusztig1984intersection} in this context. We then define the refined Donaldson--Thomas invariants of $V/G$ as the Betti numbers of these vector spaces $\CP_{\lambda}$ (Definition~\ref{definition:refinedDTinvariants}) following \cite{meinhardt2019donaldson,meinhardt2015donaldson,davison2020cohomological} and initiate their study (Proposition~\ref{proposition:bounds}). These invariants are expected to encode rich representation-theoretic information about the action of $G$ on $V$ and topological information about the GIT quotient $V\cms G$. In particular, we expect that the vector space $\CP_{\lambda}$ computes the intersection cohomology of some GIT quotient, thereby providing an efficient algorithmic and inductive way to determine the intersection Betti numbers of a large class of GIT quotients. We study this question in \cite{hennecart2024cohomological2} and in forthcoming work.

Stacks of the form $V/G$ and their moduli spaces $V\cms G$ serve as local models for smooth stacks $\FX$ with good moduli spaces $\CX$ by the Luna étale slice theorem \cite{alper2020luna}. Therefore, our results provide a foundation for exploring the geometry and topology of morphisms $\FX\rightarrow\CX$ from a weakly symmetric smooth stack to its good moduli space \cite{alper2013good}. While this paper develops the core algebraic results, the applications to the geometry of a class of significant stacks in algebraic geometry and representation theory is investigated in \cite{hennecart2024cohomological2}.

\subsection{Cohomological Integrality}

Cohomological integrality results, originally developed by Kontsevich and Soibelman in the context of cohomological Hall algebras and wall-crossing for moduli spaces \cite{kontsevich2008stability, kontsevich2011cohomological}, involve factorizations of generating series into plethystic exponentials. For \emph{symmetric} quivers without potential, Efimov \cite{efimov2012cohomological} proved a cohomological integrality isomorphism, answering a conjecture by Kontsevich and Soibelman \cite{kontsevich2011cohomological}, which identifies the cohomological Hall algebra (CoHA) to a supercommutative algebra over a graded vector space with finite-dimensional graded components. Efimov's isomorphism can be seen as a categorification of integrality formulas, in the sense that an equality between two generating series is turned into an isomorphism between two graded vector spaces whose graded characters recover the series.

Efimov's integrality theorem forms the basis of all subsequent cohomological integrality results for CoHAs \cite{meinhardt2019donaldson, davison2020cohomological, kapranov2022cohomological, davison2022bps, davison2023bps}. These results provide deep structural insights and link the topology of the stack to the good moduli space. In this context, cohomological integrality is often expressed through Poincaré-Birkhoff-Witt (PBW) theorems, analogous to those in the theory of Lie algebras, and leads to refined cohomological invariants of interest in algebraic geometry, forming the basis of \emph{cohomological Donaldson--Thomas theory} \cite{szendroi2014cohomological, davison2020cohomological, davison2023boson}.

In contrast to the traditional approach involving cohomological Hall algebras, our work deals directly with parabolic induction morphisms (\S\ref{section:parabolicinductionrep}) for more general smooth weakly symmetric stacks arising from weakly symmetric representations of reductive groups. We define the cohomological integrality morphism explicitly in Theorem~\ref{theorem:cohintabsolute}. It does not involve the symmetric power of a vector space as in the quiver case, but some combinatorial replacement of it for any Weyl group, defined from the lattice of cocharacter of a maximal torus.

This paper can be viewed as a generalization of Efimov's integrality isomorphism \cite{efimov2012cohomological} to all weakly symmetric stacks of the form $V/G$, not just those arising from quivers. It lays the groundwork for extending cohomological Donaldson--Thomas theory to more general stacks and moduli spaces beyond those parameterizing objects in an Abelian category. The cohomological integrality we present here is a purely algebraic result, with a purely algebraic proof. We explain in \cite{hennecart2024cohomological2} the geometric implications of this result.

There are also motivic versions of integrality results, such as those studied in \cite{bu2024motivic} following \cite{kontsevich2008stability} and in the context of motivic Donaldson--Thomas theory \cite{kontsevich2010motivic, meinhardt2017introduction, bussi2019motivic}, which are different from ours.

\subsection{Main results}
In this paper, we work over an algebraically closed field of characteristic zero. Moreover, cohomology is considered with $\BoQ$-coefficients.

\subsubsection{Parabolic induction}
We refer to \S\ref{subsection:inductiondiagram} for the precise definitions. Let $G$ be a connected reductive group and $V$ a representation of $G$. We let $T\subset G$ be a maximal torus. For any cocharacter $\lambda\colon\BoG_{\rmm}\rightarrow T$, we let $G^{\lambda}\subset G$ be the corresponding Levi subgroup of $G$ \cite[Proposition 1.22]{digne2020representations} and $V^{\lambda}$ the representation of $G^{\lambda}$ given by the $\lambda$-fixed subspace of $V$. If $\lambda=0$ is the trivial cocharacter, we write $V=V^0$ and $G=G^0$.

There is an induction map
\begin{equation}
\label{equation:inductionintroduction}
 \Ind_{\lambda}\colon \HO^{*+d_{\lambda}}(V^{\lambda}/G^{\lambda})\rightarrow\HO^{*+d_{0}}(V/G)\,,
\end{equation}
where $d_{\lambda}=\dim V^{\lambda}-\dim G^{\lambda}$ (\S\ref{subsection:parabolicinduction}). If $V$ is symmetric (i.e. $V$ and $V^*$ have the same sets of weights, Definition~\ref{definition:symmetricrepresentation}), then the induction map preserves shifted cohomological degrees (Lemma~\ref{lemma:preservescohdegrees}), that is $\Ind_{\lambda}(\HO^{i+d_{\lambda}}(V^{\lambda}/G^{\lambda}))\subset\HO^{i+d_0}(V/G)$ for any $i\in\BoZ$ (this is the reason for the notation $\HO^{*+d_{\lambda}}$ in \eqref{equation:inductionintroduction}, which emphasizes that the natural grading is the cohomological grading shifted by $d_{\lambda}$).

We define an order on the set $X_*(T)$ of cocharacters of $T$ as follows:
\begin{equation}
\label{equation:orderintro}
 \lambda\preceq\mu \stackrel{\text{def.}}{\iff}
 \left\{
 \begin{aligned}
  &V^{\lambda}\subset V^{\mu}\\
& \mathfrak{g}^{\lambda}\subset\mathfrak{g}^{\mu}\,.
 \end{aligned}
\right.
\end{equation}

We let $\SP_{V}\coloneqq X_*(T)/\sim$, where $\sim$ is the equivalence relation given by $\lambda\sim\mu\iff (\lambda\preceq\mu \text{ and }\mu\preceq\lambda)$. In other words, two cocharacters are equivalent if their fixed-point sets  inside $V$ and $\Fg$ (or equivalently inside $V$ and $G$) both coincide. The set $\SP_V$ is finite. If $V$ is symmetric, the induction map $\Ind_{\lambda}$ only depends on the class $\overline{\lambda}\in\SP_V$ up to a non-zero scalar (Lemma~\ref{lemma:sameimageoverline}).

 We let $W=N_G(T)/T$ be the Weyl group of $G$. The natural action of $W$ on $X_*(T)$ descends to a $W$-action on $\SP_V$ (\S\ref{subsection:parabolicinduction}). The image of the induction map $\Ind_{\lambda}$ only depends on the class $\tilde{\lambda}\in\SP_V/W$ of $\lambda$ (Lemma~\ref{lemma:sameimageWeylgroup}).

\subsubsection{Cohomological integrality}
Let $G$ be a reductive group with maximal torus $T$ and $V$ be a finite dimensional \emph{weakly symmetric} representation of $G$ (Definition~\ref{definition:symmetricrepresentation}).
For any $\overline{\lambda}\in\SP_V$, we let $\lambda\in X_*(T)$ be an arbitrary lift. For $\tilde{\lambda}\in\SP_V/W$, we let $\lambda\in X_*(T)$ be an arbitrary lift.

For $\lambda\in X_*(T)$, we let $W_{\lambda}=\{w\in W\mid V^{w\cdot\lambda}=V^{\lambda}\text{ and }\Fg^{w\cdot\lambda}=\Fg^{\lambda}\}$. This is a subgroup of $W$ (Lemma~\ref{lemma:Wlambdasubgroup}).

We let $G_{\lambda}\subset G$ be the intersection of the kernel of the action map $G^{\lambda}\rightarrow\GL(V^{\lambda})$ with the center of $G^{\lambda}$ and $\Fg_{\lambda}\coloneqq\Lie(G_{\lambda})$. Note that the group $G_{\lambda}$ is not necessarily connected, but this can essentially be ignored (Lemma~\ref{lemma:G_lambdacohomology}) because we consider cohomology with $\BoQ$-coefficients.

For $\alpha\in X^*(T)$, we let $V_{\alpha}\coloneqq \{v\in V\mid \forall t\in T, t\cdot v=\alpha(t)v\}$ be the $\alpha$-weight space of $V$. We define $\Fg_{\alpha}$ similarly.

We let
\[
 k_{\lambda}\coloneqq \frac{\prod_{\alpha\in X^*(T), \langle\lambda,\alpha\rangle<0}\alpha^{\dim V_{\alpha}}}{\prod_{\alpha\in X^*(T), \langle\lambda,\alpha\rangle<0}\alpha^{\dim\Fg_{\alpha}}}\,.
\]
We may call $k_{\lambda}$ the \emph{induction kernel}, see the shuffle-like formula in Proposition~\ref{proposition:explicitformulainduction}. We define $\varepsilon_{V,\lambda}\colon W_{\lambda}\rightarrow\BoC^*$ to be the character such that for any $w\in W_{\lambda}$, $w(k_{\lambda})=\varepsilon_{V,\lambda}(w)^{-1}k_{\lambda}$ (Proposition~\ref{proposition:characterW_lambda}).

\begin{theorem}[Proposition~\ref{proposition:finitedimensional} + Corollary~\ref{corollary:cohintiso}]
\label{theorem:cohintabsolute}
 There exist $W_{\lambda}$-stable cohomologically graded vector spaces $\CP_{\lambda}\subset \HO^{*+d_{\lambda}}(V^{\lambda}/G^{\lambda})$, $\lambda\in X_*(T)$, such that the map
 \[
  \bigoplus_{\tilde{\lambda}\in\SP_{V}/W}(\CP_{\lambda}\otimes \HO^*(\pt/G_{\lambda}))^{\varepsilon_{V,\lambda}}\xrightarrow{\sum_{\tilde{\lambda}\in\SP_V/W}\Ind_{\lambda}} \HO^*(V/G)
 \]
 induced by the induction morphisms \eqref{equation:inductionintroduction} is a graded isomorphism, where $\HO^*(\pt/G_{\lambda})\cong \Sym(\Fg_{\lambda}^*)$, $\Fg_{\lambda}^*$ sits in degree $2$ and $(\CP_{\lambda}\otimes \HO^*(\pt/G_{\lambda}))^{\varepsilon_{V,\lambda}}$ denotes the $\varepsilon_{V,\lambda}$-isotypic component for the natural $W_{\lambda}$-action. Moreover, the vector spaces $\CP_{\lambda}$ are finite-dimensional and the inclusion $\CP_{\lambda}\subset \HO^{*+d_{\lambda}}(V^{\lambda}/G^{\lambda})$ factors through the inclusion $\HO^{*+d_{\lambda}}(V^{\lambda}/(G^{\lambda}/G_{\lambda}))\rightarrow\HO^{*+d_{\lambda}}(V^{\lambda}/G^{\lambda})$. In addition, the vector spaces $\CP_{\mu}$ for $\mu\preceq\lambda$ satisfy the cohomological integrality isomorphism for $(V^{\lambda},G^{\lambda})$.
\end{theorem}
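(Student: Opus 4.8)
The plan is to argue by induction on $(\dim G,\dim V)$ ordered lexicographically, after first rewriting everything explicitly. Because $V/G\to\pt/G$ is an affine bundle one has $\HO^*(V/G)\cong\HO^*(\pt/G)\cong\Sym(\Ft^*)^W$, and likewise $\HO^*(V^\lambda/G^\lambda)\cong\Sym(\Ft^*)^{W^\lambda}$ where $W^\lambda=\Stab_W(\lambda)$ is the Weyl group of $G^\lambda$; under these identifications $\Ind_\lambda$ becomes, up to a nonzero scalar, the shuffle operator $f\mapsto\sum_{w\in W/W^\lambda}w(k_\lambda f)$ of Proposition~\ref{proposition:explicitformulainduction}. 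The class $\overline\lambda\in\SP_V$ is recorded precisely by the linear subspace $\Fg_\lambda\subset\Ft$, which is the flat through $\lambda$ of the hyperplane arrangement spanned by the weights of $V$ and the roots of $\Fg$, with $\overline\mu\preceq\overline\lambda\iff\Fg_\lambda\subset\Fg_\mu$; moreover $\Sym(\Ft^*)^{W^\lambda}\cong\Sym(\Fg_\lambda^*)\otimes\HO^*\big(V^\lambda/(G^\lambda/G_\lambda)\big)$ since $\Fg_\lambda$ is a central summand of $\Fg^\lambda$ acting trivially on $V^\lambda$. This reduces the theorem to a statement about the graded ring $\Sym(\Ft^*)^W$, the flats $\Fg_\lambda$ and the shuffle operators; it also identifies $\varepsilon_{V,\lambda}$, for since $\Fg$ is self-dual and $V$ weakly symmetric each $w\in W_\lambda$ sends $k_\lambda$ to $\pm k_\lambda$ (Proposition~\ref{proposition:characterW_lambda}), and in particular $\varepsilon_{V,0}$ is trivial as $k_0=1$.

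The inductive hypothesis applies to $(V^\lambda,G^\lambda)$ for every class $\overline\lambda\ne\overline0$, since then $\dim G^\lambda<\dim G$, or else $G^\lambda=G$ and $\dim V^\lambda<\dim V$; it yields the spaces $\CP_\mu$ for all $\overline\mu\ne\overline0$ and the isomorphism $\HO^*(V^\lambda/G^\lambda)\cong\bigoplus_{\widetilde\mu\preceq\widetilde\lambda}(\CP_\mu\otimes\HO^*(\pt/G_\mu))^{\varepsilon_{V,\mu}}$, which is already the last assertion of the theorem for $(V^\lambda,G^\lambda)$. A preliminary point, settled using transitivity of parabolic induction together with Lemmas~\ref{lemma:sameimageoverline} and~\ref{lemma:sameimageWeylgroup}, is that these data are independent of the chosen lifts and of which Levi one inducts from, so they glue coherently, and that $\Ind_\lambda\circ\Ind^{(V^\lambda,G^\lambda)}_\mu=\Ind_\mu$ up to scalar; hence $\sum_{\widetilde\lambda\ne\widetilde0}\im(\Ind_\lambda)$ coincides with $\sum_{\widetilde\mu\ne\widetilde0}\Ind_\mu\big((\CP_\mu\otimes\HO^*(\pt/G_\mu))^{\varepsilon_{V,\mu}}\big)$. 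It remains to handle the trivial class. First reduce to $\Fg_0=0$: the splitting $\Sym(\Ft^*)^W\cong\Sym(\Fg_0^*)\otimes\Sym((\Ft/\Fg_0)^*)^W$ is compatible with all $\Ind_\lambda$, so one may work in $\HO^*(V/(G/G_0))\cong\Sym((\Ft/\Fg_0)^*)^W$. Then define $\CP_0$ to be a graded complement of $\sum_{\widetilde\lambda\ne\widetilde0}\im(\Ind_\lambda)$ in that ring, chosen stable under $W$ and under the residual finite symmetries of the pair (this is what yields $W_\lambda$-stability rather than only $W^\lambda$-stability in general), and take its invariants. By construction $\CP_\lambda$ is the analogue of $\CP_0$ for $(V^\lambda,G^\lambda/G_\lambda)$, which makes the claimed factorisation of its inclusion automatic.

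Two substantive points remain. First, \emph{finite-dimensionality}: the codimension of $\sum_{\widetilde\lambda\ne\widetilde0}\im(\Ind_\lambda)$ in $\Sym((\Ft/\Fg_0)^*)^W$ is finite. The quotient, as a module over this polynomial-invariant ring, is supported only over the origin of $(\Ft/\Fg_0)/W$, because at any flat $\Fg_\lambda\ne\Fg_0$ the image of $\Ind_\lambda$ already contains, after localising at $\Fg_\lambda$, all of what the inductive hypothesis for $(V^\lambda,G^\lambda)$ produces: this is precisely where weak symmetry is used, as it makes the Euler factors $k_\lambda$ of the shuffle formula invertible in that localisation with no degree defect (cf.\ Lemma~\ref{lemma:preservescohdegrees}). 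A finitely generated module over a polynomial ring supported at the origin is finite-dimensional, so $\CP_0$ is; the other $\CP_\lambda$ are by hypothesis. Second, \emph{directness}: the map $\bigoplus_{\widetilde\lambda}(\CP_\lambda\otimes\HO^*(\pt/G_\lambda))^{\varepsilon_{V,\lambda}}\to\HO^*(V/G)$ is injective. Filter $\Sym(\Ft^*)^W$ by the closure order on $\SP_V/W$, i.e.\ by $\dim\Fg_\lambda$, and show the associated graded splits as the asserted sum. On the $\widetilde\lambda$-graded piece, localise at $\Fg_\lambda$: there the shuffle operator, restricted to the $W_\lambda$-summand, collapses (after summing over $W_\lambda/W^\lambda$) to a nonzero scalar times the isotypic projector, hence is injective exactly on $(\CP_\lambda\otimes\Sym(\Fg_\lambda^*))^{\varepsilon_{V,\lambda}}$ and zero elsewhere — which is why only this isotypic piece appears — while classes not conjugate to $\widetilde\lambda$ drop out, their supports not containing $W\cdot\Fg_\lambda$.

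The hard part is this last step: showing the closure filtration is strict — that no nontrivial relation holds among inductions from pairwise non-conjugate Levi subgroups — and simultaneously getting the $W_\lambda$-bookkeeping to produce the $\varepsilon_{V,\lambda}$-isotypic component exactly and not merely a subquotient. This is where the gap between $W^\lambda$, over which the shuffle sums, and the possibly larger stabiliser $W_\lambda$ of $\overline\lambda$ (Lemma~\ref{lemma:Wlambdasubgroup}) must be handled, and where the non-connectedness of $G_\lambda$ has to be shown inconsequential (Lemma~\ref{lemma:G_lambdacohomology}). By contrast the passage to the shuffle description, the degree bookkeeping, and the scalar ambiguities are routine given those lemmas, and the finite-dimensionality, though delicate, is a localisation argument driven directly by weak symmetry.
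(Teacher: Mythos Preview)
Your outline is the paper's proof: define $\CP_\lambda$ as a $W_\lambda$-stable complement in $\CH_\lambda^{\prim}$ of the image of induction from strictly smaller strata (the paper names this image $J_\lambda^{W^\lambda}$ and generates it explicitly by the kernels $k_{\mu,\lambda}$, Lemma~\ref{lemma:imageinduction}), get surjectivity by induction over $\SP_V$ (Lemma~\ref{lemma:surjective}), finite-dimensionality by a support argument on $\Ft/\Fg_\lambda$ after replacing $V$ by $V\times\Fg$ so that $J_\lambda$ becomes an honest ideal (Proposition~\ref{proposition:finitedimensional}), and injectivity by picking a maximal $\tilde\lambda$ with $f_\lambda\neq0$ and localising.

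Two remarks. First, a small slip: for merely \emph{weakly} symmetric $V$ the character $\varepsilon_{V,\lambda}$ takes values in $\BoQ^*$, not only $\{\pm1\}$; the sign case is the genuinely symmetric one (see the remark after Proposition~\ref{proposition:characterW_lambda}). Second, the step you flag as hardest --- strictness of the closure filtration --- is in the paper not the $W_\lambda/W^\lambda$ bookkeeping (that is just the isotypic projector, Proposition~\ref{proposition:projector}) but Lemma~\ref{lemma:nonzerodivisor}: the weights $\alpha$ with $\langle\lambda,\alpha\rangle\neq0$ are nonzerodivisors on $\CA_\lambda^{\loc}/\tilde J_\lambda$, so that the localisation $\CA_\lambda^{\loc}/\tilde J_\lambda\hookrightarrow\CA_\lambda'^{\loc}/\tilde J_\lambda'$ is injective. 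Its proof is a leading-monomial argument in the variables of $\Fg_\lambda^*$, using that every such $\alpha$ has nonzero $\Fg_\lambda^*$-component (since $\alpha(\lambda')\neq0$ for $\lambda'\in\Fg_\lambda$ the derivative of $\lambda$). Without this lemma your ``localise at $\Fg_\lambda$'' step could in principle annihilate a nonzero $f_\lambda\in(\CP_\lambda\otimes\Sym(\Fg_\lambda^*))^{\varepsilon_{V,\lambda}}$, and the filtration argument would collapse.
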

This type of theorems is sometimes referred to as \emph{cohomological integrality}. We can reconstruct the infinite dimensional vector space $\HO^*(V/G)$ using a finite number of finite-dimensional vector spaces $\CP_{\lambda}$, $\tilde{\lambda}\in\SP_V/W$, allowing us to extract fundamental enumerative invariants associated to the stack $V/G$ (Definition~\ref{definition:refinedDTinvariants}).

For a cocharacter $\lambda\in X_*(T)$, there is a natural $W$-equivariant map $\SP_{V^{\lambda}}\rightarrow\SP_V$, which can be defined as follows. Given a cocharacter $\mu\in X_*(T)$, there exists a cocharacter $\nu\in X_*(T)$ such that $V^{\lambda}\cap V^{\mu}=V^{\nu}$ and $G^{\lambda}\cap G^{\mu}=G^{\nu}$ (Lemma~\ref{lemma:gcdcocharacters}). We have $\nu\preceq \lambda$ and moreover, $\overline{\nu}=\overline{\mu}$ in $\SP_{V^{\lambda}}$. The map sends $\overline{\mu}=\overline{\nu}\in\SP_{V^{\lambda}}$ to $\overline{\nu}\in\SP_V$. Then, the last sentence of Theorem~\ref{theorem:cohintabsolute} means that the vector spaces $\CP_{\nu}$ satisfy the cohomological integrality isomorphism for $(V^{\lambda},G^{\lambda})$.

\begin{remark}
 Let $Q=(Q_0,Q_1)$ be a symmetric quiver with set of vertices $Q_0$ and set of arrows $Q_1$. Let $\dd\in\BoN^{Q_0}$ be a dimension vector. If $V_{\dd}$ is the representation space of $\dd$-dimensional representations of $Q$ which is acted on by $\GL_{\dd}\coloneqq\prod_{i\in Q_0}\GL_{\dd_i}$, then the isomorphism given by  Theorem~\ref{theorem:cohintabsolute} essentially recovers \cite[Theorem 1.1]{efimov2012cohomological}. In \emph{loc.cit}, it was convenient for the author to twist the cohomological Hall algebra multiplication to make it supercommutative, as in \cite{kontsevich2011cohomological}. This twist does not change the images of the induction/multiplication morphisms which means that it is possible to have cohomological integrality isomorphisms even without this twist. This remark inspired the present work, in which the twist is replaced by the character $\varepsilon_{V,\lambda}$ of the group $W_{\lambda}$ (Proposition~\ref{proposition:characterW_lambda}). When $V$ is \emph{symmetric} (and not only \emph{weakly symmetric}), the character $\varepsilon_{V,\lambda}$ takes values in $\{\pm1\}$.
\end{remark}

\begin{remark}
 In this paper, we consider cohomology with $\BoQ$-coefficients, which is crucial for the cohomological integrality map (Theorem~\ref{theorem:cohintabsolute}) to be an isomorphism. It is crucial to work with $\BoQ$-coefficients from \S\ref{subsection:explicitformula} until the end of the paper. However, one can define the induction morphisms (\S\ref{section:parabolicinductionrep}) for any ring of coefficients. It then becomes an interesting question to study the analogous question of cohomological integrality in positive characteristic, especially when the cohomology of the classifying space $\HO^*(\pt/G)$ has torsion.
\end{remark}

\begin{definition}
\label{definition:refinedDTinvariants}
We define the \emph{cohomologically refined Donaldson--Thomas invariants} of $V/G$ as the dimensions $p_{\lambda,i}\coloneqq\dim \CP_{\lambda}^i\subset\HO^{i+d_{\lambda}}(V^{\lambda}/G^{\lambda})$ of the cohomological degree $i$ piece of the vector space $\CP_{\lambda}$ for $\lambda\in X_*(T)$. We may call the Euler characteristics $p_{\lambda}\coloneqq \sum_{i\in\BoZ}(-1)^ip_{\lambda,i}$ of $\CP_{\lambda}$ the \emph{Donaldson--Thomas invariants of $V/G$}.
\end{definition}

\subsubsection{Vanishing bound}
The vector spaces $\CP_{\lambda}$ given by Theorem~\ref{theorem:cohintabsolute} are finite dimensional (Proposition~\ref{proposition:finitedimensional}), and we can give a rough bound on the cohomological degrees for which they possibly do not vanish. We let $\CH_{\lambda}\coloneqq\HO^{*+d_{\lambda}}(V^{\lambda}/G^{\lambda})$, where $d_{\lambda}=\dim V^{\lambda}-\dim G^{\lambda}$. The natural grading on $\CH_{\lambda}$ is given by $\CH_{\lambda}^k=\HO^{k+d_{\lambda}}(V^{\lambda}/G^{\lambda})$. It induces a cohomological grading on $\CP_{\lambda}$.

\begin{proposition}
\label{proposition:bounds}
 The cohomologically graded vector space $\CP_{\lambda}$ given by Theorem~\ref{theorem:cohintabsolute} are concentrated in cohomological degrees $[\dim G^{\lambda}-\dim V^{\lambda}, \dim G^{\lambda}]$, that is $\CP_{\lambda}^k=0$ unless $k\in [\dim G^{\lambda}-\dim V^{\lambda}, \dim G^{\lambda}]$.
\end{proposition}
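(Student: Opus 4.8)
The plan is to treat the two inequalities by completely different means: the lower bound is soft and comes from the cohomology of a classifying stack, while the upper bound has to be extracted from the cohomological integrality isomorphism together with the explicit shape of the induction morphisms.

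\emph{Lower bound.} Since $V^{\lambda}$ is a linear representation of $G^{\lambda}$, the projection $V^{\lambda}/G^{\lambda}\to\pt/G^{\lambda}$ is a vector bundle, inducing an isomorphism $\HO^{*}(V^{\lambda}/G^{\lambda})\cong\HO^{*}(\pt/G^{\lambda})$. As $G^{\lambda}$ is a connected reductive group, $\HO^{*}(\pt/G^{\lambda})$ is a polynomial ring on generators in positive even degrees, hence concentrated in non-negative degrees. Therefore $\CH_{\lambda}^{k}=\HO^{k+d_{\lambda}}(V^{\lambda}/G^{\lambda})=0$ whenever $k+d_{\lambda}<0$, i.e. whenever $k<\dim G^{\lambda}-\dim V^{\lambda}$; since $\CP_{\lambda}\subset\CH_{\lambda}$ this already gives $\CP_{\lambda}^{k}=0$ for $k<\dim G^{\lambda}-\dim V^{\lambda}$.

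\emph{Upper bound.} This cannot be read off $\CH_{\lambda}$ alone, as $\HO^{*}(\pt/G^{\lambda})$ is unbounded above, so it must use finiteness of $\CP_{\lambda}$ (Proposition~\ref{proposition:finitedimensional}) and the recursive structure of Theorem~\ref{theorem:cohintabsolute}. I would apply that theorem to the pair $(V^{\lambda},G^{\lambda})$ — legitimate by its last assertion — and argue by induction on the finite poset $\SP_{V}$ (equivalently on $\dim V^{\lambda}+\dim G^{\lambda}$). When $\lambda$ is minimal ($G^{\lambda}=T$) one has $V^{\lambda}/G^{\lambda}\cong V^{T}/T$ and $\HO^{*}(\pt/G_{\lambda})=\HO^{*}(\pt/T)=\Sym(\Ft^{*})$, and the isomorphism forces $\CP_{\lambda}$ to be one-dimensional, concentrated in shifted degree $\dim T-\dim V^{\lambda}$, which lies in $[\dim T-\dim V^{\lambda},\dim T]$. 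For the inductive step, the isomorphism of Theorem~\ref{theorem:cohintabsolute} for $(V^{\lambda},G^{\lambda})$ is, by construction, triangular with respect to $\preceq$, so that $(\CP_{\lambda}\otimes\HO^{*}(\pt/G_{\lambda}))^{\varepsilon_{V,\lambda}}$ is identified with the cokernel
\[
 C_{\lambda}\;=\;\HO^{*+d_{\lambda}}(V^{\lambda}/G^{\lambda})\Big/\sum_{\tilde{\mu}\prec\lambda}\Ind_{\mu}\big((\CP_{\mu}\otimes\HO^{*}(\pt/G_{\mu}))^{\varepsilon_{V,\mu}}\big).
\]
By projection formula the $\Ind_{\mu}$ are morphisms of $\HO^{*}(\pt/G^{\lambda})$-modules over the corresponding restriction maps, so $C_{\lambda}$ is a module over $\HO^{*}(\pt/G_{\lambda})=\Sym(\Fg_{\lambda}^{*})$ via the surjective restriction $\HO^{*}(\pt/G^{\lambda})\to\HO^{*}(\pt/G_{\lambda})$, and its top nonvanishing degree as such a module coincides with that of $\CP_{\lambda}$. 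It remains to bound this top degree by $\dim G^{\lambda}$ in shifted degree — that is, by $\dim V^{\lambda}$ in actual cohomological degree — which amounts to showing that in cohomological degree $>\dim V^{\lambda}$ every class of $\HO^{*}(V^{\lambda}/G^{\lambda})$ is, modulo the proper inductions, a $\Sym(\Fg_{\lambda}^{*})$-multiple of lower-degree classes. For this I would use the explicit shuffle formula for $\Ind_{\mu}$ (Proposition~\ref{proposition:explicitformulainduction}) together with weak symmetry: the induction kernels $k_{\mu}$ have cohomological degree governed by the weights of $V^{\lambda}$ lying outside $V^{\mu}$, and a Weyl-character-formula-type computation should show that the images of the inductions from the maximal proper $\mu$'s already account, up to $\Sym(\Fg_{\lambda}^{*})$-multiples of lower classes, for everything in degree $>\dim V^{\lambda}$; the degree-preservation of the $\Ind_{\mu}$ (Lemma~\ref{lemma:preservescohdegrees}) is what keeps this bookkeeping uniform.

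\emph{Main obstacle.} The crux is precisely this last degree estimate for $C_{\lambda}$: the ambient space $\HO^{*}(V^{\lambda}/G^{\lambda})=\HO^{*}(\pt/G^{\lambda})$ is an infinitely generated $\Sym(\Fg_{\lambda}^{*})$-module, so all the content lies in proving that the proper inductions cut it down, beyond degree $\dim V^{\lambda}$, to a module generated in lower degree; keeping track of which shifted degrees are covered by which $\Ind_{\mu}$ is where weak symmetry of $V$ enters decisively. As a consistency check, Poincaré–Verdier duality on the smooth stack $V^{\lambda}/G^{\lambda}$ of dimension $d_{\lambda}$, applied to the same isomorphism, predicts a $\varepsilon_{V,\lambda}$-twisted self-duality of $\CP_{\lambda}$ exchanging shifted degrees $k$ and $2\dim G^{\lambda}-\dim V^{\lambda}-k$, which combined with the lower bound already proved would reprove the upper bound; one may organise the argument around this duality instead, but the shift bookkeeping there is no simpler than the module-theoretic estimate above.
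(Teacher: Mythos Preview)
Your lower bound is correct and is exactly the paper's argument.

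For the upper bound, the paper takes a different and much more direct route. It does not induct on $\SP_V$ at all. Instead it works directly with the ideal $J_{\lambda}\subset\CA_{\lambda}^{\prim}=\Sym((\Ft/\Fg_{\lambda})^*)$ whose $W^{\lambda}$-invariants are the complement of $\CP_{\lambda}$. After the trick of replacing $V$ by $V\times\Fg$ (which only enlarges $\CP_{\lambda}$ and makes each $k_{\mu,\lambda}$ a genuine polynomial, so that $J_{\lambda}$ is an honest ideal), the generators of $J_{\lambda}$ are products of linear forms taken from the finite set of weights of $V^{\lambda}$. The paper then invokes a purely combinatorial lemma of Efimov (\cite[Lemma~3.6]{efimov2012cohomological}): for an ideal generated by such monomials in linear forms, finite codimension (already proven in Proposition~\ref{proposition:finitedimensional}) implies that the ideal contains all of $B^{2d}$ once $d\geq\sum_j d_j-n+1$, and in the present situation this bound is at most $\dim V^{\lambda}/2$. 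Translating back through the shift gives $\CP_{\lambda}^k=0$ for $k>\dim G^{\lambda}$.

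Your inductive framework does not close the gap, and in fact the induction hypothesis is doing no work. Knowing that each $\CP_{\mu}$ with $\mu\prec\lambda$ is bounded above tells you nothing about the top degree of $C_{\lambda}$, because the image of $\Ind_{\mu}$ on $\CP_{\mu}\otimes\HO^*(\pt/G_{\mu})$ is unbounded above regardless (the factor $\HO^*(\pt/G_{\mu})$ is a polynomial ring). So the question of whether the proper inductions cover everything above degree $\dim V^{\lambda}$ is exactly the question of whether $J_{\lambda}^{W^{\lambda}}$ contains $(\CH_{\lambda}^{\prim})^{>\dim V^{\lambda}}$, which is a statement about $J_{\lambda}$ alone and cannot be reduced to smaller $\mu$. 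Your ``Weyl-character-formula-type computation'' would have to reprove Efimov's lemma in disguise; you have identified the obstacle correctly but not removed it.

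The duality route you sketch at the end is also not available here. No Verdier-type self-duality of $\CP_{\lambda}$ is established in this paper (it would require the sheafified strong integrality discussed in \S6.8, left as a conjecture), and the centre $2\dim G^{\lambda}-\dim V^{\lambda}$ you propose is already contradicted by the example of \S\ref{subsection:GL2} and its generalisation: for $\GL_2$ acting on $(\BoC^2\oplus(\BoC^2)^{\vee})^g$ one finds $\CP_{\lambda_0}$ concentrated in shifted degrees $[4-4g,-2g]$, which is not symmetric about $8-4g$.
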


\subsection{Notations and conventions}
\label{subsection:notations}
\begin{enumerate}[$\bullet$]
 \item If $V$ is a representation of a finite group $W$ and $\chi$ an irreducible character of $W$, we let $V^{\chi}$ be the $\chi$-isotypic component of $V$.
 \item If $X$ is a complex algebraic variety acted upon by an algebraic group $G$, we let $X/G=[X/G]$ be the quotient stack, that is we omit the brackets.
 \item The multiplicative group is denoted by $\BoG_{\rmm}$.
 The Abelian group of characters of an algebraic torus $T$ is $\Hom_{\BoZ}(T,\BoG_{\rmm})$. The set of cocharacters of $T$ is the Abelian group $\Hom_{\BoZ}(\BoG_{\rmm},T)$. We have natural embedding $X_*(T)\subset\Ft\coloneqq\Lie(T)$ and $X^*(T)\subset\Ft^*$ respectively given by $\lambda\mapsto \frac{\mathrm{d\lambda}}{\mathrm{d}t}_{|t=1}$ and $\alpha\mapsto \mathrm{d}\alpha(1)$, the differential of a character $\alpha$ at the neutral element $1\in T$. Moreover, these inclusions are compatible with the natural pairings $\langle-,-\rangle\colon X_*(T)\times X^*(T)\rightarrow\BoZ$ and $\Ft\times\Ft^*\rightarrow\BoC$.
 \item If $A$ is an Abelian group and $S\subset A$ is a subset of $A$, we let $-S\coloneqq \{-s\colon s\in A\}$ be the opposite of $S$. We will only consider the case $A=X^*(T)$.
 \item Let $H\subset G$ be algebraic groups and $X$ an $H$-variety. We let $X\times^HG\coloneqq (X\times G)/H$ where $H$ acts on $X\times G$ by $h\cdot (x,g)=(h\cdot x,gh^{-1})$. The formula $g\cdot (x,g')\coloneqq (x,gg')$ gives a $G$-action on $X\times^HG$.
 \item If $R$ is a domain (e.g. $R=\HO^*_T(\pt)$ for some algebraic torus $T$), we denote by $\Frac(R)$ its fraction field.
 \item If $G$ is a possibly disconnected algebraic group, then we denote by $G^{\circ}$ its neutral component.
 \item Given a scheme $X$, we denote by $\CO_X$ its sheaf of regular functions. If $X$ is a complex scheme, then $\BoC[X]\coloneqq \CO_X(X)$ set ring of regular functions on $X$.
 \item If $S$ is a set (for example, a vector space) with the action of a group $G$, we denote by $S^G$ the fixed subset.
\end{enumerate}

\subsection{Acknowledgements}
At various stages of the preparation of this work, the author was supported by the Royal Society, by the National Science Foundation under Grant No. DMS-1928930 and by the Alfred P. Sloan Foundation under grant G-2021-16778, while the author was in residence at the Simons Laufer Mathematical Sciences Institute (formerly MSRI) in Berkeley, California, during the Spring 2024 semester. The author thanks the SLMath Sciences Institute and the University of Edinburgh for the excellent working conditions. The author is grateful to Ben Davison for the support provided, in particular via postdoctoral fellowships, to Woonam Lim for pointing out a mistake in the example section, to Drago\c{s} Fr\u{a}\c{t}il\u{a} for interesting comments, to Sebastian Schlegel Mejia for suggesting improvements and corrections, to Dimitri Wyss for interesting related discussions, and to Andrei Negu\c{t} for suggesting me to investigate weakenings of the symmetricity condition, which led to the current version involving \emph{weakly symmetric} representations (Definition~\ref{definition:symmetricrepresentation}). I am grateful to Daniel Juteau for discussions on perverse sheaves. I would also like to thank the anonymous referee for allowing me to improve this paper.

\section{Parabolic induction for representations of reductive groups}
\label{section:parabolicinductionrep}
\subsection{The induction diagram}
\label{subsection:inductiondiagram}
Let $G$ be a reductive group, $T\subset G$ a maximal torus of $G$ and $V$ a finite-dimensional representation of $G$. For a cocharacter $\lambda\colon\BoG_{\rmm}\rightarrow T$, we let $G^{\lambda}\coloneqq \{g\in G\colon \lambda(t)g\lambda(t)^{-1}=g\}$ be the centraliser of $\lambda$, a Levi subgroup of $G$ \cite[Proposition~1.22]{digne2020representations} and $V^{\lambda}\coloneqq \{v\in V\mid \lambda(t)\cdot v=v\}$ be the fixed locus, a representation of $G^{\lambda}$. We also let $P_{\lambda}= G^{\lambda\geq 0}\coloneqq \{g\in G\mid \lim_{t\rightarrow 0}\limits \lambda(t)g\lambda(t)^{-1}\text{ exists}\}$, a parabolic subgroup of $G$ and $V^{\lambda\geq 0}\coloneqq \{v\in V\mid\lim_{t\rightarrow 0}\limits \lambda(t)\cdot v\text{ exists}\}$, a representation of $P_{\lambda}$.

A weight of $V$ is a character $\alpha\colon T\rightarrow\BoG_{\rmm}$ such that there exists $v\in V\setminus \{0\}$ such that for any $t\in T$, $t\cdot v=\alpha(t)v$.

We let $\CW'(V)\subset X^*(T)$ be the set of weights of the representation $V$. We have a direct sum decomposition
\[
 V\cong\bigoplus_{\alpha\in\CW'(V)}V_{\alpha}
\]
where $V_{\alpha}$ consists of $0$ and nonzero vectors of weight $\alpha$, where $V_{\alpha}\coloneqq \{v\in V\mid t\cdot v=\alpha(t)v \text{ for any $t\in T$}\}$.

We let $\CW(V)$ be the collection of weights of $V$, counted with multiplicities: $\alpha\in\CW'(V)$ appears $\dim V_{\alpha}$ times in $\CW(V)$.

In particular, we have $\CW'(\Fg)$ and $\CW(\Fg)$ where $\Fg$ is seen as the adjoint representation of $G$. We have the natural pairing
\[
 \langle-,-\rangle\colon X_*(T)\times X^*(T)\rightarrow\BoZ
\]
between characters and cocharacters. If $(\lambda,\alpha)\in X_*(T)\times X^*(T)$, then $\alpha\circ\lambda\in\mathrm{End}(\BoG_{\rmm})$, and so it is given by raising to some power, which is $\langle\lambda,\alpha\rangle\in\BoZ$. We let $\CW^{\lambda>0}(V)\coloneqq\{\alpha\in \CW(V)\mid \langle\lambda,\alpha\rangle>0\}$ and we define similarly $\CW^{\lambda=0}(V)$, $\CW^{\lambda\geq0}(V)$, $\CW^{\lambda<0}(V)$ and $\CW^{\lambda\leq0}(V)$ and their $'$ versions, by forgetting the multiplicities.

It is immediate that $V^{\lambda\geq0}=\bigoplus_{\alpha\in\CW^{'\lambda\geq 0}(V)}V_{\alpha}$.

We consider the (commutative) induction diagram
\begin{equation}
\label{equation:comminductiondiagram}
\begin{tikzcd}
	& {V^{\lambda\geq 0}/P_{\lambda}} \\
	{V^{\lambda}/G^{\lambda}} && {V/G} \\
	{V^{\lambda}\cms G^{\lambda}} && {V\cms G}
	\arrow["{q_{\lambda}}"', from=1-2, to=2-1]
	\arrow["{p_{\lambda}}", from=1-2, to=2-3]
	\arrow["{\pi_{\lambda}}"', from=2-1, to=3-1]
	\arrow["\pi", from=2-3, to=3-3]
	\arrow["\imath_{\lambda}",from=3-1, to=3-3]
\end{tikzcd}\end{equation}
where $V\cms G\coloneqq \Spec(\BoC[V]^G)$ is the affine GIT quotient of $V$ by $G$ and the map $\imath_{\lambda}$ is induced by the equivariant closed immersion $(V^{\lambda},G^{\lambda})\rightarrow(V,G)$.

\begin{lemma}
\label{lemma:smoothnessqproperp}
 The map $q_{\lambda}$ is a vector bundle stack of rank $r_{\lambda}\coloneqq\#\CW^{\lambda>0}(V)-\#\CW^{\lambda>0}(\Fg)$ and the map $p_{\lambda}$ is a proper and representable morphism of stacks.
\end{lemma}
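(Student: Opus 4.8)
The plan is to analyze the two maps in the induction diagram \eqref{equation:comminductiondiagram} separately, using the standard description of the quotient $V^{\lambda\geq 0}/P_\lambda$ as an intermediate object between $V^\lambda/G^\lambda$ and $V/G$.

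For the map $q_\lambda\colon V^{\lambda\geq 0}/P_\lambda\to V^\lambda/G^\lambda$: the plan is to factor it through the intermediate quotient stack $V^{\lambda\geq 0}/G^\lambda$. The projection $V^{\lambda\geq 0}\to V^\lambda$ obtained by sending a vector to its $\lambda$-weight-zero component is a $G^\lambda$-equivariant affine bundle (in fact a linear projection with kernel $\bigoplus_{\alpha\in\CW'^{\lambda>0}(V)}V_\alpha=V^{\lambda>0}$), so the induced map $V^{\lambda\geq 0}/G^\lambda\to V^\lambda/G^\lambda$ is a vector bundle stack of rank $\dim V^{\lambda>0}=\#\CW^{\lambda>0}(V)$. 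Then I would observe that $V^{\lambda\geq 0}/P_\lambda\to V^{\lambda\geq 0}/G^\lambda$ is the base change of $\mathrm{pt}/P_\lambda\to\mathrm{pt}/G^\lambda$; since $P_\lambda=U_\lambda\rtimes G^\lambda$ with $U_\lambda$ the unipotent radical (an affine space of dimension $\#\CW^{\lambda>0}(\Fg)$, being $\bigoplus_{\alpha\in\CW'^{\lambda>0}(\Fg)}\Fg_\alpha$), the map $\mathrm{pt}/P_\lambda\to\mathrm{pt}/G^\lambda$ is $BU_\lambda$-fibered, hence a vector bundle stack of rank $-\#\CW^{\lambda>0}(\Fg)$ (the negative sign because $BU_\lambda$ has the homotopy type of a negative-dimensional affine space: $\HO^*(BU_\lambda)=\HO^*(\mathrm{pt})$ but the shift in the dualizing/virtual dimension is $-\dim U_\lambda$). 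Composing, $q_\lambda$ is a vector bundle stack of rank $\#\CW^{\lambda>0}(V)-\#\CW^{\lambda>0}(\Fg)=r_\lambda$, as claimed. I should be careful to state precisely what ``vector bundle stack'' means here (a composition of a vector bundle and the classifying stack of a vector group, or its dual), and to check that compositions of such are again of this type with ranks adding.

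For the map $p_\lambda\colon V^{\lambda\geq 0}/P_\lambda\to V/G$: the plan is to use the standard identification $V^{\lambda\geq 0}/P_\lambda\cong (V^{\lambda\geq 0}\times^{P_\lambda}G)/G$, under which $p_\lambda$ becomes the map induced by the $G$-equivariant evaluation morphism $V^{\lambda\geq 0}\times^{P_\lambda}G\to V$, $(v,g)\mapsto g^{-1}\cdot v$. Representability follows because this is a map of global quotient stacks $[X/G]\to[V/G]$ induced by a $G$-equivariant morphism of schemes $X\to V$ — such a map is always representable. For properness, I would note that $V^{\lambda\geq 0}\times^{P_\lambda}G\to V\times (G/P_\lambda)$ is a closed immersion (its image is the incidence variety $\{(v,gP_\lambda): g^{-1}v\in V^{\lambda\geq 0}\}$), and $G/P_\lambda$ is projective since $P_\lambda$ is parabolic; hence the composite $V^{\lambda\geq 0}\times^{P_\lambda}G\to V\times(G/P_\lambda)\to V$ is proper, and this properness descends to the map of quotient stacks $p_\lambda$.

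The main obstacle I anticipate is purely bookkeeping: pinning down the precise definition and formal properties of ``vector bundle stack'' used throughout the paper (in particular the convention that $B$ of a vector group contributes a \emph{negative} rank, so that the ranks genuinely subtract), and verifying that the two factorizations of $q_\lambda$ are compatible and that rank is additive in compositions. The properness and representability of $p_\lambda$ are standard and should require only the incidence-variety description above; I would spend most of the care on getting the rank of $q_\lambda$ exactly right and matching it with the formula $r_\lambda=\#\CW^{\lambda>0}(V)-\#\CW^{\lambda>0}(\Fg)$.
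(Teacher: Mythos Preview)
Your proposal is correct and follows essentially the same route as the paper. For $p_\lambda$ the argument is identical: identify $V^{\lambda\geq 0}/P_\lambda$ with $(V^{\lambda\geq 0}\times^{P_\lambda}G)/G$, embed as a closed subscheme of $V\times G/P_\lambda$, and use projectivity of the partial flag variety $G/P_\lambda$ (one caveat: with the paper's convention $h\cdot(x,g)=(h\cdot x,gh^{-1})$ the well-defined action map is $(v,g)\mapsto g\cdot v$, not $g^{-1}\cdot v$). For $q_\lambda$ the paper simply records the two ingredients you use---that $V^{\lambda\geq 0}\to V^\lambda$ is a vector bundle and that $\ker(P_\lambda\to G^\lambda)$ is the unipotent radical---without writing out the intermediate factorization through $V^{\lambda\geq 0}/G^\lambda$; your version is just a more explicit unpacking of the same idea.
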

\begin{proof}
 The map $p_{\lambda}$ can be presented as the map of quotient stacks $(V^{\lambda\geq 0}\times^{P_{\lambda}} G)/G\rightarrow V/G$ coming from the $G$-equivariant map
 \[
  \tilde{p}_{\lambda}\colon \tilde{V}^{\lambda\geq 0}\coloneqq V^{\lambda\geq 0}\times^{P_{\lambda}}G\rightarrow V, \quad (v,g)\mapsto g\cdot v\,,
 \]
 which can be factored as a closed immersion followed by a projective map
 \[
  \begin{matrix}
   V^{\lambda\geq 0}\times^{P_{\lambda}}G&\rightarrow&V\times G/P_{\lambda}&\rightarrow&V\\
   (v,g)&\mapsto&(g\cdot v,gP_{\lambda})&&\\
   &&(v,gP_{\lambda})&\mapsto&v
  \end{matrix}
 \]
and is therefore projective. Therefore, $p_{\lambda}$ is representable and projective.
 
 The fact that $q_{\lambda}$ is smooth, surjective of relative dimension $\#\CW^{\lambda>0}(V)-\CW^{\lambda>0}(\Fg)$ is easily seen. That it is in addition a vector bundle stack is a rather classical fact (which is not used in this paper). It comes from the facts that $V^{\lambda\geq 0}\rightarrow V^{\lambda}$ is a vector bundle (by the Bia{\l}inicky-Birula decomposition \cite{bialynicki1973some}, but this be seen directly in the case of linear actions) and the kernel of the map $P_{\lambda}\rightarrow G^{\lambda}$ is the unipotent radical of $P_{\lambda}$.
\end{proof}

\begin{lemma}
 The map $\imath_{\lambda}$ is a finite map.
\end{lemma}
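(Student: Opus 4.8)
The plan is to reduce the statement to a universal-closedness assertion and then invoke the general fact that a universally closed, affine morphism of finite type is finite (such a morphism is automatically separated, hence proper, and a proper affine morphism is finite). Since $G$ and $G^{\lambda}$ are reductive, $V\cms G=\Spec\BoC[V]^{G}$ and $V^{\lambda}\cms G^{\lambda}=\Spec\BoC[V^{\lambda}]^{G^{\lambda}}$ are affine of finite type over $\BoC$ by Hilbert's finiteness theorem, so $\imath_{\lambda}$ is automatically affine, separated and of finite type; the only point to establish is that $\imath_{\lambda}$ is \emph{universally closed}.

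For this I would use the commutative induction diagram \eqref{equation:comminductiondiagram}, which gives the identity $\imath_{\lambda}\circ\pi_{\lambda}\circ q_{\lambda}=\pi\circ p_{\lambda}$. Here $p_{\lambda}$ is proper by Lemma~\ref{lemma:smoothnessqproperp}, hence universally closed, and $\pi$, $\pi_{\lambda}$ are the good moduli space morphisms of quotient stacks of reductive groups acting on affine schemes, hence universally closed and surjective by the general theory of good moduli spaces \cite{alper2013good}. Consequently $\pi\circ p_{\lambda}=\imath_{\lambda}\circ(\pi_{\lambda}\circ q_{\lambda})$ is universally closed, while $\pi_{\lambda}\circ q_{\lambda}$ is surjective (both $q_{\lambda}$, see Lemma~\ref{lemma:smoothnessqproperp}, and $\pi_{\lambda}$ are). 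It then remains to apply the elementary observation that if $g\circ f$ is universally closed and $f$ is surjective then $g$ is universally closed: after any base change $Z'\to V\cms G$ the morphism $f'$ stays surjective, so for any closed subset $C$ of the source of $g'$ one has $g'(C)=(g'\circ f')((f')^{-1}(C))$, which is closed. Taking $f=\pi_{\lambda}\circ q_{\lambda}$ and $g=\imath_{\lambda}$ shows $\imath_{\lambda}$ is universally closed, and therefore finite.

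The only non-formal ingredient here is the input from the theory of good moduli spaces (that $\pi$ and $\pi_{\lambda}$ are universally closed and surjective), so that is where the real content sits; the rest is diagram chasing. If one prefers to avoid that input, an alternative is available: $\BoC[V]^{G}$ and $\BoC[V^{\lambda}]^{G^{\lambda}}$ are non-negatively graded with degree-zero part $\BoC$ (for the scaling action of $\BoG_{\rmm}$ on $V$, which commutes with $G$, $G^{\lambda}$ and $\lambda$), and $\imath_{\lambda}$ is a morphism of such cones; by graded Nakayama, $\imath_{\lambda}$ is finite if and only if $\imath_{\lambda}^{-1}(0)$ is set-theoretically the vertex, i.e.\ if and only if $\CN_{V}\cap V^{\lambda}\subset\CN_{V^{\lambda}}$, where $\CN_{V}$ and $\CN_{V^{\lambda}}$ denote the null-cones of $(V,G)$ and $(V^{\lambda},G^{\lambda})$. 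This inclusion follows from the Hilbert--Mumford criterion together with the fact that, for $v\in V^{\lambda}$ unstable under $G$, Kempf's optimal destabilising one-parameter subgroup can be chosen to commute with $\lambda$ — using that $\lambda(\BoG_{\rmm})$ lies in the stabiliser of $v$, hence in the canonical parabolic attached to $v$ — so that it already destabilises $v$ for the $G^{\lambda}$-action. I would present the first argument as the proof and, if useful, record the second as a remark.
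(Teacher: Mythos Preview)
Your proposal is correct and both arguments work, but they take a genuinely different route from the paper's. The paper follows \cite[Lemma~2.1]{meinhardt2019donaldson}: it introduces the intermediate affine scheme $\Spec\BoC[\tilde{V}^{\lambda\geq 0}]^{G}$ (with $\tilde{V}^{\lambda\geq 0}=V^{\lambda\geq 0}\times^{P_{\lambda}}G$), factors $\imath_{\lambda}=\overline{p}_{\lambda}\circ\overline{\jmath}_{\lambda}$ through it, shows $\overline{\jmath}_{\lambda}$ is a closed immersion via the explicit section $\jmath_{\lambda}$ of $q_{\lambda}$, and then shows $\overline{p}_{\lambda}$ is integral by combining coherence of $(\tilde{p}_{\lambda})_*\CO_{\tilde{V}^{\lambda\geq 0}}$ (from projectivity of $\tilde{p}_{\lambda}$) with the Reynolds operator to descend an integral equation from $\BoC[V]$ to $\BoC[V]^{G}$. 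Your first argument packages the same geometric input (properness of $p_{\lambda}$) more abstractly: instead of unpacking integrality by hand, you deduce universal closedness of $\imath_{\lambda}$ from that of $\pi\circ p_{\lambda}$ via a surjectivity cancellation, invoking \cite{alper2013good} for universal closedness of $\pi$ and $\pi_{\lambda}$. This is shorter, but imports Alper's theorem, whereas the paper's proof stays with classical commutative algebra. Your second argument (graded Nakayama plus Kempf's instability theory) is different again and has the virtue of pinpointing the geometric content precisely as the null-cone inclusion $\CN_{V}\cap V^{\lambda}\subset\CN_{V^{\lambda}}$; it is perhaps the most transparent of the three, at the cost of invoking Kempf's result that the stabiliser of an unstable point lies in its canonical parabolic.
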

\begin{proof}
 The arguments are parallel to that of the proof \cite[Lemma 2.1]{meinhardt2019donaldson}. Namely, we consider the commutative diagram
\[\begin{tikzcd}
	&& {V^{\lambda\geq 0}/P_{\lambda}} \\
	{V^{\lambda}/G^{\lambda}} && {\Spec(\BoC[\tilde{V}^{\lambda\geq 0}]^{G})} && {V/G} \\
	{V^{\lambda}\cms G^{\lambda}} &&&& {V\cms G}
	\arrow["q_{\lambda}"', from=1-3, to=2-1]
	\arrow["\jmath_{\lambda}", bend left=20, from=2-1, to= 1-3]
	\arrow["{\pi_{\lambda\geq 0}}", from=1-3, to=2-3]
	\arrow["p_{\lambda}", from=1-3, to=2-5]
	\arrow["{\pi_{\lambda}}"', from=2-1, to=3-1]
	\arrow["{\overline{q}_{\lambda}}"', from=2-3, to=3-1]
	\arrow["{\overline{p}_{\lambda}}", from=2-3, to=3-5]
	\arrow["\pi", from=2-5, to=3-5]
	\arrow["{\overline{\jmath}_{\lambda}}"', bend right=20, from=3-1, to=2-3]
    \arrow["\imath_{\lambda}"', bend right=20, from=3-1, to=3-5]
    \end{tikzcd}\]
where $\tilde{V}^{\lambda\geq 0}\coloneqq V^{\lambda\geq 0}\times^{P_{\lambda}}G$ (see \S\ref{subsection:notations}), so that we have an equivalence of stacks $V^{\lambda\geq 0}/P_{\lambda}\simeq\tilde{V}^{\lambda\geq 0}/G$. We now explain the definitions of the maps in this diagram.

The map $q_{\lambda}$ is given by the equivariant map $(V^{\lambda\geq 0},P_{\lambda})\rightarrow (V^{\lambda},G^{\lambda})$, $(v,g)\mapsto(\lim_{t\rightarrow0}\limits \lambda(t)\cdot v,\lim_{t\rightarrow 0}\limits\lambda(t)g\lambda(t)^{-1})$. Therefore, the pullback of functions by $q_{\lambda}$ induces a morphism between rings of invariant functions
\[
 \overline{q}_{\lambda}^*\colon\BoC[V^{\lambda}]^{G^{\lambda}}\rightarrow \BoC[\tilde{V}^{\lambda\geq 0}]^{G}
\]
whose dual is the morphism between affine schemes
\[
\overline{q}_{\lambda}\colon \Spec(\BoC[\tilde{V}^{\lambda\geq 0}]^{G})\rightarrow\Spec(\BoC[V^{\lambda}]^{G^{\lambda}})=V^{\lambda}\cms G^{\lambda}\,.
\]
The map $\jmath_{\lambda}$ is given by the equivariant morphism $(V^{\lambda},G^{\lambda})\rightarrow(V^{\lambda\geq 0},P_{\lambda})$, $(v,g)\mapsto (v,g)$ which induces the morphism of algebras
\[
 \overline{\jmath}_{\lambda}^*\colon \BoC[\tilde{V}^{\lambda\geq 0}]^{G}\rightarrow\BoC[V^{\lambda}]^{G^{\lambda}}
\]
and dually the morphism of schemes
\[
 \overline{\jmath}_{\lambda}\colon\Spec(\BoC[V^{\lambda}]^{G^{\lambda}})\rightarrow\Spec(\BoC[\tilde{V}^{\lambda\geq 0}]^{G})\,.
\]

Since $q_{\lambda}\circ \jmath_{\lambda}=\id$, we have $\overline{q}_{\lambda}\circ\overline{\jmath}_{\lambda}=\id$. On the ring of functions, we have $\overline{\jmath}_{\lambda}^*\circ \overline{q}_{\lambda}^*=\id$ and so $\overline{\jmath}_{\lambda}^*$ is surjective. This tells us that $\overline{\jmath}_{\lambda}$ is a closed immersion. Moreover, $\imath_{\lambda}=\overline{p}_{\lambda}\circ\overline{\jmath}_{\lambda}$ and so it suffices to prove that $\overline{p}_{\lambda}$ is a finite map. Since it is a morphism of finite type complex schemes, it suffices to prove that it is an integral morphism \cite[\href{https://stacks.math.columbia.edu/tag/01WJ}{Lemma 01WJ}]{stacks-project}.

Since $\tilde{p}_{\lambda}\colon \tilde{V}^{\lambda\geq 0}\rightarrow V$ is a projective morphism (Proof of Lemma~\ref{lemma:smoothnessqproperp}), $(\tilde{p}_{\lambda})_*\CO_{\tilde{V}^{\lambda\geq 0}}$ is a coherent $\CO_{V}$-module. Therefore, the map $\BoC[V]\rightarrow \BoC[\tilde{V}^{\lambda\geq 0}]$ is finite and hence integral  \cite[\href{https://stacks.math.columbia.edu/tag/01WJ}{Lemma 01WJ}]{stacks-project}. We deduce that the map
\[
 \overline{p}_{\lambda}^*\colon\BoC[V]^G\rightarrow \BoC[\tilde{V}^{\lambda\geq 0}]^G
\]
is integral, and hence finite, as follows. Let $a\in\BoC[\tilde{V}^{\geq\lambda}]^G$. It is solution of a monic polynomial with coefficients in $\BoC[V]$. By applying the Reynolds operator to the equation, we obtain a monic equation with coefficients in $\BoC[V]^G$ of which $a$ is a zero. This concludes the proof.

\end{proof}

We define the equivalence relation $\equiv$ on $X_*(T)$ by $\alpha\equiv\beta\iff \exists a,b\in\BoZ_{>0},\quad a\alpha=b\beta$. The quotient set is denoted by $\overline{X_*(T)}\coloneqq X_*(T)/\equiv$. If $S\subset X_*(T)$, we denote by $\overline{S}$ its image in $\overline{X_*(T)}$. Similarly, if $S$ is a collection of characters (with multiplicities allowed), we denote by $\overline{S}$ the corresponding collection of elements of $\overline{X_*(T)}$ (with multiplicities taken into account).

\begin{definition}
\label{definition:symmetricrepresentation}
We say that a representation $V$ of a reductive group $G$ is
\begin{enumerate}
 \item  \emph{symmetric} if $V$ and its dual $V^*$ have the same sets of weights counted with multiplicities: $\CW(V)=\CW(V^*)$.
 \item \emph{weakly symmetric} if $\overline{\CW(V)}=\overline{\CW(V^*)}$.
\end{enumerate}
\end{definition}
Since a representation of a reductive group is determined by its character, a representation $V$ of $G$ is symmetric if and only if it is self-dual, that is $V\cong V^*$ as representations of $G$.
\begin{example}
\label{example:symmetricrepresentations}
The following are symmetric representations:
 \begin{enumerate}
  \item The trivial representation $\{0\}$ of a reductive group $G$, 
  \item The adjoint representation $\Fg=\Lie(G)$ of a reductive group $G$,
  \item The cotangent representation $\Tan^*V$ of any representation $V$ of a reductive group $G$,
  \item The representation space of $\dd\in\BoC^{Q_0}$-dimensional representations of a \emph{symmetric}  quiver $Q=(Q_0,Q_1)$, for $G=\prod_{i\in Q_0}\GL_{\dd_i}$.
  \item Any representations of products of simple groups of types $B_n, C_n, D_n \text{ ($n$ even)}, E_7, E_8, F_4, G_2$. Indeed, for such a group $G$, $w_0=-1\in W$ and for any $w\in W$, $\dim V_{\alpha}=\dim V_{w\cdot\alpha}$ \cite[Planches I--IX]{bourbaki2007groupes}, \cite[Exercise and Table 1 page~59]{humphreys1990reflection}.
 \end{enumerate}
\end{example}

\begin{example}
 The two-dimensional representation of $\BoC^*$ given by $t\cdot (u,v)=(tu,t^{-2}v)$ is weakly symmetric but not symmetric.
\end{example}

For $\lambda\in X_*(T)$, we define $d_{\lambda}\coloneqq\#\CW^{\lambda=0}(V)-\#\CW^{\lambda=0}(\Fg)=\dim V^{\lambda}/G^{\lambda}$. The quantity $r_{\lambda}$ is defined in Lemma~\ref{lemma:smoothnessqproperp}. For later use, we give the following easy lemma.

\begin{lemma}
\label{lemma:numberweights}
 Let $V$ be a weakly symmetric representation of $G$. Then,
 \[
  d_{\lambda}+2r_{\lambda}=d\,.
 \]
\end{lemma}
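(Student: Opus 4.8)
The plan is to unwind both sides into counts of weights lying in the open half-spaces cut out by $\lambda$, and to reduce the claimed identity to the single symmetry statement $\#\CW^{\lambda>0}(V)=\#\CW^{\lambda<0}(V)$ (together with the analogous, and easier, statement for $\Fg$).

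First I would record the elementary bookkeeping. Partitioning weights according to the sign of the pairing with $\lambda$ gives, as multisets, $\CW(V)=\CW^{\lambda>0}(V)\sqcup\CW^{\lambda=0}(V)\sqcup\CW^{\lambda<0}(V)$, hence $\dim V=\#\CW(V)=\#\CW^{\lambda>0}(V)+\#\CW^{\lambda=0}(V)+\#\CW^{\lambda<0}(V)$, and likewise $\dim G=\#\CW(\Fg)=\#\CW^{\lambda>0}(\Fg)+\#\CW^{\lambda=0}(\Fg)+\#\CW^{\lambda<0}(\Fg)$. Recalling that $d=\dim V-\dim G$, $d_{\lambda}=\#\CW^{\lambda=0}(V)-\#\CW^{\lambda=0}(\Fg)$ and $r_{\lambda}=\#\CW^{\lambda>0}(V)-\#\CW^{\lambda>0}(\Fg)$, substituting the two displayed equalities into $d-d_{\lambda}-2r_{\lambda}$ and collecting terms yields $d-d_{\lambda}-2r_{\lambda}=\bigl(\#\CW^{\lambda<0}(V)-\#\CW^{\lambda>0}(V)\bigr)-\bigl(\#\CW^{\lambda<0}(\Fg)-\#\CW^{\lambda>0}(\Fg)\bigr)$. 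So it suffices to prove $\#\CW^{\lambda>0}(V)=\#\CW^{\lambda<0}(V)$ and $\#\CW^{\lambda>0}(\Fg)=\#\CW^{\lambda<0}(\Fg)$.

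The statement for $\Fg$ is immediate: $\CW(\Fg)$ is the multiset of roots of $G$ together with $\dim T$ copies of the zero weight, and roots occur in pairs $\pm\alpha$, so $\alpha\mapsto-\alpha$ is a multiplicity-preserving involution of $\CW(\Fg)$ exchanging $\CW^{\lambda>0}(\Fg)$ and $\CW^{\lambda<0}(\Fg)$ (equivalently, use that $\Fg\cong\Fg^*$ via the Killing form). For $V$, the key observation is that the sign of $\langle\lambda,\alpha\rangle$ is constant on $\equiv$-classes in $X^*(T)$ — if $a\alpha=b\beta$ with $a,b\in\BoZ_{>0}$ then $\langle\lambda,\alpha\rangle$ and $\langle\lambda,\beta\rangle$ have the same sign — so for any representation $W$ of $G$ the integers $\#\CW^{\lambda>0}(W)$ and $\#\CW^{\lambda<0}(W)$ depend only on the multiset $\overline{\CW(W)}$. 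Applying this to $W=V$ and $W=V^{*}$ and invoking weak symmetry, $\overline{\CW(V)}=\overline{\CW(V^{*})}$, gives $\#\CW^{\lambda>0}(V)=\#\CW^{\lambda>0}(V^{*})$; and since $\CW(V^{*})=-\CW(V)$ as multisets and $\langle\lambda,-\alpha\rangle>0\iff\langle\lambda,\alpha\rangle<0$, the right-hand side equals $\#\CW^{\lambda<0}(V)$. Combining with the $\Fg$ case closes the argument.

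There is no serious obstacle here: the proof is a short combinatorial manipulation. The only points that need a little care are that every identification of weight collections is as a \emph{multiset}, so the multiplicities $\dim V_{\alpha}$ must be carried along throughout, and that the equivalence relation $\equiv$ appearing in the definition of weak symmetry is precisely the one for which the sign of the pairing against a fixed $\lambda$ is well defined — which is exactly what legitimizes passing from $\CW(V)$ to $\overline{\CW(V)}$.
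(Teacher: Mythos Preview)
Your proof is correct and follows essentially the same route as the paper's: partition the weight multisets by the sign of the pairing with $\lambda$, reduce to $\#\CW^{\lambda>0}(V)=\#\CW^{\lambda<0}(V)$ (and likewise for $\Fg$), and deduce the identity. You are a bit more explicit than the paper in spelling out why weak symmetry forces this equality --- namely that the sign of $\langle\lambda,\alpha\rangle$ is constant on $\equiv$-classes, so the counts only depend on $\overline{\CW(V)}$ --- which is a helpful clarification of the one-line ``by weak symmetry'' in the original.
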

\begin{proof}
 We have
 \[
  \CW(V)=\CW^{\lambda>0}(V)\sqcup\CW^{\lambda=0}(V)\sqcup\CW^{\lambda<0}(V)
 \]
 and by weak symmetry (Definition~\ref{definition:symmetricrepresentation}), $\#\CW^{\lambda>0}(V)=\#\CW^{\lambda<0}(V)$. Therefore, $\#\CW(V)=2\#\CW^{\lambda>0}(V)+\#\CW^{\lambda=0}(V)$. These equalities are also valid for $V=\Fg$ under the adjoint action of $G$ (which is a symmetric representation, Example~\ref{example:symmetricrepresentations}). We obtain the lemma by combining them together.
\end{proof}

\subsection{Parabolic induction}
\label{subsection:parabolicinduction}
For $\lambda\in X_*(T)$, we let $\BoQ_{V^{\lambda}/G^{\lambda}}^{\vir}\coloneqq \BoQ_{V^{\lambda}/G^{\lambda}}\otimes\SL^{-d_{\lambda}/2}=\IC(V^{\lambda}/G^{\lambda})$ be the intersection complex monodromic mixed Hodge module of the smooth stack $V^{\lambda}/G^{\lambda}$. We refer to \cite{davison2020cohomological} for the necessary background regarding monodromic mixed Hodge modules, and in particular regarding the square-root $\SL^{1/2}$ of the Tate twist $\SL$. The Tate twist $\SL\coloneqq\HO^*_{\rmc}(\mathbf{A}^1,\BoQ)$ is a pure weight $0$ complex of mixed Hodge structures, concentrated in degree $2$.  Alternatively, for the purposes of this paper, the reader may prefer to consider instead constructible sheaves. The Tate twist $-\otimes\SL$ is then replaced by the shift $[-2]$.

By smoothness of $q_{\lambda}$ (Lemma~\ref{lemma:smoothnessqproperp}), we have $q_{\lambda}^*\cong q_{\lambda}^!\otimes\SL^{r_{\lambda}}$ and thus we obtain
\[
 q_{\lambda}^*\BD\BoQ_{V^{\lambda}/G^{\lambda}}^{\vir}\cong\BD\BoQ_{V^{\lambda\geq 0}/P_{\lambda}}\otimes\SL^{r_{\lambda}+d_{\lambda}/2}
\]
which by adjunction provides us with the map
\begin{equation}
\label{equation:smoothpbq}
 \BD\BoQ_{V^{\lambda}/G^{\lambda}}^{\vir}\rightarrow(q_{\lambda})_*\BD\BoQ_{V^{\lambda\geq 0}/P_{\lambda}}\otimes\SL^{r_{\lambda}+d_{\lambda}/2}\,.
\end{equation}
Moreover, the map $p_{\lambda}$ is proper and representable (Lemma~\ref{lemma:smoothnessqproperp}), and so by dualizing the adjunction map
\[
 \BoQ_{V/G}\rightarrow (p_{\lambda})_*\BoQ_{V^{\lambda\geq 0}/P_{\lambda}}\,,
\]
and using $\BD (p_{\lambda})_*=(p_{\lambda})_!\BD$ and $(p_{\lambda})_*=(p_{\lambda})_!$, we obtain
\begin{equation}
\label{equation:properpfp}
 (p_{\lambda})_*\BD\BoQ_{V^{\lambda\geq 0}/P_{\lambda}}\rightarrow\BD\BoQ_{V/G}\,.
\end{equation}
We eventually obtain the map
\begin{equation}
\label{equation:sheafifiedinduction}
 \Ind_{\lambda}\colon (\imath_{\lambda})_*(\pi_{\lambda})_*\BD\BoQ_{V^{\lambda}/G^{\lambda}}^{\vir}\rightarrow \pi_*\BD\BoQ_{V/G}\otimes\SL^{r_{\lambda}+d_{\lambda}/2}
\end{equation}
by composing $(\imath_{\lambda})_*(\pi_{\lambda})_*$ applied to \eqref{equation:smoothpbq} with $\pi_*$ applied to \eqref{equation:properpfp} twisted by $\SL^{r_{\lambda}+d_{\lambda}/2}$. We used the commutativity of \eqref{equation:comminductiondiagram} to identify $\pi_*(p_{\lambda})_*\BD\BoQ_{V^{\lambda\geq 0}/P_{\lambda}}$ and $(\imath_{\lambda})_*(\pi_{\lambda})_*(q_{\lambda})_*\BD\BoQ_{V^{\lambda\geq 0}/P_{\lambda}}$.

\begin{lemma}
\label{lemma:preservescohdegrees}
 Let $V$ be a weakly symmetric representation of $G$. Then, the induction morphism $\Ind_{\lambda}\colon \HO^{*+d_{\lambda}}(V^{\lambda}/G^{\lambda})\rightarrow \HO^{*+d}(V/G)$ preserves shifted cohomological degrees, that is $\Ind_{\lambda}(\HO^{i+d_{\lambda}}(V^{\lambda}/G^{\lambda}))\subset \HO^{i+d}(V/G)$ for any $i\in\BoZ$.
\end{lemma}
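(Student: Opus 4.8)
The plan is to read off the cohomological‑degree behaviour of $\Ind_\lambda$ directly from its construction. Recall that $\Ind_\lambda$ is obtained by applying the functor $\HO^*(V\cms G,-)$ to the sheaf‑level induction morphism \eqref{equation:sheafifiedinduction}, whose source is $(\imath_\lambda)_*(\pi_\lambda)_*\BD\BoQ_{V^\lambda/G^\lambda}^{\vir}$ and whose target is $\pi_*\BD\BoQ_{V/G}\otimes\SL^{r_\lambda+d_\lambda/2}$. By definition of the notation in \eqref{equation:inductionintroduction}, the degree‑$i$ graded piece of $\HO^*$ of the source is $\HO^{i+d_\lambda}(V^\lambda/G^\lambda)$; so it suffices to show that, when $V$ is weakly symmetric, the target of \eqref{equation:sheafifiedinduction} is canonically $\pi_*\BD\BoQ_{V/G}^{\vir}$ (whose degree‑$i$ piece under $\HO^*$ is $\HO^{i+d}(V/G)$) and that \eqref{equation:sheafifiedinduction} then carries no intervening shift or Tate twist, so that the exact functor $\HO^*(V\cms G,-)$ automatically sends each degree‑$i$ piece of the left‑hand side into the degree‑$i$ piece of the right‑hand side.

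To carry this out, I would first invoke Lemma~\ref{lemma:numberweights}: weak symmetry of $V$ (together with the fact that the adjoint representation $\Fg$ is always symmetric, so that $\#\CW^{\lambda>0}(\Fg)=\#\CW^{\lambda<0}(\Fg)$) gives $d=d_\lambda+2r_\lambda$, hence $r_\lambda+\tfrac{d_\lambda}{2}=\tfrac{d}{2}$. Using this equality together with the definition $\BoQ_{V/G}^{\vir}=\BoQ_{V/G}\otimes\SL^{-d/2}$ and the identity $\BD(\mathcal F\otimes\SL^{a})\cong\BD(\mathcal F)\otimes\SL^{-a}$, the target of \eqref{equation:sheafifiedinduction} becomes
\[
 \pi_*\BD\BoQ_{V/G}\otimes\SL^{r_\lambda+d_\lambda/2}\;\cong\;\pi_*\BD\BoQ_{V/G}\otimes\SL^{d/2}\;\cong\;\pi_*\BD\bigl(\BoQ_{V/G}\otimes\SL^{-d/2}\bigr)\;=\;\pi_*\BD\BoQ_{V/G}^{\vir}.
\]
Thus \eqref{equation:sheafifiedinduction} is a morphism $(\imath_\lambda)_*(\pi_\lambda)_*\BD\BoQ_{V^\lambda/G^\lambda}^{\vir}\to\pi_*\BD\BoQ_{V/G}^{\vir}$ of complexes on $V\cms G$ with no net Tate twist, and applying $\HO^*(V\cms G,-)$ gives $\Ind_\lambda\bigl(\HO^{i+d_\lambda}(V^\lambda/G^\lambda)\bigr)\subset\HO^{i+d}(V/G)$, as asserted.

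There is an equivalent, more hands‑on reformulation of the same bookkeeping: $\Ind_\lambda$ is the composition of the pullback $q_\lambda^*$ along the vector bundle stack $q_\lambda$ of rank $r_\lambda$ (Lemma~\ref{lemma:smoothnessqproperp}) with the proper pushforward $(p_\lambda)_*$ along $p_\lambda$. On ordinary cohomology $q_\lambda^*$ is an isomorphism preserving cohomological degree, while $(p_\lambda)_*$ preserves Borel–Moore homological degree; translating between the two via Poincaré duality on the smooth stacks $V^{\lambda\geq 0}/P_\lambda$ (of dimension $d_\lambda+r_\lambda$) and $V/G$ (of dimension $d$) shows that $(p_\lambda)_*$ raises the cohomological degree by $2\bigl(d-(d_\lambda+r_\lambda)\bigr)$, which by Lemma~\ref{lemma:numberweights} equals $2r_\lambda=d-d_\lambda$. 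Hence $\Ind_\lambda$ raises the cohomological degree by exactly $d-d_\lambda$, i.e.\ maps $\HO^{i+d_\lambda}(V^\lambda/G^\lambda)$ into $\HO^{i+d}(V/G)$.

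I do not expect a genuine obstacle here. The only non‑formal ingredient is Lemma~\ref{lemma:numberweights}, i.e.\ the identity $d=d_\lambda+2r_\lambda$, and this is precisely where the weak symmetry hypothesis enters, through $\#\CW^{\lambda>0}(V)=\#\CW^{\lambda<0}(V)$. The remaining care is to track the half‑integral Tate twists such as $\SL^{r_\lambda+d_\lambda/2}$, which is legitimate in the monodromic mixed Hodge module formalism used here (one may equally work with constructible sheaves, replacing $-\otimes\SL$ by $[-2]$, without changing the argument); and to observe that for a general, not weakly symmetric, $V$ the same computation instead shows $\Ind_\lambda$ shifts the natural grading $\HO^{*+d_\lambda}$ relative to $\HO^{*+d}$ by $\#\CW^{\lambda<0}(V)-\#\CW^{\lambda>0}(V)$, generically nonzero — so weak symmetry is exactly the hypothesis that makes the statement true.
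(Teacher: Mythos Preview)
Your proof is correct and follows essentially the same approach as the paper: both invoke Lemma~\ref{lemma:numberweights} to rewrite $r_\lambda+d_\lambda/2=d/2$, thereby identifying the target of \eqref{equation:sheafifiedinduction} with $\pi_*\BD\BoQ_{V/G}^{\vir}$, and then read off the degree preservation. Your additional hands-on reformulation via $q_\lambda^*$ and $(p_\lambda)_*$ and the closing remark on the non-weakly-symmetric case are nice supplements not present in the paper's terser proof.
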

\begin{proof}
 The induction at the level of cohomology vector spaces is given by applying the derived global section functor to the morphism of complexes \eqref{equation:sheafifiedinduction} (the sheafified induction). Moreover, by Lemma~\ref{lemma:numberweights}, $d/2=r_{\lambda}+d_{\lambda}/2$. Therefore, in the weakly symmetric case, the right-hand-side of \eqref{equation:sheafifiedinduction} is precisely $\pi_*\BD\BoQ_{V/G}^{\vir}$. Moreover, $\BD\BoQ_{V/G}^{\vir}\cong\BoQ_{V/G}^{\vir}$ and $\BD\BoQ_{V^{\lambda}/G^{\lambda}}^{\vir}\cong\BoQ_{V^{\lambda}/G^{\lambda}}^{\vir}$ since $V/G$ and $V^{\lambda}/G^{\lambda}$ are smooth and the chosen shifts/Tate twists (by the dimension) make the considered constant sheaves Verdier self-dual.
\end{proof}

\subsection{Augmentation}
\label{subsection:augmentation}
Let $\lambda\in X_*(T)$. We defined a representation $V^{\lambda}$ of $G^{\lambda}$ in \S\ref{subsection:inductiondiagram}. The induction morphisms, which we have defined for $(V,G)$, can also be defined in the exact same way for $(V^{\lambda},G^{\lambda})$. We can see $T$ as a maximal torus of $G^{\lambda}$. If $\mu\in X_*(T)$, we have $(V^{\lambda})^{\mu}=V^{\lambda}\cap V^{\mu}$, $(G^{\lambda})^{\mu}=G^{\lambda}\cap G^{\mu}$, $(V^{\lambda})^{\mu\geq 0}=V^{\lambda}\cap V^{\mu\geq 0}$ and $P_{\mu,\lambda}\coloneqq P_{\mu}\cap G^{\lambda}$. 

In this case, the induction diagram \eqref{equation:comminductiondiagram} becomes
\[\begin{tikzcd}
	& {(V^{\lambda})^{\mu\geq 0}/P_{\mu,\lambda}} \\
	{(V^{\lambda})^{\mu}/(G^{\lambda})^{\mu}} && {V^{\lambda}/G^{\lambda}} \\
	{(V^{\lambda})^{\mu}\cms (G^{\lambda})^{\mu}} && {V^{\lambda}\cms G^{\lambda}}
	\arrow["{q_{\lambda,\mu}}"', from=1-2, to=2-1]
	\arrow["{p_{\lambda,\mu}}", from=1-2, to=2-3]
	\arrow["{\pi_{\lambda,\mu}}"', from=2-1, to=3-1]
	\arrow["\pi_{\lambda}", from=2-3, to=3-3]
	\arrow["\imath_{\lambda,\mu}",from=3-1, to=3-3]
\end{tikzcd}\]

The definition of the induction map associated with this datum following \S\ref{subsection:parabolicinduction} is
\[
 \Ind_{\mu,\lambda}\colon \HO^*((V^{\lambda})^{\mu}/(G^{\lambda})^{\mu})\rightarrow \HO^*(V^{\lambda}/G^{\lambda})\,.
\]
 If $\mu\preceq\lambda$ \eqref{equation:orderintro}, then $(V^{\lambda})^{\mu}=V^{\mu}$ and $(G^{\lambda})^{\mu}=G^{\mu}$, so that the induction $\Ind_{\mu,\lambda}$ is
 \[
  \Ind_{\mu,\lambda}\colon \HO^*(V^{\mu}/G^{\mu})\rightarrow\HO^*(V^{\lambda}/G^{\lambda})\,.
 \]
 
\begin{lemma}
\label{lemma:gcdcocharacters}
  For any $\lambda,\mu\in X_*(T)$, one can find $\nu\in X_*(T)$ such that $V^{\nu}=V^{\mu}\cap V^{\lambda}$, $G^{\nu}=G^{\mu}\cap G^{\lambda}$, $(V^{\lambda})^{\nu\geq0}=(V^{\lambda})^{\mu\geq0}$ and $P_{\mu,\lambda}=P_{\nu,\lambda}$.
\end{lemma}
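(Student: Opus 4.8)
The plan is to exhibit $\nu$ explicitly as a positive integral combination of $\lambda$ and $\mu$. Concretely, I would set $\nu \coloneqq N\lambda + \mu \in X_*(T)$ for a sufficiently large integer $N \in \BoZ_{>0}$, and verify the four required equalities by reducing all of them to statements about the sign of the integer $\langle \nu, \alpha \rangle$ as $\alpha$ ranges over the finite weight sets $\CW'(V)$ and $\CW'(\Fg)$. The first step is to record the elementary principle underlying this reduction: $V^{\nu} = \bigoplus_{\langle \nu, \alpha\rangle = 0} V_{\alpha}$ and $V^{\nu \geq 0} = \bigoplus_{\langle \nu, \alpha \rangle \geq 0} V_{\alpha}$, while $\Fg^{\nu}$ (resp. $\Fg^{\nu \geq 0}$) is spanned by $\Ft$ together with the root spaces $\Fg_{\alpha}$ with $\langle \nu, \alpha \rangle = 0$ (resp. $\geq 0$). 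Consequently $G^{\nu}$ (resp. the parabolic $P_{\nu} = G^{\nu \geq 0}$) is generated by $T$ and the root subgroups $U_{\alpha}$ for the roots $\alpha$ with $\langle \nu, \alpha \rangle = 0$ (resp. $\geq 0$), so equality of the relevant sets of roots will force equality of the subgroups; and similarly $(V^{\lambda})^{\nu \geq 0} = V^{\lambda} \cap V^{\nu \geq 0}$, $P_{\nu,\lambda} = P_{\nu} \cap G^{\lambda}$.

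Next I would carry out the sign analysis for $\nu = N\lambda + \mu$, splitting the weights $\alpha$ into two cases. If $\langle \lambda, \alpha \rangle = 0$, then $\langle \nu, \alpha \rangle = \langle \mu, \alpha \rangle$, so the sign of $\langle \nu, \alpha \rangle$ — whether it is zero, positive, or negative — agrees with that of $\langle \mu, \alpha \rangle$; this controls the behaviour inside $V^{\lambda}$ and $G^{\lambda}$. If $\langle \lambda, \alpha \rangle \neq 0$, then $\langle \nu, \alpha \rangle = N \langle \lambda, \alpha \rangle + \langle \mu, \alpha \rangle$ has the same sign as $\langle \lambda, \alpha \rangle$ as soon as $N > |\langle \mu, \alpha \rangle| / |\langle \lambda, \alpha \rangle|$; in particular such $\alpha$ never lie in $V^{\nu}$ nor contribute to $\Fg^{\nu}$. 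Since $\CW'(V)$ and $\CW'(\Fg)$ are finite, a single $N$ works for all these weights and roots at once. Combining the two cases: $\langle \nu, \alpha \rangle = 0 \iff (\langle \lambda, \alpha \rangle = 0 \text{ and } \langle \mu, \alpha \rangle = 0)$, which yields $V^{\nu} = V^{\lambda} \cap V^{\mu}$ and, on the level of root data, $\Fg^{\nu} = \Fg^{\lambda} \cap \Fg^{\mu}$ hence $G^{\nu} = G^{\lambda} \cap G^{\mu}$; and among the weights/roots with $\langle \lambda, \alpha \rangle = 0$ one has $\langle \nu, \alpha \rangle \geq 0 \iff \langle \mu, \alpha \rangle \geq 0$, which yields $(V^{\lambda})^{\nu \geq 0} = V^{\lambda} \cap V^{\nu \geq 0} = V^{\lambda} \cap V^{\mu \geq 0} = (V^{\lambda})^{\mu \geq 0}$ and $P_{\nu,\lambda} = P_{\nu} \cap G^{\lambda} = P_{\mu} \cap G^{\lambda} = P_{\mu,\lambda}$.

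I do not expect a genuine obstacle: the entire content lies in choosing the right $\nu$, and once $\nu = N\lambda + \mu$ with $N \gg 0$ is on the table the verification is a bookkeeping exercise about signs of integer pairings over finite sets. The only point requiring a modicum of care is the passage from the weight-combinatorial equalities to the scheme-theoretic equalities $G^{\nu} = G^{\lambda} \cap G^{\mu}$ and $P_{\nu,\lambda} = P_{\mu,\lambda}$, for which I would invoke the standard structure of centralizers of cocharacters and their associated parabolics in terms of $T$ and the root subgroups picked out by the appropriate sign condition, so that coincidence of the defining root sets immediately gives coincidence of the groups.
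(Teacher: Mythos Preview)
Your proposal is correct and takes essentially the same approach as the paper: the paper constructs $\nu$ by composing $\lambda\times\mu\colon\BoG_{\rmm}^2\to T$ with a general one-parameter subgroup $t\mapsto(t^a,t^b)$ (with $b>0$), i.e.\ sets $\nu=a\lambda+b\mu$, whereas you make the specific and sufficient choice $\nu=N\lambda+\mu$ with $N\gg0$ and spell out the sign bookkeeping that the paper leaves implicit.
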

\begin{proof}
 The cocharacters $\lambda,\mu$ define a morphism
 \[
  \lambda\times\mu\colon \BoG_{\rmm}^{\times2}\rightarrow T\,.
 \]
We obtain $\nu$ by composing a general one-parameter subgroup $\zeta\colon\BoG_{\rmm}\rightarrow\BoG_{\rmm}^{\times2}$, $t\mapsto (t^a,t^b)$ with $\lambda\times\mu$. This guarantees the first two equalities of the Lemma. To guarantee the last two, it suffices to take a general one-parameter subgroup with the constraint $b>0$ by replacing $\zeta$ by $\zeta^{-1}$ if necessary.
\end{proof}

\subsection{Associativity}
\begin{proposition}
\label{proposition:associativity}
 Let $\lambda,\mu,\nu\in X_*(T)$ be such that $V^{\lambda}\subset V^{\mu}\subset V^{\nu}$, $G^{\lambda}\subset G^{\mu}\subset G^{\nu}$ and $(V^{\nu})^{\mu\geq 0}\subset(V^{\nu})^{\lambda\geq 0}$, $P_{\mu,\nu}\subset P_{\lambda,\nu}$. Then,
 \[
  \Ind_{\lambda,\nu}=\Ind_{\mu,\nu}\circ\Ind_{\lambda,\mu}\,.
 \]
\end{proposition}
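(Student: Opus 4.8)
The plan is to reduce the associativity of parabolic induction to a statement about the underlying correspondence diagrams, exhibiting a single ``triple'' induction diagram that dominates the two iterated ones. Concretely, I would introduce the partial flag object
\[
 Z \coloneqq (V^{\nu})^{\lambda\geq 0} \times^{P_{\lambda,\nu}} \bigl(\text{fibre data for } P_{\mu,\nu}\subset P_{\lambda,\nu}\bigr),
\]
or more transparently work with the chain of quotient stacks
\[
\begin{tikzcd}[column sep=small]
 & & (V^{\nu})^{\lambda\geq 0, \mu\geq 0}/P_{\lambda,\mu,\nu} \arrow[dl] \arrow[dr] & & \\
 & (V^{\mu})^{\lambda\geq 0}/P_{\lambda,\mu} \arrow[dl] \arrow[dr] & & (V^{\nu})^{\mu\geq 0}/P_{\mu,\nu} \arrow[dl] \arrow[dr] & \\
 V^{\lambda}/G^{\lambda} & & V^{\mu}/G^{\mu} & & V^{\nu}/G^{\nu}
\end{tikzcd}
\]
where $P_{\lambda,\mu,\nu}$ is the appropriate common parabolic (using $\mu\preceq\lambda$-type containments inside $G^{\nu}$, the hypotheses $P_{\mu,\nu}\subset P_{\lambda,\nu}$ and $(V^{\nu})^{\mu\geq 0}\subset (V^{\nu})^{\lambda\geq 0}$ guarantee it is well defined). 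The first step is to check that the left parallelogram in this diagram is Cartesian-compatible in the sense needed, so that the composite correspondence $V^{\lambda}/G^{\lambda}\leftarrow \cdots \rightarrow V^{\nu}/G^{\nu}$ through the top vertex agrees with the correspondence defining $\Ind_{\lambda,\nu}$; this is where the three containment hypotheses are used, and it is essentially the assertion that taking $\lambda\geq 0$ parts and then $\mu\geq 0$ parts inside $V^{\nu}$ (resp.\ $G^{\nu}$) can be done in one stroke.

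Next I would verify that the two maps out of the top vertex behave correctly: the map to $V^{\nu}/G^{\nu}$ is proper and representable, and the maps down to the $q$-type legs are vector bundle stacks, by the same arguments as in Lemma~\ref{lemma:smoothnessqproperp} applied inside $G^{\nu}$. The key computation here is that the rank of the composed vector bundle stack equals the sum of the ranks, i.e.\ $r_{\lambda,\nu} = r_{\lambda,\mu}$ (computed inside $V^{\mu}$) $+\ r_{\mu,\nu}$, and similarly $d_{\lambda,\nu} + 2 r_{\lambda,\nu} = d_{\mu,\nu} + 2 r_{\mu,\nu}$ matched with the $\lambda$-data — these are bookkeeping identities on weight multiplicities analogous to Lemma~\ref{lemma:numberweights}, ensuring the Tate twists $\SL^{r+d/2}$ match up on the nose when the two composites are compared. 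Then associativity of $\Ind$ follows formally from: (i) functoriality of the smooth pullback $(-)^*$ and adjunction unit along a composite of vector bundle stacks, (ii) functoriality of the proper pushforward $(-)_*$ and counit along a composite of proper representable maps, and (iii) base change to commute the $q$-leg of the outer block past the $p$-leg of the inner block, which holds because the relevant square (the left parallelogram above) is a fibre square of stacks with one leg smooth and one leg proper.

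The main obstacle I anticipate is step (iii): setting up the precise Cartesian square whose base change identity glues the two halves. One must show that the stack $(V^{\mu})^{\lambda\geq 0}/P_{\lambda,\mu}$, when pulled back along $p_{\mu,\nu}\colon (V^{\nu})^{\mu\geq 0}/P_{\mu,\nu}\to V^{\mu}/G^{\mu}$, recovers exactly the top vertex $(V^{\nu})^{\lambda\geq 0,\mu\geq 0}/P_{\lambda,\mu,\nu}$ — i.e.\ that ``take the $\mu$-parabolic then the $\lambda\geq 0$ part of the resulting $G^{\mu}$-space'' is the same as ``take the $(\lambda,\mu)$-iterated parabolic inside $G^{\nu}$''. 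This is a concrete but slightly delicate identification of quotient-stack presentations of the form $X\times^{P}G$, and it is exactly where the hypotheses $(V^{\nu})^{\mu\geq 0}\subset (V^{\nu})^{\lambda\geq 0}$ and $P_{\mu,\nu}\subset P_{\lambda,\nu}$ enter — they force the two filtrations to be ``nested'' so that the iterated construction makes sense. Once this fibre-square identification is in place, the rest is a diagram chase in the derived category using properness, smoothness, and the projection formula, with the Tate-twist bookkeeping handled by the weight-count identities. I would present this as: Lemma (fibre square), Lemma (rank/degree additivity), then the proposition by composing the four structure maps.
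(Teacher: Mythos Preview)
Your proposal is correct and follows essentially the same route as the paper: the paper's proof consists precisely of the five-vertex diagram you draw (with top vertex written as $(V^{\nu})^{\lambda\geq 0}/P_{\lambda,\nu}$, which under the nesting hypotheses coincides with your $(V^{\nu})^{\lambda\geq 0,\mu\geq 0}/P_{\lambda,\mu,\nu}$), the observation that the inner parallelogram is Cartesian, and the single sentence ``classically follows by base-change.'' You have simply unpacked the details the paper leaves implicit, including the fibre-square identification and the Tate-twist bookkeeping.
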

\begin{proof}
The equality classically follows by base-change in the commutative diagram
\begin{equation}
\label{equation:associtivitydiagram}
\begin{tikzcd}
	&& {(V^{\nu})^{\lambda\geq0}/P_{\lambda,\nu}} \\
	& {(V^{\mu})^{\lambda\geq 0}/P_{\lambda,\mu}} && {(V^{\nu})^{\mu\geq 0}/P_{\mu,\nu}} \\
	{V^{\lambda}/G^{\lambda}} && {V^{\mu}/G^{\mu}} && {V^{\nu}/G^{\nu}}
	\arrow[from=1-3, to=2-2]
	\arrow[from=1-3, to=2-4]
	\arrow["{q_{\nu,\lambda}}"',bend right =30, from=1-3, to=3-1]
	\arrow["\lrcorner"{anchor=center, pos=0.125, rotate=-45}, draw=none, from=1-3, to=3-3]
	\arrow["{p_{\nu,\lambda}}", bend left=30, from=1-3, to=3-5]
	\arrow["{q_{\mu,\lambda}}"', from=2-2, to=3-1]
	\arrow["{p_{\mu,\lambda}}", from=2-2, to=3-3]
	\arrow["{q_{\nu,\mu}}"', from=2-4, to=3-3]
	\arrow["{p_{\nu,\mu}}", from=2-4, to=3-5]\,.
\end{tikzcd}
\end{equation}
\end{proof}

\subsection{Explicit formula for the parabolic induction}
\label{subsection:explicitformula}
If $\Ft_{\BoQ}$ is a rational form of the Lie algebra of $T$, we have $\HO^*_T(\pt)\cong\Sym(\Ft^*)$. Any character $\alpha\in X_*(T)$ may be identified with an element of $\Ft^*$ and hence $\HO^*_T(\pt)$ via $\alpha\mapsto \mathrm{d}\alpha(1)$ (see \S\ref{subsection:notations}). By abuse, we will still denote by $\alpha$ the corresponding element of $\HO^*_T(\pt)$.

For $\lambda\in X_*(T)$, we define the induction kernel
\[
 k_{\lambda}\coloneqq \frac{\prod_{\alpha\in\CW^{\lambda<0}(V)}\alpha}{\prod_{\alpha\in\CW^{\lambda<0}(\Fg)}\alpha}\in\Frac(\HO^*(\pt/T))\,.
\]

We let $W^{\lambda}\coloneqq N_{G^{\lambda}}(T)/T$ be the Weyl group of $G^{\lambda}$.

\begin{lemma}
\label{lemma:invariance}
 For any $\lambda\in X_*(T)$, $k_{\lambda}$ is $W^{\lambda}$-invariant.
\end{lemma}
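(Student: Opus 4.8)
The plan is to show that $k_\lambda$ is fixed by every element of $W^\lambda = N_{G^\lambda}(T)/T$, using the fact that $W^\lambda$ acts on $X^*(T)$ preserving the relevant structure. The key observation is that $W^\lambda$, being the Weyl group of the Levi $G^\lambda$, permutes the roots of $G^\lambda$ (i.e. the weights $\alpha \in \CW(\Fg)$ with $\langle\lambda,\alpha\rangle = 0$) and fixes $\lambda$ itself, hence preserves the sign of $\langle\lambda,\alpha\rangle$ for every weight $\alpha$. Therefore $W^\lambda$ maps $\CW^{\lambda>0}(V)$ to itself as a multiset, and likewise $\CW^{\lambda>0}(\Fg)$ to itself.

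First I would record that for $w \in W^\lambda$ and any $\lambda' = w\cdot\lambda = \lambda$ (since $w$ centralises a representative of $\lambda$ up to $T$, or more precisely $w$ fixes $\lambda$ because $G^\lambda$ is the centraliser of $\lambda$ and $N_{G^\lambda}(T)$ normalises $T$ while fixing the cocharacter $\lambda$), one has $\langle w\cdot\lambda, w\cdot\alpha\rangle = \langle\lambda,\alpha\rangle$ by $W$-invariance of the pairing, hence $\langle\lambda, w\cdot\alpha\rangle = \langle\lambda,\alpha\rangle$. Second, since $V$ and $\Fg$ are representations of $G^\lambda \supset N_{G^\lambda}(T)$, the action of any $w \in W^\lambda$ permutes the weight spaces: $V_\alpha \cong V_{w\cdot\alpha}$ and $\Fg_\alpha \cong \Fg_{w\cdot\alpha}$, so $\dim V_\alpha = \dim V_{w\cdot\alpha}$ and $\dim\Fg_\alpha = \dim\Fg_{w\cdot\alpha}$. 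Combining these two facts, the map $\alpha \mapsto w\cdot\alpha$ is a bijection of the multiset $\CW^{\lambda>0}(V)$ onto itself, and similarly for $\CW^{\lambda>0}(\Fg)$.

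Third, I would use that the action of $W^\lambda$ on $\HO^*_T(\pt) \cong \Sym(\Ft^*)$ (and on its fraction field) is induced by the linear action on $\Ft^*$, under which the element of $\HO^*_T(\pt)$ denoted $\alpha$ is sent to the element denoted $w\cdot\alpha$. Hence
\[
 w(k_\lambda) = \frac{\prod_{\alpha\in\CW^{\lambda>0}(V)} w\cdot\alpha}{\prod_{\alpha\in\CW^{\lambda>0}(\Fg)} w\cdot\alpha} = \frac{\prod_{\beta\in\CW^{\lambda>0}(V)} \beta}{\prod_{\beta\in\CW^{\lambda>0}(\Fg)} \beta} = k_\lambda,
\]
the middle equality being the reindexing by the bijections established above.

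The only point requiring genuine care — the "main obstacle," though it is mild — is the assertion that $w \in N_{G^\lambda}(T)$ fixes the cocharacter $\lambda$, which is what makes $\langle\lambda,w\cdot\alpha\rangle = \langle\lambda,\alpha\rangle$ hold rather than merely $\langle w^{-1}\cdot\lambda, \alpha\rangle = \langle\lambda, w\cdot\alpha\rangle$. This follows because $\lambda$ lands in the centre of $G^\lambda$ (indeed $\mathbb{G}_m \xrightarrow{\lambda} T$ has image in the centre of its own centraliser $G^\lambda$), and conjugation by any $g \in N_{G^\lambda}(T)$ is trivial on the centre of $G^\lambda$; since $\lambda(t)$ is central in $G^\lambda$ for all $t$, we get $g\lambda(t)g^{-1} = \lambda(t)$, i.e. $w\cdot\lambda = \lambda$ in $X_*(T)$. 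With this in hand the rest is the bookkeeping above.
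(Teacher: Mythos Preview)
Your proof is correct and follows essentially the same approach as the paper: both reduce to showing that $W^{\lambda}$ permutes the multisets $\CW^{\lambda>0}(V)$ and $\CW^{\lambda>0}(\Fg)$, using $W$-invariance of the pairing together with $w\cdot\lambda=\lambda$ for $w\in W^{\lambda}$. Your version is simply more explicit on two points the paper leaves tacit, namely the justification that $\lambda$ is central in $G^{\lambda}$ (hence fixed by $W^{\lambda}$) and the preservation of multiplicities via $\dim V_{\alpha}=\dim V_{w\cdot\alpha}$.
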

\begin{proof}
 It suffices to prove that $W^{\lambda}$ sends $\CW^{\lambda<0}(V)\subset\CW(V)$ to itself, and similarly for $\CW^{\lambda<0}(\Fg)$. This comes from the invariance of the pairing between characters and cocharacters: for any $w\in W$, $\langle w\cdot\lambda,w\cdot\alpha\rangle=\langle\lambda,\alpha\rangle$ and if $w\in W^{\lambda}$, $w\cdot\lambda=\lambda$. Therefore, for any $\alpha\in X^*(T)$, $\langle\lambda,\alpha\rangle<0$ if and only if $\langle\lambda,w\cdot\alpha\rangle<0$.
\end{proof}

\begin{proposition}
\label{proposition:explicitformulainduction}
Let $\lambda\colon\BoG_{\rmm}\rightarrow T$ be a cocharacter and $f\in\HO^*(V^{\lambda}/G^{\lambda})\cong\HO^*(\pt/T)^{W^{\lambda}}$. We have
\[
 \Ind_{\lambda}(f)=\sum_{w\in W/W^{\lambda}}w\cdot(fk_{\lambda})=\frac{1}{\# W^{\lambda}}\sum_{w\in W}w\cdot(fk_{\lambda})\,.
\]
\end{proposition}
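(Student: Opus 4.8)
The plan is to unwind the definition of $\Ind_\lambda$ from \eqref{equation:sheafifiedinduction} after applying the derived global section functor, reducing everything to cohomology of classifying stacks of tori. First I would use that $\HO^*(V^\lambda/G^\lambda)\cong\HO^*(\pt/G^\lambda)\cong\HO^*(\pt/T)^{W^\lambda}$ and $\HO^*(V/G)\cong\HO^*(\pt/T)^{W}$, since $V^\lambda/G^\lambda$ and $V/G$ are vector bundle stacks over $\pt/G^\lambda$ and $\pt/G$ respectively (the representations deformation-retract onto the origin), and $\HO^*(\pt/G)\cong\HO^*_G(\pt)\cong\HO^*_T(\pt)^W$ by the standard fact for connected reductive groups. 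So the induction map becomes a $\HO^*_T(\pt)^W$-linear (via restriction) map $\HO^*_T(\pt)^{W^\lambda}\to\HO^*_T(\pt)^W$.

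Next I would identify the three building blocks of $\Ind_\lambda$ on cohomology. The map $q_\lambda^*$ on $\HO^*(V^{\lambda\geq0}/P_\lambda)\cong\HO^*(\pt/P_\lambda)\cong\HO^*(\pt/G^\lambda)$ is an isomorphism (since $P_\lambda\to G^\lambda$ is a homotopy equivalence, its unipotent radical being contractible), but the \emph{Gysin} direction built into \eqref{equation:smoothpbq} contributes the Euler class of the normal bundle of $q_\lambda$, which as a $T$-equivariant class is $\prod_{\alpha\in\CW^{\lambda>0}(V)}\alpha\big/\prod_{\alpha\in\CW^{\lambda>0}(\Fg)}\alpha=k_\lambda$ — more precisely, the vector bundle stack $q_\lambda$ has the virtual bundle $V^{\lambda>0}-\Fg^{\lambda>0}$ as "fiber", and pulling back then pushing forward along a smooth morphism of relative virtual dimension $r_\lambda$ multiplies by the equivariant Euler class $k_\lambda$. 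The map $p_\lambda$ is proper representable, and the pushforward $(p_\lambda)_*$ on $\HO^*$ composed with the identification $\HO^*(V^{\lambda\geq0}/P_\lambda)\cong\HO^*(\pt/T)^{W^\lambda}$ is the standard parabolic pushforward $\HO^*_T(\pt)^{W^\lambda}\to\HO^*_T(\pt)^W$ given by $g\mapsto\sum_{w\in W/W^\lambda}w\cdot g$; this is the classical computation of the Gysin map for $G/P_\lambda\to\pt$ in equivariant cohomology (Weyl integration / the fact that $\HO^*_T(G/P)$ is free over $\HO^*_T(\pt)$ with basis indexed by $W/W^\lambda$ and that the pushforward is the corresponding trace). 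Composing: $\Ind_\lambda(f)$ first becomes $f\cdot k_\lambda\in\Frac(\HO^*_T(\pt))$ (living on $\pt/T$ after the Gysin pullback), then is symmetrized over $W/W^\lambda$ by $(p_\lambda)_*$, giving $\sum_{w\in W/W^\lambda}w\cdot(fk_\lambda)$. The second equality in the statement is then immediate from Lemma~\ref{lemma:invariance}: since $k_\lambda$ is $W^\lambda$-invariant and $f\in\HO^*_T(\pt)^{W^\lambda}$, the product $fk_\lambda$ is $W^\lambda$-invariant, so $\sum_{w\in W/W^\lambda}w\cdot(fk_\lambda)=\frac{1}{\#W^\lambda}\sum_{w\in W}w\cdot(fk_\lambda)$.

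The main obstacle I expect is making the middle step rigorous at the level of \emph{stacks} rather than spaces: one must check that under the isomorphisms $\HO^*(V^{\lambda\geq0}/P_\lambda)\cong\HO^*_T(\pt)^{W^\lambda}$ and $\HO^*(V/G)\cong\HO^*_T(\pt)^W$, the sheaf-theoretic pushforward along the proper representable $p_\lambda$ — defined via the dualized adjunction \eqref{equation:properpfp} and Verdier duality — really is the elementary Weyl-orbit-sum operator. The cleanest route is to pass to an atlas: present $p_\lambda$ as $(\tilde V^{\lambda\geq0}\times^{P_\lambda}G)/G\to V/G$, reduce to the $G$-equivariant (hence, via $G/T$-fibration, $T$-equivariant) Gysin map for the proper map $\tilde V^{\lambda\geq0}\to V$, and then further to the Gysin map for $G/P_\lambda\to\pt$, where the formula is classical (it appears e.g. in the theory of equivariant cohomology of flag varieties and is essentially the Atiyah–Bott / Berline–Vergne localization computation, or Brion's description of $\HO^*_T(G/P)$). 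One also needs to track the shift $\SL^{r_\lambda+d_\lambda/2}$ carefully, but by Lemma~\ref{lemma:numberweights} this is exactly $\SL^{d/2}$, which is absorbed into the virtual normalizations $\BoQ^{\vir}$, so it does not affect the formula — it only ensures the degree bookkeeping of Lemma~\ref{lemma:preservescohdegrees} is consistent.
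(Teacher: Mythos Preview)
Your approach matches the paper's, which simply defers the first equality to the Euler-class computation in \cite[Theorem~2]{kontsevich2011cohomological} and derives the second from the $W^{\lambda}$-invariance of $f$ and $k_{\lambda}$ (Lemma~\ref{lemma:invariance}), exactly as you do.

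One misattribution to correct: you assign the factor $k_{\lambda}$ to a ``Gysin direction built into \eqref{equation:smoothpbq}''. There is none. On cohomology, \eqref{equation:smoothpbq} is the smooth pullback $q_{\lambda}^*$, which under the identifications $\HO^*(V^{\lambda}/G^{\lambda})\cong\HO^*(V^{\lambda\geq0}/P_{\lambda})\cong\HO^*_T(\pt)^{W^{\lambda}}$ is the identity; it contributes no Euler class. The entire kernel $k_{\lambda}$ arises from the proper pushforward $(p_{\lambda})_*$, so your description of that pushforward as the bare symmetrization $g\mapsto\sum_{w\in W/W^{\lambda}}w\cdot g$ is not right either. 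Factoring $p_{\lambda}$ as in the proof of Lemma~\ref{lemma:smoothnessqproperp} --- the closed immersion $V^{\lambda\geq0}\times^{P_{\lambda}}G\hookrightarrow V\times G/P_{\lambda}$ followed by the projection to $V$ --- the Gysin map for the closed immersion produces the numerator of $k_{\lambda}$ (the Euler class of the normal bundle, whose $T$-weights lie in $\CW(V)\setminus\CW^{\lambda\geq0}(V)$), while integration over the fibre $G/P_{\lambda}$, computed by torus localization at the fixed points $W/W^{\lambda}$, produces simultaneously the sum over $W/W^{\lambda}$ and the denominator (the inverse tangent Euler classes). In other words, the computation you flag as the ``main obstacle'' is not a technicality to be checked afterwards; it is where the formula actually comes from.
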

\begin{proof}
 The second equality follows from the $W^{\lambda}$-invariance of both $f$ (since $f\in \HO^*(V^{\lambda}/G^{\lambda})=\HO^*(V^{\lambda}/T)^{W^{\lambda}}$) and $k_{\lambda}$ (Lemma~\ref{lemma:invariance}). The first equality is a computation of Euler classes exactly as in the proof of \cite[Theorem 2]{kontsevich2011cohomological} in the case of quiver representations.
\end{proof}

More generally, let $\lambda,\mu\in X_*(T)$. We let
\[
 k_{\lambda,\mu}\coloneqq\frac{\prod_{\alpha\in\CW^{\lambda<0}(V^{\mu})}\alpha}{\prod_{\alpha\in\CW^{\lambda<0}(\Fg^{\mu})}\alpha}\in\Frac(\HO^*_T(\pt))\,.
\]
\begin{proposition}[Augmentation of Proposition~\ref{proposition:explicitformulainduction}]
 For $f\in\HO^*((V^{\mu})^{\lambda}/(G^{\mu})^{\lambda})\cong\HO^*(\pt/T)^{W^{\lambda}\cap W^{\mu}}$, we have
 \[
  \Ind_{\lambda,\mu}(f)=\sum_{w\in W^{\mu}/(W^{\lambda}\cap W^{\mu})}w(fk_{\lambda,\mu})\,.
 \]
\end{proposition}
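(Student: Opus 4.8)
The plan is to observe that this statement is the word-for-word analogue of Proposition~\ref{proposition:explicitformulainduction} for the pair $(V^{\mu},G^{\mu})$, and to deduce it by substituting $(V,G)\leadsto(V^{\mu},G^{\mu})$ into that proposition and matching up all the objects involved. Indeed, by construction in \S\ref{subsection:augmentation}, $\Ind_{\lambda,\mu}$ is the induction morphism attached, via the recipe of \S\ref{subsection:parabolicinduction}, to the representation $V^{\mu}$ of the group $G^{\mu}$ together with the cocharacter $\lambda$. Since $G^{\mu}=C_G(\mu)$ is a Levi subgroup of $G$, it is reductive and $T$ is a maximal torus of it, so every hypothesis of Proposition~\ref{proposition:explicitformulainduction} is satisfied for $(V^{\mu},G^{\mu},\lambda)$.

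I would then match up the data. The centraliser of $\lambda$ in $G^{\mu}$ is $(G^{\mu})^{\lambda}=G^{\lambda}\cap G^{\mu}$, with fixed space $(V^{\mu})^{\lambda}=V^{\lambda}\cap V^{\mu}$ (\S\ref{subsection:augmentation}); the Weyl group of the reductive group $G^{\mu}$ relative to $T$ is $W^{\mu}$, and the Weyl group of its Levi subgroup $(G^{\mu})^{\lambda}$ relative to $T$ is $W^{\lambda}\cap W^{\mu}$. With these identifications, the isomorphisms $\HO^*(V^{\mu}/G^{\mu})\cong\HO^*(\pt/T)^{W^{\mu}}$ and $\HO^*((V^{\mu})^{\lambda}/(G^{\mu})^{\lambda})\cong\HO^*(\pt/T)^{W^{\lambda}\cap W^{\mu}}$ are exactly the instances, for $(V^{\mu},G^{\mu})$, of the cohomology identifications appearing in Proposition~\ref{proposition:explicitformulainduction}. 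Finally, the induction kernel produced by the recipe for $(V^{\mu},G^{\mu},\lambda)$ is, by the definition preceding Lemma~\ref{lemma:invariance}, $\prod_{\alpha\in\CW^{\lambda>0}(V^{\mu})}\alpha\big/\prod_{\alpha\in\CW^{\lambda>0}(\Fg^{\mu})}\alpha=k_{\lambda,\mu}$, where $\Fg^{\mu}=\Lie(G^{\mu})$ is the adjoint representation of $G^{\mu}$, which gives the correct denominator. Hence Proposition~\ref{proposition:explicitformulainduction}, applied to $(V^{\mu},G^{\mu})$, $\lambda$ and $f$, reads precisely $\Ind_{\lambda,\mu}(f)=\sum_{w\in W^{\mu}/(W^{\lambda}\cap W^{\mu})}w\cdot(fk_{\lambda,\mu})$, which is the claim; in particular no weak-symmetry hypothesis is needed, since that proposition is a pure Euler-class computation valid for an arbitrary cocharacter.

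The only point that is not a formal substitution is the identification $N_{(G^{\mu})^{\lambda}}(T)/T=W^{\lambda}\cap W^{\mu}$, which I expect to be the (mild) main obstacle. It follows because a representative $n\in N_G(T)$ of $w\in W$ acts by conjugation on $X_*(T)$ as $w$, so $n$ centralises $\lambda(\BoG_{\rmm})$ (resp.\ $\mu(\BoG_{\rmm})$) exactly when $w\cdot\lambda=\lambda$ (resp.\ $w\cdot\mu=\mu$); intersecting, $n\in G^{\lambda}\cap G^{\mu}$ iff $w\in W^{\lambda}\cap W^{\mu}$. For the sum to be well-defined one should also note, as in Proposition~\ref{proposition:explicitformulainduction}, that $fk_{\lambda,\mu}$ is $(W^{\lambda}\cap W^{\mu})$-invariant: $f$ is so by hypothesis, and $k_{\lambda,\mu}$ is so by Lemma~\ref{lemma:invariance} applied to $(V^{\mu},G^{\mu})$, whose Levi's Weyl group is $W^{\lambda}\cap W^{\mu}$.
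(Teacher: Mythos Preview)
Your proposal is correct and follows exactly the paper's approach: apply Proposition~\ref{proposition:explicitformulainduction} to the pair $(V^{\mu},G^{\mu})$ with the cocharacter $\lambda$, after identifying the Weyl group of $(G^{\mu})^{\lambda}=G^{\lambda}\cap G^{\mu}$ with $W^{\lambda}\cap W^{\mu}$. You have simply spelled out in detail what the paper states in one sentence, including the justification of the Weyl-group identification and the $(W^{\lambda}\cap W^{\mu})$-invariance of $fk_{\lambda,\mu}$.
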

\begin{proof}
 This follows immediately from Proposition~\ref{proposition:explicitformulainduction} applied to $V^{\mu}$ as a representation of $G^{\mu}$ and the cocharacter $\lambda$ of $T$, seen as a maximal torus of $G^{\mu}$, and the identification of the Weyl group of $(G^{\mu})^{\lambda}=G^{\lambda}\cap G^{\mu}$ with $W^{\lambda}\cap W^{\mu}$.
\end{proof}

\subsection{Parabolic induction for weakly symmetric representations}
\label{subsection:parindsymreps}

\begin{proposition}
 Let $V$ be a (weakly) symmetric representation of a reductive group $G$ and $\lambda\in X_*(T)$ a cocharacter of the maximal torus of $T$. Then, the representation $V^{\lambda}$ of $G^{\lambda}$ is also (weakly) symmetric.
\end{proposition}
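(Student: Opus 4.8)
The plan is to reduce the statement to elementary bookkeeping with multisets of weights, using that $T$ is a maximal torus of $G^{\lambda}$ just as it is of $G$. First I would record two basic identities. Since $\lambda(t)$ acts on the weight space $V_{\alpha}$ by the scalar $t^{\langle\lambda,\alpha\rangle}$, we have $V^{\lambda}=\bigoplus_{\alpha\in\CW^{'\lambda=0}(V)}V_{\alpha}$, and hence, as multisets of characters of $T$, $\CW(V^{\lambda})=\CW^{\lambda=0}(V)$. Dually, the weights of the $G^{\lambda}$-representation $(V^{\lambda})^{*}$ are the negatives of those of $V^{\lambda}$; combining this with $\CW(V^{*})=-\CW(V)$ and the obvious equivalence $\langle\lambda,\alpha\rangle=0\iff\langle\lambda,-\alpha\rangle=0$ gives $\CW((V^{\lambda})^{*})=-\CW^{\lambda=0}(V)=\CW^{\lambda=0}(V^{*})$. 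Thus $V^{\lambda}$ is symmetric as a $G^{\lambda}$-representation if and only if $\CW^{\lambda=0}(V)=\CW^{\lambda=0}(V^{*})$ as multisets, and $V^{\lambda}$ is weakly symmetric if and only if $\overline{\CW^{\lambda=0}(V)}=\overline{\CW^{\lambda=0}(V^{*})}$.

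In the symmetric case the conclusion is then immediate: by hypothesis $\CW(V)=\CW(V^{*})$ as multisets of characters of $T$, and $\CW^{\lambda=0}(-)$ is obtained from $\CW(-)$ by keeping exactly the characters $\alpha$ with $\langle\lambda,\alpha\rangle=0$, so the two restrictions agree as well. For the weakly symmetric case, the only thing to check is that this ``restriction to $\lambda$-fixed weights'' is compatible with the equivalence relation $\equiv$: if $\alpha\equiv\beta$, say $a\alpha=b\beta$ with $a,b\in\BoZ_{>0}$, then $a\langle\lambda,\alpha\rangle=b\langle\lambda,\beta\rangle$, so $\langle\lambda,\alpha\rangle$ and $\langle\lambda,\beta\rangle$ vanish simultaneously. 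Hence the condition $\langle\lambda,-\rangle=0$ is well defined on equivalence classes, $\overline{\CW^{\lambda=0}(V)}$ is obtained from $\overline{\CW(V)}$ just by deleting (with multiplicities) the classes of weights not killed by $\lambda$, and the same recipe applies to $V^{*}$. Applying this to the hypothesis $\overline{\CW(V)}=\overline{\CW(V^{*})}$ yields $\overline{\CW^{\lambda=0}(V)}=\overline{\CW^{\lambda=0}(V^{*})}$, as wanted.

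I do not expect any serious obstacle here; the only mildly delicate points are to track multiplicities throughout (all the equalities of weight systems above are equalities of multisets, not of sets) and, in the weakly symmetric case, to use that the relation $\equiv$ involves only strictly positive integers — which is exactly what makes the sign condition $\langle\lambda,-\rangle=0$ descend to the set of equivalence classes of weights. One could alternatively phrase the entire argument through a canonical isomorphism $(V^{\lambda})^{*}\cong(V^{*})^{\lambda}$ of $G^{\lambda}$-representations, which makes the compatibility with duality manifest; I would adopt whichever formulation reads more cleanly next to the surrounding material.
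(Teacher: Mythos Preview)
Your proof is correct and follows essentially the same approach as the paper: identify $\CW(V^{\lambda})=\CW^{\lambda=0}(V)$ via the equality of weight spaces $V^{\lambda}_{\alpha}=V_{\alpha}$ for $\langle\lambda,\alpha\rangle=0$, and then restrict the (weak) symmetry hypothesis. The paper's own proof is a one-line remark that this is immediate; your version spells out more carefully the passage to duals and, in the weakly symmetric case, the compatibility of the condition $\langle\lambda,-\rangle=0$ with the equivalence relation $\equiv$, which is a welcome clarification.
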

\begin{proof}
 It is immediate to check that $\CW'(V^{\lambda})=\{\alpha\in\CW'(V)\mid\langle\lambda,\alpha\rangle=0\}$ and for any $\alpha\in\CW'(V^{\lambda})$, we can identify the weight spaces $V^{\lambda}_{\alpha}=V_{\alpha}$. Therefore, the (weak) symmetry of $V$ implies that of $V^{\lambda}$.
\end{proof}

Recall the order relation on $X_*(T)$ defined in the introduction \eqref{equation:orderintro}
\[
 \lambda\preceq \mu\overset{\text{def.}}{\iff} 
 \left\{
 \begin{aligned}V^{\lambda}\subset V^{\mu}\\
 \Fg^{\lambda}\subset\Fg^{\mu}\,.
 \end{aligned}
 \right.
\]
for any $\lambda,\mu\in X_*(T)$.

We obtain an equivalence relation on the set $X_*(T)$
\[
 \lambda\sim \mu\overset{\text{def.}}{\iff} \lambda\preceq\mu \text{ and }\mu\preceq\lambda\,.
\]
We let $\SP_V\coloneqq X_*(T)/\sim$ be the quotient of $X_*(T)$ by this equivalence relation. If $\lambda\in X_*(T)$, $\overline{\lambda}$ denotes its class in $\SP_V$. It has an induced order relation still denoted by $\preceq$:
\[
 \overline{\lambda}\preceq\overline{\mu}\iff \lambda\preceq\mu\,.
\]
It is immediate that this is well-defined, i.e. the definition does not depend on the representatives $\lambda,\mu$ of $\overline{\lambda},\overline{\mu}$.

\begin{lemma}
\label{lemma:numberpositiverootsrep}
 Let $V$ be a weakly symmetric representation of a reductive group $G$. If $\lambda,\mu\in X_*(T)$ are such that $\lambda\sim\mu$, then 
 \begin{enumerate}
  \item $\#\CW^{\lambda>0}(V)=\#\CW^{\mu>0}(V)=\#\CW^{\lambda<0}(V)=\#\CW^{\mu<0}(V)$,
  \item $\#\CW^{\lambda>0}(\Fg)=\#\CW^{\mu>0}(\Fg)=\#\CW^{\lambda<0}(\Fg)=\#\CW^{\mu<0}(\Fg)$.
 \end{enumerate}
\end{lemma}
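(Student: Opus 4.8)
The plan is to reduce both chains of equalities to the single counting identity extracted in the proof of Lemma~\ref{lemma:numberweights}, together with the defining property of $\sim$. Concretely, I would show that $\#\CW^{\lambda>0}(V)$ and $\#\CW^{\lambda>0}(\Fg)$ each depend on $\lambda$ only through the subspaces $V^{\lambda}$, $\Fg^{\lambda}$; then $\lambda\sim\mu$ immediately gives the $\lambda$ versus $\mu$ comparisons, and weak symmetry gives the ${>}0$ versus ${<}0$ comparisons.

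First I would recall from the proof of Lemma~\ref{lemma:numberweights} that for a weakly symmetric representation one has $\CW(V)=\CW^{\lambda>0}(V)\sqcup\CW^{\lambda=0}(V)\sqcup\CW^{\lambda<0}(V)$ together with $\#\CW^{\lambda>0}(V)=\#\CW^{\lambda<0}(V)$, so that
\[
 \#\CW^{\lambda>0}(V)=\tfrac12\bigl(\#\CW(V)-\#\CW^{\lambda=0}(V)\bigr)=\#\CW^{\lambda<0}(V)\,.
\]
Since $\Fg$ with the adjoint action is symmetric, hence weakly symmetric (Example~\ref{example:symmetricrepresentations}), the same identity holds with $V$ replaced by $\Fg$.

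Next I would use the weight-space decomposition recorded in \S\ref{subsection:inductiondiagram}: $V^{\lambda}=\bigoplus_{\langle\lambda,\alpha\rangle=0}V_{\alpha}$, so $\#\CW^{\lambda=0}(V)=\dim V^{\lambda}$, and likewise $\#\CW^{\lambda=0}(\Fg)=\dim\Fg^{\lambda}$. The hypothesis $\lambda\sim\mu$ means $V^{\lambda}=V^{\mu}$ and $\Fg^{\lambda}=\Fg^{\mu}$ as subspaces, hence $\#\CW^{\lambda=0}(V)=\#\CW^{\mu=0}(V)$ and $\#\CW^{\lambda=0}(\Fg)=\#\CW^{\mu=0}(\Fg)$. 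Substituting into the displayed identity (and its $\Fg$-analogue) gives $\#\CW^{\lambda>0}(V)=\#\CW^{\mu>0}(V)$ and $\#\CW^{\lambda>0}(\Fg)=\#\CW^{\mu>0}(\Fg)$, while the second equality in the display supplies $\#\CW^{\lambda>0}(V)=\#\CW^{\lambda<0}(V)$, $\#\CW^{\mu>0}(V)=\#\CW^{\mu<0}(V)$, and the corresponding statements for $\Fg$. Chaining these equalities produces exactly the four-term chains in (1) and (2).

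I do not expect a real obstacle here: the statement is a bookkeeping consequence of weak symmetry plus the definition of $\sim$. The only point to state carefully is that $\#\CW^{\lambda=0}(V)$ equals $\dim V^{\lambda}$, so that the equality $V^{\lambda}=V^{\mu}$ is used only as an equality of subspaces of $V$; this is immediate from the weight decomposition.
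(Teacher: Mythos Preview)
Your proposal is correct and follows essentially the same route as the paper: decompose $\CW(V)$ into the ${>}0$, ${=}0$, ${<}0$ parts, use weak symmetry to equate the ${>}0$ and ${<}0$ counts, and use $\lambda\sim\mu$ (via $V^{\lambda}=V^{\mu}$, $\Fg^{\lambda}=\Fg^{\mu}$) to equate the ${=}0$ counts. The only cosmetic difference is that you pass through $\#\CW^{\lambda=0}(V)=\dim V^{\lambda}$, whereas the paper simply notes $\CW^{\lambda=0}(V)=\CW^{\mu=0}(V)$ as multisets; both are immediate from the weight decomposition.
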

\begin{proof}
 We prove the statement for $V$. The statement for $\Fg$ is obtained by specialising $V=\Fg$ since $\Fg$ under the adjoint action of $G$ is a symmetric representation of $G$ (by the existence of invariant pairings, nondegenerate up to the center of $\Fg$, Example~\ref{example:symmetricrepresentations}).
 
 Since $V$ is weakly symmetric, we have $\#\CW^{\lambda>0}(V)=\#\CW^{\lambda<0}(V)$ and moreover, $\#\CW(V)=\#\CW^{\lambda>0}(V)+\#\CW^{\lambda<0}(V)+\CW(V^{\lambda})$. Since we have the same equalities with $\lambda$ replaced by $\mu$ and $\CW^{\lambda=0}(V)=\CW^{\mu=0}(V)$, we obtain $\#\CW^{\lambda>0}(V)=\#\CW^{\mu>0}(V)$. The other equalities follow.
\end{proof}

\begin{lemma}
\label{lemma:signkernelsim}
 If $\lambda,\mu\in X_*(T)$ are such that $\lambda\sim\mu$, then $k_{\lambda}=\xi k_{\mu}$ for some $\xi\in\BoQ ^*$.
\end{lemma}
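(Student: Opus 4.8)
The plan is to isolate, inside $k_{\lambda}/k_{\mu}$, a product over weights of $V$ and a product over weights of $\Fg$, and to treat each by manufacturing from the weak symmetry hypothesis a permutation of the relevant weight multiset which scales every weight by a \emph{negative} rational number.

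First I would simplify the quotient. Because $\lambda\sim\mu$ we have $V^{\lambda}=V^{\mu}$, hence $\CW^{\lambda=0}(V)=\CW(V^{\lambda})=\CW(V^{\mu})=\CW^{\mu=0}(V)$ as multisets, and similarly $\CW^{\lambda=0}(\Fg)=\CW^{\mu=0}(\Fg)$. In particular every $\alpha\in\CW^{\lambda>0}(V)$ has $\langle\mu,\alpha\rangle\neq0$, so setting $C$ to be the multiset of weights of $V$ that are positive on $\lambda$ and negative on $\mu$, and $D$ the multiset of weights of $V$ negative on $\lambda$ and positive on $\mu$, one gets $\CW^{\lambda>0}(V)=\bigl(\CW^{\lambda>0}(V)\cap\CW^{\mu>0}(V)\bigr)\sqcup C$ and $\CW^{\mu>0}(V)=\bigl(\CW^{\lambda>0}(V)\cap\CW^{\mu>0}(V)\bigr)\sqcup D$, so that the common factor cancels in
\[
\frac{\prod_{\alpha\in\CW^{\lambda>0}(V)}\alpha}{\prod_{\alpha\in\CW^{\mu>0}(V)}\alpha}=\frac{\prod_{\alpha\in C}\alpha}{\prod_{\alpha\in D}\alpha}\,.
\]
Defining $C',D'$ analogously for $\Fg$, one obtains $k_{\lambda}/k_{\mu}=\bigl(\prod_{C}\alpha/\prod_{D}\alpha\bigr)\cdot\bigl(\prod_{D'}\alpha/\prod_{C'}\alpha\bigr)$, so it suffices to show each factor lies in $\BoQ^{*}$.

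Next I would reformulate weak symmetry combinatorially. Since dualising a representation negates its weights with multiplicities, $\CW(V^{*})=-\CW(V)$, and weak symmetry says $\overline{\CW(V)}=\overline{-\CW(V)}$ as multisets of $\equiv$-classes. Hence there is a permutation $\tau$ of the multiset $\CW(V)$ with $\overline{\tau(\alpha)}=\overline{-\alpha}$ for every $\alpha$; by the definition of $\equiv$ this forces $\tau(\alpha)=c_{\alpha}\alpha$ with $c_{\alpha}\in\BoQ^{*}$ and $c_{\alpha}<0$. Consequently $\langle\nu,\tau(\alpha)\rangle=c_{\alpha}\langle\nu,\alpha\rangle$ has sign opposite to that of $\langle\nu,\alpha\rangle$ for \emph{every} cocharacter $\nu$ and every $\alpha$ with $\langle\nu,\alpha\rangle\neq0$. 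Applying this to $\nu=\lambda$ and $\nu=\mu$ at once, $\tau$ maps $C$ into $D$ and $D$ into $C$; being an injection of the finite multiset $C\sqcup D$ into itself, it restricts to a bijection $\tau\colon C\to D$. Therefore
\[
\prod_{\beta\in D}\beta=\prod_{\alpha\in C}\tau(\alpha)=\Bigl(\prod_{\alpha\in C}c_{\alpha}\Bigr)\prod_{\alpha\in C}\alpha\,,
\]
and since $\prod_{\alpha\in C}c_{\alpha}\in\BoQ^{*}$ the ratio $\prod_{C}\alpha/\prod_{D}\alpha$ lies in $\BoQ^{*}$. The same argument applied to $\Fg$, which is symmetric hence weakly symmetric by Example~\ref{example:symmetricrepresentations}, gives $\prod_{C'}\alpha/\prod_{D'}\alpha\in\BoQ^{*}$; multiplying the two proves $k_{\lambda}=\xi k_{\mu}$ with $\xi\in\BoQ^{*}$.

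The delicate point is the passage from the multiset identity $\overline{\CW(V)}=\overline{-\CW(V)}$ to a permutation $\tau$ acting on each individual weight by a \emph{negative} rational scalar: this is exactly what makes $\tau$ simultaneously reverse the signs of $\langle\lambda,-\rangle$ and $\langle\mu,-\rangle$, which is the crux of the argument. It is also where weak symmetry is genuinely used rather than just the relation $\lambda\sim\mu$: for $\BoG_{\rmm}$ acting on $\BoC^{2}$ with weights $1,2$ one has $\lambda\sim\mu$ for $\lambda=\id$, $\mu=\id^{-1}$, yet $k_{\lambda}/k_{\mu}\notin\BoQ^{*}$. Note that Lemma~\ref{lemma:numberpositiverootsrep} is not actually needed here, as $\#C=\#D$ drops out of the bijection $\tau\colon C\to D$.
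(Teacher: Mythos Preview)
Your proof is correct and follows essentially the same approach as the paper: both use weak symmetry to produce a bijection between the relevant weight multisets that scales each weight by a nonzero rational, with the paper constructing directly a bijection $\CW^{\lambda>0}(V)\to\CW^{\mu>0}(V)$, $\alpha\mapsto\xi_{\alpha}\alpha$, while you first cancel the common factor $\CW^{\lambda>0}(V)\cap\CW^{\mu>0}(V)$ and then exhibit the needed bijection $C\to D$ as the restriction of a global permutation $\tau$ of $\CW(V)$. Your version is arguably cleaner in that the existence of $\tau$ and the fact that it swaps $C$ and $D$ are made fully explicit, whereas the paper leaves the construction of its bijection somewhat implicit.
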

\begin{proof}
 If $\lambda\sim\mu$, we have $\CW^{\lambda>0}(V)\sqcup\CW^{\lambda<0}(V)=\CW^{\mu>0}(V)\sqcup\CW^{\mu<0}(V)$ and $\CW^{\lambda>0}(\Fg)\sqcup\CW^{\lambda<0}(\Fg)=\CW^{\mu>0}(\Fg)\sqcup\CW^{\mu<0}(\Fg)$ and since $V$ and $\Fg$ are weakly symmetric representations of $G$, $\overline{\CW^{\lambda>0}(V)}=-\overline{\CW^{\lambda<0}(V)}$, $\overline{\CW^{\mu>0}(V)}=-\overline{\CW^{\mu<0}(V)}$ and similarly for $\Fg$. Therefore, for any $\alpha\in\CW^{\lambda<0}(V)$ (resp. $\CW^{\lambda<0}(\Fg)$), there exists $\xi_{\alpha}\in\BoQ^*$ (resp. $\zeta_{\alpha}\in\BoQ^*$) such that
 \[
  \begin{matrix}
   \CW^{\lambda<0}(V)&\rightarrow&\CW^{\mu<0}(V)\\
   \alpha&\mapsto &\xi_{\alpha}\alpha
  \end{matrix}
 \]
and
 \[
  \begin{matrix}
   \CW^{\lambda<0}(\Fg)&\rightarrow&\CW^{\mu<0}(\Fg)\\
   \alpha&\mapsto &\zeta_{\alpha}\alpha
  \end{matrix}
 \]
 are bijections.
 Then,
 \[
 \frac{\prod_{\alpha\in\CW^{\lambda<0}(V)}\xi_{\alpha}}{\prod_{\alpha\in\CW^{\lambda<0}(\Fg)}\zeta_{\alpha}} k_{\lambda}=k_{\mu}\,.
 \]
We may therefore set $\xi\coloneqq \frac{\prod_{\alpha\in\CW^{\lambda<0}(\Fg)}\zeta_{\alpha}}{\prod_{\alpha\in\CW^{\lambda<0}(V)}\xi_{\alpha}}$.
\end{proof}

\begin{lemma}
\label{lemma:sameimageoverline}
 We assume that $V$ is a weakly symmetric representation of $G$. Then, for any $\lambda, \mu\in X_*(T)$ such that $\overline{\lambda}=\overline{\mu}$, the induction maps $\Ind_{\lambda}$ and $\Ind_{\mu}$ differ by a non-zero scalar. In particular, the images of $\Ind_{\lambda}$ and $\Ind_{\mu}$ inside $\HO^{*+d}(V/G)$ coincide.
\end{lemma}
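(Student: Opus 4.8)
The plan is to reduce the statement to the explicit shuffle formula of Proposition~\ref{proposition:explicitformulainduction} together with the comparison of induction kernels provided by Lemma~\ref{lemma:signkernelsim}.

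First I would unwind the hypothesis: $\overline{\lambda}=\overline{\mu}$ means $\lambda\sim\mu$, i.e. $V^{\lambda}=V^{\mu}$ and $\Fg^{\lambda}=\Fg^{\mu}$. Since $G^{\lambda}$ and $G^{\mu}$ are connected with Lie algebras $\Fg^{\lambda}$ and $\Fg^{\mu}$ respectively, this forces $G^{\lambda}=G^{\mu}$, hence $W^{\lambda}=N_{G^{\lambda}}(T)/T=N_{G^{\mu}}(T)/T=W^{\mu}$ and $d_{\lambda}=\dim V^{\lambda}-\dim G^{\lambda}=\dim V^{\mu}-\dim G^{\mu}=d_{\mu}$. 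Thus $\Ind_{\lambda}$ and $\Ind_{\mu}$ are maps with one and the same source $\HO^{*+d_{\lambda}}(V^{\lambda}/G^{\lambda})$ and the same target $\HO^{*+d}(V/G)$, so that it is meaningful to compare them.

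Then I would invoke Proposition~\ref{proposition:explicitformulainduction} for both cocharacters: under the identification $\HO^*(V^{\lambda}/G^{\lambda})\cong\HO^*(\pt/T)^{W^{\lambda}}$, for every $f$ in this common source one has
\[
 \Ind_{\lambda}(f)=\frac{1}{\#W^{\lambda}}\sum_{w\in W}w\cdot(fk_{\lambda}),\qquad \Ind_{\mu}(f)=\frac{1}{\#W^{\mu}}\sum_{w\in W}w\cdot(fk_{\mu})\,.
\]
By Lemma~\ref{lemma:signkernelsim} there is $\xi\in\BoQ^*$ with $k_{\lambda}=\xi k_{\mu}$; plugging this in and using $W^{\lambda}=W^{\mu}$ gives $\Ind_{\lambda}(f)=\xi\,\Ind_{\mu}(f)$ for all $f$, i.e. $\Ind_{\lambda}=\xi\,\Ind_{\mu}$ as maps of cohomologically graded vector spaces. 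Since $\xi\neq0$ the scalar $\xi$ acts invertibly on $\HO^{*+d}(V/G)$, so the two images agree, which gives the ``in particular'' clause.

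I do not expect a genuine obstacle here; the only point needing a little attention is that the formula of Proposition~\ref{proposition:explicitformulainduction} is an equality in $\Frac(\HO^*(\pt/T))$ obtained after the canonical identification of $\HO^*(V^{\lambda}/G^{\lambda})$ with $\HO^*(\pt/T)^{W^{\lambda}}$, so one must check that the same identification is being used on the $\lambda$ and the $\mu$ side — which is automatic because $G^{\lambda}=G^{\mu}$. All of the weak-symmetry input has already been absorbed into Lemma~\ref{lemma:signkernelsim}, so nothing further about $V$ is needed.
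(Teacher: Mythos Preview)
Your proof is correct and follows essentially the same approach as the paper: you invoke the explicit shuffle formula of Proposition~\ref{proposition:explicitformulainduction}, observe that $G^{\lambda}=G^{\mu}$ and $W^{\lambda}=W^{\mu}$, and then apply Lemma~\ref{lemma:signkernelsim} to conclude $\Ind_{\lambda}=\xi\,\Ind_{\mu}$. Your version is in fact slightly more detailed, spelling out why $G^{\lambda}=G^{\mu}$ follows from $\Fg^{\lambda}=\Fg^{\mu}$ via connectedness.
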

\begin{proof}
 We use the explicit formula for the induction (Proposition~\ref{proposition:explicitformulainduction}). Let $f\in \HO^*(V^{\lambda}/G^{\lambda})=\HO^*(V^{\mu}/G^{\mu})$. We have
 \[
  \Ind_{\lambda}(f)=\sum_{w\in W/W^{\lambda}}w\cdot(fk_{\lambda}),\quad  \Ind_{\mu}(f)=\sum_{w\in W/W^{\mu}}w\cdot(fk_{\mu})\,.
 \]
Moreover, $G^{\lambda}=G^{\mu}$, $W^{\lambda}=W^{\mu}$ and by Lemma~\ref{lemma:signkernelsim}, $k_{\lambda}=\xi k_{\mu}$. This concludes the proof.
\end{proof}

The $W$-action on $X_*(T)$ induces a $W$-action on $\SP_V$. For $\lambda\in X_*(T)$, we let $\tilde{\lambda}\in \SP_V/W$ be its $W$-orbit.

\begin{lemma}
\label{lemma:sameimageWeylgroup}
 Let $\lambda,\mu\in X_*(T)$ be such that $\tilde{\lambda}=\tilde{\mu}$. Then, the images of $\Ind_{\lambda}$ and $\Ind_{\mu}$ coincide.
\end{lemma}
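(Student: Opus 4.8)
The plan is to reduce the statement to the case $\mu = w\cdot\lambda$ for a single $w\in W$ via Lemma~\ref{lemma:sameimageoverline}, and then to compare $\Ind_{w\cdot\lambda}$ and $\Ind_{\lambda}$ directly using the explicit shuffle formula of Proposition~\ref{proposition:explicitformulainduction}. First I would note that $\tilde\lambda=\tilde\mu$ in $\SP_V/W$ means there is $w\in W$ with $w\cdot\overline\lambda=\overline\mu$ in $\SP_V$, i.e. $\overline{w\cdot\lambda}=\overline\mu$; then Lemma~\ref{lemma:sameimageoverline} gives that $\Ind_{w\cdot\lambda}$ and $\Ind_\mu$ have the same image. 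So it suffices to prove that $\Ind_{w\cdot\lambda}$ and $\Ind_\lambda$ have the same image.

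Next I would record the transport of structure under a representative $n_w\in N_G(T)$ of $w$: conjugation by $n_w$ identifies $G^{w\cdot\lambda}=n_wG^\lambda n_w^{-1}$, carries $V^{w\cdot\lambda}$ isomorphically onto $V^\lambda$, sends $W^{w\cdot\lambda}=wW^\lambda w^{-1}$, and induces on $\HO^*(\pt/T)\cong\Sym(\Ft^*)$ the natural $w$-action, which takes the subring $\HO^*(\pt/T)^{W^\lambda}\cong\HO^*(V^\lambda/G^\lambda)$ isomorphically onto $\HO^*(\pt/T)^{W^{w\cdot\lambda}}\cong\HO^*(V^{w\cdot\lambda}/G^{w\cdot\lambda})$. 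Since $n_w$ acts on $V$ carrying $V_\alpha$ onto $V_{w\cdot\alpha}$, and since $\langle w\cdot\lambda,w\cdot\alpha\rangle=\langle\lambda,\alpha\rangle$, the map $\alpha\mapsto w\cdot\alpha$ is a bijection $\CW^{\lambda>0}(V)\to\CW^{w\cdot\lambda>0}(V)$ respecting multiplicities, and likewise for $\Fg$; hence $k_{w\cdot\lambda}=w(k_\lambda)$ in $\Frac(\HO^*(\pt/T))$.

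Then, given $g\in\HO^*(V^{w\cdot\lambda}/G^{w\cdot\lambda})$, I write $g=w(f)$ with $f=w^{-1}(g)\in\HO^*(V^\lambda/G^\lambda)$, and compute using the averaged form of Proposition~\ref{proposition:explicitformulainduction}:
\[
\Ind_{w\cdot\lambda}(g)=\frac{1}{\#W^{w\cdot\lambda}}\sum_{v\in W}v\cdot\bigl(g\,k_{w\cdot\lambda}\bigr)=\frac{1}{\#W^{\lambda}}\sum_{v\in W}v\cdot\bigl(w(f)\,w(k_\lambda)\bigr)=\frac{1}{\#W^{\lambda}}\sum_{v\in W}(vw)\cdot(fk_\lambda)\,.
\]
Since $v\mapsto vw$ is a bijection of $W$, the last sum equals $\frac{1}{\#W^\lambda}\sum_{v\in W}v\cdot(fk_\lambda)=\Ind_\lambda(f)$. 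Thus $\Ind_{w\cdot\lambda}=\Ind_\lambda\circ w^{-1}$ as maps, and as $w^{-1}\colon\HO^*(V^{w\cdot\lambda}/G^{w\cdot\lambda})\to\HO^*(V^\lambda/G^\lambda)$ is an isomorphism, $\Ind_{w\cdot\lambda}$ and $\Ind_\lambda$ have the same image. Combined with the first step, this proves the lemma.

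I do not expect a genuine obstacle here; the only point requiring a little care is the bookkeeping of the identifications induced by $n_w\in N_G(T)$ — in particular, checking that the canonical isomorphism $\HO^*(V^\lambda/G^\lambda)\cong\HO^*(\pt/T)^{W^\lambda}$ intertwines $n_w$-conjugation on the left with the $w$-action on $\Sym(\Ft^*)$ on the right — and this is routine.
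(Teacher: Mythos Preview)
Your proof is correct and follows essentially the same route as the paper: reduce to $\mu=w\cdot\lambda$ via Lemma~\ref{lemma:sameimageoverline}, observe $k_{w\cdot\lambda}=w(k_\lambda)$ from the bijection $\alpha\mapsto w\cdot\alpha$ on weights, and then use the explicit formula of Proposition~\ref{proposition:explicitformulainduction} to obtain $\Ind_{w\cdot\lambda}(g)=\Ind_\lambda(w^{-1}g)$. Your version is simply more explicit about the transport-of-structure bookkeeping.
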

\begin{proof}
 By Lemma~\ref{lemma:sameimageoverline}, we may assume that for some $w\in W$, $\mu=w\cdot\lambda$. Then, $\CW^{\mu<0}(V)=w\cdot\CW^{\lambda<0}(V)$ and $\CW^{\mu<0}(\Fg)=w\cdot\CW^{\lambda<0}(\Fg)$. Therefore, $w(k_{\lambda})=k_{\mu}$. Using the explicit formula for the induction product (Proposition~\ref{proposition:explicitformulainduction}), we obtain
 \[
  \Ind_{\mu}(f)=\Ind_{\lambda}(w^{-1}(f))\,,
 \]
and therefore, since $w^{-1}$ induces an isomorphism $\HO^*(V^{\mu}/G^{\mu})=\HO^*(V^{w\cdot\lambda}/G^{w\cdot\lambda})\cong\HO^*(V^{\lambda}/G^{\lambda})$, the images of $\Ind_{\lambda}$ and $\Ind_{\mu}$ coincide.
\end{proof}

\section{Action of tautological classes on the cohomology of the stack}
Let $G$ be a reductive group and $V$ a representation of $G$. We let $\pi\colon V/G\rightarrow V\cms G$ be the good moduli space map. We let $\kappa\colon V\cms G\rightarrow\pt$ be the projection to the point.

Then, for any line bundle $\CL$ over $V/G$, we have an operation $\pi_*\BoQ_{V/G}\rightarrow\pi_*\BoQ_{V/G}[-2]$ of multiplication by the first Chern class $c_1(\CL)$. More precisely, we have the following lemma.

\begin{lemma}
\label{lemma:actiontautologicalclasses}
There is a morphism of graded rings
 \[
  \HO^*(\pt/G)\xrightarrow{a} \RHom(\pi_*\BoQ_{V/G},\pi_*\BoQ_{V/G})
 \]
such that the composition $\kappa_*\circ a$ where $\kappa_*\colon\RHom(\pi_*\BoQ_{V/G},\pi_*\BoQ_{V/G})\rightarrow\RHom(\kappa_*\pi_*\BoQ_{V/G},\kappa_*\pi_*\BoQ_{V/G})\cong\RHom(\HO^*(\pt/G),\HO^*(\pt/G))$, send $a\in\HO^*(\pt/G)$ to the endomorphism of $\HO^*(\pt/G)$ given by the cup product with $a$.
\end{lemma}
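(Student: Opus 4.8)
The plan is to exhibit $\pi_*\BoQ_{V/G}$ as an algebra object on $V\cms G$ and let $\HO^*(\pt/G)$ act through that algebra structure, pulled back along the structure morphism $V/G\to\pt/G$.

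Write $\CX\coloneqq V\cms G$ and $\CA\coloneqq\pi_*\BoQ_{V/G}\in D(\CX)$. First I would record that $\CA$ carries a canonical structure of graded‑commutative algebra object in $D(\CX)$: the constant sheaf $\BoQ_{V/G}$ is the tensor unit and hence a commutative algebra, and $\pi_*$ is lax symmetric monoidal, being right adjoint to the symmetric monoidal functor $\pi^*$. (For constructible complexes this is classical; in the monodromic mixed Hodge module formalism of \cite{davison2020cohomological} one argues the same way, or simply passes to underlying constructible complexes, which is harmless here.) This produces a multiplication $m\colon\CA\otimes^{\mathbf L}\CA\to\CA$ and unit $u\colon\BoQ_{\CX}\to\CA$ with the usual associativity, commutativity and unit constraints.

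Next I would use these to act. A class $\gamma\in\HO^n(V/G)=\HO^n(\CX,\CA)$ is the same as a morphism $\gamma\colon\BoQ_{\CX}\to\CA[n]$ in $D(\CX)$, and I define $a_0(\gamma)$ to be the composite
\[
\CA\;\xrightarrow{\ \sim\ }\;\BoQ_{\CX}\otimes^{\mathbf L}\CA\;\xrightarrow{\ \gamma\otimes\id\ }\;\CA[n]\otimes^{\mathbf L}\CA\;\xrightarrow{\ m\ }\;\CA[n],
\]
that is, ``multiplication by $\gamma$''; for $\gamma=c_1(\CL)$ this is precisely the operation described just before the statement. The associativity, commutativity and unit axioms for $(m,u)$ show that $a_0\colon\HO^*(V/G)\to\bigoplus_n\Hom_{D(\CX)}(\CA,\CA[n])=\HO^*\bigl(\RHom(\CA,\CA)\bigr)$ is a homomorphism of graded rings (this coherence check is the only slightly fiddly point, but it is purely formal). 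Finally I set $a\coloneqq a_0\circ f^*$, where $f\colon V/G\to\pt/G$ is the structure morphism and $f^*\colon\HO^*(\pt/G)\to\HO^*(V/G)$ the pullback ring map; then $a$ is a morphism of graded rings as a composite of such. One may additionally note that $f^*$ is an isomorphism, since the fibres of $f$ are the affine space $V$ and hence contractible, and that $\HO^2(\pt/G)$ is spanned by first Chern classes of line bundles pulled back from $\pt/G$ and generates $\HO^*(\pt/G)$, which reconciles the two descriptions; this is not needed for the proof.

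For the compatibility with $\kappa_*$ I would apply the (again lax symmetric monoidal) functor $\kappa_*$ to all of the above. Then $\kappa_*\CA=R\Gamma(V/G,\BoQ)\cong\HO^*(\pt/G)$ (the identification via $f$, using contractibility of its fibres and formality of $R\Gamma(\pt/G,\BoQ)$ for $G$ reductive), $\kappa_*m$ is the cup product on $\HO^*(V/G)$, and $\kappa_*u$ the unit. Hence for $\eta\in\HO^*(\pt/G)$ the endomorphism $\kappa_*a(\eta)$ of $\kappa_*\CA\cong\HO^*(\pt/G)$ is left multiplication by $f^*\eta$ under $\kappa_*m$, which under the ring isomorphism $f^*$ is exactly cup product with $\eta$, as claimed. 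The main obstacle is purely formal: establishing that $\pi_*$ and $\kappa_*$ are lax symmetric monoidal in the coefficient category used in the paper (or electing to run the argument with plain constructible complexes), and verifying the coherences that make $a_0$ a ring homomorphism; everything else is bookkeeping.
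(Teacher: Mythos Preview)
Your argument is correct, but the paper's proof is considerably shorter and proceeds directly via the $(\pi^*,\pi_*)$ adjunction rather than the algebra-object formalism. Concretely, the paper uses the canonical isomorphism $\RHom(\pi_*\BoQ_{V/G},\pi_*\BoQ_{V/G})\cong\RHom(\pi^*\pi_*\BoQ_{V/G},\BoQ_{V/G})$ and then precomposes by the counit $\pi^*\pi_*\BoQ_{V/G}\to\BoQ_{V/G}$ to obtain the map out of $\RHom(\BoQ_{V/G},\BoQ_{V/G})\cong\HO^*(V/G)\cong\HO^*(\pt/G)$; if you unwind the adjunction using the triangle identity, this is nothing but $\gamma\mapsto\pi_*\gamma$, so the ring-homomorphism property and the compatibility with $\kappa_*$ are immediate from functoriality. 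Your route---exhibiting $\pi_*\BoQ_{V/G}$ as a commutative algebra object via the lax symmetric monoidal structure of $\pi_*$ and letting global sections act by multiplication---produces the same map, just with more scaffolding. The upside of your presentation is that it makes the algebra-object viewpoint explicit, which is conceptually pleasant and generalises cleanly; the paper's upside is brevity, and that it avoids having to check monoidal coherences or invoke formality of $R\Gamma(\pt/G,\BoQ)$ in whatever sheaf formalism is in play.
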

\begin{proof}
 We have a canonical adjunction isomorphism
 \[
  \RHom(\pi_*\BoQ_{V/G},\pi_*\BoQ_{V/G})\cong\RHom(\pi^*\pi_*\BoQ_{V/G},\BoQ_{V/G})\,.
 \]
 By precomposing with the counit map $\pi^*\pi_*\BoQ_{V/G}\rightarrow\BoQ_{V/G}$, we obtain the morphism
 \[
  \RHom(\BoQ_{V/G},\BoQ_{V/G})\cong\HO^*(V/G)\cong\HO^*(\pt/G)\rightarrow \RHom(\pi_*\BoQ_{V/G}\,,\pi_*\BoQ_{V/G}).
 \]
 
 The last statement of the theorem follows from the functoriality of adjunctions.
\end{proof}

We now state and prove an elementary lemma regarding Cartan subalgebras of reductive Lie algebras.
\begin{lemma}
\label{lemma:inducedtorus}
 Let $\Fg$ be a reductive Lie algebra, $\Fh\subset\Fg$ a Lie ideal and $\Ft$ a Cartan subalgebra. Then, $\Fh\cap\Ft$ is a Cartan subalgebra of $\Fh$.
\end{lemma}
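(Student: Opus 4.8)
The plan is to verify directly that $\Fh\cap\Ft$ satisfies the two defining conditions of a Cartan subalgebra of $\Fh$: that it is nilpotent and that it is self-normalizing in $\Fh$. The first is automatic, since $\Fh\cap\Ft$ is a subalgebra of the abelian algebra $\Ft$, hence abelian; so the whole content is the equality $N_{\Fh}(\Fh\cap\Ft)=\Fh\cap\Ft$, of which only the inclusion $\subseteq$ needs proof.

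To describe $\Fh$ concretely I would use that it is an ideal, so $[\Ft,\Fh]\subseteq\Fh$ and $\Fh$ is stable under $\mathrm{ad}(\Ft)$. Since $\Ft$ is a Cartan subalgebra of the reductive algebra $\Fg$, it is abelian and consists of semisimple elements, so $\mathrm{ad}(\Ft)$ acts diagonalizably on $\Fg$ and we have the root space decomposition $\Fg=\Ft\oplus\bigoplus_{\alpha\in\Phi}\Fg_{\alpha}$, with $\Fg_0=C_{\Fg}(\Ft)=\Ft$ and each $\Fg_{\alpha}$ one-dimensional. Restricting to the $\mathrm{ad}(\Ft)$-stable subspace $\Fh$ gives $\Fh=(\Fh\cap\Ft)\oplus\bigoplus_{\alpha\in\Phi_{\Fh}}\Fg_{\alpha}$, where $\Phi_{\Fh}=\{\alpha\in\Phi:\Fg_{\alpha}\subseteq\Fh\}$ (one-dimensionality forces $\Fh\cap\Fg_{\alpha}$ to be $0$ or $\Fg_{\alpha}$).

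Now let $x\in N_{\Fh}(\Fh\cap\Ft)$ and write $x=t_0+\sum_{\alpha\in\Phi_{\Fh}}x_{\alpha}$ with $t_0\in\Fh\cap\Ft$ and $x_{\alpha}\in\Fg_{\alpha}$. For every $h\in\Fh\cap\Ft$ the element $[h,x]=\sum_{\alpha}\alpha(h)x_{\alpha}$ lies in $\Fh\cap\Ft\subseteq\Ft$, and since $\Ft\cap\bigoplus_{\alpha}\Fg_{\alpha}=0$ this forces $\alpha(h)x_{\alpha}=0$ for all $\alpha$ and all $h$. Hence for any $\alpha\in\Phi_{\Fh}$ with $x_{\alpha}\ne 0$ we would have $\alpha|_{\Fh\cap\Ft}=0$. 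But $\Fg_{\alpha}\subseteq\Fh$ and $\Fh$ is an ideal, so $[\Fg_{-\alpha},\Fg_{\alpha}]\subseteq\Fh$; this bracket has $\Ft$-weight $0$, so it lies in $\Fh\cap\Ft$, and it is spanned by a nonzero vector $h_{\alpha}\in\Ft$ with $\alpha(h_{\alpha})\ne 0$ (a coroot), contradicting $\alpha|_{\Fh\cap\Ft}=0$. Therefore all $x_{\alpha}$ vanish, $x=t_0\in\Fh\cap\Ft$, and $\Fh\cap\Ft$ is self-normalizing; being also nilpotent, it is a Cartan subalgebra of $\Fh$.

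I do not expect a serious obstacle; the only delicate point — and the place where the ideal hypothesis, rather than merely ``subalgebra'', is used — is the final step, where one needs $\Fh\cap\Ft$ to contain, for each root direction $\Fg_{\alpha}$ it absorbs, the corresponding coroot $h_{\alpha}$, which is exactly what being an ideal guarantees. As an independent check one could argue structurally instead: write $\Fg=Z(\Fg)\oplus[\Fg,\Fg]$ with $[\Fg,\Fg]$ semisimple, observe that an ideal $\Fh$ splits as $(\Fh\cap Z(\Fg))\oplus\bigoplus_{i\in I}\mathfrak{s}_i$ over a subset of the simple factors $\mathfrak{s}_i$ of $[\Fg,\Fg]$, recall that $\Ft\cap\mathfrak{s}_i$ is a Cartan subalgebra of $\mathfrak{s}_i$ and that $Z(\Fg)\subseteq\Ft$, and intersect everything; but I would present the root-space argument above as the main proof, since it is shorter and self-contained.
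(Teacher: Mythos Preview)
Your proof is correct and takes a genuinely different route from the paper's. The paper argues by dimension count: it uses the splitting $\Fg\cong\Fh\oplus(\Fg/\Fh)$ available for ideals in a reductive Lie algebra, picks Cartan subalgebras $\Ft'\subset\Fh$ and $\Ft''\subset\Fg/\Fh$, notes that $\Ft'\oplus\Ft''$ is a Cartan of $\Fg$, and then squeezes $\dim(\Ft\cap\Fh)$ between two bounds to conclude $\dim(\Ft\cap\Fh)=\dim\Ft'$. You instead verify the defining properties (abelian and self-normalizing) directly via the root-space decomposition, using the coroot trick $h_\alpha\in[\Fg_{-\alpha},\Fg_\alpha]\subset\Fh\cap\Ft$ to rule out any nonzero root component in the normalizer. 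Your argument is more self-contained and makes explicit exactly where the ideal hypothesis is consumed (to get $[\Fg_{-\alpha},\Fg_\alpha]\subset\Fh$), whereas the paper's version is shorter but leans implicitly on structure theory (in particular, that $\Ft\cap\Fh$ consists of semisimple elements, so that ``maximality of $\dim\Ft'$'' is among toral rather than arbitrary abelian subalgebras). Your closing structural sketch---splitting off the center and simple factors---is in fact closer in spirit to what the paper does than your main argument.
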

\begin{proof}
 Let $\pi\colon\Fg\rightarrow\Fg/\Fh$ be the projection. Since $\Fg$ is reductive, we have a (noncanonical) decomposition $\Fg\cong\Fh\oplus(\Fg/\Fh)$ as Lie algebras. If $\Ft'$ is a Cartan subalgebra of $\Fh$ and $\Ft''$ is a Cartan subalgebra of $\Fg/\Fh$, then $\Ft'\oplus\Ft''$ is a Cartan subalgebra of $\Fg$. Moreover, $\pi(\Ft)\cong \Ft/\Ft\cap\Fh$ is an Abelian subalgebra of $\Fg/\Fh$ and $\Ft\cap\Fh$ is an Abelian subalgebra of $\Fh$. By maximality of $\dim\Ft'$, we have
 \[
  \dim \pi(\Ft)=\dim \Ft-\dim(\Ft\cap\Fh)\geq \dim \Ft-\dim \Ft'=\dim\Ft''\,.
 \]
 By maximality of $\dim\Ft''$, we have equality in this chain of inequalities. This means that $\dim(\Ft\cap\Fh)=\dim\Ft'$ and therefore, $\Ft\cap\Fh$ is a Cartan subalgebra of $\Fh$.
\end{proof}

We have the group version of Lemma~\ref{lemma:inducedtorus}.
\begin{lemma}
 Let $G$ be a reductive group, $H\subset G$ a normal subgroup and $T\subset G$ a maximal torus. Then, the neutral component of $T\cap H$ is a maximal torus of $H$.
\end{lemma}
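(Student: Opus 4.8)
The plan is to differentiate and reduce to the Lie algebra statement just proved (Lemma~\ref{lemma:inducedtorus}), and then conclude with a dimension count; the whole argument takes place over a field of characteristic zero, which is essential.

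First I would pass to the connected group $H^{\circ}$: a torus is connected, so the maximal tori of $H$ and of $H^{\circ}$ coincide, and $H^{\circ}$ is again normal in $G$. Normality of $H$ gives $\mathrm{Ad}(g)\Lie(H)=\Lie(H)$ for all $g\in G$, so $\Fh\coloneqq\Lie(H)=\Lie(H^{\circ})$ is a Lie ideal of the reductive Lie algebra $\Fg\coloneqq\Lie(G)$; being an ideal of a reductive Lie algebra, $\Fh$ is reductive, hence the connected group $H^{\circ}$ is reductive. Set $S\coloneqq(T\cap H)^{\circ}$. As a connected closed subgroup of the torus $T$ it is a torus, and being connected it lies in $H^{\circ}$; moreover, in characteristic zero we have $\Lie(T\cap H)=\Lie(T)\cap\Lie(H)=\Ft\cap\Fh$, so $\Lie(S)=\Ft\cap\Fh$, where $\Ft\coloneqq\Lie(T)$ is a Cartan subalgebra of $\Fg$ because $T$ is a maximal torus of the reductive group $G$.

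Now Lemma~\ref{lemma:inducedtorus}, applied to the reductive Lie algebra $\Fg$, the ideal $\Fh$, and the Cartan subalgebra $\Ft$, shows that $\Ft\cap\Fh=\Lie(S)$ is a Cartan subalgebra of $\Fh$. A dimension count then finishes the proof: $\dim S=\dim\Lie(S)=\dim(\Ft\cap\Fh)$ equals the common dimension of the Cartan subalgebras of $\Fh$, which for the reductive group $H^{\circ}$ is the rank of $H^{\circ}$, i.e.\ the dimension of any maximal torus of $H^{\circ}$. Since $S$ is a torus of $H^{\circ}$ of this maximal possible dimension, it is a maximal torus of $H^{\circ}$, and hence of $H$. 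The only delicate points — and the nearest thing to an obstacle in an otherwise routine argument — are the two characteristic-zero inputs used above, namely $\Lie(A\cap B)=\Lie(A)\cap\Lie(B)$ for closed subgroups (which fails in positive characteristic) and the reductivity of the identity component of a normal subgroup of a reductive group; both are standard.
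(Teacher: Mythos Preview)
Your proof is correct and follows essentially the same approach as the paper: reduce to the Lie-algebra statement (Lemma~\ref{lemma:inducedtorus}) by taking Lie algebras of all groups involved. The paper's proof is a single sentence to this effect; you have simply spelled out the details (passing to $H^{\circ}$, the characteristic-zero identity $\Lie(T\cap H)=\Lie(T)\cap\Lie(H)$, and the dimension count) that the paper leaves implicit.
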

\begin{proof}
 This follows from Lemma~\ref{lemma:inducedtorus} by taking the Lie algebras of all groups appearing.
\end{proof}

\begin{example}
In the group situation, it is necessary to consider the neutral component of $T\cap H$ as shown by the elementary case of $H=\{\pm 1\}\subset\rmSL_2(\BoC)$, and where $T\subset\rmSL_2(\BoC)$ is the standard torus of diagonal matrices.
\end{example}

\begin{proposition}
\label{proposition:actions}
 Let $G$ be a reductive group. For any normal subgroup $H\subset G$ acting trivially on $V$, we have (non-canonical) actions of $\HO^*(\pt/H)$ on $\HO^*(\pt/G)\cong \HO^*(V/G)$ and $\pi_*\BoQ_{V/G}$.
\end{proposition}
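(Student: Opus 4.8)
The plan is to realize the $\HO^*(\pt/H)$-action by restricting along the inclusion $H \into G$ and then exploiting normality to descend the action to both $\HO^*(\pt/G)$ and $\pi_*\BoQ_{V/G}$. First I would fix a maximal torus $T_H$ of $H$ and, by the group version of Lemma~\ref{lemma:inducedtorus}, arrange (after conjugation) that $T_H$ is the neutral component of $T \cap H$, so that the restriction map $\HO^*(\pt/G) \cong \HO^*(\pt/T)^W \to \HO^*(\pt/T_H)^{W_H} \cong \HO^*(\pt/H)$ on equivariant cohomology is available. Dually, since $H$ acts trivially on $V$, the quotient stack $V/G$ carries a morphism $V/G \to \pt/H$ (coming from $G \to G/H$ is \emph{not} what we want; rather, the classifying map $V/G \to \pt/G \to \pt/(G/H)$ is the wrong direction, so instead I use that the $G$-action on $V$ factors through $G/H$, giving $V/G \cong (V/(G/H)) \times_{\pt/(G/H)} \pt/G$, hence a map $V/G \to \pt/G \to \pt/H$ is replaced by the projection to $\pt/H$ through the fiber structure). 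Concretely: because $H$ acts trivially, there is a map $V/G \to \pt/H$ classifying the $H$-bundle $V/G^{\circ}\text{-type}$ underlying the $G$-quotient, and pulling back cohomology classes of $\pt/H$ along this map gives the desired operators on $\HO^*(V/G)$, and applying $\pi_*$ gives operators on $\pi_*\BoQ_{V/G}$.

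The key steps, in order, would be: (1) using normality of $H$ and Lemma~\ref{lemma:inducedtorus} (group version), reduce to the torus picture and construct the ring map $\HO^*(\pt/H) \to \HO^*(\pt/G)$ via $W$- and $W_H$-invariants of $\HO^*(\pt/T)$; (2) observe that the $G$-action on $V$ descends to $G/H$, so $V/G$ is the fiber product of $V/(G/H)$ with $\pt/G$ over $\pt/(G/H)$, yielding a canonical morphism $\rho\colon V/G \to \pt/H$; (3) define the action of $a \in \HO^*(\pt/H)$ on $\pi_*\BoQ_{V/G}$ as cup product with $\rho^* a$, using the projection formula / functoriality of $\pi_*$ as in Lemma~\ref{lemma:actiontautologicalclasses}, and check that on $\HO^*(V/G) \cong \HO^*(\pt/G)$ this recovers cup product with the image of $a$ under the ring map from step (1); (4) note the non-canonicity enters only through the choice of splitting $\Fg \cong \Fh \oplus (\Fg/\Fh)$ (equivalently, the choice of conjugating $T$ so that $T \cap H$ is maximal in $H$), exactly as in the proof of Lemma~\ref{lemma:inducedtorus}.

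The main obstacle I anticipate is step (2)/(3): making precise the morphism $V/G \to \pt/H$ and checking that the induced operators on $\pi_*\BoQ_{V/G}$ are compatible with the algebra structure and with the $\HO^*(\pt/G)$-action of Lemma~\ref{lemma:actiontautologicalclasses}. The cleanest route is probably to avoid the fiber-product description and instead argue directly: the short exact sequence $1 \to H \to G \to G/H \to 1$ with $H$ central-up-to-the-action (here $H$ is normal and acts trivially, so the commutator $[G,H]$ acts trivially and lands in $H$, forcing $H \subset Z(G) \cdot (\text{something})$ — more carefully, $H$ normal plus trivial action does \emph{not} force $H$ central, e.g. $H$ could be a normal subgroup of a nonabelian group acting trivially, so one genuinely needs the Lie-theoretic splitting) gives $\pt/G \to \pt/(G/H)$ with "fiber" $\pt/H$, and one pulls back along $V/G \to \pt/G$. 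Verifying that cup product with these pulled-back classes assembles into a \emph{ring} homomorphism $\HO^*(\pt/H) \to \RHom(\pi_*\BoQ_{V/G}, \pi_*\BoQ_{V/G})$ lifting the one of Lemma~\ref{lemma:actiontautologicalclasses} along restriction is the real content; the finite-group/non-connectedness subtleties (cf. the $\rmSL_2$ example) are handled exactly as in Lemma~\ref{lemma:inducedtorus} by passing to neutral components, which is harmless on rational cohomology.
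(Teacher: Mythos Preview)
Your proposal contains the correct idea but buries it under unnecessary and partly erroneous constructions. The paper's proof is two lines: choose a (non-canonical) Lie-algebra splitting $\Fg \cong \Fh \oplus (\Fg/\Fh)$, which induces an isomorphism $\HO^*(\pt/G) \cong \HO^*(\pt/H) \otimes \HO^*(\pt/(G/H))$ and hence an inclusion $\HO^*(\pt/H) \hookrightarrow \HO^*(\pt/G)$; then precompose the morphism $a$ of Lemma~\ref{lemma:actiontautologicalclasses} with this inclusion to obtain the sheaf-level action. Your steps (1) and (4), together with the reference to Lemma~\ref{lemma:actiontautologicalclasses} in step (3), amount to exactly this, and that is all that is needed.

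The genuine error is in step (2): the fiber-product description $V/G \simeq V/(G/H) \times_{\pt/(G/H)} \pt/G$ is correct, but it does \emph{not} yield a morphism $\rho\colon V/G \to \pt/H$. The projections from a fiber product go to the two factors $V/(G/H)$ and $\pt/G$, not to the fiber $\pt/H$ of $\pt/G \to \pt/(G/H)$. A morphism $\pt/G \to \pt/H$ exists only after choosing a retraction $G \to H$, i.e.\ a group-level splitting $G \cong H \times (G/H)$, which need not exist and is in any case stronger than the Lie-algebra splitting the paper uses. Your own parenthetical remarks suggest you sensed this; the resolution is simply to abandon the search for a geometric $\rho$ and work purely at the level of cohomology rings via the splitting, as the paper does.

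A smaller point: your opening sentence speaks of ``restricting along $H \into G$'', but restriction $\HO^*(\pt/G) \to \HO^*(\pt/H)$ goes the wrong way. The map you actually need is the \emph{inclusion} $\HO^*(\pt/H) \hookrightarrow \HO^*(\pt/G)$ coming from the tensor decomposition, and this is precisely what requires the non-canonical splitting.
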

\begin{proof}
 By choosing a decomposition $\Lie(G)\cong \Lie(H)\oplus\Lie(G/H)$, we obtain a decomposition $\HO^*(\pt/G)\cong\HO^*(\pt/(G/H))\otimes\HO^*(\pt/H)$, giving an action of $\HO^*(\pt/H)$ on $\HO^*(\pt/G)$ by cup-product on the second summand. At the sheaf level, the action of the lemma is obtained by pre-composing the morphism $a$ of Lemma~\ref{lemma:actiontautologicalclasses} with the inclusion $\HO^*(\pt/H)\rightarrow\HO^*(\pt/G)$.
\end{proof}

\begin{lemma}
\label{lemma:naturalinclusion}
 Let $G$ be an algebraic group and $H\subset G$ a normal subgroup. Then, we have a natural inclusion $\HO^*(\pt/(G/H))\rightarrow\HO^*(\pt/G)$.
\end{lemma}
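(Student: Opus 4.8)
The asserted map is the pullback $q^*\colon \HO^*(\pt/(G/H))\to\HO^*(\pt/G)$ along the morphism of quotient stacks $\pt/G\to\pt/(G/H)$ induced by the quotient homomorphism $q\colon G\to G/H$; the point of the lemma is that $q^*$ is injective. The plan is to reduce to the case where $G$ is reductive, and then to compute $q^*$ explicitly on a maximal torus, where injectivity becomes a statement of linear algebra.

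\emph{Reduction to reductive groups.} Let $U=\mathrm{R}_u(G)$ be the unipotent radical, a normal subgroup with $G/U$ reductive (we are in characteristic zero). Since $U\cong\mathbf{A}^N$ as a variety, $\HO^*(\pt/U)=\BQ$, so the Leray--Serre (or Eilenberg--Moore) spectral sequence of $\pt/U\to\pt/G\to\pt/(G/U)$ shows that pullback along $\pt/G\to\pt/(G/U)$ is an isomorphism; the same argument applies to $\pt/(G/H)\to\pt/(G/UH)$, because $UH/H\cong U/(U\cap H)$ is unipotent. The commuting square of homomorphisms $G\to G/H$ and $G/U\to (G/U)/\bar H=G/UH$, where $\bar H=UH/U$ is the image of $H$ in the reductive group $\bar G\coloneqq G/U$, together with functoriality of pullback, identifies (up to the two isomorphisms just described) the map $q^*$ with the analogous pullback $\HO^*(\pt/(\bar G/\bar H))\to\HO^*(\pt/\bar G)$. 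Since $\bar H\triangleleft\bar G$ with $\bar G$ reductive (and hence $\bar H$ reductive), we are reduced to the reductive case.

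\emph{The reductive case.} Assume now $G$ reductive, let $T\subset G$ be a maximal torus, $\Ft=\Lie(T)$, and $W_G=N_G(T)/T$; then $\HO^*(\pt/G)\cong\Sym(\Ft^*)^{W_G}$ (Borel's computation, combined with $\HO^*(\pt/G)\cong\HO^*(\pt/G^{\circ})^{\pi_0(G)}$ and the extension $1\to N_{G^\circ}(T)/T\to W_G\to\pi_0(G)\to 1$ in the disconnected case). Write $Q=G/H$. The image $\bar T$ of $T$ in $Q$ is a maximal torus of $Q$, with $\Lie(\bar T)=\Ft/(\Ft\cap\Fh)$, so dually $(\Lie\bar T)^*=(\Ft\cap\Fh)^{\perp}\subset\Ft^*$ and the inclusion $(\Lie\bar T)^*\hookrightarrow\Ft^*$ is injective. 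Moreover $N_G(T)$ maps into $N_Q(\bar T)$, giving a homomorphism $W_G\to W_Q$ for which this inclusion is equivariant. Under the above identifications, $q^*$ becomes the map $\Sym((\Lie\bar T)^*)^{W_Q}\to\Sym(\Ft^*)^{W_G}$ induced by $(\Lie\bar T)^*\hookrightarrow\Ft^*$: by equivariance it does land in the $W_G$-invariants, and it is injective because $\Sym((\Lie\bar T)^*)\to\Sym(\Ft^*)$ is. This proves the lemma.

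The genuine content — and the step to be handled with care — is the \emph{naturality} of the Borel isomorphism $\HO^*(\pt/G)\cong\Sym(\Ft^*)^{W_G}$, i.e.\ checking that the stack pullback $q^*$ really is the tautological algebraic inclusion described above (this is where compatibility of maximal tori under $q$ and functoriality of the Borel construction enter). The reduction to the reductive case is routine but requires the bookkeeping with unipotent radicals indicated above, and once one is in the reductive case the final injectivity is immediate.
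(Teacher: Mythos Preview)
Your proof is correct and considerably more thorough than the paper's own. The paper's ``proof'' consists of a single sentence identifying the map as the pullback along $\pt/G\to\pt/(G/H)$ induced by the quotient homomorphism; it does not argue injectivity at all, presumably treating the word ``inclusion'' loosely or regarding the claim as standard. You, by contrast, take the injectivity assertion seriously and supply a complete argument: a reduction to the reductive case via the unipotent radical, followed by an explicit identification of $q^*$ with the map $\Sym((\Lie\bar T)^*)^{W_Q}\hookrightarrow\Sym(\Ft^*)^{W_G}$ induced by the injection $(\Lie\bar T)^*\hookrightarrow\Ft^*$.

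Two remarks. First, your reductive-case computation essentially anticipates the paper's proof of Proposition~\ref{proposition:decompositioncohomology}, where (under the additional hypothesis that $H$ acts trivially on $V$, and only for reductive $G$) the same symmetric-algebra description is invoked to show that the pullback, combined with the $\HO^*(\pt/H)$-action, is an isomorphism; your argument is the natural preliminary half of that. Second, the fact that the image $\bar T$ of $T$ in $Q=G/H$ is a maximal torus---which you use---is the content of Lemma~\ref{lemma:inducedtorus} (or rather its consequence for the quotient), so you are implicitly relying on the same circle of ideas the paper develops just before this lemma. In short: same definition of the map, but you actually justify the word ``inclusion''.
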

\begin{proof}
 The map $\HO^*(\pt/(G/H))\rightarrow\HO^*(\pt/G)$ is the pullback in cohomology for the morphism of stacks $\pt/G\rightarrow\pt/(G/H)$ induced by the surjective morphism of groups $G\rightarrow G/H$.
\end{proof}

\begin{proposition}
\label{proposition:decompositioncohomology}
 Let $G$ be a reductive algebraic group, and $V$ a finite-dimensional representation of $G$. Let $H$ be a normal subgroup of $G$ acting trivially on $V$. Then, the morphism
 \[
  \HO^*(\pt/H)\otimes\HO^*(V/(G/H))\rightarrow\HO^*(V/G)
 \]
 obtained by combining the natural pullback map $\HO^*(V/(G/H))\rightarrow\HO^*(V/G)$ coming from the morphism of stacks $V/G\rightarrow V/(G/H)$ (Lemma~\ref{lemma:naturalinclusion}) with the $\HO^*(\pt/H)$-action on $\HO^*(V/G)$ (Proposition~\ref{proposition:actions}) is an isomorphism.
\end{proposition}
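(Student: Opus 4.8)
The plan is to reduce the statement to the case $V=\{0\}$ by homotopy invariance, and then to read it off from the (non-canonical) tensor decomposition already established in Proposition~\ref{proposition:actions}. For the reduction: the constant $G$-equivariant map $V\to\pt$ induces a vector bundle stack $r\colon V/G\to\pt/G$, and since $H$ acts trivially on $V$ it likewise induces a vector bundle stack $V/(G/H)\to\pt/(G/H)$; both induce isomorphisms on cohomology. First I would organise the two stacks into the commutative square with horizontal maps $V/G\to V/(G/H)$ and $\pt/G\to\pt/(G/H)$ coming from $G\twoheadrightarrow G/H$, and check that on cohomology the vertical isomorphisms identify the pullback $\HO^*(V/(G/H))\to\HO^*(V/G)$ with the pullback $\HO^*(\pt/(G/H))\to\HO^*(\pt/G)$ of Lemma~\ref{lemma:naturalinclusion}, and identify the $\HO^*(\pt/H)$-action on $\HO^*(V/G)$ with that on $\HO^*(\pt/G)$ (this last compatibility is essentially built into Proposition~\ref{proposition:actions}, which treats $\HO^*(V/G)$ and $\HO^*(\pt/G)$ as identified via $r^*$, the action being cup product with the image of $\HO^*(\pt/H)$). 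This reduces the claim to the case $V=\{0\}$.

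For $V=\{0\}$, I would work with the splitting $\Lie(G)\cong\Lie(H)\oplus\Lie(G/H)$ fixed in Proposition~\ref{proposition:actions}. By Chern--Weil theory, $\HO^*(\pt/G;\BoQ)\cong(\Sym\Fg^*)^G$ with $\Fg^*$ in degree $2$, and the splitting induces a graded ring isomorphism $\HO^*(\pt/G)\cong\HO^*(\pt/(G/H))\otimes\HO^*(\pt/H)$ (Proposition~\ref{proposition:actions}). Two compatibilities then have to be recorded: first, that the pullback $\HO^*(\pt/(G/H))\to\HO^*(\pt/G)$ of Lemma~\ref{lemma:naturalinclusion}, being induced by $\Fg\twoheadrightarrow\Fg/\Fh$ and hence by the inclusion $(\Fg/\Fh)^*\hookrightarrow\Fg^*$ of the corresponding summand, is $x\mapsto x\otimes 1$; and second, that the $\HO^*(\pt/H)$-action of Proposition~\ref{proposition:actions} is cup product on the right-hand factor, $a\cdot(x\otimes y)=x\otimes ay$. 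Granting these, the morphism in the statement is $a\otimes x\mapsto a\cdot(x\otimes 1)=x\otimes a$, i.e.\ the flip isomorphism followed by the ring isomorphism above, hence an isomorphism; over a field of characteristic zero there are no extension issues, so this suffices at the level of graded vector spaces.

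I do not expect a deep obstacle here; the work is entirely in the two bookkeeping points of the previous paragraph. The one place where the hypotheses are genuinely used is in identifying the pullback with the first-factor inclusion under the chosen splitting: this rests on $\Fg$ being reductive with $\Fh$ an ideal, so that $\Fh$ has a $G$-stable complement isomorphic to $\Fg/\Fh$ on which $H$ acts trivially, which is precisely the input making the decomposition of $(\Sym\Fg^*)^G$ in Proposition~\ref{proposition:actions} the tensor product of the invariant subrings of the two factors. If one preferred not to invoke Proposition~\ref{proposition:actions}, the same decomposition can be produced directly from $\HO^*(\pt/G;\BoQ)\cong\HO^*(\pt/T;\BoQ)^W$ together with the group version of Lemma~\ref{lemma:inducedtorus} and the fact that $W\cong W_H\times W_{G/H}$ acts through the two factors of $\Ft=(\Ft\cap\Fh)\oplus\bar\Ft$ (the complement chosen $W$-stably), but citing Proposition~\ref{proposition:actions} is shorter and is the route I would take.
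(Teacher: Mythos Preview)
Your proposal is correct and essentially coincides with the paper's proof. The paper skips the explicit reduction to $V=\{0\}$ (implicitly using $\HO^*(V/G)\cong\Sym(\Ft^*)^W$) and takes precisely the route you describe as your alternative at the end: it identifies the map with $\Sym((\Ft\cap\Fh)^*)^{W_{\Fh}}\otimes\Sym(\pi(\Ft)^*)^{W_{\Fg/\Fh}}\to\Sym(\Ft^*)^{W}$ via Lemma~\ref{lemma:inducedtorus} and the decomposition $W\cong W_{\Fh}\times W_{\Fg/\Fh}$; your primary route through Proposition~\ref{proposition:actions} is the same computation transported through Chevalley restriction.
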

\begin{proof}
We let $\Fg=\Lie(G)$ and $\Fh=\Lie(H)$. We let $\Ft\subset \Fg$ be a Cartan subalgebra. We have $\Fg\cong\Fh\oplus(\Fg/\Fh)$ and therefore a decomposition of the Weyl group $W_{\Fg}\cong W_{\Fh}\times W_{\Fg/\Fh}$. We let $\pi\colon \Fg\rightarrow\Fg/\Fh$ be the projection. The morphism of the proposition can then be identified with the isomorphism
\[
 \Sym((\Ft\cap\Fh)^*)^{W_{\Fh}}\otimes\Sym(\pi(\Ft)^*)^{W_{\Fg/\Fh}}\rightarrow\Sym(\Ft^*)^{W}\,,
\]
see also Lemma~\ref{lemma:inducedtorus}.
\end{proof}

\section{Cohomological integrality for weakly symmetric representations of reductive groups}

We fix a weakly symmetric representation $V$ (Definition~\ref{definition:symmetricrepresentation}) of a connected reductive group $G$ and a maximal torus $T\subset G$.

\subsection{Weyl groups}
\label{subsection:Weylgroups}
Let $\lambda\colon \BoG_{\rmm}\rightarrow T$. We let $G^{\lambda}\subset G$ be the corresponding Levi subgroup (\S\ref{subsection:inductiondiagram}) and $G_{\lambda}\coloneqq Z(G^{\lambda})\cap\ker(G^{\lambda}\rightarrow\GL(V^{\lambda}))$ the intersection of the kernel of the action of $G^{\lambda}$ on $V^{\lambda}$ and the center of $G^{\lambda}$. We let $W^{\lambda}=N_{G^{\lambda}}(T)$ be the Weyl group of $G^{\lambda}$ and $W_{\lambda}\coloneqq \{w\in W\mid V^{w\cdot\lambda}=V^{\lambda}\text{ and }G^{w\cdot\lambda}=G^{\lambda}\}=\{w\in W\mid w\cdot\overline{\lambda}=\overline{\lambda}\}$. The group $W_{\lambda}$ may be thought of as a kind of relative Weyl group.

\begin{lemma}
\label{lemma:G_lambdacohomology}
 Let $G_{\lambda}^{\circ}$ be the neutral component of $G_{\lambda}$. Let $k=\dim G_{\lambda}$. Then, $G_{\lambda}^{\circ}\cong(\BoC^*)^k$ for some $k\geq 0$ and the natural pullback map $\HO^*(\pt/G_{\lambda})\rightarrow\HO^*(\pt/G_{\lambda}^{\circ})$ is an isomorphism.
\end{lemma}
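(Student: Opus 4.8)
The plan is to identify $G_{\lambda}$ as a diagonalizable group and then use that finite groups have trivial rational cohomology, so that only the torus part of $G_{\lambda}$ contributes.

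First I would establish the structure of $G_{\lambda}$. Since $G$ is connected reductive and $G^{\lambda}$ is a Levi subgroup, $G^{\lambda}$ is connected reductive, so its center $Z(G^{\lambda})$ is a diagonalizable group (it is contained in every maximal torus of $G^{\lambda}$). The kernel of the action map $G^{\lambda}\rightarrow\GL(V^{\lambda})$ is a closed subgroup, hence $G_{\lambda}=Z(G^{\lambda})\cap\ker(G^{\lambda}\rightarrow\GL(V^{\lambda}))$ is a closed subgroup of $Z(G^{\lambda})$ and therefore diagonalizable. By the structure theory of diagonalizable groups, $G_{\lambda}\cong(\BoC^*)^k\times F$ for some finite abelian group $F$, and its neutral component is $G_{\lambda}^{\circ}\cong(\BoC^*)^k$ with $k=\dim G_{\lambda}$; in particular $\pi_0(G_{\lambda})\cong F$ is finite.

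Next I would compute the cohomology using the splitting $G_{\lambda}\cong G_{\lambda}^{\circ}\times F$, which gives an equivalence of stacks $\pt/G_{\lambda}\simeq(\pt/G_{\lambda}^{\circ})\times(\pt/F)$. By the Künneth formula, $\HO^*(\pt/G_{\lambda};\BoQ)\cong\HO^*(\pt/G_{\lambda}^{\circ};\BoQ)\otimes\HO^*(\pt/F;\BoQ)$, and since $F$ is finite and we work with rational coefficients, $\HO^*(\pt/F;\BoQ)=\BoQ$. Hence pullback along the projection $(\pt/G_{\lambda}^{\circ})\times(\pt/F)\rightarrow\pt/G_{\lambda}^{\circ}$ is an isomorphism $\HO^*(\pt/G_{\lambda}^{\circ};\BoQ)\isoto\HO^*(\pt/G_{\lambda};\BoQ)$. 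Finally I would check that the natural pullback map of the Lemma, associated with $\pt/G_{\lambda}^{\circ}\rightarrow\pt/G_{\lambda}$, is inverse to this isomorphism: under the splitting, the composite $\pt/G_{\lambda}^{\circ}\rightarrow\pt/G_{\lambda}\rightarrow\pt/G_{\lambda}^{\circ}$ is the identity, so the Lemma's map is the inverse of an isomorphism, hence an isomorphism.

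Alternatively, and without choosing a splitting, one can observe that $\pt/G_{\lambda}^{\circ}\rightarrow\pt/G_{\lambda}$ is a $\pi_0(G_{\lambda})$-cover with $\pi_0(G_{\lambda})$ finite, so the transfer argument gives $\HO^*(\pt/G_{\lambda};\BoQ)\cong\HO^*(\pt/G_{\lambda}^{\circ};\BoQ)^{\pi_0(G_{\lambda})}$, and this invariance is vacuous because $G_{\lambda}$ is abelian, so $\pi_0(G_{\lambda})$ acts trivially by conjugation on $G_{\lambda}^{\circ}$ and hence on $\HO^*(\pt/G_{\lambda}^{\circ};\BoQ)$. There is no real obstacle in this Lemma; the only points to record carefully are that $Z(G^{\lambda})$ — and therefore $G_{\lambda}$ — is diagonalizable, and that rational coefficients are essential for discarding the finite component group.
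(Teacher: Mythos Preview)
Your proposal is correct and follows essentially the same route as the paper: identify $G_{\lambda}$ as a diagonalizable group (being a closed subgroup of $Z(G^{\lambda})$), split it as a torus times a finite group, apply K\"unneth, and use that a finite group has trivial rational cohomology. Your added verification that the specific pullback map of the Lemma inverts the K\"unneth isomorphism, and your alternative transfer/cover argument, are pleasant extras but not needed beyond what the paper does.
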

\begin{proof}
 The group $G_{\lambda}$ is a subgroup of the center of $G^{\lambda}$, and so it is a diagonalisable algebraic group. Therefore, its neutral component is a torus. We have a splitting $G\cong G_{\lambda}^{\circ}\times\pi_0(G_{\lambda})$ \cite[Theorem p.104]{humphreys2012linear} and so, by the K\"unneth formula,
 \[
  \HO^*(\pt/G_{\lambda})\cong\HO^*(\pt/G_{\lambda}^{\circ})\otimes\HO^*(\pt/\pi_0(G_{\lambda}))\,.
 \]
 Since $\pi_0(G_{\lambda})$ is a finite group, $\HO^*(\pt/\pi_0(G_{\lambda}))\cong\HO^*(\pt)^{\pi_0(G_{\lambda})}\cong\BoQ$, which concludes.
\end{proof}

\begin{lemma}
\label{lemma:Wlambdasubgroup}
 The group $W_{\lambda}$ is a subgroup of $W$ and it contains $W^{\lambda}$ as a normal subgroup.
\end{lemma}
\begin{proof}
 We let $\dot{w}\in N_G(T)$ be a lift of $w\in W$. Then, $V^{w\cdot\lambda}=\dot{w}V^{\lambda}$. The first statement of the lemma follows then immediately from the fact that if $w,w'\in W$, and $\dot{w}, \dot{w'}\in N_G(T)$ are respective lifts, then, $\dot{w}\dot{w'}$ is a lift of $ww'$ and $\dot{w}^{-1}$ a lift of $w^{-1}$.

 We prove that $W^{\lambda}\subset W_{\lambda}$. Let $w\in W^{\lambda}$ and $\dot{w}\in N_{G^{\lambda}}(T)$ a lift of $w$. Then, $\dot{w}G^{\lambda}\dot{w}^{-1}=G^{w\cdot\lambda}=G^{\lambda}$ and $\dot{w}V^{\lambda}=V^{w\cdot\lambda}=V^{\lambda}$ and so $w\in W_{\lambda}$. This proves the inclusion.

 We now show that $W^{\lambda}$ is a normal subgroup of $W_{\lambda}$. It suffices to check that if $w\in W_{\lambda}$ and $w'\in W^{\lambda}$, and $\dot{w}\in N_G(T)$, $\dot{w'}\in N_{G^{\lambda}}(T)$ are any lifts, then $\dot{w}\dot{w'}\dot{w}^{-1}\in G^{\lambda}$. We have $\dot{w}\dot{w'}\dot{w}^{-1}\in \dot{w}G^{\lambda}\dot{w}^{-1}=G^{w\cdot \lambda}=G^{\lambda}$. This concludes the proof.
\end{proof}

\begin{corollary}
\label{corollary:W_lambdaG^lambda}
 The group $W_{\lambda}$ acts on $\HO^*(\pt/T)^{W^{\lambda}}\cong \HO^*(\pt/G^{\lambda})$.
\end{corollary}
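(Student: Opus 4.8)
The plan is to realise the asserted $W_{\lambda}$-action as the restriction to $W_{\lambda}$ of the natural $W$-action on $\HO^*(\pt/T)\cong\Sym(\Ft^*)$, after checking that this restriction preserves the subring $\HO^*(\pt/T)^{W^{\lambda}}$. First I would recall that, since $G^{\lambda}$ is connected reductive with maximal torus $T$ and Weyl group $W^{\lambda}$, the Borel description of equivariant cohomology gives the identification $\HO^*(\pt/G^{\lambda})\cong\HO^*(\pt/T)^{W^{\lambda}}$ already used in Proposition~\ref{proposition:explicitformulainduction}; it therefore suffices to put a $W_{\lambda}$-action on the right-hand side.

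Next, given $w\in W_{\lambda}\subset W$ and $f\in\HO^*(\pt/T)^{W^{\lambda}}$, I would verify that $w\cdot f$ is again $W^{\lambda}$-invariant. For any $w'\in W^{\lambda}$ one has $w'\cdot(w\cdot f)=w\cdot\bigl((w^{-1}w'w)\cdot f\bigr)$, and by Lemma~\ref{lemma:normalisationWeylgroups} the element $w^{-1}w'w$ lies in $W^{\lambda}$, hence fixes $f$; thus $w'\cdot(w\cdot f)=w\cdot f$. So $\HO^*(\pt/T)^{W^{\lambda}}$ is a $W_{\lambda}$-stable subspace (indeed subring) of $\HO^*(\pt/T)$, and restricting the $W$-action yields the desired $W_{\lambda}$-action, which is transported to $\HO^*(\pt/G^{\lambda})$ through the identification above.

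There is essentially no obstacle: the content is entirely contained in Lemma~\ref{lemma:normalisationWeylgroups} (that $W_{\lambda}$ normalises $W^{\lambda}$), and the only point worth a remark is that the action does not depend on auxiliary choices. For the torus model this is automatic; and for the equivalent description via conjugation by lifts $\dot{w}\in N_G(T)$ (which satisfy $\dot{w}G^{\lambda}\dot{w}^{-1}=G^{\lambda}$ by the argument in the proof of Lemma~\ref{lemma:inclusionsweylgroups}), it follows because two lifts of $w\in W_{\lambda}$ differ by an element of $T\subset G^{\lambda}$, and inner automorphisms of $G^{\lambda}$ act trivially on $\HO^*(\pt/G^{\lambda})$.
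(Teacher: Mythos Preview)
Your proof is correct and follows exactly the paper's approach: restrict the natural $W$-action on $\HO^*(\pt/T)$ to $W_{\lambda}$ and use Lemma~\ref{lemma:normalisationWeylgroups} to see that $W^{\lambda}$-invariants are preserved. You have simply spelled out the one-line invocation of that lemma in more detail (and added a harmless remark on independence of lifts).
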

\begin{proof}
 The Weyl group $W$ of $G$ acts on $\HO^*(\pt/T)$. We just have to check that the induced action of the subgroup $W_{\lambda}$ preserves $W^{\lambda}$-invariants. This is a consequence of Lemma~\ref{lemma:Wlambdasubgroup}.
\end{proof}

\begin{lemma}
\label{lemma:W_lambdaG_lambda}
 The group $W_{\lambda}$ acts naturally on $\HO^*(\pt/G_{\lambda})$. The subgroup $W^{\lambda}\subset W_{\lambda}$ acts trivially on $\HO^*(\pt/G_{\lambda})$.
\end{lemma}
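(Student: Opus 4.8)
The plan is to build the $W_{\lambda}$-action first on the group $G_{\lambda}$ itself and then transport it to cohomology using Lemma~\ref{lemma:G_lambdacohomology}. The starting observation is that $G_{\lambda}\subset Z(G^{\lambda})\subset T$: since $G^{\lambda}$ is connected reductive \cite[Proposition~1.22]{digne2020representations}, its centre satisfies $Z(G^{\lambda})\subset C_{G^{\lambda}}(T)=T$, and $G_{\lambda}$ is a subgroup of $Z(G^{\lambda})$ by definition. Hence $\Fg_{\lambda}\coloneqq\Lie(G_{\lambda})$ is a subspace of $\Ft=\Lie(T)$, and by Lemma~\ref{lemma:G_lambdacohomology} we have $\HO^*(\pt/G_{\lambda})\cong\HO^*(\pt/G_{\lambda}^{\circ})\cong\Sym(\Fg_{\lambda}^*)$. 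So it suffices to exhibit a well-defined $W_{\lambda}$-action on $\Fg_{\lambda}$, compatibly with the $W$-action on $\Ft$, and the action on $\HO^*(\pt/G_{\lambda})$ will be the induced one on $\Sym(\Fg_{\lambda}^*)$.

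Next I would fix $w\in W_{\lambda}$ and a lift $\dot{w}\in N_G(T)$ and argue that conjugation $c_{\dot{w}}\colon g\mapsto\dot{w}g\dot{w}^{-1}$ restricts to an automorphism of $G_{\lambda}$ which, on $\Ft$, is the standard action of $w$. Indeed $c_{\dot{w}}$ preserves $T$, and since $w\in W_{\lambda}$ we have $\dot{w}G^{\lambda}\dot{w}^{-1}=G^{w\cdot\lambda}=G^{\lambda}$, so $c_{\dot{w}}$ is an automorphism of $G^{\lambda}$ and in particular preserves $Z(G^{\lambda})$. It then preserves $\ker(G^{\lambda}\to\GL(V^{\lambda}))$ as well: for $g\in G^{\lambda}$ and $v\in V^{\lambda}=V^{w\cdot\lambda}=\dot{w}V^{\lambda}$, writing $v=\dot{w}v'$ with $v'\in V^{\lambda}$, one has $(\dot{w}g\dot{w}^{-1})\cdot v=\dot{w}(g\cdot v')$, so $g$ acting trivially on $V^{\lambda}$ forces $\dot{w}g\dot{w}^{-1}$ to act trivially on $V^{\lambda}$. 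Hence $c_{\dot{w}}(G_{\lambda})=G_{\lambda}$, and passing to Lie algebras gives the desired $W_{\lambda}$-stability of $\Fg_{\lambda}\subset\Ft$. Finally, independence of the lift: two lifts of $w$ differ by some $t\in T\subset G^{\lambda}$, and $c_t$ fixes $Z(G^{\lambda})\supset G_{\lambda}$ pointwise, so $w\mapsto c_{\dot{w}}|_{G_{\lambda}}$ is a genuine group homomorphism $W_{\lambda}\to\mathrm{Aut}(G_{\lambda})$; functoriality of $\pt/(-)$ and of $\HO^*$ then produces the natural $W_{\lambda}$-action on $\HO^*(\pt/G_{\lambda})$.

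For the last assertion, I would take $w\in W^{\lambda}$ and use Lemma~\ref{lemma:inclusionsweylgroups} to choose a lift $\dot{w}\in N_{G^{\lambda}}(T)\subset G^{\lambda}$. Since $\dot{w}\in G^{\lambda}$ and $G_{\lambda}\subset Z(G^{\lambda})$, conjugation by $\dot{w}$ fixes every element of $G_{\lambda}$, hence acts as the identity on $\Fg_{\lambda}$, hence trivially on $\HO^*(\pt/G_{\lambda})\cong\Sym(\Fg_{\lambda}^*)$. The argument presents no real obstacle; the only points needing care are the verification that $c_{\dot{w}}$ preserves the kernel $\ker(G^{\lambda}\to\GL(V^{\lambda}))$ — which is exactly where the defining condition $\dot{w}V^{\lambda}=V^{\lambda}$ of $W_{\lambda}$ is used — and the lift-independence check that turns the construction into an honest action of $W_{\lambda}$ rather than of $N_G(T)$.
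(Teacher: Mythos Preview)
Your proof is correct and follows essentially the same route as the paper's: you show that $W_{\lambda}$ preserves $G_{\lambda}$ (the paper phrases this as ``$G_{\lambda}$ only depends on $\overline{\lambda}\in\SP_V$ and $W_{\lambda}$ is the stabiliser of $\overline{\lambda}$'', which is exactly what your explicit verification that conjugation by $\dot{w}$ preserves $Z(G^{\lambda})$ and $\ker(G^{\lambda}\to\GL(V^{\lambda}))$ unpacks), and then both arguments deduce the triviality of the $W^{\lambda}$-action from $G_{\lambda}\subset Z(G^{\lambda})$. Your version is simply more detailed, in particular spelling out the lift-independence and the passage to $\Sym(\Fg_{\lambda}^*)$ via Lemma~\ref{lemma:G_lambdacohomology}, which the paper leaves implicit.
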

\begin{proof}
 By definition, $G_{\lambda}$ is preserved by $W_{\lambda}$, as $G_{\lambda}$ only depends on $\overline{\lambda}\in\SP_V$ and $W_{\lambda}$ is the stabilizer of $\overline{\lambda}$. The second statement follows from the fact that $G_{\lambda}$ is contained in the center of $G^{\lambda}$.
\end{proof}

We let $\overline{G^{\lambda}}\coloneqq G^{\lambda}/G_{\lambda}$. Since $G_{\lambda}$ is a normal subgroup of $G^{\lambda}$, $\overline{G^{\lambda}}$ is a reductive group. Since $G_{\lambda}$ is contained in the center of $G^{\lambda}$, the Weyl group of $\overline{G^{\lambda}}$ is isomorphic to $W^{\lambda}$.

\begin{lemma}
\label{lemma:W_lambdaoverlineGlambda}
 The group $W_{\lambda}$ acts naturally on $\HO^*(V^{\lambda}/\overline{G^{\lambda}})$. Moreover, the $W_{\lambda}$-action on $\HO^*(\pt/G^{\lambda})\cong\HO^*(V^{\lambda}/G^{\lambda})$ obtained in Corollary~\ref{corollary:W_lambdaG^lambda} and the $W_{\lambda}$-action on $\HO^*(V^{\lambda}/\overline{G^{\lambda}})\otimes \HO^*(\pt/G_{\lambda})$ obtained by the tensor product of the actions given in Lemmas~\ref{lemma:W_lambdaG_lambda} and~\ref{lemma:W_lambdaoverlineGlambda} coincide via the isomorphism given in Proposition~\ref{proposition:decompositioncohomology}.
\end{lemma}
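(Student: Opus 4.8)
The plan is to pin down the auxiliary Lie‑algebra splitting entering Proposition~\ref{proposition:decompositioncohomology}, to choose it $W_\lambda$-equivariantly, and then to read off the claimed coincidence as a termwise identity inside $\Sym(\Ft^*)=\HO^*(\pt/T)$. Since $G_\lambda$ is central in $G^\lambda$ we have $G_\lambda^\circ\subset Z(G^\lambda)^\circ\subset T$, hence $\Fg_\lambda\subset Z(\Fg^\lambda)\subset\Ft$. Applying Proposition~\ref{proposition:decompositioncohomology} to the representation $V^\lambda$ of $G^\lambda$ and the central subgroup $H=G_\lambda$, and inspecting its proof together with Lemma~\ref{lemma:G_lambdacohomology} and the group analogue of Lemma~\ref{lemma:inducedtorus}, the isomorphism in question becomes, under the identifications $\HO^*(\pt/G^\lambda)\cong\Sym(\Ft^*)^{W^\lambda}$, $\HO^*(\pt/G_\lambda)\cong\Sym(\Fg_\lambda^*)$ and $\HO^*(V^\lambda/\overline{G^\lambda})\cong\Sym((\Ft/\Fg_\lambda)^*)^{W^\lambda}$, the isomorphism attached to a choice of ideal complement $\Fm$ to $\Fg_\lambda$ in $\Fg^\lambda$: writing $\Fn\coloneqq\Ft\cap\Fm$, which is a complement to $\Fg_\lambda$ inside $\Ft$, the factor $\Sym((\Ft/\Fg_\lambda)^*)^{W^\lambda}$ maps in via the canonical inclusion $(\Ft/\Fg_\lambda)^*=\mathrm{Ann}(\Fg_\lambda)\hookrightarrow\Ft^*$, while $\Sym(\Fg_\lambda^*)$ maps in by extending functionals on $\Fg_\lambda$ by zero on $\Fn$.

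First I would record that $W_\lambda$ acts compatibly on $\Ft$, on $\Fg_\lambda$, on $Z(\Fg^\lambda)$ and on $\mathrm{Ann}(\Fg_\lambda)\subset\Ft^*$: any $w\in W_\lambda$ has a lift $\dot w\in N_G(T)$ with $\dot wG^\lambda\dot w^{-1}=G^\lambda$ and $\dot w\cdot V^\lambda=V^\lambda$ since $w$ fixes $\overline\lambda\in\SP_V$, so $\dot w$ normalises $G_\lambda$ and $Z(G^\lambda)$; as these subgroups lie in $\Ft$, on which $T$ acts trivially by conjugation, $\mathrm{Ad}(\dot w)$ restricted to $\Fg_\lambda$ and to $Z(\Fg^\lambda)$ is independent of the lift. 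Because the base field has characteristic zero and $W_\lambda$ is finite, Maschke's theorem produces a $W_\lambda$-stable complement $\mathfrak{c}$ to $\Fg_\lambda$ inside $Z(\Fg^\lambda)$; I then take $\Fm\coloneqq[\Fg^\lambda,\Fg^\lambda]\oplus\mathfrak{c}$, so that $\Fn=\Ft\cap\Fm=(\Ft\cap[\Fg^\lambda,\Fg^\lambda])\oplus\mathfrak{c}$ is $W_\lambda$-stable — here one uses that $\Ft\cap[\Fg^\lambda,\Fg^\lambda]$, being the sum of the non‑trivial $W^\lambda$-isotypic components of $\Ft$, is permuted into itself by $W_\lambda$, which normalises $W^\lambda$ (Lemma~\ref{lemma:normalisationWeylgroups}). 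I fix once and for all the isomorphism of Proposition~\ref{proposition:decompositioncohomology} attached to this $\Fm$; note $\Fn$ is in particular $W^\lambda$-stable, as $W^\lambda\subset W_\lambda$ (Lemma~\ref{lemma:inclusionsweylgroups}).

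With this choice all three actions are restrictions of the natural $W$-action on $\Sym(\Ft^*)$. On $\HO^*(\pt/G^\lambda)=\Sym(\Ft^*)^{W^\lambda}$ this restricted action is precisely the one of Corollary~\ref{corollary:W_lambdaG^lambda}. The subring $\Sym(\mathrm{Ann}(\Fg_\lambda))^{W^\lambda}\subset\Sym(\Ft^*)^{W^\lambda}$ is $W_\lambda$-stable (as $\mathrm{Ann}(\Fg_\lambda)$ is, and $W_\lambda$ normalises $W^\lambda$), and the induced $W_\lambda$-action is $\Sym$ of the $W_\lambda$-module $(\Ft/\Fg_\lambda)^*=\Lie(T/G_\lambda^\circ)^*$, i.e. exactly the action of Lemma~\ref{lemma:W_lambdaoverlineGlambda}. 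Likewise the subspace obtained from $\Fg_\lambda^*$ by extension by zero on $\Fn$ is $W^\lambda$-fixed — using that $W^\lambda$ fixes $\Fg_\lambda$ pointwise, $G_\lambda$ being central, and stabilises $\Fn$ — and is $W_\lambda$-stable, the $W_\lambda$-action on it being $\Sym$ of $\Fg_\lambda^*$, i.e. the action of Lemma~\ref{lemma:W_lambdaG_lambda}; $W_\lambda$-stability of this subspace is precisely where the $W_\lambda$-stable choice of $\Fn$ is used. Since $\Ft^*=\mathrm{Ann}(\Fg_\lambda)\oplus(\text{the extension-by-zero copy of }\Fg_\lambda^*)$ as $W_\lambda$-modules and $W^\lambda$ acts trivially on the second summand, $\Sym(\Ft^*)^{W^\lambda}$ is, as a $W_\lambda$-equivariant ring, the tensor product of the two subrings just described; this is exactly the assertion that the tensor-product action of Lemmas~\ref{lemma:W_lambdaG_lambda} and~\ref{lemma:W_lambdaoverlineGlambda} is intertwined with the action of Corollary~\ref{corollary:W_lambdaG^lambda} by the isomorphism of Proposition~\ref{proposition:decompositioncohomology}.

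The one genuinely non-formal point, and the step I would be most careful about, is the equivariance of the splitting produced via Maschke's theorem: for an arbitrary complement the extension-by-zero embedding $\Sym(\Fg_\lambda^*)\hookrightarrow\Sym(\Ft^*)^{W^\lambda}$ is compatible with the two $W_\lambda$-actions only modulo the ideal generated by $\mathrm{Ann}(\Fg_\lambda)$, so that the comparison would be merely triangular rather than diagonal; the $W_\lambda$-stable choice of $\Fn$ is what makes this correction term vanish. Everything else is an unwinding of the definitions of \S\ref{subsection:Weylgroups}, including the lift-independence that makes the $W_\lambda$-actions of Lemmas~\ref{lemma:W_lambdaG_lambda} and~\ref{lemma:W_lambdaoverlineGlambda} and of Corollary~\ref{corollary:W_lambdaG^lambda} well defined in the first place.
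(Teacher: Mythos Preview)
Your proof is correct and follows the only natural route; the paper itself gives no proof beyond the \qed, treating the lemma as an immediate consequence of the three preceding results. The one substantive point you add---that the splitting of $\Fg^{\lambda}$ underlying Proposition~\ref{proposition:decompositioncohomology} must be chosen $W_{\lambda}$-equivariantly for the isomorphism to intertwine the actions---is exactly the content the paper defers to \S\ref{subsection:proofcohint}, where it simply asserts ``we choose a $W_{\lambda}$-invariant splitting $\Fg^{\lambda}=\Fg_{\lambda}\oplus(\Fg^{\lambda}/\Fg_{\lambda})$'' without further comment; your construction via $[\Fg^{\lambda},\Fg^{\lambda}]\oplus\mathfrak{c}$ and your remark that without this care the comparison is only triangular are both apt.
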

\begin{proof}
 We have $\HO^*(V^{\lambda}/\overline{G^{\lambda}})\cong \HO^*(\pt/(T/G_{\lambda}^{\circ}))^{W^{\lambda}}$. The restriction to $W_{\lambda}$ of the $W$ action on $T$ preserves $G_{\lambda}$ since $G_{\lambda}$ is in the center of $G^{\lambda}$. Therefore, we obtain a $W_{\lambda}$-action on $T/G_{\lambda}^{\circ}$. It induces a $W_{\lambda}$-action on $\HO^*(\pt/(T/G_{\lambda}^{\circ}))$. The fact that $W_{\lambda}$ normalises $W^{\lambda}$ (Lemma~\ref{lemma:Wlambdasubgroup}) implies that $\HO^*(\pt/(T/G_{\lambda}^{\circ}))^{W^{\lambda}}$ is preserved by the $W_{\lambda}$-action.

 The tensor product decomposition of the action follows immediately from this and Lemma~\ref{lemma:W_lambdaG_lambda}.
\end{proof}

\begin{proposition}
\label{proposition:characterW_lambda}
 There exists a character $\varepsilon_{V,\lambda}\colon W_{\lambda}\rightarrow\BoQ^*$ such that for any $w\in W_{\lambda}$, $w(k_{\lambda})=\varepsilon_{V,\lambda}(w)^{-1}k_{\lambda}\in\HO^*_T(\pt)$.
\end{proposition}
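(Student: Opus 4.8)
The plan is to exhibit the character by the explicit formula $\varepsilon_{V,\lambda}(w)=(w(k_{\lambda})/k_{\lambda})^{-1}$, and then to check two things: (i) for $w\in W_{\lambda}$ the element $w(k_{\lambda})/k_{\lambda}$ is a well-defined nonzero \emph{rational number}; (ii) $w\mapsto w(k_{\lambda})/k_{\lambda}$ is a group homomorphism $W_{\lambda}\to\BoQ^{*}$. All identities are to be read in $\Frac(\HO^{*}_{T}(\pt))$, which is a field; this is harmless since $k_{\lambda}\neq 0$.

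First I would record how $W$ acts on $k_{\lambda}$. The group $W$ acts on $\HO^{*}_{T}(\pt)\cong\Sym(\Ft^{*})$ permuting $X^{*}(T)$, and this action preserves the multiplicities $\dim V_{\alpha}$ and $\dim\Fg_{\alpha}$ because $V$ and $\Fg$ are $G$-representations. Using $W$-invariance of the pairing, $\langle w\cdot\lambda,w\cdot\alpha\rangle=\langle\lambda,\alpha\rangle$, one gets $w\cdot\CW^{\lambda>0}(V)=\CW^{w\cdot\lambda>0}(V)$ and $w\cdot\CW^{\lambda>0}(\Fg)=\CW^{w\cdot\lambda>0}(\Fg)$ — this is precisely the manipulation already used in the proofs of Lemma~\ref{lemma:invariance} and Lemma~\ref{lemma:sameimageWeylgroup} — and hence $w(k_{\lambda})=k_{w\cdot\lambda}$ for \emph{every} $w\in W$.

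Now restrict to $w\in W_{\lambda}$. By definition of $W_{\lambda}$ we have $V^{w\cdot\lambda}=V^{\lambda}$ and $G^{w\cdot\lambda}=G^{\lambda}$, hence $\Fg^{w\cdot\lambda}=\Fg^{\lambda}$, i.e. $w\cdot\lambda\sim\lambda$. Lemma~\ref{lemma:signkernelsim} then gives $k_{w\cdot\lambda}=\xi_{w}k_{\lambda}$ for some $\xi_{w}\in\BoQ^{*}$, and $\xi_{w}$ is unique since we are in a field with $k_{\lambda}\neq 0$; set $\varepsilon_{V,\lambda}(w)\coloneqq\xi_{w}^{-1}$, which already yields $w(k_{\lambda})=\varepsilon_{V,\lambda}(w)^{-1}k_{\lambda}$. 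For multiplicativity, note that $w$ fixes the scalars $\BoQ\subset\Sym(\Ft^{*})$, so for $w,w'\in W_{\lambda}$ one computes $\xi_{ww'}k_{\lambda}=(ww')(k_{\lambda})=w(w'(k_{\lambda}))=w(\xi_{w'}k_{\lambda})=\xi_{w'}\,w(k_{\lambda})=\xi_{w'}\xi_{w}k_{\lambda}$, so $\xi_{ww'}=\xi_{w}\xi_{w'}$; thus $w\mapsto\xi_{w}$, and therefore $\varepsilon_{V,\lambda}$, is a homomorphism $W_{\lambda}\to\BoQ^{*}$.

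The computation is elementary; the only real input is Lemma~\ref{lemma:signkernelsim}, which is where weak symmetry of $V$ (and of $\Fg$, a symmetric representation) is used to force the proportionality constant between $k_{w\cdot\lambda}$ and $k_{\lambda}$ to lie in $\BoQ^{*}$ rather than to be merely a ratio of products of linear forms. The one point requiring a little care is that $k_{\lambda}$ a priori lives in $\Frac(\HO^{*}_{T}(\pt))$ rather than in $\HO^{*}_{T}(\pt)$, so the displayed identity should be interpreted in the fraction field; since that ring is a domain this causes no difficulty.
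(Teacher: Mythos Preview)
Your proof is correct and follows essentially the same approach as the paper: both reduce to Lemma~\ref{lemma:signkernelsim} applied to the equivalent cocharacters $\lambda$ and $w\cdot\lambda$ (for $w\in W_{\lambda}$) to extract the scalar $\xi_{w}\in\BoQ^{*}$, and both set $\varepsilon_{V,\lambda}(w)=\xi_{w}^{-1}$. The only difference is cosmetic: the paper infers multiplicativity from uniqueness of $\xi_{w}$, whereas you verify it by a direct one-line computation; your remark that the identity is really in $\Frac(\HO^{*}_{T}(\pt))$ is a useful clarification.
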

\begin{proof}
 It suffices to prove the existence and unicity of the rational number $\varepsilon_{V,\lambda}(w)\in\BoQ^*$ satisfying the equality of the proposition, for any $w\in W_{\lambda}$. The multiplicativity then follows from the unicity of this sign.
 
 For $w\in W_{\lambda}$, the cocharacters $\lambda,w^{-1}\cdot\lambda$ of $T$ are such that $\overline{\lambda}=\overline{w^{-1}\cdot\lambda}$. Therefore, by Lemma~\ref{lemma:signkernelsim}, there exists $\xi\in\BoQ^*$ such that $w(k_{\lambda})=k_{w^{-1}\cdot\lambda}=\xi k_{\lambda}$. We set $\varepsilon_{V,\lambda}(w)\coloneqq \xi^{-1}$. This proves the existence. The unicity of $\xi$ is straightforward.
\end{proof}

\begin{proposition}
 Let $\lambda\in X_*(T)$, $w\in W_{\lambda}$ and $f\in\HO^*(V^{\lambda}/G^{\lambda})$. Then, $\Ind_{\lambda}(w\cdot f)=\varepsilon_{V,\lambda}(w)\Ind_{\lambda}(f)$.
\end{proposition}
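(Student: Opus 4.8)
The plan is to reduce everything to the explicit shuffle formula for the induction map (Proposition~\ref{proposition:explicitformulainduction}) and then to feed in the transformation law for the induction kernel $k_{\lambda}$ under $W_{\lambda}$ recorded in Proposition~\ref{proposition:characterW_lambda}. All computations will take place inside $\Frac(\HO^*(\pt/T))$ equipped with its natural $W$-action by ring automorphisms: recall $\HO^*(V^{\lambda}/G^{\lambda})\cong\HO^*(\pt/T)^{W^{\lambda}}$, and the $W_{\lambda}$-action on the left-hand side is exactly the restriction to $W_{\lambda}$ of the $W$-action on $\HO^*(\pt/T)$, which preserves $W^{\lambda}$-invariants because $W_{\lambda}$ normalises $W^{\lambda}$ (Lemma~\ref{lemma:normalisationWeylgroups}, Corollary~\ref{corollary:W_lambdaG^lambda}). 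In particular $w\cdot f$ again lies in $\HO^*(V^{\lambda}/G^{\lambda})$, so Proposition~\ref{proposition:explicitformulainduction} applies to it.

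Concretely, I would start from $\Ind_{\lambda}(w\cdot f)=\tfrac{1}{\#W^{\lambda}}\sum_{u\in W}u\cdot\big((w\cdot f)\,k_{\lambda}\big)$ and rewrite $(w\cdot f)\,k_{\lambda}=w\cdot\big(f\cdot(w^{-1}\cdot k_{\lambda})\big)$, using that each element of $W$ acts as a ring automorphism. Now $w^{-1}\in W_{\lambda}$ by Lemma~\ref{lemma:Wlambdasubgroup}, so Proposition~\ref{proposition:characterW_lambda} gives $w^{-1}\cdot k_{\lambda}=\varepsilon_{V,\lambda}(w^{-1})^{-1}k_{\lambda}$, and since $\varepsilon_{V,\lambda}$ is a character, $\varepsilon_{V,\lambda}(w^{-1})^{-1}=\varepsilon_{V,\lambda}(w)$. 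Pulling this scalar out (on which $W$ acts trivially) yields $(w\cdot f)\,k_{\lambda}=\varepsilon_{V,\lambda}(w)\,w\cdot(f k_{\lambda})$, hence $\Ind_{\lambda}(w\cdot f)=\tfrac{\varepsilon_{V,\lambda}(w)}{\#W^{\lambda}}\sum_{u\in W}(uw)\cdot(f k_{\lambda})$.

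Finally I would reindex the sum: since $w\in W_{\lambda}\subset W$, the map $u\mapsto uw^{-1}$ is a bijection of $W$, so $\sum_{u\in W}(uw)\cdot(f k_{\lambda})=\sum_{u\in W}u\cdot(f k_{\lambda})$, and comparing with the averaged form of Proposition~\ref{proposition:explicitformulainduction} gives $\Ind_{\lambda}(w\cdot f)=\varepsilon_{V,\lambda}(w)\,\Ind_{\lambda}(f)$.

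There is no genuine obstacle here; the proof is a short formal manipulation. The only points requiring care are the bookkeeping of inverses — distinguishing $\varepsilon_{V,\lambda}(w)$ from $\varepsilon_{V,\lambda}(w)^{-1}$ and invoking the homomorphism property of $\varepsilon_{V,\lambda}$ — and the verification that $w\cdot f$ remains in $\HO^*(V^{\lambda}/G^{\lambda})$ so that the explicit formula is applicable; both are immediate from the lemmas already in place. Running the argument instead with the coset-sum form $\sum_{u\in W/W^{\lambda}}u\cdot(f k_{\lambda})$ also works, but the symmetrised form makes the reindexing step cleanest.
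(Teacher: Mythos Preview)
Your proof is correct and follows essentially the same approach as the paper's: both use the averaged form of Proposition~\ref{proposition:explicitformulainduction}, apply Proposition~\ref{proposition:characterW_lambda} to $w^{-1}\cdot k_{\lambda}$, and reindex the sum over $W$. The only cosmetic difference is the order of operations---the paper reindexes first and then invokes the character identity, while you do the reverse---but the content is identical.
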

\begin{proof}
 By Proposition~\ref{proposition:explicitformulainduction}, we have
 \[
  \begin{aligned}
  \Ind_{\lambda}(w\cdot f)&=\frac{1}{\# W^{\lambda}}\sum_{w'\in W}w'(wf)w'(k_{\lambda})\\
  &=\frac{1}{\# W^{\lambda}}\sum_{w''\in W}(w''f)w''(w^{-1}k_{\lambda})\\
  &=\frac{1}{\# W^{\lambda}}\sum_{w''\in W}(w''f)w''(\varepsilon_{V,\lambda}(w)k_{\lambda}) \quad\text{ by Proposition~\ref{proposition:characterW_lambda}}
   \end{aligned}
 \]
 which concludes.
\end{proof}

We now state an elementary result from the representation theory of finite groups for later use.

\begin{proposition}
\label{proposition:projector}
 Let $G$ be a finite group $\chi\colon G\rightarrow\BoC^*$ a character of $G$ and $V$ a finite-dimensional representation of $G$. Then, the formula
 \[
  p_{\chi}(v)=\frac{1}{\# G}\sum_{g\in G}\chi(g)^{-1}g\cdot v
\]
is the projector onto the $\chi$-isotypic component $V^{\chi}$ of $V$.
\end{proposition}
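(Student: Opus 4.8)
The plan is to verify directly that $p_{\chi}$ is a linear idempotent endomorphism of $V$ whose image is exactly $V^{\chi}$ and which restricts to the identity on $V^{\chi}$; taken together these say precisely that $p_{\chi}$ is the projection onto $V^{\chi}$ along $\ker p_{\chi}$, i.e. $V = V^{\chi}\oplus\ker p_{\chi}$ with $p_{\chi}$ the associated projector. Linearity is immediate, since $p_{\chi}$ is an average of the linear maps $v\mapsto g\cdot v$.

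For idempotency I would compute
\[
 p_{\chi}(p_{\chi}(v)) = \frac{1}{(\#G)^2}\sum_{g,h\in G}\chi(g)^{-1}\chi(h)^{-1}\, g\cdot(h\cdot v)\,,
\]
and then, for each fixed $h$, substitute $k=gh$ (so that $k$ ranges over $G$ as $g$ does) and use that $\chi$ is a group homomorphism, hence $\chi(g)^{-1}=\chi(kh^{-1})^{-1}=\chi(k)^{-1}\chi(h)$. The inner sum becomes $\sum_{k}\chi(k)^{-1}\,k\cdot v$, which is independent of $h$; summing over the $\#G$ choices of $h$ and dividing yields $p_{\chi}(v)$.

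Next I would show $\im p_{\chi}\subseteq V^{\chi}$: for $h\in G$, the substitution $k=hg$ together with multiplicativity of $\chi$ gives
\[
 h\cdot p_{\chi}(v) = \frac{1}{\#G}\sum_{g\in G}\chi(g)^{-1}\,(hg)\cdot v = \frac{1}{\#G}\sum_{k\in G}\chi(h^{-1}k)^{-1}\,k\cdot v = \chi(h)\, p_{\chi}(v)\,,
\]
so $p_{\chi}(v)$ lies in $V^{\chi}$ for every $v$. Conversely, if $v\in V^{\chi}$, i.e. $g\cdot v=\chi(g)v$ for all $g\in G$, then $p_{\chi}(v)=\frac{1}{\#G}\sum_{g\in G}\chi(g)^{-1}\chi(g)\,v=v$, so $p_{\chi}$ is the identity on $V^{\chi}$; in particular $V^{\chi}\subseteq\im p_{\chi}$, whence $\im p_{\chi}=V^{\chi}$. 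Combined with $p_{\chi}^2=p_{\chi}$ this identifies $p_{\chi}$ as the projector onto $V^{\chi}$.

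There is no genuine obstacle here; this is a standard averaging argument, and the only points requiring a little care are the bookkeeping with the inverse character $\chi^{-1}$ and the reindexing substitutions $k=gh$, $k=hg$, which rely on $\chi$ being a homomorphism. One may also remark, if desired, that the subspace $V^{\chi}$ defined as $\{v\in V\mid g\cdot v=\chi(g)v\ \forall g\}$ coincides with the $\chi$-isotypic component in the usual sense precisely because the one-dimensional representation $\chi$ is irreducible.
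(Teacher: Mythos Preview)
Your proof is correct and follows essentially the same approach as the paper: both verify that $p_{\chi}$ is idempotent, fixes $V^{\chi}$ pointwise, and has image contained in $V^{\chi}$, with your version simply spelling out the routine computations that the paper omits.
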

\begin{proof}
This is \cite[\S2.6, Th\'eor\`eme~8]{serre1971representation}.
\end{proof}

\subsection{Proof of the cohomological integrality theorem}
\label{subsection:proofcohint}
We let $V$ be a weakly symmetric representation of a reductive group $G$. In the rest of this section, we prove Theorem~\ref{theorem:cohintabsolute}.

For $\lambda\in X_*(T)$, we define the $\BoZ$-graded vector space
\[
 \CH_{\lambda}^*\coloneqq \HO^{*}(V^{\lambda}/G^{\lambda})\,.
\]
When $\lambda=0$ is the trivial cocharacter, we write $\CH\coloneqq\CH_0=\HO^{*}(V/G)$. More generally, for any pair $\lambda,\mu$, we define $\CH_{\lambda,\mu}\coloneqq \HO^{*}(V^{\lambda}\cap V^{\mu}/G^{\lambda}\cap G^{\mu})$. By \S\ref{subsection:augmentation}, we have induction maps $\Ind_{\lambda}\colon \CH_{\lambda}\rightarrow\CH$ for $\lambda\in X_*(T)$ and more generally, $\Ind_{\mu,\lambda}\colon \CH_{\lambda,\mu}\rightarrow\CH_{\lambda}$ for $\lambda,\mu\in X_*(T)$.

For $\lambda\in X_*(T)$, we choose a $W_{\lambda}$-invariant splitting $\Fg^{\lambda}=\Fg_{\lambda}\oplus(\Fg^{\lambda}/\Fg_{\lambda})$, where $\Fg_{\lambda}\coloneqq\Lie(G_{\lambda})$ is the Lie algebra of $G_{\lambda}$ (\S\ref{subsection:Weylgroups}). It induces a splitting of the maximal torus $\Ft=\Fg_{\lambda}\oplus(\Ft/\Fg_{\lambda})$ (Lemma~\ref{lemma:inducedtorus}). We let
\[\CH_{\lambda}^{\prim}\coloneqq \HO^*(V^{\lambda}/(G^{\lambda}/G_{\lambda}^{\circ}))\,,\] so that $\CH_{\lambda}\cong \CH_{\lambda}^{\prim}\otimes\HO^*(\pt/G_{\lambda})$ (Proposition~\ref{proposition:decompositioncohomology}). We let
\[\CA_{\lambda}\coloneqq \HO^*(V^{\lambda}/T) \quad\text{and}\quad \CA_{\lambda}^{\prim}\coloneqq \HO^*(V^{\lambda}/(T/G_{\lambda}^{\circ}))\] so that $\CA_{\lambda}\cong\CA_{\lambda}^{\prim}\otimes\HO^*(\pt/G_{\lambda})$, $\CH_{\lambda}=\CA_{\lambda}^{W^{\lambda}}$ and $\CH_{\lambda}^{\prim}\cong(\CA_{\lambda}^{\prim})^{W^{\lambda}}$.
  
 For each $\lambda\in X_*(T)$, we let $J_{\lambda}$ be the smallest $W^{\lambda}$-stable $\CA_{\lambda}^{\prim}$-submodule of the localisation \[\CA_{\lambda}^{\prim,\loc}\coloneqq\CA_{\lambda}^{\prim}[\prod_{\alpha\in\CW(\Fg^{\lambda})\setminus\{0\}}\alpha^{-1}]\] containing
 \[
  k_{\mu,\lambda}=\frac{\prod_{\alpha\in\CW^{\mu<0}(V^{\lambda})}\alpha}{\prod_{\alpha\in\CW^{\mu<0}(\Fg^{\lambda})}\alpha}
 \]
 for all $\mu\in X_*(T)$ such that $\lambda\not\preceq \mu$ for the order relation $\preceq$ defined in \S\ref{subsection:parindsymreps}.
 
 We let \[\CA_{\lambda}^{\loc}\coloneqq\CA_{\lambda}[\prod_{\alpha\in\CW(\Fg^{\lambda})\setminus\{0\}}\alpha^{-1}]\cong\CA_{\lambda}^{\prim,\loc}\otimes\HO^*(\pt/G_{\lambda})\] and
 \[\quad\CH_{\lambda}^{\loc}\coloneqq(\CA_{\lambda}^{\loc})^{W^{\lambda}}\cong\CH_{\lambda}^{\prim}[\prod_{\alpha\in\CW(\Fg^{\lambda})\setminus\{0\}}\alpha^{-1}]\otimes\HO^*(\pt/G_{\lambda})
 \]
 (for this last isomorphism, we use the fact that $W^{\lambda}$ acts trivially on $\HO^*(\pt/G_{\lambda})$ as $G_{\lambda}$ is in the center of $G^{\lambda}$, Lemma~\ref{lemma:W_lambdaG_lambda}).

 \begin{lemma}
 \label{lemma:characterisationJlambda}
 We can characterise $J_{\lambda}$ as the smallest $W^{\lambda}$-stable $\CA_{\lambda}^{\prim}$-submodule of $\CA^{\prim,\loc}$ containing
 \[
  k_{\mu,\lambda}=\frac{\prod_{\alpha\in\CW^{\mu<0}(V^{\lambda})}\alpha}{\prod_{\alpha\in\CW^{\mu<0}(\Fg^{\lambda})}\alpha}
 \]
 for all $\mu\in X_*(T)$ such that $\mu\prec\lambda$.
 \end{lemma}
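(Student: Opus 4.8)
The plan is to prove the stronger statement that the two generating families coincide \emph{as subsets} of $\CA^{\prim,\loc}_{\lambda}$, which immediately forces the two $W^{\lambda}$-stable $\CA_{\lambda}^{\prim}$-submodules they generate to be equal. One containment is free: if $\mu\prec\lambda$ then $V^{\mu}\subseteq V^{\lambda}$, $\Fg^{\mu}\subseteq\Fg^{\lambda}$ and $\mu\not\sim\lambda$, so we cannot also have $\lambda\preceq\mu$ (that would give $\lambda\sim\mu$); hence $\lambda\not\preceq\mu$, and $k_{\mu,\lambda}$ already occurs among the defining generators of $J_{\lambda}$. So the module characterised in the lemma is contained in $J_{\lambda}$.

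For the reverse containment, fix $\mu\in X_*(T)$ with $\lambda\not\preceq\mu$; the goal is to produce $\nu\in X_*(T)$ with $\nu\prec\lambda$ and $k_{\nu,\lambda}=k_{\mu,\lambda}$. I would apply Lemma~\ref{lemma:gcdcocharacters} to the pair $(\lambda,\mu)$, obtaining $\nu$ with $V^{\nu}=V^{\lambda}\cap V^{\mu}$, $G^{\nu}=G^{\lambda}\cap G^{\mu}$, $(V^{\lambda})^{\nu\geq0}=(V^{\lambda})^{\mu\geq0}$ and $P_{\nu,\lambda}=P_{\mu,\lambda}$. From the first two equalities $V^{\nu}\subseteq V^{\lambda}$ and $\Fg^{\nu}=\Fg^{\lambda}\cap\Fg^{\mu}\subseteq\Fg^{\lambda}$, so $\nu\preceq\lambda$; and $\nu\sim\lambda$ would mean $V^{\nu}=V^{\lambda}$ and $\Fg^{\nu}=\Fg^{\lambda}$, i.e. $V^{\lambda}\subseteq V^{\mu}$ and $\Fg^{\lambda}\subseteq\Fg^{\mu}$, i.e. $\lambda\preceq\mu$, contrary to hypothesis. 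Hence $\nu\prec\lambda$, so $k_{\nu,\lambda}$ is one of the generators allowed in the lemma.

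It then remains to check $k_{\nu,\lambda}=k_{\mu,\lambda}$, which is bookkeeping with weight multisets inside $V^{\lambda}$ and $\Fg^{\lambda}$. The equality of subspaces $(V^{\lambda})^{\nu\geq0}=(V^{\lambda})^{\mu\geq0}$ gives $\CW^{\nu\geq0}(V^{\lambda})=\CW^{\mu\geq0}(V^{\lambda})$, and passing to Lie algebras in $P_{\nu,\lambda}=P_{\mu,\lambda}$ gives $\CW^{\nu\geq0}(\Fg^{\lambda})=\CW^{\mu\geq0}(\Fg^{\lambda})$; likewise, since the $\nu$-fixed and $\mu$-fixed loci of $V^{\lambda}$ are $V^{\nu}=V^{\lambda}\cap V^{\mu}$ and of $G^{\lambda}$ are $G^{\nu}=G^{\lambda}\cap G^{\mu}$, one gets $\CW^{\nu=0}(V^{\lambda})=\CW^{\mu=0}(V^{\lambda})$ and $\CW^{\nu=0}(\Fg^{\lambda})=\CW^{\mu=0}(\Fg^{\lambda})$. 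Subtracting the ``$=0$'' multiset from the ``$\geq0$'' multiset yields $\CW^{\nu>0}(V^{\lambda})=\CW^{\mu>0}(V^{\lambda})$ and $\CW^{\nu>0}(\Fg^{\lambda})=\CW^{\mu>0}(\Fg^{\lambda})$, so the numerator and denominator defining $k_{\nu,\lambda}$ and $k_{\mu,\lambda}$ are literally the same products. This shows every generator of $J_{\lambda}$ lies in the family $\{k_{\mu,\lambda}:\mu\prec\lambda\}$, completing the proof.

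The only real content is the middle step: pulling out of Lemma~\ref{lemma:gcdcocharacters} a cocharacter $\nu$ that simultaneously computes the intersection of fixed loci \emph{and} matches $\mu$ on the parabolic $P_{\mu,\lambda}$ (hence on the full $\geq0$/$>0$ weight data inside $V^{\lambda}$ and $\Fg^{\lambda}$), together with the observation that the hypothesis $\lambda\not\preceq\mu$ is exactly what promotes $\nu\preceq\lambda$ to the strict $\nu\prec\lambda$. Everything else is formal manipulation of multisets of weights, and no $W^{\lambda}$-stability or module structure is actually needed beyond recording that the two generating families agree as sets.
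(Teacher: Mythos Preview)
Your proof is correct and follows essentially the same route as the paper's: both directions reduce to showing the two generating sets $\{k_{\mu,\lambda}:\lambda\not\preceq\mu\}$ and $\{k_{\mu,\lambda}:\mu\prec\lambda\}$ coincide, with the nontrivial inclusion handled via Lemma~\ref{lemma:gcdcocharacters} to replace an arbitrary $\mu$ with $\lambda\not\preceq\mu$ by some $\nu\prec\lambda$ satisfying $k_{\nu,\lambda}=k_{\mu,\lambda}$. Your write-up is in fact a bit more explicit than the paper's in spelling out the weight-multiset bookkeeping behind the equality $k_{\nu,\lambda}=k_{\mu,\lambda}$, which the paper simply asserts.
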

 \begin{proof}
  If $\mu\prec\lambda$, then we have $\dim V^{\mu}\leq\dim V^{\lambda}$ and $\dim \Fg^{\mu}\leq\dim\Fg^{\lambda}$ with at least one strict inequality (where the order relation $\preceq$ is defined in \S\ref{subsection:parindsymreps}). If by contradiction we have $\lambda\preceq \mu$ then, we have
  \[
   \dim V^{\lambda}\leq \dim V^{\mu}\,, \quad \dim\Fg^{\lambda}\leq\dim \Fg^{\mu}\,.
  \]
Therefore, if $\mu\prec\lambda$, then $\lambda\not\preceq\mu$ and by definition, $k_{\mu,\lambda}\in J_{\lambda}$.
  
  Conversely, if $\lambda\not\preceq\mu$, we let $\nu\in X_*(T)$ be such that $V^{\nu}=V^{\lambda}\cap V^{\mu}$, $\Fg^{\nu}=\Fg^{\lambda}\cap\Fg^{\mu}$, $(V^{\lambda})^{\nu\geq 0}=(V^{\lambda})^{\mu\geq 0}$ and $(\Fg^{\lambda})^{\nu\geq 0}=(\Fg^{\lambda})^{\mu\geq 0}$ (Lemma~\ref{lemma:gcdcocharacters}). We have $V^{\nu}\subset V^{\lambda}$ and $\Fg^{\nu}\subset\Fg^{\lambda}$ with at least one strict inclusion. Therefore, $\nu\prec\lambda$ and moreover, $k_{\nu,\lambda}=k_{\mu,\lambda}$. This proves the lemma.
 \end{proof}

 \begin{lemma}
 \label{lemma:invariantsJH}
  We have $J_{\lambda}^{W^{\lambda}}\subset\CH_{\lambda}^{\prim}$. Moreover, $J_{\lambda}\cap\CH_{\lambda}^{\prim}=J_{\lambda}^{W^{\lambda}}$.
 \end{lemma}
\begin{proof}
 This comes from the fact that the induction product \S\ref{subsection:parabolicinduction} is well-defined on the cohomology, without requiring localisation, even though the explicit formula (Proposition~\ref{proposition:explicitformulainduction}) involves rational fractions. More precisely, $J_{\lambda}^{W^{\lambda}}$ is linearly generated by the elements
 \[
  \sum_{w\in W^{\lambda}}w\cdot(fk_{\mu,\lambda})
 \]
for $f\in \CA_{\lambda}^{\prim}$ and $\mu\prec\lambda$, by Lemma~\ref{lemma:characterisationJlambda}. These are averages over $W^{\lambda}$ of elements of $J_{\lambda}$.
We can rewrite this sum as
\[
 \sum_{\overline{w'}\in W^{\lambda}/W^{\mu}}w'\cdot\left[\left(\sum_{w\in W^{\mu}}(w\cdot f)\right)k_{\mu,\lambda}\right]
\]
since $k_{\mu,\lambda}$ is $W^{\mu}$-invariant (Lemma~\ref{lemma:invariance}). This is (Proposition~\ref{proposition:explicitformulainduction}) the formula for
\[
 I\coloneqq\Ind_{\mu,\lambda}\left(\sum_{w\in W^{\mu}}(w\cdot f)\right)\,.
\]
Since $\sum_{w\in W^{\mu}}(w\cdot f)$ is polynomial, its induction $I$ is also polynomial. Therefore, it belongs to $\CH_{\lambda}$. Moreover, $I\in J_{\lambda}\subset \CA_{\lambda}^{\prim}[\prod_{\alpha\in\CW(\Fg^{\lambda})\setminus\{0\}}\alpha^{-1}]$ by $W^{\lambda}$-invariance of $J_{\lambda}$. Therefore, $I\in\CH_{\lambda}\cap \CA_{\lambda}^{\prim}[\prod_{\alpha\in\CW(\Fg^{\lambda})\setminus\{0\}}\alpha^{-1}]=\CH_{\lambda}^{\prim}$.

For the second claim of the lemma, we have obviously $J_{\lambda}^{W^{\lambda}}\subset J_{\lambda}$. Combined with the first part of the lemma, this gives the reverse inclusion $J_{\lambda}^{W^{\lambda}}\subset J_{\lambda}\cap\CH_{\lambda}^{\prim}$. For the direct inclusion, if $f\in J_{\lambda}$ is also in $\CH_{\lambda}^{\prim}=(\CA_{\lambda}^{\prim})^{W^{\lambda}}$, it clearly is $W^{\lambda}$-invariant: $f\in J_{\lambda}^{W^{\lambda}}$.
\end{proof}

\begin{lemma}
\label{lemma:JlambdaWlambdainvariant}
 The submodule $J_{\lambda}$ is $W_{\lambda}$-invariant.
\end{lemma}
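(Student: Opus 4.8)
The plan is to show that the $W_{\lambda}$-action on $\CA_{\lambda}^{\prim,\loc}$ preserves the defining generating set of $J_{\lambda}$ up to nonzero scalars, and permutes the operations ($W^{\lambda}$-translation, multiplication by $\CA_{\lambda}^{\prim}$) used to build $J_{\lambda}$ from these generators. Recall $J_{\lambda}$ is, by Lemma~\ref{lemma:characterisationJlambda}, the smallest $W^{\lambda}$-stable $\CA_{\lambda}^{\prim}$-submodule of $\CA_{\lambda}^{\prim,\loc}$ containing the elements $k_{\mu,\lambda}$ for all $\mu\prec\lambda$. First I would check that $W_{\lambda}$ acts on the ambient module $\CA_{\lambda}^{\prim,\loc}$: indeed $W_{\lambda}$ acts on $\CA_{\lambda}=\HO^*(V^{\lambda}/T)$ (it normalises $W^{\lambda}$ by Lemma~\ref{lemma:normalisationWeylgroups}, and more basically acts on $\HO^*(\pt/T)$), it preserves $\CA_{\lambda}^{\prim}$ since it preserves $G_{\lambda}$ (Lemma~\ref{lemma:W_lambdaG_lambda}), and it preserves the multiplicative set $\{\prod_{\alpha\in\CW(\Fg^{\lambda})\setminus\{0\}}\alpha\}$ up to scalar because for $w\in W_{\lambda}$ we have $G^{w\cdot\lambda}=G^{\lambda}$, hence $w$ permutes $\CW(\Fg^{\lambda})\setminus\{0\}$; therefore $w$ extends to the localisation $\CA_{\lambda}^{\prim,\loc}$.

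Next comes the key point: for $w\in W_{\lambda}$ and $\mu\prec\lambda$, I would show $w(k_{\mu,\lambda})$ is a nonzero scalar multiple of $k_{\mu',\lambda}$ for some $\mu'\prec\lambda$. Applying $w$ (equivalently, a lift $\dot w\in N_G(T)$) to the numerator and denominator of $k_{\mu,\lambda}$ sends $\CW^{\mu>0}(V^{\lambda})$ to $\CW^{w\cdot\mu>0}(w\cdot V^{\lambda})=\CW^{w\cdot\mu>0}(V^{\lambda})$ (using $V^{w\cdot\lambda}=V^{\lambda}$, i.e. $\dot w V^{\lambda}=V^{\lambda}$), and likewise for $\Fg^{\lambda}$; hence $w(k_{\mu,\lambda})=k_{w\cdot\mu,\lambda}$ on the nose. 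It remains to see $w\cdot\mu\prec\lambda$: since $\mu\prec\lambda$ means $V^{\mu}\subset V^{\lambda}$, $\Fg^{\mu}\subset\Fg^{\lambda}$ with at least one strict inclusion, applying $\dot w$ gives $V^{w\cdot\mu}=\dot w V^{\mu}\subset\dot w V^{\lambda}=V^{\lambda}$ and similarly $\Fg^{w\cdot\mu}\subset\Fg^{\lambda}$, with the strictness preserved by $\dot w$ being an isomorphism. So $w\cdot\mu\prec\lambda$ and $w(k_{\mu,\lambda})=k_{w\cdot\mu,\lambda}$ is again a defining generator of $J_{\lambda}$.

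Finally I would assemble these facts. Let $J_{\lambda}'\coloneqq\sum_{w\in W_{\lambda}}w(J_{\lambda})$; this is a $W_{\lambda}$-stable (hence in particular $W^{\lambda}$-stable, since $W^{\lambda}\subset W_{\lambda}$ by Lemma~\ref{lemma:inclusionsweylgroups}) $\CA_{\lambda}^{\prim}$-submodule of $\CA_{\lambda}^{\prim,\loc}$ — $\CA_{\lambda}^{\prim}$-stability uses that $w(f\cdot m)=w(f)\cdot w(m)$ and $W_{\lambda}$ preserves $\CA_{\lambda}^{\prim}$. It contains all $k_{\mu,\lambda}$, $\mu\prec\lambda$, so by the minimality in Lemma~\ref{lemma:characterisationJlambda} it contains $J_{\lambda}$; conversely by the previous paragraph each $w(J_{\lambda})$ is generated over $\CA_{\lambda}^{\prim}$ by $W^{\lambda}$-translates of generators $k_{w\cdot\mu,\lambda}$ of $J_{\lambda}$ (note $w$ normalises $W^{\lambda}$, so $w$ applied to a $W^{\lambda}$-stable module is again $W^{\lambda}$-stable), whence $w(J_{\lambda})\subset J_{\lambda}$ and $J_{\lambda}'\subset J_{\lambda}$. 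Thus $J_{\lambda}'=J_{\lambda}$ is $W_{\lambda}$-stable. The only mild subtlety — the step I would be most careful about — is bookkeeping the interaction between $W_{\lambda}$-conjugation and $W^{\lambda}$-translation when showing $w(J_{\lambda})\subset J_{\lambda}$, which is exactly where Lemma~\ref{lemma:normalisationWeylgroups} is used; everything else is formal.
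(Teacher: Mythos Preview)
Your proof is correct and follows essentially the same strategy as the paper's: show that for $w\in W_\lambda$ the translate $w(J_\lambda)$ is again a $W^\lambda$-stable $\CA_\lambda^{\prim}$-submodule of $\CA_\lambda^{\prim,\loc}$ containing the generating kernels, then conclude by minimality. The only differences are cosmetic --- you use the $\mu\prec\lambda$ characterisation from Lemma~\ref{lemma:characterisationJlambda} and compute $w(k_{\mu,\lambda})=k_{w\cdot\mu,\lambda}$ directly, while the paper works with the original indexing $\lambda\not\preceq\mu$ and routes the comparison of kernels through Lemma~\ref{lemma:signkernelsim}; your direct computation is in fact slightly cleaner.
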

\begin{proof}
 Let $w\in W_{\lambda}$. We let $J_{\lambda}^w\coloneqq w\cdot J_{\lambda}$. Then, $J_{\lambda}^w$ is $W^{\lambda}$-stable by Lemma~\ref{lemma:Wlambdasubgroup}. It is an $\CA_{\lambda}^{\prim}$-submodule of $\CA_{\lambda}^{\prim}[\prod_{\alpha\in\CW(\Fg^{\lambda})\setminus\{0\}}(w\cdot\alpha)^{-1}]=\CA_{\lambda}^{\prim}[\prod_{\alpha\in\CW(\Fg^{\lambda})\setminus\{0\}}\alpha^{-1}]$. Moreover, it contains
 \[
  k_{w\cdot\mu,w\cdot\lambda}
 \]
for $\lambda\not\preceq\mu$. Now, we use that by Lemma~\ref{lemma:signkernelsim}, for $w\in W_{\lambda}$, $\{\pm k_{\mu,\lambda}\colon \lambda\not\preceq\mu\}=\{\pm k_{w\cdot\mu,w\cdot\lambda}\mid \lambda\not\preceq\mu\}$ since for $w\in W_{\lambda}$, $\overline{w\cdot\lambda}=\overline{\lambda}$. By minimality of $J_{\lambda}$, we deduce that $J_{\lambda}\subset J_{\lambda}^w$. By symmetry of the argument, we conclude $J_{\lambda}^w=J_{\lambda}$.
\end{proof}

\begin{corollary}
\label{corollary:JlambdaWlambdainv}
 $J_{\lambda}^{W^{\lambda}}$ is $W_{\lambda}$-invariant.
\end{corollary}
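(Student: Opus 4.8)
The statement follows formally by combining Lemma~\ref{lemma:JlambdaWlambdainvariant} with Lemma~\ref{lemma:normalisationWeylgroups}, so the plan is short. Fix $w\in W_{\lambda}$ and $x\in J_{\lambda}^{W^{\lambda}}$. First I would observe that $w\cdot x\in J_{\lambda}$: this is exactly the content of Lemma~\ref{lemma:JlambdaWlambdainvariant}, which says $J_{\lambda}$ is $W_{\lambda}$-stable.

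Next I would check that $w\cdot x$ is itself $W^{\lambda}$-invariant. Take any $v\in W^{\lambda}$. Since $W_{\lambda}$ normalises $W^{\lambda}$ inside $W$ (Lemma~\ref{lemma:normalisationWeylgroups}), the element $w^{-1}vw$ lies in $W^{\lambda}$, hence $(w^{-1}vw)\cdot x=x$ because $x\in J_{\lambda}^{W^{\lambda}}$. Therefore $v\cdot(w\cdot x)=w\cdot\bigl((w^{-1}vw)\cdot x\bigr)=w\cdot x$, which shows $w\cdot x\in J_{\lambda}^{W^{\lambda}}$. As $w\in W_{\lambda}$ was arbitrary, $W_{\lambda}\cdot J_{\lambda}^{W^{\lambda}}\subset J_{\lambda}^{W^{\lambda}}$, i.e. $J_{\lambda}^{W^{\lambda}}$ is $W_{\lambda}$-stable.

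The only point that deserves a word is that all of this takes place for a genuine $W_{\lambda}$-action on the ambient space: this is recorded in Corollary~\ref{corollary:W_lambdaG^lambda}, and the identical normalisation argument shows the $W_{\lambda}$-action descends to the localisation $\CA_{\lambda}^{\prim,\loc}$ in which $J_{\lambda}$ sits. There is no genuine obstacle here; the corollary is a purely formal consequence of the normalisation property. Equivalently, one may note that $J_{\lambda}^{W^{\lambda}}$ is the image of the averaging projector $\frac{1}{\#W^{\lambda}}\sum_{v\in W^{\lambda}}v$ acting on $J_{\lambda}$, and this projector commutes with the $W_{\lambda}$-action precisely because $W_{\lambda}$ normalises $W^{\lambda}$, so $W_{\lambda}$ preserves its image.
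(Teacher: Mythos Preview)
Your proof is correct and follows exactly the same approach as the paper: the paper's proof is the one-line remark that the corollary follows directly from Lemma~\ref{lemma:normalisationWeylgroups} and Lemma~\ref{lemma:JlambdaWlambdainvariant}, and you have simply spelled out the standard argument behind that remark.
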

\begin{proof}
 This follows directly from Lemmas~\ref{lemma:Wlambdasubgroup} and~\ref{lemma:JlambdaWlambdainvariant}.
\end{proof}

 We let $\CP_{\lambda}\subset \CH_{\lambda}^{\prim}$ be a $W_{\lambda}$-stable direct sum complement of $J_{\lambda}^{W^{\lambda}}$ (using Corollary~\ref{corollary:JlambdaWlambdainv}):
 \[\CH_{\lambda}^{\prim}=J_{\lambda}^{W^{\lambda}}\oplus \CP_{\lambda}\,.
 \]

\begin{lemma}
\label{lemma:imageinduction}
 For any $\lambda\in X_*(T)$, the image of
 \[
  \bigoplus_{\lambda\not\preceq\mu}\CH_{\mu,\lambda}\xrightarrow{\bigoplus\Ind_{\mu,\lambda}}\CH_{\lambda}
 \]
is $J_{\lambda}^{W^{\lambda}}\otimes\HO^*(\pt/G_{\lambda})$.

Similarly, the image of
\[
 \bigoplus_{\mu\prec\lambda}\CH_{\mu,\lambda}\xrightarrow{\bigoplus\Ind_{\mu,\lambda}}\CH_{\lambda}
\]
is $J_{\lambda}^{W^{\lambda}}\otimes\HO^*(\pt/G_{\lambda})$.
\end{lemma}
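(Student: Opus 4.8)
The plan is to reduce both statements to the explicit formula for parabolic induction (Proposition~\ref{proposition:explicitformulainduction} and its augmented version) and the characterisation of $J_\lambda$ in Lemma~\ref{lemma:characterisationJlambda}. First I would note that by Lemma~\ref{lemma:characterisationJlambda} the two indexing sets $\{\mu : \lambda\not\preceq\mu\}$ and $\{\mu : \mu\prec\lambda\}$ give rise to the same set of kernels $k_{\mu,\lambda}$ up to replacing $\mu$ by the gcd-cocharacter $\nu$ furnished by Lemma~\ref{lemma:gcdcocharacters} (which satisfies $\nu\prec\lambda$ and $k_{\nu,\lambda}=k_{\mu,\lambda}$, and also $(V^\lambda)^{\mu\ge 0}=(V^\lambda)^{\nu\ge 0}$, so that $\Ind_{\mu,\lambda}$ and $\Ind_{\nu,\lambda}$ have the same image). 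Hence the two displayed images coincide, and it suffices to treat, say, the second one.

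Next I would compute the image of $\bigoplus_{\mu\prec\lambda}\Ind_{\mu,\lambda}$ directly. For $\mu\prec\lambda$ we have $(V^\lambda)^\mu=V^\mu$, $(G^\lambda)^\mu=G^\mu$, so $\CH_{\mu,\lambda}=\HO^*(V^\mu/G^\mu)=\CA_\mu^{W^\mu}$, and by the augmented explicit formula
\[
 \Ind_{\mu,\lambda}(g)=\sum_{\overline{w}\in W^\lambda/W^\mu} w\cdot(g\,k_{\mu,\lambda})
 =\frac{1}{\#W^\mu}\sum_{w\in W^\lambda} w\cdot(g\,k_{\mu,\lambda})\,,
\]
using that $k_{\mu,\lambda}$ is $W^\mu$-invariant (Lemma~\ref{lemma:invariance}) and $g$ is $W^\mu$-invariant. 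This shows immediately that the image lands in $\CH_\lambda$ and, being a $W^\lambda$-average of elements of the form $g\,k_{\mu,\lambda}$ with $g\in\CA_\mu\subset\CA_\lambda$, lands in the $W^\lambda$-invariants of the $W^\lambda$-stable $\CA_\lambda$-submodule generated by the $k_{\mu,\lambda}$. That submodule is exactly $J_\lambda\otimes\HO^*(\pt/G_\lambda)$ under the decomposition $\CA_\lambda\cong\CA_\lambda^{\prim}\otimes\HO^*(\pt/G_\lambda)$ — here one uses that $k_{\mu,\lambda}\in\CA_\lambda^{\prim,\loc}$ (its numerator and denominator are products of weights of $V^\lambda$ and $\Fg^\lambda$, and the relevant center $\Fg_\lambda$-directions do not contribute since $G_\lambda$ acts trivially), together with the definition $J_\lambda\subset\CA_\lambda^{\prim,\loc}$ as the smallest such submodule. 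Taking $W^\lambda$-invariants and using that $W^\lambda$ acts trivially on $\HO^*(\pt/G_\lambda)$ (Lemma~\ref{lemma:W_lambdaG_lambda}) gives that the image is contained in $J_\lambda^{W^\lambda}\otimes\HO^*(\pt/G_\lambda)$.

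For the reverse inclusion I would argue that every generator $g\,k_{\mu,\lambda}$ of $J_\lambda$ (with $g\in\CA_\lambda^{\prim}$, $\mu\prec\lambda$) can be hit: apply $\Ind_{\mu,\lambda}$ to $\sum_{w\in W^\mu}w\cdot g$, which is a polynomial class in $\CH_{\mu,\lambda}$, to produce $\sum_{\overline{w'}\in W^\lambda/W^\mu} w'\cdot\big(\big(\sum_{w\in W^\mu}w\cdot g\big)k_{\mu,\lambda}\big)$, i.e. the $W^\lambda$-average of a $J_\lambda$-generator; since such averages linearly span $J_\lambda^{W^\lambda}$ (already observed in the proof of Lemma~\ref{lemma:invariantsJH}), and since tensoring with $\HO^*(\pt/G_\lambda)$-classes is absorbed by allowing $g$ to range over all of $\CA_\lambda^{\prim}$ and multiplying by the pulled-back $\HO^*(\pt/G_\lambda)$-factor (which commutes with induction as it comes from $V^\lambda\cms G^\lambda$, cf. Proposition~\ref{proposition:actions}), we get all of $J_\lambda^{W^\lambda}\otimes\HO^*(\pt/G_\lambda)$ in the image.

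\medskip

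The main obstacle I anticipate is the bookkeeping of the $\HO^*(\pt/G_\lambda)$-factor: one must check carefully that the induction morphism $\Ind_{\mu,\lambda}$ is linear over $\HO^*(\pt/G_\lambda)$ (equivalently, that multiplication by center-of-$G^\lambda$ tautological classes commutes with $\Ind_{\mu,\lambda}$), so that the tensor decomposition $\CH_\lambda\cong\CH_\lambda^{\prim}\otimes\HO^*(\pt/G_\lambda)$ is compatible with everything in sight. This should follow from the projection-formula-type compatibility implicit in the construction of the sheafified induction \eqref{equation:sheafifiedinduction} together with the fact that $G_\lambda$ acts trivially on $V^\lambda$ (so the $\HO^*(\pt/G_\lambda)$-action is pulled back from $V^\lambda\cms G^\lambda$, hence commutes with the proper pushforward $p_{\mu,\lambda}$ and the smooth pullback $q_{\mu,\lambda}$); but making this precise is the one point requiring genuine care rather than formula-chasing.
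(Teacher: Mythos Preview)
Your proposal is correct and follows essentially the same route as the paper: reduce the two index sets via Lemma~\ref{lemma:characterisationJlambda}/Lemma~\ref{lemma:gcdcocharacters}, use the explicit averaged formula for $\Ind_{\mu,\lambda}$ to see the image lands in $(J_{\lambda}\otimes\HO^*(\pt/G_{\lambda}))^{W^{\lambda}}=J_{\lambda}^{W^{\lambda}}\otimes\HO^*(\pt/G_{\lambda})$, and use the computation from the proof of Lemma~\ref{lemma:invariantsJH} for surjectivity onto $J_{\lambda}^{W^{\lambda}}$.

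The only point worth flagging is your ``main obstacle''. You propose to establish $\HO^*(\pt/G_{\lambda})$-linearity of $\Ind_{\mu,\lambda}$ via projection-formula compatibilities for the sheafified induction \eqref{equation:sheafifiedinduction}. The paper does this much more cheaply, directly from the explicit formula: any $g\in\HO^*(\pt/G_{\lambda})\subset\CA_{\lambda}$ is $W^{\lambda}$-invariant (Lemma~\ref{lemma:W_lambdaG_lambda}), hence for $f\in\CH_{\mu,\lambda}$ one has $fg\in\CH_{\mu,\lambda}$ and
\[
 \Ind_{\mu,\lambda}(fg)=\frac{1}{\#W^{\mu}}\sum_{w\in W^{\lambda}}w\cdot(fg\,k_{\mu,\lambda})=g\cdot\frac{1}{\#W^{\mu}}\sum_{w\in W^{\lambda}}w\cdot(f\,k_{\mu,\lambda})=g\,\Ind_{\mu,\lambda}(f)\,.
\]
This is all that is needed both for the containment (decompose $f=\sum_i f_i\otimes g_i$ along $\CA_{\lambda}\cong\CA_{\lambda}^{\prim}\otimes\HO^*(\pt/G_{\lambda})$ and pull the $g_i$ out) and for the surjectivity (multiply the generators of $J_{\lambda}^{W^{\lambda}}$ already in the image by arbitrary $g\in\HO^*(\pt/G_{\lambda})$). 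So no geometric input beyond Proposition~\ref{proposition:explicitformulainduction} is required.
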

\begin{proof}
We only prove the second part of the lemma, as the second part can be proved in the exact same way by using Lemma~\ref{lemma:characterisationJlambda}.

 This is a calculation very similar to that in the proof of Lemma~\ref{lemma:JlambdaWlambdainvariant}. Namely, if $f\in\CH_{\mu,\lambda}$ and $g\in\HO^*(\pt/G_{\lambda})$, $\Ind_{\mu,\lambda}(fg)=\Ind_{\mu,\lambda}(f)g$ by $W^{\lambda}$-invariance of $g$ (Lemma~\ref{lemma:W_lambdaG_lambda}). Moreover, $fg\in\CH_{\mu,\lambda}$. Therefore, a spanning subset of $J_{\lambda}^{W^{\lambda}}\otimes\HO^*(\pt/G_{\lambda})$ can be obtained in this way by considering all $\mu\prec\lambda$. Conversely, if $f\in\CH_{\mu,\lambda}\subset\CA_{\mu,\lambda}=\CA_{\lambda}=\HO^*(\pt/T)$ for $\mu\prec\lambda$, we write $f=\sum_{i=1}^Nf_i\otimes g_i$ with $f_i\in \CA_{\lambda}^{\prim}$ and $g_i\in\HO^*(\pt/G_{\lambda})$ so that
 \[
  \Ind_{\mu,\lambda}(f)=\sum_{i=1}^N\Ind_{\mu,\lambda}(f_i)\otimes g_i
 \]
 indeed belongs to $J_{\lambda}^{W^{\lambda}}\otimes\HO^*(\pt/G_{\lambda})$.
\end{proof}

\begin{lemma}
\label{lemma:surjective}
 For any $\lambda\in X_*(T)$, the map
 \[
  \bigoplus_{\substack{\mu\in X_*(T)\\\mu\preceq\lambda}}\CP_{\mu}\otimes\HO^*(\pt/G_{\mu})\xrightarrow{\bigoplus_{\mu\preceq\lambda}\Ind_{\mu,\lambda}} \CH_{\lambda}
 \]
is surjective.
\end{lemma}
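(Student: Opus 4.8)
The plan is to prove this by induction on $\lambda$ along the order $\preceq$, concretely on the integer $\dim V^{\lambda}+\dim\Fg^{\lambda}$, which strictly decreases whenever $\mu\prec\lambda$ (so the induction is well-founded); the inductive hypothesis is that the asserted surjectivity holds for every cocharacter $\mu$ with $\mu\prec\lambda$. First I would record the decomposition to be filled: combining $\CH_{\lambda}\cong\CH_{\lambda}^{\prim}\otimes\HO^*(\pt/G_{\lambda})$ (Proposition~\ref{proposition:decompositioncohomology}) with the chosen $W_{\lambda}$-stable splitting $\CH_{\lambda}^{\prim}=J_{\lambda}^{W^{\lambda}}\oplus\CP_{\lambda}$ gives
\[
 \CH_{\lambda}=\bigl(J_{\lambda}^{W^{\lambda}}\otimes\HO^*(\pt/G_{\lambda})\bigr)\oplus\bigl(\CP_{\lambda}\otimes\HO^*(\pt/G_{\lambda})\bigr)\,.
\]
The second summand is immediately in the image: the $\mu=\lambda$ term of the map is $\Ind_{\lambda,\lambda}$, and for $\mu=\lambda$ the augmented induction diagram of \S\ref{subsection:augmentation} degenerates ($(V^{\lambda})^{\lambda\geq0}=V^{\lambda}$ and $P_{\lambda,\lambda}=G^{\lambda}$), so $\Ind_{\lambda,\lambda}=\id_{\CH_{\lambda}}$. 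It therefore remains to show that $J_{\lambda}^{W^{\lambda}}\otimes\HO^*(\pt/G_{\lambda})$ lies in the image.

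For this I would invoke Lemma~\ref{lemma:imageinduction}, which identifies $J_{\lambda}^{W^{\lambda}}\otimes\HO^*(\pt/G_{\lambda})$ with the image of $\bigoplus_{\mu\prec\lambda}\CH_{\mu,\lambda}\to\CH_{\lambda}$, and note that $\CH_{\mu,\lambda}=\CH_{\mu}$ for $\mu\prec\lambda$ (since then $V^{\mu}\subset V^{\lambda}$ and $G^{\mu}\subset G^{\lambda}$). For each such $\mu$ the inductive hypothesis gives $\CH_{\mu}=\sum_{\nu\preceq\mu}\Ind_{\nu,\mu}\bigl(\CP_{\nu}\otimes\HO^*(\pt/G_{\nu})\bigr)$. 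Applying $\Ind_{\mu,\lambda}$ and using that the composite $\Ind_{\mu,\lambda}\circ\Ind_{\nu,\mu}$ equals $\Ind_{\nu,\lambda}$ up to a nonzero rational scalar (transitivity of parabolic induction: on the level of the explicit shuffle formula of Proposition~\ref{proposition:explicitformulainduction} this is the identity $k_{\nu,\mu}\,k_{\mu,\lambda}=\xi\,k_{\nu,\lambda}$ with $\xi\in\BoQ^*$, provable from weak symmetry exactly as in Lemma~\ref{lemma:signkernelsim}), one gets
\[
 \Ind_{\mu,\lambda}(\CH_{\mu})=\sum_{\nu\preceq\mu}\Ind_{\nu,\lambda}\bigl(\CP_{\nu}\otimes\HO^*(\pt/G_{\nu})\bigr)\subset\im\Bigl(\bigoplus_{\nu\preceq\lambda}\Ind_{\nu,\lambda}\Bigr)\,.
\]
Summing over $\mu\prec\lambda$ places $J_{\lambda}^{W^{\lambda}}\otimes\HO^*(\pt/G_{\lambda})$ in the image, which together with the previous paragraph closes the induction; the base case ($\lambda$ minimal, so $J_{\lambda}=0$ and $\CH_{\lambda}=\CP_{\lambda}\otimes\HO^*(\pt/G_{\lambda})$) is the same argument with the sum over $\mu\prec\lambda$ empty.

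The only step requiring genuine care rather than bookkeeping is the transitivity of parabolic induction up to a scalar used above. The geometric associativity of Proposition~\ref{proposition:associativity} is in general available only after a judicious choice of lifts of the cocharacters, so I would instead compare $\Ind_{\mu,\lambda}\circ\Ind_{\nu,\mu}$ with $\Ind_{\nu,\lambda}$ directly through the shuffle formula and check, as in Lemma~\ref{lemma:signkernelsim}, that weak symmetry forces the two induction kernels to coincide up to a nonzero rational scalar --- which suffices, since the image of $\Ind_{\nu,\lambda}$ restricted to $\CP_{\nu}\otimes\HO^*(\pt/G_{\nu})$ is unchanged by such a rescaling.
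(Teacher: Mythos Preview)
Your argument is correct and follows the same inductive strategy as the paper: split $\CH_\lambda=(\CP_\lambda\otimes\HO^*(\pt/G_\lambda))\oplus(J_\lambda^{W^\lambda}\otimes\HO^*(\pt/G_\lambda))$, cover the first summand with $\Ind_{\lambda,\lambda}=\id$, and reach the second via Lemma~\ref{lemma:imageinduction}, the inductive hypothesis, and transitivity of induction. The only difference is that the paper invokes Proposition~\ref{proposition:associativity} directly for this last step, whereas you (rightly noting that its parabolic compatibility hypotheses are not automatic for arbitrary lifts $\nu\preceq\mu\preceq\lambda$) establish the kernel identity $k_{\nu,\mu}\,k_{\mu,\lambda}\in\BoQ^*\cdot k_{\nu,\lambda}$ from weak symmetry in the spirit of Lemma~\ref{lemma:signkernelsim}; this is a valid and slightly more careful route to the same conclusion.
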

\begin{proof}
 We proceed by induction on $\lambda\in X_*(T)$. If $\lambda$ is minimal, then $J_{\lambda}=\{0\}$ (by Lemma~\ref{lemma:characterisationJlambda}) and so $\CP_{\lambda}=\HO^*(V^{\lambda}/(G^{\lambda}/G_{\lambda}))$. By Proposition~\ref{proposition:decompositioncohomology}, the map
 \[
  \CP_{\lambda}\otimes\HO^*(\pt/G_{\lambda})\rightarrow\HO^*(V^{\lambda}/G^{\lambda})
 \]
of the lemma is indeed surjective (as it even is an isomorphism).

Now, let $\lambda\in X_*(T)$. We assume that the map of the lemma is surjective for any $\lambda'\prec\lambda$. By associativity of the induction (Proposition~\ref{proposition:associativity}), the image of the map of the lemma coincides with the image of the composition
\[
 \left(\bigoplus_{\mu\prec\lambda}\bigoplus_{\mu'\preceq\mu}\CP_{\mu'}\otimes\HO^*(\pt/G_{\mu'})\right)\oplus \left(\CP_{\lambda}\otimes\HO^*(\pt/G_{\lambda})\right)\xrightarrow{I\oplus\id} \left(\bigoplus_{\mu\prec\lambda}\CH_{\mu}\right)\oplus\left(\CP_{\lambda}\otimes\HO^*(\pt/G_{\lambda})\right)\rightarrow\CH_{\lambda}
\]
where the morphism $I$ is $\bigoplus_{\mu\prec\lambda}\bigoplus_{\mu'\preceq\mu}\Ind_{\mu',\mu}$.
By induction hypothesis,
\[
 \left(\bigoplus_{\mu\prec\lambda}\bigoplus_{\mu'\preceq\mu}\CP_{\mu'}\otimes\HO^*(\pt/G_{\mu'})\right)\rightarrow\left(\bigoplus_{\mu\prec\lambda}\CH_{\mu}\right)
\]
is surjective and by Lemma~\ref{lemma:imageinduction}, the image of 
\[
\left(\bigoplus_{\mu\prec\lambda}\CH_{\mu}\right)\rightarrow\CH_{\lambda}
\]
is $J_{\lambda}^{W^{\lambda}}\otimes\HO^*(\pt/G_{\lambda})$, of which $\CP_{\lambda}\otimes\HO^*(\pt/G_{\lambda})$ is a direct sum complement by definition of $\CP_{\lambda}$. This proves the surjectivity.
\end{proof}

 For $y\in\Ft$, we let $\Fg^{y}=\{\xi\in\Fg\mid [y,\xi]=0\}$ be the centraliser of $y$ in $\Fg$ and $V^{y}=\{v\in V\mid y\cdot v=0\}$ where $y$ acts on $V$ via the infinitesimal action.

\begin{lemma}
\label{lemma:cocharactertoelement}
For any $y\in\Ft$, there exists $\lambda\in X_*(T)$ such that $\Fg^{\lambda}=\Fg^{y}$ and $V^{\lambda}=V^{y}$.
\end{lemma}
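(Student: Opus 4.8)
The plan is to reduce the statement to an elementary fact about hyperplane arrangements and lattices in $\Ft$. The point is that, regarding $\CW'(\Fg)$ and $\CW'(V)$ as subsets of $X^*(T)\subset\Ft^*$ (\S\ref{subsection:notations}), both $\Fg^{\lambda},V^{\lambda}$ for $\lambda\in X_*(T)$ and $\Fg^{y},V^{y}$ for $y\in\Ft$ are controlled by the \emph{same} data, namely the set of weights vanishing on the given element. For $z\in\Ft$ put $S(z)\coloneqq\{\gamma\in(\CW'(\Fg)\cup\CW'(V))\setminus\{0\}\mid\gamma(z)=0\}$. For a cocharacter $\lambda$ one has $\langle\lambda,\gamma\rangle=\gamma(\lambda)$ under the inclusion $X_*(T)\subset\Ft$, so $\Fg^{\lambda}=\Ft\oplus\bigoplus_{\alpha\in\CW'(\Fg),\,\gamma(\lambda)=0}\Fg_{\alpha}$ and $V^{\lambda}=\bigoplus_{\beta\in\CW'(V),\,\beta(\lambda)=0}V_{\beta}$; and since $y$ acts on $\Fg_{\alpha}$ (resp. $V_{\beta}$) by the scalar $\alpha(y)$ (resp. $\beta(y)$) through the infinitesimal action, $\Fg^{y}$ and $V^{y}$ also depend only on $S(y)$. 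Hence it suffices to find $\lambda\in X_*(T)$ with $S(\lambda)=S(y)$.

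To produce such a $\lambda$, I would set $H\coloneqq\bigcap_{\gamma\in S(y)}\ker\gamma\subset\Ft$, a linear subspace containing $y$ which is defined over $\BoQ$ with respect to the rational structure $\Ft_{\BoQ}\coloneqq X_*(T)\otimes_{\BoZ}\BoQ$ (the $\gamma\in S(y)$ being integral functionals). For every $\gamma\in((\CW'(\Fg)\cup\CW'(V))\setminus\{0\})\setminus S(y)$ we have $\gamma(y)\neq0$, so $H\cap\ker\gamma$ is a proper subspace of $H$; therefore $U\coloneqq H\setminus\bigcup_{\gamma\notin S(y)}(H\cap\ker\gamma)$ is a Zariski-open subset of $H$, non-empty since it contains $y$. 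The set $H\cap\Ft_{\BoQ}$ of rational points is Zariski dense in $H$, so I may choose $\lambda_{0}\in H\cap\Ft_{\BoQ}\cap U$; then $N\lambda_{0}\in X_*(T)$ for a suitable positive integer $N$, and $\lambda\coloneqq N\lambda_{0}$ still lies in $H\cap U$ because each $\ker\gamma$ is a linear subspace. By construction $\gamma(\lambda)=0$ precisely when $\gamma\in S(y)$, that is $S(\lambda)=S(y)$, and hence $\Fg^{\lambda}=\Fg^{y}$ and $V^{\lambda}=V^{y}$.

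I do not anticipate a genuine obstacle: the argument is pure linear algebra over $\BoQ$. The only two points that need a little care are the identification of the infinitesimal action of $y$ on the weight spaces $\Fg_{\alpha}$ and $V_{\beta}$ with scalar multiplication by $\alpha(y)$ and $\beta(y)$ (so that $\Fg^{y},V^{y}$ indeed depend only on $S(y)$), and the rationality of $H$ — which is exactly what guarantees that $H$ carries enough rational points (equivalently, that $X_*(T)\cap H$ is a full-rank lattice in $H$) to meet the non-empty open set $U$.
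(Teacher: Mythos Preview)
Your proof is correct and follows essentially the same approach as the paper: both arguments identify the set of weights vanishing on $y$, form the rational subspace $H=\bigcap_{\gamma\in S(y)}\ker\gamma$ (the paper's $\cap_{\alpha\in\CW_1}\alpha^{\perp}$), remove the finitely many proper hyperplanes coming from the non-vanishing weights, and then pick a rational point in the resulting non-empty open set and clear denominators to land in $X_*(T)$. Your write-up is slightly more explicit about why $\Fg^{y}$ and $V^{y}$ depend only on $S(y)$ via the infinitesimal action, and about the density of rational points in $H$, but the underlying argument is the same.
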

\begin{proof}
 We may assume that $y\neq 0$. Via the natural embeddings $X_*(T)\subset \Ft_{\BoQ}$ and $X^*(T)\subset \Ft_{\BoQ}^*$, the natural pairing $X_*(T)\times X^*(T)\rightarrow \BoZ$ corresponds to the natural pairing $\langle-,-\rangle\colon\Ft\times\Ft^*\rightarrow\BoC$ (see also \S\ref{subsection:notations}). We let
 \[
  \CW_1\coloneqq \{\alpha\in\CW'(V)\cup\CW'(\Fg)\mid \langle y,\alpha\rangle=0\}
 \]
 and
\[
  \CW_2\coloneqq \{\alpha\in\CW'(V)\cup\CW'(\Fg)\mid \langle y,\alpha\rangle\neq0\}\,.
  \]
The rational vector space $\cap_{\alpha\in\CW_1}\alpha^{\perp}\subset\Ft_{\BoQ}$ is non-zero, as it is defined by the rational equations $\langle-,\alpha\rangle=0$ and its complexification contains $y$. The complement of the finite union of rational hyperplanes $U_y\coloneqq\cap_{\alpha\in\CW_1}\alpha^{\perp}\setminus\cup_{\alpha\in\CW_2}\alpha^{\perp}$ is therefore also non-empty. We let $y'\in U_{y}$ be a rational point ($y\in\Ft_{\BoQ}$). For some $N\geq 1$, $Ny'\in X_*(T)\subset \Ft$. We let $\lambda\coloneqq Ny'$. It satisfies the property of the lemma.
\end{proof}

\begin{proposition}
\label{proposition:finitedimensional}
 For any $\lambda\in X_*(T)$, $\CP_{\lambda}$ is a finite-dimensional vector space.
\end{proposition}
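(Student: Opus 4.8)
The plan is to present $\CP_{\lambda}$ as a quotient and reduce to the cofiniteness of a homogeneous ideal in a polynomial ring. By construction $\CH_{\lambda}^{\prim}=J_{\lambda}^{W^{\lambda}}\oplus\CP_{\lambda}$, so $\CP_{\lambda}\cong\CH_{\lambda}^{\prim}/J_{\lambda}^{W^{\lambda}}$ as graded vector spaces, and it suffices to show that $J_{\lambda}^{W^{\lambda}}$ has finite codimension in $\CH_{\lambda}^{\prim}=(\CA_{\lambda}^{\prim})^{W^{\lambda}}$. Here $\CA_{\lambda}^{\prim}=\HO^{*}(V^{\lambda}/(T/G_{\lambda}^{\circ}))\cong\Sym((\Ft/\Fg_{\lambda})^{*})$ is a polynomial ring ($V^{\lambda}$ is equivariantly contractible and $T/G_{\lambda}^{\circ}$ is a torus with Lie algebra $\Ft/\Fg_{\lambda}$), and $J_{\lambda}$ is a graded $\CA_{\lambda}^{\prim}$-submodule of its localisation. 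Put $I\coloneqq J_{\lambda}\cap\CA_{\lambda}^{\prim}$, a homogeneous ideal. By Lemma~\ref{lemma:invariantsJH} we have $J_{\lambda}^{W^{\lambda}}\subset\CH_{\lambda}^{\prim}\subset\CA_{\lambda}^{\prim}$, hence $J_{\lambda}^{W^{\lambda}}\subset I$ and, being $W^{\lambda}$-invariant, $J_{\lambda}^{W^{\lambda}}=I^{W^{\lambda}}$. So once we know $\dim_{\BoC}\CA_{\lambda}^{\prim}/I<\infty$ we get $I\supset(\CA_{\lambda}^{\prim})_{\geq N}$ for some $N$, therefore $J_{\lambda}^{W^{\lambda}}=I^{W^{\lambda}}\supset(\CH_{\lambda}^{\prim})_{\geq N}$, and $\CP_{\lambda}$ is finite-dimensional.

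Because $I$ is a homogeneous ideal of the polynomial ring $\CA_{\lambda}^{\prim}=\BoC[\Ft/\Fg_{\lambda}]$, the condition $\dim_{\BoC}\CA_{\lambda}^{\prim}/I<\infty$ is equivalent to the vanishing locus $V(I)$ being contained in $\{0\}$ inside $\Ft/\Fg_{\lambda}$. So the heart of the matter is: for every $y\in(\Ft/\Fg_{\lambda})\setminus\{0\}$, exhibit an element of $I$ not vanishing at $y$. I would take it to be $N_{\mu}\coloneqq\prod_{\alpha\in\CW^{\mu>0}(V^{\lambda})}\alpha$ — the numerator of $k_{\mu,\lambda}$ — for a well-chosen cocharacter $\mu\prec\lambda$: indeed $N_{\mu}=\big(\prod_{\alpha\in\CW^{\mu>0}(\Fg^{\lambda})}\alpha\big)k_{\mu,\lambda}$, and $k_{\mu,\lambda}\in J_{\lambda}$ by Lemma~\ref{lemma:characterisationJlambda} (valid as $\mu\prec\lambda$), so $N_{\mu}\in J_{\lambda}$ since $J_{\lambda}$ is an $\CA_{\lambda}^{\prim}$-module; moreover all the $\alpha$ occurring vanish on $\Fg_{\lambda}$, so $N_{\mu}\in\CA_{\lambda}^{\prim}$ and hence $N_{\mu}\in I$. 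It then remains to choose $\mu\prec\lambda$ so that $\alpha(y)\neq 0$ for every $\alpha\in\CW^{\mu>0}(V^{\lambda})$, which gives $N_{\mu}(y)\neq 0$.

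To find such a $\mu$ I would argue as in the proof of Lemma~\ref{lemma:cocharactertoelement}. The forms $\alpha$, $\alpha\in\CW'(V^{\lambda})\cup\CW'(\Fg^{\lambda})$, all vanish on $\Fg_{\lambda}$ (roots of the Levi $\Fg^{\lambda}$ vanish on its centre, weights of $V^{\lambda}$ vanish on $\Fg_{\lambda}$ since $G_{\lambda}$ acts trivially on $V^{\lambda}$), and by the definition of $G_{\lambda}$ their common kernel in $\Ft$ is $\Fg_{\lambda}$; hence they span $(\Ft/\Fg_{\lambda})^{*}$, and some $\alpha_{0}$ among them has $\alpha_{0}(y)\neq 0$. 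Set $\CW_{0}\coloneqq\{\alpha\in\CW'(V^{\lambda})\cup\CW'(\Fg^{\lambda}):\alpha(y)=0\}$ and $U\coloneqq\bigcap_{\alpha\in\CW_{0}}\ker\langle-,\alpha\rangle\subset\Ft_{\BoQ}$, a rational subspace. Since $\langle\lambda,\alpha\rangle=0$ for all $\alpha\in\CW'(V^{\lambda})\cup\CW'(\Fg^{\lambda})$ we have $\lambda\in U$; consequently $U$ is not contained in any hyperplane $\ker\langle-,\alpha\rangle$ with $\alpha\in\CW'(V)\cup\CW'(\Fg)$, $\langle\lambda,\alpha\rangle\neq 0$, and $U$ is not contained in $(\Fg_{\lambda})_{\BoQ}$ (since $\alpha_{0}\notin\mathrm{span}(\CW_{0})$, so $\alpha_{0}$ is not identically zero on $U$). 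As $U$ is irreducible it has a rational point — hence, after scaling, a cocharacter $\mu\in X_{*}(T)$ — lying in none of these finitely many proper subspaces. Then: avoiding those hyperplanes gives $V^{\mu}\subset V^{\lambda}$ and $\Fg^{\mu}\subset\Fg^{\lambda}$, i.e. $\mu\preceq\lambda$; $\mu\notin(\Fg_{\lambda})_{\BoQ}$ forces $V^{\mu}\subsetneq V^{\lambda}$ or $\Fg^{\mu}\subsetneq\Fg^{\lambda}$, i.e. $\mu\prec\lambda$; and $\mu\in U$ means $\langle\mu,\alpha\rangle=0$ for all $\alpha\in\CW_{0}$, so every $\alpha\in\CW^{\mu>0}(V^{\lambda})$ lies outside $\CW_{0}$, i.e. $\alpha(y)\neq 0$, as wanted.

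I expect the last construction to be the main obstacle — producing a single $\mu$ that is simultaneously $\prec\lambda$ and ``transverse at $y$''. Concretely, the only non-formal input is the strict inclusion $(\Fg_{\lambda})_{\BoQ}\subsetneq U$, which is exactly what makes $\mu\prec\lambda$ (rather than merely $\mu\sim\lambda$) attainable while keeping $\langle\mu,\alpha\rangle=0$ on all of $\CW_{0}$. The remaining ingredients (the reduction to $V(I)\subset\{0\}$, clearing the denominator of $k_{\mu,\lambda}$, and passing to $W^{\lambda}$-invariants) are routine.
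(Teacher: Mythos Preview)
Your proof is correct and follows essentially the same route as the paper's. Both arguments reduce to showing that a homogeneous ideal in $\CA_{\lambda}^{\prim}=\Sym((\Ft/\Fg_{\lambda})^{*})$ has zero-dimensional support, and both use the numerator $N_{\mu}=\prod_{\alpha\in\CW^{\mu>0}(V^{\lambda})}\alpha$ of $k_{\mu,\lambda}$, for a suitable $\mu\prec\lambda$, as the witness that does not vanish at a given $y\neq 0$. The minor packaging differences are: the paper first reduces to $\lambda=0$ and replaces $V$ by $V\times\Fg$ so that the new $J_{\lambda}$ is an honest ideal of $\CA_{\lambda}^{\prim}$ (the new $k_{\mu,\lambda}$ being precisely $N_{\mu}$), whereas you keep $\lambda$ general and work with $I=J_{\lambda}\cap\CA_{\lambda}^{\prim}$, observing that $N_{\mu}\in I$; and the paper invokes Lemma~\ref{lemma:cocharactertoelement} to produce the cocharacter, while you reprove the relevant part of that lemma inline. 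Your treatment of the strict inequality $\mu\prec\lambda$ via $U\not\subset(\Fg_{\lambda})_{\BoQ}$ is exactly the paper's observation that $y'\notin\Fg_{\lambda}$ forces either a proper Levi or a proper fixed subspace.
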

\begin{proof}
 By possibly replacing $V$ by $V^{\lambda}$ and $G$ by $G^{\lambda}$, we may assume that $\lambda=0$. It suffices to prove that $J_{\lambda}^{W^{\lambda}}\subset \CH_{\lambda}^{\prim}$ has finite codimension. If we replace $V$ by $V\times\Fg$, then the corresponding module $J_{\lambda}\subset\CA^{\prim}_{\lambda}[\prod_{\alpha\in\CW(\Fg)\setminus\{0\}}\alpha^{-1}]$ is smaller than the one for $V$ (by the very definition of $J_{\lambda}$ at the beginning of \S\ref{subsection:proofcohint}, since when replacing $V$ by $V\times\Fg$, the new $k_{\lambda}$ is equal to the numerator of the old $k_{\lambda}$), and therefore so is the corresponding module of invariants $J_{\lambda}^{W^{\lambda}}$, and therefore its codimension is bigger. Moreover, in this case, $J_{\lambda}\subset \CA_{\lambda}^{\prim}$, i.e. there is no need to localise.
 
 We have an inclusion
 \[
  \CH_{\lambda}^{\prim}/J_{\lambda}^{W^{\lambda}}\rightarrow\CA_{\lambda}^{\prim}/J_{\lambda}\,,
 \]
because $\CH_{\lambda}^{\prim}\cap J_{\lambda}=J_{\lambda}^{W^{\lambda}}$ (second part of Lemma~\ref{lemma:invariantsJH}). Therefore, it suffices to prove that the ideal $J_{\lambda}\subset\CA_{\lambda}^{\prim}$ has finite codimension. We see $\CA_{\lambda}^{\prim}$ as the ring of regular functions on the affine algebraic variety $\Ft/\Fg_{\lambda}$. We prove that the closed subscheme defined by $J_{\lambda}$ is supported on $\{0\}$. If the ideal $J_{\lambda}$ vanishes on $y\in(\Ft/\Fg_{\lambda})_{\overline{\BoQ}}\setminus\{0\}$ (where $\overline{\BoQ}$ is the field of algebraic numbers) we let $y'\in\Ft$ be a lift of $y$ and $\nu$ be given by Lemma~\ref{lemma:cocharactertoelement}. It satisfies $\CW^{\nu=0}(V)=\{\alpha\in\CW(V)\mid \langle y,\alpha\rangle=0\}\subset \CW(V)$ and $\CW^{\nu=0}(\Fg)=\{\alpha\in\CW(\Fg)\mid\langle y,\alpha\rangle=0\}\subset\CW(\Fg)$. Moreover, at least one of these inequalities is strict. Indeed, we have $y'\not\in\Fg_{\lambda}$, which means that either $y'$ is not in the center of $\Lie(G^{\lambda})$ and so $\nu$ defines a strict Levi $G^{\nu}=G^y$ of $G^{\lambda}$ or $y'$ is not in the Lie algebra of the kernel of the action $G^{\lambda}\rightarrow \GL(V^{\lambda})$ in which case $V^{\nu}=V^{y}\subsetneq V^{\lambda}$. Therefore, $\nu\prec\lambda=0$, i.e $\overline{\nu}$ is nontrivial. By definition, we have $\prod_{\alpha\in\CW^{\nu<0}(V^{\lambda})}\alpha\in J_{\lambda}$.
 
We also have $\prod_{\alpha\in\CW^{\nu<0}(V^{\lambda})}\langle y,\alpha\rangle\neq 0$: indeed, by choice of $\nu$, for any $\alpha\in\CW^{\nu<0}(V^{\lambda})$, $\langle\nu,\alpha\rangle=0$ if and only if $\langle y,\alpha\rangle=0$. This is a contradiction since we found an element $\prod_{\alpha\in\CW^{\nu<0}(V^{\lambda})}\alpha\neq 0$ of $J_{\lambda}$ not vanishing on $y$. Therefore, $J_{\lambda}$ defines a subscheme of $\Ft/\Fg_{\lambda}$ supported on $0$.
\end{proof}

The rest of this section is devoted to the proof of the injectivity of the cohomological integrality map in Theorem~\ref{theorem:cohintabsolute}.

For $\lambda\in X_*(T)$, we let $\tilde{J}_{\lambda}\coloneqq J_{\lambda}\CA_{\lambda}$. This is a $\CA_{\lambda}$-submodule of $\CA_{\lambda}^{\loc}$.

Recall the inclusion $\CH_{\lambda}=\CA_{\lambda}^{W^{\lambda}}\rightarrow \CA_{\lambda}$.

\begin{lemma}
\label{lemma:nonzerodivisor}
 The elements $\alpha\in\CW^{\lambda<0}(V)\cup\CW^{\lambda<0}(\Fg)$ (or more generally, $\alpha\in X^*(T)$ such that $\langle\lambda,\alpha\rangle\neq0$), seen as elements of $\CA_{\lambda}$, are not zero divisors in the quotient $\CA_{\lambda}$-module $\CA_{\lambda}^{\loc}/\tilde{J}_{\lambda}$.
\end{lemma}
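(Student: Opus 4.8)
The plan is to make the $\CA_{\lambda}$-module $\CA_{\lambda}^{\loc}/\tilde J_{\lambda}$ explicit and then observe that, because $\langle\lambda,\alpha\rangle\neq 0$, multiplication by $\alpha$ is given by a degree-one monic polynomial in a well-chosen variable, which is automatically injective.

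First I would record the shape of the quotient. Fix a basis $\mathbf{y}=(y_1,\dots,y_k)$ of $\Fg_{\lambda}^*$. Using the chosen $W_{\lambda}$-invariant splitting $\Ft^*\cong(\Ft/\Fg_{\lambda})^*\oplus\Fg_{\lambda}^*$ together with the identifications recalled at the start of \S\ref{subsection:proofcohint} (coming from Proposition~\ref{proposition:decompositioncohomology} and Lemma~\ref{lemma:G_lambdacohomology}), we have $\HO^*(\pt/G_{\lambda})\cong\Sym(\Fg_{\lambda}^*)=\BoC[\mathbf{y}]$ and $\CA_{\lambda}\cong\CA_{\lambda}^{\prim}[\mathbf{y}]$; moreover the localisation defining $\CA_{\lambda}^{\loc}$ only inverts elements of $\CA_{\lambda}^{\prim}$, because every $\alpha\in\CW(\Fg^{\lambda})\setminus\{0\}$ vanishes on the centre of $\Fg^{\lambda}$ and hence on $\Fg_{\lambda}$, so $\CA_{\lambda}^{\loc}\cong\CA_{\lambda}^{\prim,\loc}[\mathbf{y}]$. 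Since $J_{\lambda}$ is a $\CA_{\lambda}^{\prim}$-submodule of $\CA_{\lambda}^{\prim,\loc}$ and $\BoC[\mathbf{y}]$ is free over $\BoC$, it follows that $\tilde J_{\lambda}=J_{\lambda}\CA_{\lambda}=J_{\lambda}\otimes_{\BoC}\BoC[\mathbf{y}]$, and hence
\[
 \CA_{\lambda}^{\loc}/\tilde J_{\lambda}\;\cong\;M[\mathbf{y}],\qquad M\coloneqq\CA_{\lambda}^{\prim,\loc}/J_{\lambda},
\]
the ring of polynomials in $\mathbf{y}$ with coefficients in the $\CA_{\lambda}^{\prim}$-module $M$, on which $\CA_{\lambda}=\CA_{\lambda}^{\prim}[\mathbf{y}]$ acts in the evident way.

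The crucial point is that $\lambda$, viewed in $\Ft$ (\S\ref{subsection:notations}), lies in $\Fg_{\lambda}$: it centralises $\Fg^{\lambda}$ by the definition $G^{\lambda}=C_G(\lambda)$, so it lies in the centre of $\Fg^{\lambda}$; and it annihilates $V^{\lambda}$, which is exactly its zero weight space, so it lies in $\Lie(\ker(G^{\lambda}\to\GL(V^{\lambda})))$. Therefore, whenever $\langle\lambda,\alpha\rangle\neq 0$ the linear form $\alpha$ does not vanish on $\Fg_{\lambda}$ — it takes the value $\langle\lambda,\alpha\rangle$ at $\lambda\in\Fg_{\lambda}$ — so the $\Fg_{\lambda}^*$-component of $\alpha$ under $\Ft^*\cong(\Ft/\Fg_{\lambda})^*\oplus\Fg_{\lambda}^*$ is a nonzero linear combination of the $y_i$. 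Next I would apply an invertible $\BoC$-linear change of the coordinates $\mathbf{y}$: this is an automorphism of $\CA_{\lambda}$ fixing $\CA_{\lambda}^{\prim}$, hence fixing $\CA_{\lambda}^{\prim,\loc}$, $J_{\lambda}$ and $\tilde J_{\lambda}$, so it changes neither $M[\mathbf{y}]$ nor whether $\alpha$ is a zero-divisor on it; after it we may assume $\alpha=y_1+\alpha'$ with $\alpha'\in(\Ft/\Fg_{\lambda})^*\subset\CA_{\lambda}^{\prim}$. Writing $M[\mathbf{y}]=\bigl(M[y_2,\dots,y_k]\bigr)[y_1]$, multiplication by $\alpha$ on $\CA_{\lambda}^{\loc}/\tilde J_{\lambda}$ is then multiplication by a polynomial in $y_1$ that is monic of degree one over $A\coloneqq\CA_{\lambda}^{\prim,\loc}[y_2,\dots,y_k]$.

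Finally I would invoke the elementary fact that a monic polynomial $f\in A[y_1]$ is a non-zero-divisor on $N[y_1]$ for every $A$-module $N$: if $g=\sum_{j\leq d}n_jy_1^j$ has $n_d\neq 0$, the top ($y_1^{d+1}$) coefficient of $fg$ equals $n_d\neq 0$, so $fg\neq 0$. Taking $N=M[y_2,\dots,y_k]$ shows multiplication by $\alpha$ on $M[\mathbf{y}]\cong\CA_{\lambda}^{\loc}/\tilde J_{\lambda}$ is injective, which is the claim; note the argument used $\alpha$ only through $\langle\lambda,\alpha\rangle\neq 0$, so it already covers the general statement. I expect the only mildly delicate step to be the bookkeeping in the first paragraph — that $\tilde J_{\lambda}$ is genuinely extended from $J_{\lambda}$ and that localising $\CA_{\lambda}$ at the roots of $\Fg^{\lambda}$ does not touch the variables $\mathbf{y}$; everything after the observation $\lambda\in\Fg_{\lambda}$ is the monic-polynomial triviality.
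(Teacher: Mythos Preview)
Your proof is correct and follows essentially the same route as the paper's: both use the key observation that $\lambda\in\Fg_{\lambda}$ forces the $\Fg_{\lambda}^*$-component of $\alpha$ to be nonzero, then choose coordinates on $\Fg_{\lambda}^*$ so that this component is a single coordinate, and conclude by a leading-term argument. The only cosmetic difference is packaging: you first identify $\CA_{\lambda}^{\loc}/\tilde J_{\lambda}\cong M[\mathbf{y}]$ and then invoke the general fact that a monic polynomial is a non-zero-divisor on $N[y_1]$, whereas the paper carries out the same leading-term computation by hand, writing $\alpha g=(\alpha-\alpha')g+x_1g$ and tracking the top monomial in the $x$-variables.
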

\begin{proof}
 Let $\alpha\in\CW^{\lambda<0}(V)\cup\CW^{\lambda<0}(\Fg)$. We see $\alpha\in\Ft^*$ via the natural inclusion $X^*(T)\subset\Ft^*$ and $\langle\lambda,\alpha\rangle\neq 0$ implies that $\alpha$ does not vanish on $\Fg_{\lambda}$ since it takes a nonzero value on $\left(\frac{\mathrm{d}}{\mathrm{d}t}\lambda(t)\right)_{t=1}\in\Fg_{\lambda}\subset\Ft$. Therefore, the projection $\alpha'\in\Fg_{\lambda}^*$ of $\alpha$ with respect to the chosen (or any) decomposition $\Ft^*\cong(\Ft/\Fg_{\lambda})^*\oplus\Fg_{\lambda}^*$ is nonzero. 
 
 We choose a basis $(x_1,\hdots,x_{\dim\Fg_{\lambda}})$ of $\Fg_{\lambda}^*$ such that $x_1=\alpha'$ and we order the monomials by the lexicographic order so that $x_1\geq x_2\geq\hdots$. Let $g\in \CA_{\lambda}^{\loc}\cong\CA_{\lambda}^{\prim,\loc}\otimes\HO^*(\pt/G_{\lambda})$. It may be written in a unique way (see Lemma~\ref{lemma:G_lambdacohomology})
 \begin{equation}
 \label{equation:decompositiong}
  g=\sum_{(\nu_1,\hdots,\nu_{\dim\Fg_{\lambda}})\in\BoN^{\dim\Fg_{\lambda}}}g_{\nu}x^{\nu}
  \end{equation}
with $g_{\nu}\in\CA_{\lambda}^{\prim,\loc}$.
We have an equivalence between the following two statements:
\begin{enumerate}
 \item $g\not\in \tilde{J}_{\lambda}$,
 \item $g_{\nu}\not\in J_{\lambda}$ for some $\nu\in\BoN^{\dim\Fg_{\lambda}}$.
\end{enumerate}
The implication $(1)\implies(2)$ is immediate as if all $g_{\nu}$ belong to $J_{\lambda}$, then $g$ would be in $\tilde{J}_{\lambda}$. The reverse implication $(2)\implies(1)$ is also true by the unicity of the decomposition \eqref{equation:decompositiong} of $g$.

Assume that $g\not\in\tilde{J}_{\lambda}$. We need to show that for $\alpha\in\CW^{\lambda<0}(V)\cup\CW^{\lambda<0}(\Fg)$, $\alpha g\not\in\tilde{J}_{\lambda}$. We let $\nu=\max \{\nu'\in\BoN^{\dim\Fg_{\lambda}}\colon g_{\nu'}\neq 0\}$ (the maximum is taken with respect to the lexicographic order). We may assume that $g_{\nu}\not\in J_{\lambda}$.
We have
\[
\begin{aligned}
 \alpha g&=(\alpha-\alpha')g+\alpha'g\\
         &=(\alpha-\alpha')g+x_1g
\end{aligned}
\]
and $\alpha-\alpha'\in(\Ft/\Fg_{\lambda})^*\subset\Sym((\Ft/\Fg_{\lambda})^*)$. Therefore, the term of highest degree in the variables $(x_1,\hdots,x_{\dim\Fg_{\lambda}})$ in $\alpha g$ is $g_{\nu}x_1x^{\nu}$ and $g_{\nu}\not\in J_{\lambda}$. By the equivalence above, $\alpha g\not\in \tilde{J}_{\lambda}$. This concludes the proof.
\end{proof}

We let $\CA_{\lambda}'\coloneqq \CA_{\lambda}[\prod_{\alpha\in\CW^{\lambda<0}(V)\cup\CW^{\lambda<0}(\Fg)}\alpha^{-1}]$, $\CA'^{\loc}_{\lambda}\coloneqq\CA_{\lambda}[\alpha^{-1}\colon\alpha\in\CW(\Fg)\setminus\{0\}]$ and $\CH_{\lambda}'\coloneqq (\CA_{\lambda}')^{W^{\lambda}}$. We have the localisation maps
\[
 L\colon \CA_{\lambda}\rightarrow\CA_{\lambda}',\quad L\colon\CA_{\lambda}^{\loc}\rightarrow\CA_{\lambda}'^{\loc}, \quad L\colon \CH_{\lambda}\rightarrow\CH_{\lambda}'\,.
\]
We let $\tilde{J}_{\lambda}'\coloneqq \CA_{\lambda}'L(\tilde{J}_{\lambda})$ be the localised $\CA_{\lambda}'$-module inside $\CA_{\lambda}'^{\loc}$ (i.e. the $\CA_{\lambda}'$-submodule of $\CA_{\lambda}'^{\loc}$ generated by $J_{\lambda}$). We emphasized here the fact that $\tilde{J}'_{\lambda}$ is \emph{not} a $\CA'^{\loc}_{\lambda}$-submodule.

\begin{lemma}
\label{lemma:localizationinjective}
 The localisation maps
 \[
  L\colon \CA_{\lambda}^{\loc}/\tilde{J}_{\lambda}\rightarrow \CA_{\lambda}'^{\loc}/\tilde{J}_{\lambda}', \quad L\colon \CH_{\lambda}/(\tilde{J}_{\lambda})^{W^{\lambda}}\rightarrow \CH_{\lambda}'/(\tilde{J}_{\lambda}')^{W^{\lambda}}
 \]
are injective.
\end{lemma}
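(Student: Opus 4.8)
The plan is to recognise both arrows as localisation maps and to deduce their injectivity from Lemma~\ref{lemma:nonzerodivisor}, which already contains the geometric content. Set $S$ to be the multiplicative subset of $\CA_\lambda$ generated by the elements $\alpha$ for $\alpha\in\CW^{\lambda>0}(V)\cup\CW^{\lambda>0}(\Fg)$. I would first record the bookkeeping identity that passing from the unprimed to the primed objects amounts exactly to inverting $S$: one has $\CA_\lambda'^{\loc}=S^{-1}\CA_\lambda^{\loc}$ (both sides invert the roots in $\CW(\Fg^\lambda)\setminus\{0\}$, and $\CA_\lambda'^{\loc}$ additionally inverts the elements of $S$, the negatives $\CW^{\lambda<0}$ being units already) and $\tilde J_\lambda'=S^{-1}\tilde J_\lambda$, by the definition $\tilde J_\lambda'=\CA_\lambda' L(\tilde J_\lambda)$ together with $\CA_\lambda'=S^{-1}\CA_\lambda$. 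Since localisation at $S$ is exact, $\CA_\lambda'^{\loc}/\tilde J_\lambda'\cong S^{-1}(\CA_\lambda^{\loc}/\tilde J_\lambda)$, and the first map is the canonical localisation morphism $M\to S^{-1}M$ with $M\coloneqq\CA_\lambda^{\loc}/\tilde J_\lambda$. Its kernel is the $S$-torsion $\{m\in M:\exists s\in S,\ sm=0\}$; by Lemma~\ref{lemma:nonzerodivisor} each generator of $S$ acts on $M$ as a non-zero divisor, a finite product of non-zero divisors on a module is again one (if $ab\cdot m=0$ then $a(bm)=0$, hence $bm=0$, hence $m=0$), so every element of $S$ is a non-zero divisor on $M$, the $S$-torsion vanishes, and the first map is injective.

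For the second map I would argue directly from the same input. Let $x\in\CH_\lambda$ represent an element of the kernel; then its image in $\CH_\lambda'/(\tilde J_\lambda')^{W^\lambda}$ is zero, so $L(x)\in(\tilde J_\lambda')^{W^\lambda}\subseteq\tilde J_\lambda'=S^{-1}\tilde J_\lambda$, and unwinding the localisation there is $s\in S$ with $sx\in\tilde J_\lambda$ inside $\CA_\lambda^{\loc}$. Since $s$ is a non-zero divisor on $M=\CA_\lambda^{\loc}/\tilde J_\lambda$ by Lemma~\ref{lemma:nonzerodivisor}, we get $x\in\tilde J_\lambda$; and as $x$ is $W^\lambda$-invariant (it lies in $\CH_\lambda=\CA_\lambda^{W^\lambda}$), this gives $x\in(\tilde J_\lambda)^{W^\lambda}$, so the class of $x$ vanishes. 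Alternatively one can obtain this from the first part by applying the exact functor $(-)^{W^\lambda}$ (valid in characteristic zero for the finite group $W^\lambda$), using that $S$ is $W^\lambda$-stable by the invariance of the character--cocharacter pairing, as in the proof of Lemma~\ref{lemma:invariance}, and that $(\tilde J_\lambda)^{W^\lambda}\subseteq\CH_\lambda$ by Lemma~\ref{lemma:invariantsJH}.

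The mathematical heart is thus already Lemma~\ref{lemma:nonzerodivisor}; the one step I would treat with care is the identification $\CA_\lambda'^{\loc}=S^{-1}\CA_\lambda^{\loc}$, $\tilde J_\lambda'=S^{-1}\tilde J_\lambda$---in particular that $\tilde J_\lambda'$ is genuinely the $S$-localised submodule and not something larger---after which the statement reduces to the formal fact that localisation at a multiplicative set of non-zero divisors is injective.
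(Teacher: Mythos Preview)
Your proof is correct and follows exactly the same approach as the paper's, which consists of the single sentence ``This follows from Lemma~\ref{lemma:nonzerodivisor} as we localise by a set of nonzero divisors of the quotient $\CA_{\lambda}$-module $\CA_{\lambda}^{\loc}/\tilde{J}_{\lambda}$ in $\CA_{\lambda}$.'' You have simply unpacked this one-liner, making explicit the identification of the map with a localisation $M\to S^{-1}M$ and the standard fact that such a map has trivial kernel when $S$ consists of non-zero divisors on $M$.
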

\begin{proof}
 This follows from Lemma~\ref{lemma:nonzerodivisor} as we localise by a set of nonzero divisors of the quotient $\CA_{\lambda}$-module $\CA_{\lambda}^{\loc}/\tilde{J}_{\lambda}$ in $\CA_{\lambda}$. More precisely, the first localization morphism $L\colon \CA_{\lambda}^{\loc}/\tilde{J}_{\lambda}\rightarrow \CA_{\lambda}'^{\loc}/\tilde{J}_{\lambda}'$ is the localization with respect to $\prod_{\alpha\in\CW^{\lambda<0}(V)\cup\CW^{\lambda<0}(\Fg)}\alpha$ for the $\CA_{\lambda}$-module $\CA_{\lambda}^{\loc}/\tilde{J}_{\lambda}$, and the localization element is not a zero-divisor of $\CA_{\lambda}^{\loc}/\tilde{J}_{\lambda}$. Therefore, the first localization morphism is injective.

 Now, the two localization morphisms of the lemma fit into the commutative diagram
\begin{equation}\label{equation:diagramlocalization}\begin{tikzcd}
	{\CH_{\lambda}/(\tilde{J}_{\lambda})^{W^{\lambda}}} & {\CH'_{\lambda}/(\tilde{J}'_{\lambda})^{W^{\lambda}}} \\
	{\CA_{\lambda}^{\loc}/\tilde{J}_{\lambda}} & {\CA'^{\loc}_{\lambda}/\tilde{J}'_{\lambda}} \\
	{}
	\arrow["L", from=1-1, to=1-2]
	\arrow["a"', from=1-1, to=2-1]
	\arrow[from=1-2, to=2-2]
	\arrow["L", from=2-1, to=2-2]
\end{tikzcd}\end{equation}
The vertical map $a$ is induced by the inclusion $\CH_{\lambda}=\CH_{\lambda}^{W^{\lambda}}\subset \CA_{\lambda}\subset\CA_{\lambda}^{\loc}$. The map $a$ is injective since $\CH_{\lambda}\cap\tilde{J}_{\lambda}=(\tilde{J}_{\lambda})^{W^{\lambda}}$, which follows from the arguments of the proof of Lemma~\ref{lemma:invariantsJH}. Therefore, the second localization map (the top line of \eqref{equation:diagramlocalization}) must be injective.
\end{proof}

\begin{lemma}
\label{lemma:productkernels}
 Let $\lambda, \mu\in X_{*}(T)$ and $\nu$ as in Lemma~\ref{lemma:gcdcocharacters}. Then,
  \[
   k_{\mu}=k_{\nu,\lambda}\frac{\prod_{\alpha\in\CW^{\mu<0}(V)\cap\CW^{\lambda\neq0}(V)}\alpha}{\prod_{\alpha\in\CW^{\mu<0}(\Fg)\cap\CW^{\lambda\neq0}(\Fg)}\alpha}\,.
 \]
\end{lemma}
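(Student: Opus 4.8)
The plan is to unwind both sides into products of characters and match them factor by factor. Write $\nu=a\lambda+b\mu$ with $b>0$, as produced in the proof of Lemma~\ref{lemma:gcdcocharacters}. First I would identify the weight sets entering $k_{\nu,\lambda}$. The weights of $V^{\lambda}$, viewed as a $G^{\lambda}$-representation with respect to $T$, are exactly the $\alpha\in\CW(V)$ with $\langle\lambda,\alpha\rangle=0$, with unchanged multiplicities (the Proposition following Definition~\ref{definition:symmetricrepresentation}); likewise $\CW(\Fg^{\lambda})=\CW^{\lambda=0}(\Fg)$. For such an $\alpha$ we have $\langle\nu,\alpha\rangle=a\langle\lambda,\alpha\rangle+b\langle\mu,\alpha\rangle=b\langle\mu,\alpha\rangle$, so that, $b$ being positive, $\langle\nu,\alpha\rangle>0\iff\langle\mu,\alpha\rangle>0$. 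Hence
\[
 \CW^{\nu>0}(V^{\lambda})=\CW^{\mu>0}(V)\cap\CW^{\lambda=0}(V),\qquad \CW^{\nu>0}(\Fg^{\lambda})=\CW^{\mu>0}(\Fg)\cap\CW^{\lambda=0}(\Fg),
\]
and therefore
\[
 k_{\nu,\lambda}=\frac{\prod_{\alpha\in\CW^{\mu>0}(V)\cap\CW^{\lambda=0}(V)}\alpha}{\prod_{\alpha\in\CW^{\mu>0}(\Fg)\cap\CW^{\lambda=0}(\Fg)}\alpha}.
\]
(One can avoid the explicit form of $\nu$: since $(V^{\lambda})^{\nu>0}$ is the direct-sum complement of $(V^{\lambda})^{\nu=0}=V^{\nu}=V^{\mu}\cap V^{\lambda}$ inside $(V^{\lambda})^{\nu\geq0}=(V^{\lambda})^{\mu\geq0}$, and likewise for $\mu$, one gets $(V^{\lambda})^{\nu>0}=(V^{\lambda})^{\mu>0}$; the $\Fg$-statement follows similarly from $\Fg^{\nu}=\Fg^{\mu}\cap\Fg^{\lambda}$ and $P_{\mu,\lambda}=P_{\nu,\lambda}$.)

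Next I would split $\CW^{\mu>0}(V)$ according to whether $\langle\lambda,\alpha\rangle$ vanishes, and similarly for $\Fg$ (these are genuine partitions of the underlying multisets):
\[
 \CW^{\mu>0}(V)=\bigl(\CW^{\mu>0}(V)\cap\CW^{\lambda=0}(V)\bigr)\sqcup\bigl(\CW^{\mu>0}(V)\cap\CW^{\lambda\neq0}(V)\bigr).
\]
Taking the product of the characters over each piece and dividing,
\[
 k_{\mu}=\frac{\prod_{\alpha\in\CW^{\mu>0}(V)}\alpha}{\prod_{\alpha\in\CW^{\mu>0}(\Fg)}\alpha}=k_{\nu,\lambda}\cdot\frac{\prod_{\alpha\in\CW^{\mu>0}(V)\cap\CW^{\lambda\neq0}(V)}\alpha}{\prod_{\alpha\in\CW^{\mu>0}(\Fg)\cap\CW^{\lambda\neq0}(\Fg)}\alpha},
\]
which is the asserted identity; all manipulations take place in the domain $\Frac(\HO^*_T(\pt))$, in which each $\alpha$ involved is a nonzero element.

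There is no genuine obstacle here: the only content is the bookkeeping identification $\CW^{\nu>0}(V^{\lambda})=\CW^{\mu>0}(V)\cap\CW^{\lambda=0}(V)$ together with its $\Fg$-analogue, which is where the defining properties of $\nu$ from Lemma~\ref{lemma:gcdcocharacters} (or, equivalently, its explicit form $a\lambda+b\mu$ with $b>0$) enter. The points deserving a little care are that all weight collections are counted with multiplicity, so the displayed products are over multisets, and that one is free to perform the computation in the localisation $\Frac(\HO^*_T(\pt))$.
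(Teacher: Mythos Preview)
Your proof is correct and is essentially the same approach as the paper's, which simply says the identity is ``immediate from the definitions, by the properties satisfied by $\nu$'': you have just made explicit the bookkeeping that the paper leaves to the reader, namely the identification $\CW^{\nu>0}(V^{\lambda})=\CW^{\mu>0}(V)\cap\CW^{\lambda=0}(V)$ (and its $\Fg$-analogue) together with the obvious disjoint splitting of $\CW^{\mu>0}(V)$ according to whether $\langle\lambda,\alpha\rangle$ vanishes.
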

\begin{proof}
 By the properties satisfied by the cocharacter $\nu$ given by Lemma~\ref{lemma:gcdcocharacters}, we have $\CW^{\mu<0}(V)=\CW^{\nu<0}(V)\cap\CW^{\lambda=0}(V)\sqcup \CW^{\mu<0}(V)\cap\CW^{\lambda\neq0}(V)$ and similarly for $V$ and $\Fg$ exchanged. Therefore,
 \[
 \begin{aligned}
  k_{\mu}&=\frac{\prod_{\alpha\in\CW^{\mu<0}(V)}\alpha}{\prod_{\alpha\in\CW^{\mu<0}(\Fg)}\alpha}\\
  &=\frac{\prod_{\alpha\in\CW^{\nu<0}(V)\cap\CW^{\lambda=0}(V)}\alpha}{\prod_{\alpha\in\CW^{\nu<0}(\Fg)\cap\CW^{\lambda=0}(\Fg)}\alpha}\frac{\prod_{\alpha\in\CW^{\mu<0}(V)\cap\CW^{\lambda\neq0}(V)}\alpha}{\prod_{\alpha\in\CW^{\mu<0}(\Fg)\cap\CW^{\lambda\neq0}(\Fg)}\alpha}\\
  &=k_{\nu,\lambda}\frac{\prod_{\alpha\in\CW^{\mu<0}(V)\cap\CW^{\lambda\neq0}(V)}\alpha}{\prod_{\alpha\in\CW^{\mu<0}(\Fg)\cap\CW^{\lambda\neq0}(\Fg)}\alpha}\,.
 \end{aligned}
 \]

\end{proof}

We define
\[
 k'_{\mu,\lambda}\coloneqq \frac{\prod_{\alpha\in\CW^{\mu<0}(V)\cap\CW^{\lambda\neq0}(V)}\alpha}{\prod_{\alpha\in\CW^{\mu<0}(\Fg)\cap\CW^{\lambda\neq0}(\Fg)}\alpha}\,.
\]
This is an invertible element in $\CA_{\lambda}'$.
\begin{lemma}
\label{lemma:idealkernel}
 Let $\lambda, \mu\in X_{*}(T)$ and $w$ in $W$ be such that $\lambda\not\preceq w^{-1}\cdot\mu$. Then, $w\cdot k_{\mu}\in \tilde{J}'_{\lambda}$.
\end{lemma}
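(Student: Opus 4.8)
The plan is to reduce to the case where $w$ is trivial and then to feed the factorisation of Lemma~\ref{lemma:productkernels} into the description of $J_\lambda$ provided by Lemma~\ref{lemma:characterisationJlambda}. For the reduction I would argue exactly as in the proof of Lemma~\ref{lemma:sameimageWeylgroup}: a lift of $w$ in $N_G(T)$ permutes the weights of $V$ and of $\Fg$ with multiplicities, and the pairing $\langle-,-\rangle$ is $W$-invariant, so $w\cdot\CW^{\mu>0}(V)=\CW^{w\cdot\mu>0}(V)$ and likewise for $\Fg$, whence $w\cdot k_\mu=k_{w\cdot\mu}$. After this relabelling of the $W$-orbit of $\mu$ the lemma becomes the case $w=1$, namely: for every $\sigma\in X_*(T)$ with $\lambda\not\preceq\sigma$ one has $k_\sigma\in\tilde J'_\lambda$.

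So fix such a $\sigma$ and let $\nu\in X_*(T)$ be a cocharacter attached to the pair $\lambda,\sigma$ by Lemma~\ref{lemma:gcdcocharacters}, so that in particular $V^\nu=V^\lambda\cap V^\sigma$ and $\Fg^\nu=\Fg^\lambda\cap\Fg^\sigma$. First I would check that $\nu\prec\lambda$: the inclusions $V^\nu\subseteq V^\lambda$ and $\Fg^\nu\subseteq\Fg^\lambda$ give $\nu\preceq\lambda$, and if both were equalities we would get $V^\lambda\subseteq V^\sigma$ and $\Fg^\lambda\subseteq\Fg^\sigma$, i.e.\ $\lambda\preceq\sigma$, contrary to assumption; hence at least one inclusion is strict and $\nu\prec\lambda$. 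By Lemma~\ref{lemma:characterisationJlambda} this gives $k_{\nu,\lambda}\in J_\lambda\subseteq\tilde J_\lambda$. On the other hand Lemma~\ref{lemma:productkernels} gives $k_\sigma=k_{\nu,\lambda}\cdot k'_{\sigma,\lambda}$, and $k'_{\sigma,\lambda}$ is invertible in $\CA_\lambda'$ (as recorded just before the statement: each character occurring in $k'_{\sigma,\lambda}$ pairs nontrivially with $\lambda$, hence, by weak symmetry of $V$ and symmetry of $\Fg$, is a nonzero rational multiple of one of the factors inverted in $\CA_\lambda'$). Multiplying, $k_\sigma=k_{\nu,\lambda}\cdot k'_{\sigma,\lambda}$ lies in the $\CA_\lambda'$-submodule $\tilde J'_\lambda=\CA_\lambda'\,L(\tilde J_\lambda)$, which is the assertion.

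This lemma is essentially formal once Lemmas~\ref{lemma:gcdcocharacters}, \ref{lemma:characterisationJlambda} and~\ref{lemma:productkernels} are available, so there is no single hard step; the two points I would be most careful about are the $W$-equivariance bookkeeping in the reduction (the identity $w\cdot k_\mu=k_{w\cdot\mu}$ and the relabelling it induces) and the invertibility of $k'_{\sigma,\lambda}$ in $\CA_\lambda'$, which is precisely where the weak-symmetry hypothesis on $V$ is used, symmetry of $\Fg$ being automatic. Everything else is a direct substitution.
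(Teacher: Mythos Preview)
Your proof is correct and follows the same route as the paper's: reduce to $w=1$ via $w\cdot k_\mu=k_{\mu'}$ for a suitable $W$-translate $\mu'$ of $\mu$, pick $\nu$ from Lemma~\ref{lemma:gcdcocharacters}, check $\nu\prec\lambda$, and factor $k_\sigma=k_{\nu,\lambda}\cdot k'_{\sigma,\lambda}$ via Lemma~\ref{lemma:productkernels} with $k'_{\sigma,\lambda}$ a unit in $\CA'_\lambda$. The only point to tidy up is the one you already flagged: with your identity $w\cdot k_\mu=k_{w\cdot\mu}$ the relabelled cocharacter is $\sigma=w\cdot\mu$, whereas the hypothesis reads $\lambda\not\preceq w^{-1}\cdot\mu$, so make sure the direction of the $W$-action on $X_*(T)$ versus on $\HO^*_T(\pt)$ is fixed consistently (the paper's own proof writes $w\cdot k_\mu=k_{w^{-1}\cdot\mu}$, which matches the stated hypothesis directly).
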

\begin{proof}
 Of course, the presence of $W$ is superfluous since $w k_{\mu}=k_{w^{-1}\cdot \mu}$ for any $w\in W$ and it suffices to prove that for any $\lambda,\mu\in X_*(T)$ such that $\lambda\not\preceq\mu$, $k_{\mu}\in\tilde{J}'_{\lambda}$.
 
We let $\nu\in X_*(T)$ be such that $V^{\nu}=V^{\lambda}\cap V^{\mu}$, $\Fg^{\nu}=\Fg^{\lambda}\cap\Fg^{\mu}$, $(V^{\lambda})^{\nu\geq 0}=(V^{\lambda})^{\mu\geq 0}$ and $(\Fg^{\lambda})^{\nu\geq 0}=(\Fg^{\lambda})^{\mu\geq 0}$ (Lemma~\ref{lemma:gcdcocharacters}). By assumption, $\nu\prec\lambda$.

 We can write, by Lemma~\ref{lemma:productkernels},
 \[
   k_{\mu}=k_{\nu,\lambda}\underbrace{\frac{\prod_{\alpha\in\CW^{\mu<0}(V)\cap\CW^{\lambda\neq0}(V)}\alpha}{\prod_{\alpha\in\CW^{\mu<0}(\Fg)\cap\CW^{\lambda\neq0}(\Fg)}\alpha}}_{=k'_{\mu,\lambda}\in\CA'_{\lambda}}\,.
 \]
This element is in $\tilde{J}'_{\lambda}$ since $k_{\nu,\lambda}$ is in ${J}_{\lambda}$, by Lemma~\ref{lemma:characterisationJlambda}.
\end{proof}

\begin{lemma}
\label{lemma:stableWlambda}
 Let $\lambda\in X_*(T)$ be such that for some $w\in W$, $\lambda\preceq w\cdot\lambda$. Then, $\overline{w\cdot\lambda}=\overline{\lambda}$.
\end{lemma}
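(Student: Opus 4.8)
The plan is to unwind the definition: the relation $\preceq$ is defined purely by inclusions of fixed loci (see \eqref{equation:orderintro}), so the assertion $\overline{w\cdot\lambda}=\overline{\lambda}$ is equivalent to showing that the inclusions $V^{\lambda}\subseteq V^{w\cdot\lambda}$ and $\Fg^{\lambda}\subseteq\Fg^{w\cdot\lambda}$ furnished by the hypothesis $\lambda\preceq w\cdot\lambda$ are in fact equalities; equivalently, that one also has $w\cdot\lambda\preceq\lambda$, hence $\lambda\sim w\cdot\lambda$.

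The key point I would record is a dimension count. Fix a lift $\dot{w}\in N_G(T)$ of $w$, exactly as in the proof of Lemma~\ref{lemma:Wlambdasubgroup}. Conjugation by $\dot{w}$ is an automorphism of $G$ stabilising $T$ and sending the cocharacter $\lambda$ to $w\cdot\lambda$, so it carries $G^{\lambda}=Z_G(\lambda)$ isomorphically onto $G^{w\cdot\lambda}$; likewise the linear automorphism $v\mapsto\dot{w}\cdot v$ of $V$ identifies $V^{\lambda}$ with $V^{w\cdot\lambda}$ (since $v\in V^{\lambda}$ if and only if $\dot{w}\cdot v\in V^{w\cdot\lambda}$). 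In particular $\dim V^{w\cdot\lambda}=\dim V^{\lambda}$ and $\dim\Fg^{w\cdot\lambda}=\dim\Fg^{\lambda}$.

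Then I would conclude: a containment of finite-dimensional vector spaces of equal dimension is an equality, so $V^{\lambda}=V^{w\cdot\lambda}$ and $\Fg^{\lambda}=\Fg^{w\cdot\lambda}$ (the latter being equivalent to $G^{\lambda}=G^{w\cdot\lambda}$ because Levi subgroups are connected), i.e. $\lambda\sim w\cdot\lambda$, which is precisely $\overline{w\cdot\lambda}=\overline{\lambda}$. There is essentially no obstacle: the only input is that the $W$-action on $X_*(T)$ is realised by inner automorphisms of $G$, which preserve the dimensions of the relevant fixed loci; in particular weak symmetry of $V$ plays no role in this lemma.
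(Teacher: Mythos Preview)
Your proof is correct and follows essentially the same approach as the paper: the paper's proof simply records the inclusions $V^{\lambda}\subset V^{w\cdot\lambda}$, $\Fg^{\lambda}\subset\Fg^{w\cdot\lambda}$ together with the dimension equalities $\dim V^{\lambda}=\dim V^{w\cdot\lambda}$, $\dim\Fg^{\lambda}=\dim\Fg^{w\cdot\lambda}$, while you additionally spell out why those dimension equalities hold (via the action of a lift $\dot{w}\in N_G(T)$).
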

\begin{proof}
 The lemma follows from the inclusions $V^{\lambda}\subset V^{w\cdot\lambda}$ and $\Fg^{\lambda}\subset\Fg^{w\cdot\lambda}$ and the equalities of dimensions $\dim V^{\lambda}=\dim V^{w\cdot\lambda}$ and $\dim\Fg^{\lambda}=\dim\Fg^{w\cdot\lambda}$.
\end{proof}

For $\lambda\in X_*(T)$, we have an inclusion
\[
 \Res_{\lambda}\colon \CH\cong\CA^{W}\cong\CA_{\lambda}^{W}\rightarrow\CH_{\lambda}\cong \CA_{\lambda}^{W^{\lambda}}\,.
\]
since $\CA\cong\CA_{\lambda}=\HO^*(\pt/T)$ and $W^{\lambda}\subset W$ (Lemma~\ref{lemma:Wlambdasubgroup}).

\begin{corollary}
\label{corollary:congruence}
For any $f_{\mu}\in\CH_{\mu}$, $\tilde{\mu}\in\SP_V/W$ such that $f_{\mu}=0$ if $\lambda\prec w\cdot\mu$ for some $w\in W$, we have
 \[
  L\Res_{\lambda}\left(\sum_{\tilde{\mu}\in\SP_V/W}\Ind_{\mu}(f_{\mu})\right)\equiv k_{\lambda}\cdot\sum_{w\in W_{\lambda}/W^{\lambda}}\varepsilon_{V,\lambda}(w)^{-1}(w\cdot f_{\lambda})\pmod{\tilde{J}'_{\lambda}}\,.
 \]
\end{corollary}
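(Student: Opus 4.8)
The plan is to expand each summand $\Res_\lambda(\Ind_\mu(f_\mu))$ via the explicit formula of Proposition~\ref{proposition:explicitformulainduction}, and then to discard, term by term, the contributions that already lie in $\tilde{J}'_{\lambda}$. Under the identifications $\CH\cong\CA^W$, $\CA\cong\CA_\lambda$ and $\CH_\lambda\cong\CA_\lambda^{W^\lambda}$, the map $\Res_\lambda$ is just the inclusion $\CA_\lambda^{W}\hookrightarrow\CA_\lambda^{W^\lambda}$, i.e.\ the identity on underlying elements of $\CA_\lambda=\HO^*(\pt/T)$. So, writing $\mu$ for the chosen lift of a class $\tilde\mu\in\SP_V/W$ and $\lambda$ for the chosen lift of $\tilde\lambda$, Proposition~\ref{proposition:explicitformulainduction} gives
\[
\Res_\lambda(\Ind_\mu(f_\mu))=\frac{1}{\#W^\mu}\sum_{w\in W}(w\cdot f_\mu)(w\cdot k_\mu)\in\CA_\lambda.
\]
By Lemma~\ref{lemma:idealkernel}, $w\cdot k_\mu\in\tilde{J}'_{\lambda}$ whenever $\lambda\not\preceq w^{-1}\cdot\mu$; since $\tilde{J}'_{\lambda}$ is a module over $\CA_\lambda'$, $\CA_\lambda\subset\CA_\lambda'$, and $w\cdot f_\mu\in\CA_\lambda$, the $w$-th summand then lies in $\tilde{J}'_{\lambda}$ as well. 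Hence, modulo $\tilde{J}'_{\lambda}$, only the indices $w$ with $\lambda\preceq w^{-1}\cdot\mu$ can contribute.

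First I would dispose of the classes $\tilde\mu\neq\tilde\lambda$. Suppose some index $w$ survives for such a $\mu$, i.e.\ $\lambda\preceq w^{-1}\cdot\mu$. If $\lambda\sim w^{-1}\cdot\mu$, then, since the $W$-action on $\SP_V$ is induced from that on $X_*(T)$, the classes $\overline\lambda$ and $\overline\mu$ lie in the same $W$-orbit, contradicting $\tilde\mu\neq\tilde\lambda$. Otherwise $\lambda\prec w^{-1}\cdot\mu$, which exhibits an element (namely $w^{-1}\in W$) with $\lambda\prec w^{-1}\cdot\mu$, so the hypothesis on $f_\mu$ forces $f_\mu=0$ and hence $\Ind_\mu(f_\mu)=0$. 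In either case $\Res_\lambda(\Ind_\mu(f_\mu))\equiv 0\pmod{\tilde{J}'_{\lambda}}$; this is also so when no index survives, since then $\Ind_\mu(f_\mu)\in\tilde{J}'_{\lambda}$ outright. Thus the outer sum reduces modulo $\tilde{J}'_{\lambda}$ to the single term $\Res_\lambda(\Ind_\lambda(f_\lambda))$.

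For that term, the surviving indices are the $w\in W$ with $\lambda\preceq w^{-1}\cdot\lambda$, which by Lemma~\ref{lemma:stableWlambda} are exactly the elements of $W_\lambda$ (and each $w\in W_\lambda$ does survive, since then $V^{w^{-1}\cdot\lambda}=V^\lambda$ and $\Fg^{w^{-1}\cdot\lambda}=\Fg^\lambda$). For such $w$, $w\cdot k_\lambda=\varepsilon_{V,\lambda}(w)^{-1}k_\lambda$ by Proposition~\ref{proposition:characterW_lambda}, so
\[
\Res_\lambda(\Ind_\lambda(f_\lambda))\equiv k_\lambda\cdot\frac{1}{\#W^\lambda}\sum_{w\in W_\lambda}\varepsilon_{V,\lambda}(w)^{-1}(w\cdot f_\lambda)\pmod{\tilde{J}'_{\lambda}}.
\]
Since $k_\lambda$ is $W^\lambda$-invariant (Lemma~\ref{lemma:invariance}), the character $\varepsilon_{V,\lambda}$ is trivial on $W^\lambda$; and $f_\lambda$ is $W^\lambda$-invariant. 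Hence the summand $\varepsilon_{V,\lambda}(w)^{-1}(w\cdot f_\lambda)$ depends only on the coset $wW^\lambda$ ($W^\lambda$ being normal in $W_\lambda$ by Lemma~\ref{lemma:normalisationWeylgroups}), and the average over $W_\lambda$ collapses to $\sum_{w\in W_\lambda/W^\lambda}\varepsilon_{V,\lambda}(w)^{-1}(w\cdot f_\lambda)$. This is exactly the claimed congruence.

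I expect the only real difficulty to be bookkeeping rather than substance: keeping the convention for the $W$-action on the kernels $k_\mu$ consistent with Lemmas~\ref{lemma:idealkernel} and~\ref{lemma:stableWlambda}, fixing $\lambda$ as the chosen representative of $\tilde\lambda$ (note that then the premise $\lambda\prec w\cdot\lambda$ of the hypothesis is vacuous by Lemma~\ref{lemma:stableWlambda}, so no constraint is imposed on $f_\lambda$), and --- most delicately --- treating the dichotomy for $\tilde\mu\neq\tilde\lambda$: it is precisely because the hypothesis annihilates $f_\mu$ only under the \emph{strict} domination $\lambda\prec w\cdot\mu$ that the borderline case $\lambda\sim w^{-1}\cdot\mu$ must be excluded separately by the orbit argument, the non-strict relation being exactly what contributes the right-hand side when $\mu=\lambda$.
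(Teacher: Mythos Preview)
Your proof is correct and follows essentially the same route as the paper's own argument: expand via Proposition~\ref{proposition:explicitformulainduction}, discard terms using Lemma~\ref{lemma:idealkernel} and the hypothesis, identify the surviving indices with $W_\lambda$ via Lemma~\ref{lemma:stableWlambda}, and apply Proposition~\ref{proposition:characterW_lambda}. You are in fact more careful than the paper, spelling out the dichotomy for $\tilde\mu\neq\tilde\lambda$ and the final collapse from $\frac{1}{\#W^\lambda}\sum_{w\in W_\lambda}$ to $\sum_{w\in W_\lambda/W^\lambda}$, both of which the paper leaves implicit.
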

\begin{proof}
 We calculate:
 \[
 \begin{aligned}
  L\Res_{\lambda}\left(\sum_{\tilde{\mu}\in\SP_V/W}\Ind_{\mu}(f_{\mu})\right)&=L\Res_{\lambda}\left( \sum_{\tilde{\mu}\in\SP_V/W}\frac{1}{\# W^{\mu}}\sum_{w\in W}(w\cdot f_{\mu})(w\cdot k_{\mu})\right)\quad\text{by Proposition~\ref{proposition:explicitformulainduction}}\\
  &\equiv\frac{1}{\# W^{\lambda}}\sum_{\substack{w\in W\\ w\overline{\lambda}=\overline{\lambda}}}(w\cdot f_{\lambda})(w\cdot k_{\lambda})\pmod{\tilde{J}_{\lambda}'}\quad\text{by Lemmas~\ref{lemma:idealkernel} and \ref{lemma:stableWlambda}}\\
  &= \frac{1}{\# W^{\lambda}}k_{\lambda}\cdot\sum_{w\in W_{\lambda}}\varepsilon_{V,\lambda}(w)^{-1}(w\cdot f_{\lambda})\quad\text{by definition of the character $\varepsilon_{V,\lambda}$, Proposition~\ref{proposition:characterW_lambda}}\,.
  \end{aligned}
 \]
\end{proof}

\begin{proposition}
\label{proposition:injectivity}
 The map
 \[
  \bigoplus_{\tilde{\mu}\in\SP_V/W}(\CP_{\mu}\otimes\HO^*(\pt/G_{\mu}))^{\varepsilon_{V,\mu}}\xrightarrow{\bigoplus\Ind_{\mu}}\HO^*(V/G)
 \]
induced by the parabolic induction morphisms is injective.
\end{proposition}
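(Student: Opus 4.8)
The plan is a downward induction along the poset $\SP_V/W$, with Corollary~\ref{corollary:congruence} doing essentially all of the work. First I would record that $\preceq$ descends to a partial order $\trianglelefteq$ on $\SP_V/W$: declare $\tilde{\mu}_1\trianglelefteq\tilde{\mu}_2$ if $\mu_1\preceq w\cdot\mu_2$ for some $w\in W$. This is $W$-equivariant because $V^{w\cdot\mu}=\dot{w}V^{\mu}$ and $\Fg^{w\cdot\mu}=\dot{w}\Fg^{\mu}$, and antisymmetric because $\lambda\preceq\mu\preceq\nu$ together with $\dim V^{\lambda}=\dim V^{\nu}$ and $\dim\Fg^{\lambda}=\dim\Fg^{\nu}$ forces $\lambda\sim\mu\sim\nu$ (as in Lemma~\ref{lemma:stableWlambda}). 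In particular $\lambda\prec w\cdot\mu$ forces $\tilde{\lambda}\triangleleft\tilde{\mu}$ strictly, since then one of the fixed-point dimensions strictly increases.

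Now suppose $(f_{\mu})_{\tilde{\mu}\in\SP_V/W}$, with $f_{\mu}\in(\CP_{\mu}\otimes\HO^*(\pt/G_{\mu}))^{\varepsilon_{V,\mu}}$, satisfies $\sum_{\tilde{\mu}}\Ind_{\mu}(f_{\mu})=0$, and assume for contradiction that $S\coloneqq\{\tilde{\mu}:f_{\mu}\neq 0\}$ is nonempty. Since $\SP_V/W$ is finite, $S$ has a $\trianglelefteq$-maximal element $\tilde{\lambda}$; by the previous paragraph, $f_{\mu}=0$ whenever $\lambda\prec w\cdot\mu$ for some $w\in W$, so the hypothesis of Corollary~\ref{corollary:congruence} holds for $\lambda$. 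Applying $\Res_{\lambda}$ to the vanishing relation and invoking that corollary gives
\[
 0 \;\equiv\; k_{\lambda}\cdot\sum_{w\in W_{\lambda}/W^{\lambda}}\varepsilon_{V,\lambda}(w)^{-1}(w\cdot f_{\lambda})\pmod{\tilde{J}_{\lambda}'}.
\]
Because $W^{\lambda}$ acts trivially on $\CP_{\lambda}\otimes\HO^*(\pt/G_{\lambda})$ (as $\CP_{\lambda}\subset(\CA_{\lambda}^{\prim})^{W^{\lambda}}$ and by Lemma~\ref{lemma:W_lambdaG_lambda}) and $f_{\lambda}$ lies in the $\varepsilon_{V,\lambda}$-isotypic component of the $W_{\lambda}$-action, every summand equals $f_{\lambda}$, so the sum is $\tfrac{\#W_{\lambda}}{\#W^{\lambda}}f_{\lambda}$. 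Working in characteristic zero and using that $k_{\lambda}$ is a unit in $\CA_{\lambda}'$, we conclude $f_{\lambda}\in\tilde{J}_{\lambda}'$.

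It remains to descend to the unlocalised picture. Since $f_{\lambda}\in\CH_{\lambda}\cong\CH_{\lambda}^{\prim}\otimes\HO^*(\pt/G_{\lambda})$ is a $W^{\lambda}$-invariant polynomial, the relation $f_{\lambda}\in\tilde{J}_{\lambda}'$ says precisely that the class of $f_{\lambda}$ in $\CH_{\lambda}/(\tilde{J}_{\lambda})^{W^{\lambda}}$ is killed by the injective localisation map $L\colon\CH_{\lambda}/(\tilde{J}_{\lambda})^{W^{\lambda}}\to\CH_{\lambda}'/(\tilde{J}_{\lambda}')^{W^{\lambda}}$ established above; hence $f_{\lambda}\in(\tilde{J}_{\lambda})^{W^{\lambda}}=J_{\lambda}^{W^{\lambda}}\otimes\HO^*(\pt/G_{\lambda})$. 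But $f_{\lambda}$ also lies in $\CP_{\lambda}\otimes\HO^*(\pt/G_{\lambda})$, and tensoring the direct-sum decomposition $\CH_{\lambda}^{\prim}=J_{\lambda}^{W^{\lambda}}\oplus\CP_{\lambda}$ by $\HO^*(\pt/G_{\lambda})$ shows these two submodules of $\CH_{\lambda}$ meet only in $0$. Thus $f_{\lambda}=0$, contradicting $\tilde{\lambda}\in S$, so all $f_{\mu}$ vanish and the map is injective. The only step needing genuine care is this last one, namely identifying $\tilde{J}_{\lambda}'\cap\CH_{\lambda}$ with $J_{\lambda}^{W^{\lambda}}\otimes\HO^*(\pt/G_{\lambda})$ via the injectivity of localisation and checking that the complement property defining $\CP_{\lambda}$ survives the tensor factor $\HO^*(\pt/G_{\lambda})$; everything else is a formal consequence of Corollary~\ref{corollary:congruence}.
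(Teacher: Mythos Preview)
Your argument is correct and follows the same route as the paper's proof: pick a $\trianglelefteq$-maximal $\tilde{\lambda}$ among the nonzero $f_{\mu}$'s, apply Corollary~\ref{corollary:congruence}, use that $f_{\lambda}$ is $\varepsilon_{V,\lambda}$-isotypic to collapse the sum, and then exploit that $k_{\lambda}$ is a unit in $\CA'_{\lambda}$. The paper simply asserts ``$f_{\lambda}\notin\tilde{J}'_{\lambda}$ since $f_{\lambda}\in\CP_{\lambda}\otimes\HO^*(\pt/G_{\lambda})$'' and leaves the reader to unpack it; you have made this step explicit via the injectivity-of-localisation lemma and the identification $(\tilde{J}_{\lambda})^{W^{\lambda}}=J_{\lambda}^{W^{\lambda}}\otimes\HO^*(\pt/G_{\lambda})$, which is exactly the intended justification.
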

\begin{proof}
Let $f_{\mu}\in(\CP_{\mu}\otimes\HO^*(\pt/G_{\mu}))^{\varepsilon_{V,\mu}}$, $\tilde{\mu}\in\SP_V/W$. We let $U\coloneqq \sum_{\tilde{\mu}\in\SP_V/W}\Ind_{\mu}(f_{\mu})$. We assume that not all of the $f_{\mu}$'s are zero, and we have to prove that $U\neq0$. We let $\tilde{\lambda}\in\SP_V/W$ be such that $f_{\lambda}\neq0$ and for any $\tilde{\mu}\in\SP_V/W$ such that $f_{\mu}\neq0$, for any $w\in W$, $\lambda\not\preceq w\cdot \mu$ (otherwise, we may replace $\tilde{\lambda}$ by $\tilde{\mu}$, etc.).

By Corollary~\ref{corollary:congruence}, we have
 \[
 \begin{aligned}
  L\Res_{\lambda}\left(\sum_{\tilde{\mu}\in\SP_V/W}\Ind_{\mu}(f_{\mu})\right)&\equiv k_{\lambda}\cdot\sum_{w\in W_{\lambda}/W^{\lambda}}\varepsilon_{V,\lambda}(w)^{-1}(w\cdot f_{\lambda})\pmod{\tilde{J}'_{\lambda}}\\ &\equiv k_{\lambda}\# (W_{\lambda}/W^{\lambda}) f_{\lambda}\pmod{\tilde{J}'_{\lambda}}\quad\text{by Proposition \ref{proposition:projector}}\,.
  \end{aligned}
 \]
and moreover, $f_{\lambda}\not\in\tilde{J}'_{\lambda}$ since $f_{\lambda}\in\CP_{\lambda}\otimes\HO^*(\pt/G_{\lambda})$, and $k_{\lambda}$ is invertible in $\CA_{\lambda}'$. We obtain that $\Res_{\lambda}(U)\neq0$. Therefore, $U\neq 0$, proving injectivity.
\end{proof}

We are now ready to prove Theorem~\ref{theorem:cohintabsolute}.
\begin{corollary}
\label{corollary:cohintiso}
 The map \[
  \bigoplus_{\tilde{\mu}\in\SP_V/W}(\CP_{\mu}\otimes\HO^*(\pt/G_{\mu}))^{\varepsilon_{V,\mu}}\xrightarrow{\bigoplus_{\tilde{\mu}\in\SP_V/W}\Ind_{\mu}}\HO^*(V/G)
 \]
 is an isomorphism.
\end{corollary}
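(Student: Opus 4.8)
The plan is to obtain the isomorphism by combining the injectivity already established in Proposition~\ref{proposition:injectivity} with the surjectivity coming from Lemma~\ref{lemma:surjective}; the only substantive work is bookkeeping, namely reconciling the index set $X_*(T)$ that appears in the surjectivity statement with the set $\SP_V/W$ and the $\varepsilon_{V,\mu}$-isotypic components appearing in the statement to be proved.

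First I would apply Lemma~\ref{lemma:surjective} with $\lambda=0$. Since $V^{\mu}\subset V$ and $\Fg^{\mu}\subset\Fg$ hold for every cocharacter $\mu$, the constraint $\mu\preceq 0$ is vacuous, so the lemma yields that
\[
\bigoplus_{\mu\in X_*(T)}\CP_{\mu}\otimes\HO^*(\pt/G_{\mu})\xrightarrow{\ \bigoplus\Ind_{\mu}\ }\HO^*(V/G)
\]
is surjective. Then I would collapse this sum in two steps. The data $V^{\mu}$, $\Fg^{\mu}$, $G_{\mu}$, and the submodule $J_{\mu}$ --- whose generators $k_{\nu,\mu}$ only involve $V^{\mu}$ and $\Fg^{\mu}$, and whose relevant index set is $\{\nu:\nu\prec\mu\}$ by Lemma~\ref{lemma:characterisationJlambda} --- depend only on the class $\overline{\mu}\in\SP_V$, hence so does the chosen complement $\CP_{\mu}$. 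Combined with Lemma~\ref{lemma:sameimageoverline}, which gives that $\Ind_{\mu}$ and $\Ind_{\mu'}$ have the same image when $\overline{\mu}=\overline{\mu'}$, and with the finiteness of $\SP_V$, the image is already attained by the subsum over a set of representatives of $\SP_V$. Next, choosing the complements $\CP_{\mu}$ $W$-equivariantly along each $W$-orbit --- which is legitimate because $J_{w\cdot\mu}=w\cdot J_{\mu}$ and because the $W_{\mu}$-stability built into $\CP_{\mu}$ is exactly the stabiliser condition making such a choice consistent --- and using Lemma~\ref{lemma:sameimageWeylgroup} in the form $\Ind_{w\cdot\mu}(f)=\Ind_{\mu}(w^{-1}\cdot f)$, the contributions of $\overline{\mu}$ and of $w\cdot\overline{\mu}$ to the image coincide, so that already
\[
\sum_{\tilde{\mu}\in\SP_V/W}\Ind_{\mu}\bigl(\CP_{\mu}\otimes\HO^*(\pt/G_{\mu})\bigr)=\HO^*(V/G).
\]

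Finally I would pass to the isotypic components without losing surjectivity. By the proposition following Proposition~\ref{proposition:characterW_lambda}, $\Ind_{\mu}(w\cdot f)=\varepsilon_{V,\mu}(w)\Ind_{\mu}(f)$ for $w\in W_{\mu}$, so if $p_{\varepsilon_{V,\mu}}$ denotes the projector onto the $\varepsilon_{V,\mu}$-isotypic component (Proposition~\ref{proposition:projector}) then $\Ind_{\mu}\circ p_{\varepsilon_{V,\mu}}=\Ind_{\mu}$, whence $\Ind_{\mu}\bigl((\CP_{\mu}\otimes\HO^*(\pt/G_{\mu}))^{\varepsilon_{V,\mu}}\bigr)=\Ind_{\mu}(\CP_{\mu}\otimes\HO^*(\pt/G_{\mu}))$. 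Summing over $\tilde{\mu}\in\SP_V/W$ gives surjectivity of the map in the statement, and together with Proposition~\ref{proposition:injectivity} this gives bijectivity; the map is then automatically a graded isomorphism since each constituent map is graded (for parabolic induction this is Lemma~\ref{lemma:preservescohdegrees}). I expect the only real obstacle to be the careful bookkeeping of compatible choices of the $\CP_{\mu}$ over $W$-orbits together with this isotypic reduction; the injectivity and surjectivity inputs themselves are already in hand, so the corollary is essentially a packaging of the preceding results.
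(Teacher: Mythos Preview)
Your proposal is correct and follows essentially the same route as the paper: injectivity from Proposition~\ref{proposition:injectivity}, surjectivity from Lemma~\ref{lemma:surjective} at $\lambda=0$, reduction of the index set via Lemmas~\ref{lemma:sameimageoverline} and~\ref{lemma:sameimageWeylgroup}, and passage to the $\varepsilon_{V,\mu}$-isotypic component via the relation $\Ind_{\mu}(w\cdot f)=\varepsilon_{V,\mu}(w)\Ind_{\mu}(f)$ combined with Proposition~\ref{proposition:projector}. The paper compresses the last two steps into a single displayed formula for $\Ind_{\mu}(f)$, while you make the reduction $X_*(T)\to\SP_V\to\SP_V/W$ explicit and note that the complements $\CP_{\mu}$ should be chosen $W$-equivariantly along orbits; this extra care is a legitimate way to make the terse argument precise and does not change the substance.
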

\begin{proof}
 The injectivity is Proposition~\ref{proposition:injectivity}. The surjectivity comes from Lemmas~\ref{lemma:surjective} and~\ref{lemma:sameimageWeylgroup}. Indeed, by Lemma~\ref{lemma:surjective}, any element $f\in\HO^*(V/G)$ has the form $f=\sum_{\mu\preceq 0}\Ind_{\mu}(g_{\mu})$ for some $g_{\mu}\in\CP_{\mu}\otimes\HO^*(\pt/G_{\mu})$. By Lemma~\ref{lemma:sameimageWeylgroup}, we may rewrite $f=\sum_{\tilde{\mu}\in\SP_{V}/W}\Ind_{\mu}(\tilde{g}_{\mu})$. We have the formula
 \[
  \Ind_{\mu}(\tilde{g}_{\mu})=\frac{1}{\# W^{\mu}}\sum_{\overline{w'}\in W/W_{\mu}}w'\cdot\left(\sum_{w\in W_{\mu}}\varepsilon_{V,\mu}(w)^{-1}w\cdot \tilde{g}_{\mu} \right)(w'\cdot k_{\mu})
 \]
for $\tilde{g}_{\mu}\in\CH_{\mu}$, deduced from Propositions~\ref{proposition:explicitformulainduction} and~\ref{proposition:characterW_lambda} which implies, using Proposition~\ref{proposition:projector}, that $\Ind_{\mu}(\tilde{g}_{\mu})$ vanishes if $f$ is not in the $\varepsilon_{V,\mu}$-isotypic component of $\CP_{\mu}\otimes\HO^*(\pt/G_{\mu})$ for the $W_{\mu}$-action. Therefore, we may assume that $\tilde{g}_{\mu}$ is $\varepsilon_{V,\lambda}$-invariant. This concludes the proof of the corollary.
\end{proof}

\subsection{The BPS subspaces}
In analogy to the case of quivers \cite{efimov2012cohomological,meinhardt2019donaldson} and also \cite{davison2020cohomological} for quivers with potential, we may call the subspaces $\CP_{\lambda}\subset\HO^*(V^{\lambda}/G^{\lambda})$ the \emph{BPS subspaces}. Although we lack both qualitative and quantitative properties of these spaces, there are cases when we can prove they vanish.

\begin{proposition}
 Let $\lambda\in X_*(T)$. If $\ker(G^{\lambda}\rightarrow\GL(V^{\lambda}))^{\circ}\not\subset Z(G^{\lambda})$, then $\CP_{\lambda}=\{0\}$.
\end{proposition}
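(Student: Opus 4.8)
The plan is to show that, under the hypothesis, the submodule $J_{\lambda}^{W^{\lambda}}$ already equals $\CH_{\lambda}^{\prim}$, so that its chosen complement $\CP_{\lambda}$ must vanish. As in the proof of Proposition~\ref{proposition:finitedimensional}, I would first reduce to the case $\lambda=0$ by replacing $(V,G)$ with $(V^{\lambda},G^{\lambda})$; the augmentation of \S\ref{subsection:augmentation} together with Lemma~\ref{lemma:gcdcocharacters} (which identifies the generators $k_{\mu,\lambda}$ of $J_{\lambda}$ with the corresponding ones for $(V^{\lambda},G^{\lambda})$) shows that the objects $J_{\lambda}$, $\CH_{\lambda}^{\prim}$ and $\CP_{\lambda}$ are unchanged, while the hypothesis becomes $K^{\circ}\not\subset Z(G)$, where $K\coloneqq\ker(G\to\GL(V))$.

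The crucial step is to produce a cocharacter $\nu\in X_*(T)$ with $\nu(\BoG_{\rmm})\subset K$ but $\nu(\BoG_{\rmm})\not\subset Z(G)$. Since $K$ is normal in $G$, the neutral component $S$ of $T\cap K^{\circ}$ is a maximal torus of $K^{\circ}$, by the group analogue of Lemma~\ref{lemma:inducedtorus}. If we had $S\subset Z(G)$, then $S$ would centralise $K^{\circ}$, whence $K^{\circ}\subset Z_{K^{\circ}}(S)=S$ (a maximal torus of a connected reductive group is its own centraliser), so $K^{\circ}=S\subset Z(G)$, contradicting the hypothesis. Hence $S\not\subset Z(G)$, the sub-torus $(S\cap Z(G))^{\circ}$ has strictly smaller dimension than $S$, and I may pick $\nu\in X_*(S)\subset X_*(T)$ whose image avoids $Z(G)$ (the rationality being handled exactly as in the proof of Proposition~\ref{proposition:finitedimensional}). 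By construction $\nu$ acts trivially on $V$, so $\CW^{\nu>0}(V)=\emptyset$, while $\nu(\BoG_{\rmm})\not\subset Z(G)$ gives $\Fg^{\nu}\subsetneq\Fg$; consequently $0\not\preceq\nu$.

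It then follows that
\[
 k_{\nu,0}=\frac{\prod_{\alpha\in\CW^{\nu>0}(V)}\alpha}{\prod_{\alpha\in\CW^{\nu>0}(\Fg)}\alpha}=\frac{1}{D},\qquad D\coloneqq\prod_{\alpha\in\CW^{\nu>0}(\Fg)}\alpha,
\]
is a unit of $\CA_0^{\prim,\loc}$ (indeed $D$ divides $\prod_{\alpha\in\CW(\Fg)\setminus\{0\}}\alpha$), and $D\in\CA_0^{\prim}$ since every nonzero root vanishes on $\Fg_0\subset Z(\Fg)$ and hence descends to $\Ft/\Fg_0$. Because $0\not\preceq\nu$, the element $k_{\nu,0}$ is one of the defining generators of $J_0$, so $1=D\,k_{\nu,0}\in J_0$, and since $1$ is $W^0$-invariant, $1\in J_0^{W^0}$. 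By Lemma~\ref{lemma:invariantsJH} we have $J_0^{W^0}\subset\CH_0^{\prim}$, and $J_0^{W^0}$ is stable under multiplication by $\CH_0^{\prim}\subset\CA_0^{\prim}$ and under $W^0$, hence is an ideal of the ring $\CH_0^{\prim}$; containing $1$, it must be all of $\CH_0^{\prim}$. Therefore its complement $\CP_0=\{0\}$, and undoing the reduction gives $\CP_{\lambda}=\{0\}$. The main obstacle is the construction of $\nu$ — that is, the structure-theoretic input showing a noncentral kernel forces a noncentral one-parameter subgroup inside $K$; everything after that is formal manipulation with the modules $J_0$, $J_0^{W^0}$ and $\CH_0^{\prim}$ already set up in \S\ref{subsection:proofcohint}.
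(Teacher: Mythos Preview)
Your proof is correct and follows essentially the same strategy as the paper: reduce to $\lambda=0$, produce a cocharacter $\nu$ landing in $\ker(G\to\GL(V))$ but not in $Z(G)$, and use it to force $\CP_0=0$. The paper phrases the final step geometrically---observing that for such $\nu$ one has $V^{\nu}=V$ while $G^{\nu}\subsetneq G$, so the induction $\Ind_{\nu}\colon\HO^*(V/G^{\nu})\to\HO^*(V/G)$ has kernel $k_{\nu}=1/\prod_{\alpha\in\CW^{\nu>0}(\Fg)}\alpha$ and is visibly surjective (feed in $gD$ for any $W$-invariant $g$), whence $\CP_0=0$ via Lemma~\ref{lemma:imageinduction}. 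You instead work directly inside the module $J_0$: since $k_{\nu,0}=1/D$ is one of the defining generators and $D\in\CA_0^{\prim}$, you get $1\in J_0^{W^0}$ and conclude $J_0^{W^0}=\CH_0^{\prim}$. These are two sides of the same coin; your route avoids invoking the induction map explicitly and is a touch more self-contained. You also supply the structure-theoretic justification for the existence of $\nu$ (via the maximal torus $S=(T\cap K^{\circ})^{\circ}$ of $K^{\circ}$ and the self-centralising property of maximal tori), which the paper leaves implicit in the single sentence ``there exists a cocharacter $\mu\colon\BoG_{\rmm}\rightarrow \ker(G\rightarrow\GL(V))\subset T$ which does not factor through $Z(G)$.'' The aside about ``rationality being handled as in Proposition~\ref{proposition:finitedimensional}'' is unnecessary here, since you are already choosing $\nu\in X_*(S)\subset X_*(T)$ directly; no passage from $\Ft$ to $X_*(T)$ is needed.
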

\begin{proof}
 By possibly replacing $V$ by $V^{\lambda}$ and $G$ by $G^{\lambda}$, we may assume that $\lambda$ is the trivial cocharacter of $T$. If $\ker(G\rightarrow\GL(V))^{\circ}\not\subset Z(G)$, then there exists a cocharacter $\mu\colon\BoG_{\rmm}\rightarrow \ker(G\rightarrow\GL(V))\subset T$ which does not factor through $Z(G)$. We have $\overline{\mu}\in\SP_V\setminus\{\overline{0}\}$. Then, the induction diagram reads
\[\begin{tikzcd}
	& {V/P_{\mu}} \\
	{V/G^{\mu}} && {V/G}
	\arrow["{q_{\mu}}"', from=1-2, to=2-1]
	\arrow["{p_{\mu}}", from=1-2, to=2-3]
\end{tikzcd}\]
and the induction map $\Ind_{\mu}\colon \HO^*(V/G^{\mu})=\HO^*(\pt/T)^{W^{\mu}}\rightarrow \HO^*(V/G)=\HO^*(\pt/T)$ is the map
\[
 f\mapsto \sum_{w\in W/W^{\mu}}w\cdot\left(\frac{f}{\prod_{\alpha\in\CW^{\mu<0}(\Fg)}\alpha}\right)\,.
\]
It is surjective. Hence, $\CP_0=\{0\}$.
\end{proof}

\section{Vanishing bounds for the BPS spaces}
We recall that $\CH_{\lambda}=\HO^*(V^{\lambda}/G^{\lambda})$ for $\lambda\in X_*(T)$. We recall the chosen $\BoZ$-grading on $\CH_{\lambda}$: $\CH_{\lambda}^k=\HO^{k+d_{\lambda}}(V^{\lambda}/G^{\lambda})$ for $k\in\BoZ$. This is just a shift of the cohomological grading, chosen so that induction morphisms preserve this grading (Lemma~\ref{lemma:preservescohdegrees}). In this section, we prove Proposition~\ref{proposition:bounds}.

\begin{proof}[Proof of the lower bound in Proposition~\ref{proposition:bounds}]
First, we have an inclusion $\CP_{\lambda}^k\subset \CH_{\lambda}^k=\HO^{k+d_{\lambda}}(V^{\lambda}/G^{\lambda})$, and the latter vector space is zero for $k<-d_{\lambda}=\dim G^{\lambda}-\dim V^{\lambda}$. 
\end{proof}

The proof of the upper bound uses \cite[Lemma~3.6]{efimov2012cohomological} and is analogous to the proof of \cite[Theorem~3.5]{efimov2012cohomological}.

\begin{lemma}[{\cite[Lemma~3.6]{efimov2012cohomological}}]
\label{lemma:efimovlemma}
 Let $k$ be a field and let $B=k[z_1,\hdots,z_n]$ be an algebra of polynomials with $\deg(z_i)=2$ (to match the cohomological degrees). Suppose that $\ell_1,\hdots,\ell_s\in B^1$ are pairwise linearly independent linear forms in $z_i$. Let $\{P_1,\hdots,P_r\}\subset B$ be  non-empty set of polynomials of the form
 \[
  P_i=\prod_{j=1}^s\ell_j^{d_{i,s}}
 \]
for some $d_{i,j}\in\BoZ_{\geq0}$. Let $d_j\coloneqq \max_{1\leq i\leq r}d_{i,j}$ for $1\leq j\leq s$. Then, the following are equivalent:
\begin{enumerate}[(i)]
 \item $B^{2d}\subset (P_1,\hdots,P_r)$ for $d\geq \sum_{j=1}^sd_j-n+1$,
 \item the ideal $(P_1,\hdots,P_r)\subset B$ has finite codimension,
 \item For any sequence $p_1,\hdots,p_r$ of numbers in $\{1,\hdots,s\}$ such that $d_{i,p_i}>0$ for $1\leq i\leq r$, the linear forms generate the space $B^1$.
\end{enumerate}

\end{lemma}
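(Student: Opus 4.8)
This is \cite[Lemma~3.6]{efimov2012cohomological}; here is the plan I would follow. Write $I=(P_1,\dots,P_r)$ and arrange the implications cyclically as $(i)\Rightarrow(ii)\Rightarrow(iii)\Rightarrow(i)$. The implication $(i)\Rightarrow(ii)$ is immediate: if $B^{2d}\subset I$ then $B/I$ is concentrated in degrees $<2d$, hence finite-dimensional. For $(ii)\Leftrightarrow(iii)$ I would argue geometrically on $\Spec B=\mathbb{A}^n$, after harmlessly extending scalars to an algebraic closure (this changes neither finite codimension nor condition (iii)). Each $\ell_j$ cuts out a hyperplane through the origin, $V(P_i)=\bigcup_{j:\,d_{i,j}>0}V(\ell_j)$, and distributing the intersection over the unions gives
\[
 V(I)=\bigcap_{i=1}^{r}V(P_i)=\bigcup_{(p_1,\dots,p_r)}\ \bigcap_{i=1}^{r}V(\ell_{p_i}),
\]
the union ranging over all tuples with $d_{i,p_i}>0$. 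The linear space $\bigcap_i V(\ell_{p_i})$ reduces to $\{0\}$ precisely when $\ell_{p_1},\dots,\ell_{p_r}$ span $B^{1}$; and since $I$ is homogeneous, finite codimension is equivalent to $V(I)\subseteq\{0\}$, i.e. to $\sqrt{I}=(z_1,\dots,z_n)$. Hence (ii) holds iff every such tuple spans $B^1$, which is (iii). (This also yields $(iii)\Rightarrow(ii)$, but it does not yet give the explicit degree bound of (i).)

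The substance is $(iii)\Rightarrow(i)$, which I would prove by strong induction on $n+\sum_{j}d_j$. If $s=0$ or all $d_j=0$, then every $P_i=1$, $I=B$ and (i) is vacuous; otherwise choose $\ell_1$ with $d_1=\max_j d_j\geq 1$ and use the short exact sequence of graded $B$-modules
\[
 0\longrightarrow\bigl(B/(I:\ell_1)\bigr)(-2)\xrightarrow{\ \cdot\,\ell_1\ }B/I\longrightarrow B/(I+\ell_1 B)\longrightarrow 0,
\]
so that $B^{2d}\subset I$ as soon as $\bigl(B/(I:\ell_1)\bigr)_{2d-2}=0$ and $\bigl(B/(I+\ell_1 B)\bigr)_{2d}=0$. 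For the first, $I:\ell_1\supseteq(P_1',\dots,P_r')$ where $P_i'=P_i/\ell_1$ if $d_{i,1}\geq1$ and $P_i'=P_i$ otherwise; this new system lives in the same $B$, has the same maximal exponents except that $d_1$ is lowered by $1$, and still satisfies (iii) (a selection valid for the $P_i'$ is a fortiori valid for the $P_i$). The inductive hypothesis kills degrees $\geq 2(\sum_j d_j-n)$, and $d\geq \sum_j d_j-n+1$ makes $2d-2$ large enough. For the second, $B/(I+\ell_1 B)\cong\bar B/\bar I$ with $\bar B$ a polynomial ring in $n-1$ variables; the images $\bar\ell_j$ ($j\neq 1$) are nonzero but may be pairwise proportional, so I would cluster them into pairwise linearly independent forms $\bar m_1,\dots,\bar m_t$, rewrite each surviving $\bar P_i$ (those with $d_{i,1}=0$) as a unit times a monomial in the $\bar m_k$, and observe that the new maximal exponents sum to at most $\sum_{j\neq 1}d_j=\sum_j d_j-d_1$. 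The inductive hypothesis then kills degrees $\geq 2(\sum_j d_j-d_1-n+2)$, and $d\geq\sum_j d_j-n+1\geq\sum_j d_j-d_1-n+2$ because $d_1\geq 1$.

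The main obstacle, and the only genuinely fiddly point, is checking that condition (iii) descends to the two smaller systems — especially to the quotient system $(\bar P_i)$ after clustering: given a valid choice of clusters $(q_i)$ one must refine it to a choice of actual forms $p_i\in\mathrm{cl}(q_i)$ with $d_{i,p_i}>0$, pad it by $p_i=1$ on the indices with $d_{i,1}\geq1$, invoke (iii) for the original system to obtain spanning of $B^1$, and then project to $\bar B^1$ (the padded forms map to $0$, the others to nonzero scalar multiples of the $\bar m_{q_i}$), concluding that the $\bar m_{q_i}$ span $\bar B^1$; the case where no index survives (which forces $n=1$) is handled directly. Once this bookkeeping is in place the two numerical bounds dovetail exactly with the target $\sum_j d_j-n+1$, and the induction closes.
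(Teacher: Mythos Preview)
The paper does not prove this lemma; it is quoted from Efimov and used as a black box in the proof of the upper bound of Proposition~\ref{proposition:bounds}. So there is no ``paper's own proof'' to compare against.

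Your sketch is correct and essentially complete. The geometric argument for $(ii)\Leftrightarrow(iii)$ via the decomposition $V(I)=\bigcup_{(p_i)}\bigcap_i V(\ell_{p_i})$ is standard, and the inductive scheme for $(iii)\Rightarrow(i)$ through the exact sequence $0\to B/(I:\ell_1)\to B/I\to B/(I,\ell_1)\to 0$ is exactly Efimov's mechanism. The two numerical checks dovetail as you say: for the colon ideal the bound drops by one because $\sum d_j$ drops by one with $n$ fixed, and for the quotient ring both $n$ and $\sum d_j$ drop (the latter by at least $d_1\geq1$), so the target degree $\sum_j d_j-n+1$ suffices in both branches. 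The bookkeeping you flag---descent of condition $(iii)$ to the quotient system after clustering proportional images $\bar\ell_j$---is indeed the only delicate point, and your padding argument (set $p_i=1$ on the non-surviving indices, refine cluster choices on the survivors, apply $(iii)$ upstairs, then project) handles it correctly; note that $\bar\ell_j\neq0$ for $j\neq1$ precisely because the $\ell_j$ are pairwise linearly independent. The edge case of no surviving index forcing $n=1$ is also right: pairwise linear independence in a one-dimensional space of linear forms forces $s\leq1$, so every $P_i$ is a power of $\ell_1$ and $B/(I,\ell_1)=k$ sits in degree $0$.

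In short, your proposal supplies what the paper deliberately omits, and follows the same line as the cited source.
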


\begin{proof}[Proof of the upper bound in Proposition~\ref{proposition:bounds}]
As we already noticed in the proof of Proposition \ref{proposition:finitedimensional}, if we replace $V$ by $V\times \Fg$, then we have inclusions $\CP_{V,\lambda}\subset \CP_{V\times\Fg,\lambda}$, and therefore, it suffices to give the upper bound for $\CP_{V\times\Fg,\lambda}$. We set $B\coloneqq \Sym[(\Ft/\Fg_{\lambda})^*]=\CA_{\lambda}^{\prim}$, the (polynomial) algebra of functions on $\Fg/\Fg_{\lambda}$. The proof of Proposition \ref{proposition:finitedimensional} shows that the ideal $J_{\lambda}\subset\Sym[(\Ft/\Fg_{\lambda})^*]$ has finite codimension, that is satisfies the second point of Lemma~\ref{lemma:efimovlemma}. In our case, the quantity $\sum_{j=1}^sd_j$ is equal to $\frac{\dim V^{\lambda}-\dim V^T}{2}$. Therefore, an upper bound for $\sum_{j=1}^sd_j-n+1$ is $\frac{\dim V^{\lambda}}{2}$.

Now, Lemma~\ref{lemma:efimovlemma} gives $(\CA_{\lambda}^{\prim})^{2d}\subset J_{\lambda}$ for $d\geq \frac{\dim V^{\lambda}}{2}$ (for the unshifted cohomological gradings), where $(\CA_{\lambda}^{\prim})^e$ denotes degree $e$ polynomials inside $\CA_{\lambda}^{\prim}$. By taking into account the cohomological shift by $\dim G^{\lambda}-\dim V^{\lambda}$, we obtain the upper bound: $\CP_{\lambda}^k=0$ for $k>\dim V^{\lambda}+\dim G^{\lambda}-\dim V^{\lambda}=\dim G^{\lambda}$.
\end{proof}

\section{Examples}
In this section, we give some explicit examples of cohomological integrality isomorphisms for some choices of pairs $(G,V)$ of a reductive group $G$ and a (weakly) symmetric representation $V$ of $G$. We give each time the explicit formula for the cohomological integrality isomorphisms produced by Theorem~\ref{theorem:cohintabsolute}. In each case, it is possible to verify by hand that they are indeed isomorphisms.
\subsection{$(\BoC^*)^2\curvearrowright\BoC^2\oplus(\BoC^2)^{\vee}$}
We let $G\coloneqq(\BoC^*)^2$ act on $V\coloneqq \BoC^2\oplus(\BoC^2)^{\vee}\cong \BoC^4$ by
\[
 (t,u)\cdot (a,b,c,d)\coloneqq (ta,ub,t^{-1}c,u^{-1}d)\,.
\]

We can identify the set of equivalence classes of cocharacters (\S\ref{subsection:parindsymreps}) $\SP_{V}=\{\overline{\lambda_0},\overline{\lambda_1},\overline{\lambda_2},\overline{\lambda_3}\}$ where
\[
 \begin{matrix}
  \lambda_0&\colon&\BoG_{\rmm}&\rightarrow&(\BoC^*)^2\\
  &&v&\mapsto&(1,1)\,,
 \end{matrix}
\]
\[
 \begin{matrix}
  \lambda_1&\colon&\BoG_{\rmm}&\rightarrow&(\BoC^*)^2\\
  &&v&\mapsto&(v^{-1},1)\,,
 \end{matrix}
\]
\[
 \begin{matrix}
  \lambda_2&\colon&\BoG_{\rmm}&\rightarrow&(\BoC^*)^2\\
  &&v&\mapsto&(1,v^{-1})\,,
 \end{matrix}
\]
\[
 \begin{matrix}
  \lambda_3&\colon&\BoG_{\rmm}&\rightarrow&(\BoC^*)^2\\
  &&v&\mapsto&(v^{-1},v^{-1})\,.
 \end{matrix}
\]
We have $V^{\lambda_0}=V$, $V^{\lambda_1}=(0\oplus\BoC)\oplus(0\oplus\BoC^{\vee})$, $V^{\lambda_2}=(\BoC\oplus0)\oplus(\BoC^{\vee}\oplus0)$ and $V^{\lambda_3}=\{0\}$. We have $G^{\lambda_0}=G^{\lambda_1}=G^{\lambda_2}=G^{\lambda_3}=(\BoC^*)^2$. We can describe the induction maps as follows:
\[
\begin{matrix}
  \Ind_{\lambda_1,\lambda_0}&\colon& \overbrace{\BoQ[x_1,x_2]}^{=\HO^*(V^{\lambda_1}/G^{\lambda_1})}&\rightarrow&\overbrace{\BoQ[x_1,x_2]}^{=\HO^*(V/G)}\\
&&f&\mapsto&x_1f
\end{matrix}
\]
\[
\begin{matrix}
  \Ind_{\lambda_2,\lambda_0}&\colon& \overbrace{\BoQ[x_1,x_2]}^{=\HO^*(V^{\lambda_2}/G^{\lambda_2})}&\rightarrow&\overbrace{\BoQ[x_1,x_2]}^{=\HO^*(V/G)}\\
&&f&\mapsto&x_2f
\end{matrix}
\]
\[
\begin{matrix}
  \Ind_{\lambda_3,\lambda_0}&\colon& \overbrace{\BoQ[x_1,x_2]}^{=\HO^*(V^{\lambda_3}/G^{\lambda_3})}&\rightarrow&\overbrace{\BoQ[x_1,x_2]}^{\HO^*(V/G)}\\
&&f&\mapsto&x_1x_2f
\end{matrix}
\]
\[
\begin{matrix}
  \Ind_{\lambda_3,\lambda_1}&\colon& \overbrace{\BoQ[x_1,x_2]}^{=\HO^*(V^{\lambda_3}/G^{\lambda_3})}&\rightarrow&\overbrace{\BoQ[x_1,x_2]}^{=\HO^*(V^{\lambda_1}/G^{\lambda_1})}\\
&&f&\mapsto&x_2f
\end{matrix}
\]
\[
\begin{matrix}
  \Ind_{\lambda_3,\lambda_2}&\colon& \overbrace{\BoQ[x_1,x_2]}^{=\HO^*(V^{\lambda_3}/G^{\lambda_3})}&\rightarrow&\overbrace{\BoQ[x_1,x_2]}^{=\HO^*(V^{\lambda_2}/G^{\lambda_2})}\\
&&f&\mapsto&x_1f
\end{matrix}
\]
We have
\[
 \CP_{\lambda_0}=\BoQ\subset \BoQ[x_1,x_2],\quad \CP_{\lambda_1}=\BoQ\subset \BoQ[x_1,x_2],\quad \CP_{\lambda_2}=\BoQ\subset \BoQ[x_1,x_2],\quad \CP_{\lambda_3}=\BoQ\subset \BoQ[x_1,x_2]\,.
\]
We have
\[
 G_{\lambda_0}=1\subset(\BoC^*)^2,\quad G_{\lambda_1}=\BoC^*\times 1\subset(\BoC^*)^2,\quad G_{\lambda_2}=1\times\BoC^*\subset(\BoC^*)^2,\quad G_{\lambda_3}=(\BoC^*)^2\,.
\]

The cohomological integrality isomorphism is
\[
\begin{matrix}
  \overbrace{\BoQ}^{=\CP_{\lambda_0}}\oplus \overbrace{\BoQ[x_1]}^{=\CP_{\lambda_1}\otimes\HO^*(\pt/G_{\lambda_1})}\oplus\overbrace{\BoQ[x_2]}^{=\CP_{\lambda_2}\otimes\HO^*(\pt/G_{\lambda_2})}\oplus\overbrace{\BoQ[x_1,x_2]}^{=\CP_{\lambda_3}\otimes\HO^*(\pt/G_{\lambda_3})}&\rightarrow&\overbrace{\BoQ[x_1,x_2]}^{=\HO^*(V/G)}\\
(a_0,f(x_1),g(x_2),h(x_1,x_2))&\mapsto&a_0+x_1f(x_1)+x_2g(x_2)+x_1x_2h(x_1,x_2)\,.
\end{matrix}
\]

\subsection{$\GL_2(\BoC)\curvearrowright\BoC^2\oplus(\BoC^2)^{\vee}$}
\label{subsection:GL2}
We let $G\coloneqq\GL_2(\BoC)$ act on $V\coloneqq\BoC^2\oplus(\BoC^2)^{\vee}$ via
\[
 g\cdot (u,v)\coloneqq (gu,(g^{-1})^tv)\,,
\]
where the superscript $t$ indicates the transpose. We let $T\coloneqq (\BoC^*)^2\subset\GL_2(\BoC)$ be the standard torus. The Weyl group is $W=\FS_2$. We have $\SP_V/W=\{\tilde{\lambda_0},\tilde{\lambda_1},\tilde{\lambda_2}, \tilde{\lambda}_3\}$ where
\[
 \begin{matrix}
  \lambda_0&\colon&\BoG_{\rmm}&\rightarrow&(\BoC^*)^2\\
  &&v&\mapsto&(1,1)
 \end{matrix}
\]
\[
 \begin{matrix}
  \lambda_1&\colon&\BoG_{\rmm}&\rightarrow&(\BoC^*)^2\\
  &&v&\mapsto&(v^{-1},1)
 \end{matrix}
\]
\[
 \begin{matrix}
  \lambda_2&\colon&\BoG_{\rmm}&\rightarrow&(\BoC^*)^2\\
  &&v&\mapsto&(v^{-1},v^{-2})
 \end{matrix}
\]
\[
 \begin{matrix}
  \lambda_3&\colon&\BoG_{\rmm}&\rightarrow&(\BoC^*)^2\\
  &&v&\mapsto&(v^{-1},v^{-1})
 \end{matrix}
\]

Therefore, we have $V^{\lambda_0}=V$, $V^{\lambda_1}=(0\oplus\BoC)\oplus(0\oplus\BoC^{\vee})$, $V^{\lambda_2}=\{0\}$, and $V^{\lambda_3}=\{0\}$. We have $G^{\lambda_0}=\GL_2(\BoC), G^{\lambda_1}=T$, $G^{\lambda_2}=T$ and $G^{\lambda_3}=G$. We can describe the induction maps as follows:
\[
\begin{matrix}
  \Ind_{\lambda_1,\lambda_0}&\colon& \overbrace{\BoQ[x_1,x_2]}^{=\HO^*(V^{\lambda_1}/G^{\lambda_1})}&\rightarrow&\overbrace{\BoQ[x_1,x_2]^{\FS_2}}^{=\HO^*(V/G)}=\BoQ[x_1+x_2,x_1x_2]\\
&&f&\mapsto&\frac{x_1}{x_1-x_2}f(x_1,x_2)+\frac{x_2}{x_2-x_1}f(x_2,x_1)
\end{matrix}
\]
\[
\begin{matrix}
  \Ind_{\lambda_2,\lambda_0}&\colon& \overbrace{\BoQ[x_1,x_2]}^{=\HO^*(V^{\lambda_2}/G^{\lambda_2})}&\rightarrow&\overbrace{\BoQ[x_1,x_2]^{\FS_2}}^{=\HO^*(V/G)}=\BoQ[x_1+x_2,x_1x_2]\\
&&f&\mapsto&\frac{x_1x_2}{x_1-x_2}f(x_1,x_2)+\frac{x_1x_2}{x_2-x_1}f(x_2,x_1)
\end{matrix}
\]
\[
\begin{matrix}
  \Ind_{\lambda_2,\lambda_1}&\colon& \overbrace{\BoQ[x_1,x_2]}^{=\HO^*(V^{\lambda_2}/G^{\lambda_2})}&\rightarrow&\overbrace{\BoQ[x_1,x_2]}^{\HO^*(V^{\lambda_1}/G^{\lambda_1})}\\
&&f&\mapsto&x_2f
\end{matrix}
\]
\[
\begin{matrix}
  \Ind_{\lambda_2,\lambda_3}&\colon& \overbrace{\BoQ[x_1,x_2]}^{=\HO^*(V^{\lambda_2}/G^{\lambda_2})}&\rightarrow&\overbrace{\BoQ[x_1+x_2,x_1x_2]}^{\HO^*(V^{\lambda_3}/G^{\lambda_3})}\\
&&f&\mapsto&\frac{f(x_1,x_2)-f(x_2,x_1)}{x_1-x_2}
\end{matrix}
\]

The induction $\Ind_{\lambda_1,\lambda_0}$ is surjective, since
\[
 x_1+x_2=\Ind_{\lambda_1,\lambda_0}(x_1)
\]
and
\[
 x_1x_2=\Ind_{\lambda_1,\lambda_0}\left(\frac{1}{2}x_1x_2\right)\,.
\]

The induction $\Ind_{\lambda_2,\lambda_3}$ is surjective as any symmetric polynomial $f$ is the image by $\Ind_{\lambda_2,\lambda_3}$ of $(x_1-x_2)f$.

A direct sum complement of the image of $\Ind_{\lambda_2,\lambda_1}$ is $\BoQ[x_1]=\CP_{\lambda_1}\otimes \BoQ[x_1]$ by definition of $\CP_{\lambda_1}$.

Therefore, we have $\CP_{\lambda_0}=0$, $\CP_{\lambda_1}=\BoQ$, $\CP_{\lambda_2}=\BoQ$ and $\CP_{\lambda_3}=0$. We have $G_{\lambda_0}=1, G_{\lambda_1}=\BoC^*\times 1$, $G_{\lambda_2}=T$ and $G_{\lambda_3}=G$. The cohomological integrality isomorphism reads
\[
\begin{matrix}
  \overbrace{\BoQ[x_1]}^{=\CP_{\lambda_1}\otimes\HO^*(\pt/G_{\lambda_1})}\oplus\overbrace{\BoQ[x_1,x_2]^{\sgn}}^{=(\CP_{\lambda_2}\otimes\HO^*(\pt/G_{\lambda_2}))^{\varepsilon_{V,\lambda_2}}}&\rightarrow&\overbrace{\BoQ[x_1,x_2]^{\FS_2}}^{=\HO^*(V/G)}\\
(f(x_1),g(x_1,x_2))&\mapsto&\frac{x_1f(x_1)-x_2f(x_2)}{x_1-x_2}+2x_1x_2\frac{g(x_1,x_2)}{x_1-x_2}
\end{matrix}
\]
where $\BoQ[x_1,x_2]^{\sgn}$ is the sign-isotypic component, since the kernel $\frac{x_1x_2}{x_1-x_2}$ changes sign when we exchange the variables $x_1,x_2$, and so the character $\varepsilon_{V,\lambda_2}$ of $W_{\lambda_2}=\mathfrak{S}_2$ given by Proposition~\ref{proposition:characterW_lambda} is the sign.

\subsection{$\GL_2(\BoC)\curvearrowright(\BoC^2\oplus(\BoC^2)^{\vee})^g$}
We generalize the previous situation by considering the diagonal action of $\GL_2(\BoC)$ on $(\BoC^2\oplus(\BoC^2)^{\vee})^g$ ($g\geq 0$). The set $\SP_{V}/W$ is the same as in \S\ref{subsection:GL2}. The fixed point sets $V^{\lambda_0}, V^{\lambda_1}$ and $V^{\lambda_2}$ can be computed similarly and the Levi subgroups $G^{\lambda_0}, G^{\lambda_1}$ and $G^{\lambda_2}$ are the ones given in \S\ref{subsection:GL2}. We can describe the induction maps as follows, analogously to \S\ref{subsection:GL2}.
\[
\begin{matrix}
  \Ind_{\lambda_1,\lambda_0}&\colon& \BoQ[x_1,x_2]&\rightarrow&\BoQ[x_1,x_2]^{\FS_2}=\BoQ[x_1+x_2,x_1x_2]\\
&&f&\mapsto&\frac{x_1^g}{x_1-x_2}f(x_1,x_2)+\frac{x_2^g}{x_2-x_1}f(x_2,x_1)
\end{matrix}
\]
\[
\begin{matrix}
  \Ind_{\lambda_2,\lambda_0}&\colon& \BoQ[x_1,x_2]&\rightarrow&\BoQ[x_1,x_2]^{\FS_2}=\BoQ[x_1+x_2,x_1x_2]\\
&&f&\mapsto&\frac{x_1^gx_2^g}{x_1-x_2}f(x_1,x_2)+\frac{x_1^gx_2^g}{x_2-x_1}f(x_2,x_1)
\end{matrix}
\]
\[
\begin{matrix}
  \Ind_{\lambda_2,\lambda_1}&\colon& \BoQ[x_1,x_2]&\rightarrow&\BoQ[x_1,x_2]\\
&&f&\mapsto&x_2^gf
\end{matrix}
\]
\[
\begin{matrix}
  \Ind_{\lambda_2,\lambda_3}&\colon&\BoQ[x_1,x_2]&\rightarrow&\BoQ[x_1+x_2,x_1x_2]\\
&&f&\mapsto&\frac{f(x_1,x_2)-f(x_2,x_1)}{x_1-x_2}
\end{matrix}
\]

We have $\CP_{\lambda_0}=\bigoplus_{j=0}^{g-2}\BoQ(x_1+x_2)^{j}$, $\CP_{\lambda_1}=\bigoplus_{j=0}^{g-1}\BoQ x_2^j$, $\CP_{\lambda_2}=\BoQ$ and $\CP_{\lambda_3}=0$ and the cohomological integrality isomorphism reads
\[
\begin{matrix}
  \CP_{\lambda_{0}}\oplus (\CP_{\lambda_1}\otimes\BoQ[x_1])\oplus\BoQ[x_1,x_2]^{\sgn}&\rightarrow&\BoQ[x_1,x_2]^{\FS_2}\\
(f(x_1,x_2),g(x_1,x_2),h(x_1,x_2))&\mapsto&f(x_1,x_2)+\frac{x_1^gg(x_1,x_2)-x_2^gg(x_2,x_1)}{x_1-x_2}+2x_1^gx_2^g\frac{h(x_1,x_2)}{x_1-x_2}\,.
\end{matrix}
\]

\subsection{$\rmSL_2(\BoC)\curvearrowright\BoC^2$}
\label{subsection:SL2C2}

We let $G\coloneqq\rmSL_2(\BoC)$ act on $\BoC^2$ via the natural action. It is a symmetric representation. We have $W=\FS_2$. We let $T\subset\rmSL_2(\BoC)$ be the maximal torus of diagonal matrices. We have $\SP_V/W=\{\tilde{\lambda_0},\tilde{\lambda_1}\}$ where
\[
 \begin{matrix}
  \lambda_0&\colon&\BoG_{\rmm}&\rightarrow&T\\
  &&v&\mapsto&(1,1)
 \end{matrix}
\]
\[
 \begin{matrix}
  \lambda_1&\colon&\BoG_{\rmm}&\rightarrow&T\\
  &&v&\mapsto&(v^{-1},v)\,.
 \end{matrix}
\]
We have $V^{\lambda_0}=V$, $V^{\lambda_1}=\{0\}$. We can describe the induction map as follows.
\[
\begin{matrix}
  \Ind_{\lambda_1,\lambda_0}&\colon& \BoQ[x]&\rightarrow&\BoQ[x^2]\\
&&f&\mapsto&\frac{1}{2}(f(x)+f(-x))\,.
\end{matrix}
\]
The factor $\frac{1}{2}$ comes from the fact that the $T$-weight of $\Fg^{\lambda_1>0}=\begin{pmatrix}
0&*\\
0&0
                        \end{pmatrix}
$ is $2$. We have $\CP_{\lambda_1}=\BoQ$ and $\CP_{\lambda_0}=\{0\}$ and the cohomological integrality isomorphism is
\[
 \begin{matrix}
  \BoQ[x^2]&\rightarrow&\BoQ[x^2]\\
  f&\mapsto&f\,.
 \end{matrix}
\]

\subsection{$\rmSL_2(\BoC)\curvearrowright \BoC^d$}
\label{subsection:SL2Cd}

Let $V_d$ be the $d$-dimensional irreducible representation of $\rmSL_2(\BoC)$. Up to a scaling factor $C_{d}$, the induction morphism is
\[
\begin{matrix}
  \Ind_{\lambda_1,\lambda_0}&\colon& \BoQ[x]&\rightarrow&\BoQ[x^2]\\
&&f&\mapsto&C_{d}x^{\lfloor\frac{d}{2}\rfloor-1}f(x)+C_{d}(-x)^{\lfloor\frac{d}{2}\rfloor-1}f(-x)\,.
\end{matrix}
\]
We have $\CP_{\lambda_1}=\BoQ$ and $\CP_{\lambda_0}=\BoQ[x^2]_{\deg<2(\lfloor\frac{d}{2}\rfloor-1)}$
where $C_{d}\in\BoQ$ is some constant depending only on $d$ (note that $\deg(x)=2$ in the cohomological grading).

The cohomological integrality isomorphism is
\[
 \begin{matrix}
  \CP_{\lambda_0}\oplus\BoQ[x]^{\varepsilon_{d}}&\rightarrow&\BoQ[x^2]\\
  (f,g)&\mapsto&f+2C_{d}x^{\lfloor\frac{d}{2}\rfloor-1}g
 \end{matrix}
\]
where $\BoQ[x]^{\varepsilon_{d}}=\BoQ[x^2]$ if $\lfloor\frac{d}{2}\rfloor-1$ is even, and $\BoQ[x]^{\varepsilon_{d}}=x\BoQ[x^2]$ if $\lfloor\frac{d}{2}\rfloor-1$ is odd.

\begin{remark}
 We note, for example in \S\ref{subsection:GL2} and also \S\ref{subsection:SL2C2}, that the integrality morphisms are not isomorphisms when we consider cohomology with integral coefficients instead of rational coefficients. In \S\ref{subsection:GL2}, this comes from the presence of the factor $2$ in the cohomological integrality morphism.
\end{remark}

\subsection{$\rmSL_2(\BoC)\curvearrowright(\mathfrak{sl}_2(\BoC))^g$}
Let $g\geq 0$. We consider the diagonal adjoint action of $\rmSL_2(\BoC)$ on $V\coloneqq(\mathfrak{sl}_2(\BoC))^g$. Again, $\SP_V=\{\overline{\lambda_0},\overline{\lambda_1}\}$ with $\lambda_0,\lambda_1$ as in \S\ref{subsection:SL2C2}. The induction map reads
\[
 \begin{matrix}
  \Ind_{\lambda_1,\lambda_0}&\colon&\BoQ[x]&\rightarrow&\BoQ[x^2]\\
  &&f&\mapsto&(2x)^{g-1}f(x)+(-2x)^{g-1}f(-x)\,.
 \end{matrix}
\]
We have $V^{\lambda_1}\cong\BoC^g$ with the trivial action of $T$. Therefore, $\CP_{\lambda_1}=\BoQ$ and $\CP_{\lambda_0}=\BoQ[x^2]_{\deg<2(g-1)}$. The cohomological integrality isomorphism is
\[
 \begin{matrix}
  \BoQ[x]^{\varepsilon_g}\oplus\CP_0&\rightarrow&\BoQ[x^2]\\
  (f,g)&\mapsto&g(x)+2^gx^{g-1}f(x)
 \end{matrix}
\]
where $\BoQ[x]^{\varepsilon_g}=\BoQ[x^2]$ if $g-1$ is even and $\BoQ[x]^{\varepsilon_g}=x\BoQ[x^2]$ is $g-1$ is odd.
\begin{remark}
In this example, $G_{\lambda_0}=\{\pm1\}$ is a finite group. We see that it does not affect the cohomological integrality isomorphism.
\end{remark}

\subsection{$G\curvearrowright \{0\}$}
In this section, we present the cohomological integrality isomorphism for the trivial representation of a reductive group $G$. Then, $\SP_{\{0\}}/W$ is in bijection with the set of conjugacy classes of Levi subgroups of $G$. If $\lambda\in X_*(T)$ is a general cocharacter, then the induction morphism
\[
 \Ind_{\lambda}\colon \HO^*(\pt/T)\rightarrow\HO^*(\pt/G)
\]
is surjective. Indeed, we have
\[
 \Ind_{\lambda}(f)=\sum_{w\in W}w\left(\frac{f}{\prod_{\substack{\alpha\in\CW(\Fg)\\\langle\lambda,\alpha\rangle<0}}\alpha}\right).
\]
Therefore, $f=\Ind_{\lambda}\left(\frac{1}{\# W}f\prod_{\substack{\alpha\in\CW(\Fg)\\\langle\lambda,\alpha\rangle<0}}\alpha\right)$.

This implies that $\CP_{\lambda}=\{0\}$ if $G^{\lambda}$ is not the maximal torus of $G$ and $\CP_{\lambda}=\BoQ$ if $G^{\lambda}=T$. In particular, the cohomological integrality reads (for a choice of generic $\lambda\in X_*(T)$):
\[
 \begin{matrix}
  \BoQ[\Ft]^{\varepsilon_W}&\rightarrow&\BoQ[\Ft]^W\\
  f&\mapsto& \frac{f}{\prod_{\substack{\alpha\in\CW(\Fg)\\\langle\lambda,\alpha\rangle<0}}\alpha}\,,
 \end{matrix}
\]
where $\varepsilon_W(w)=(-1)^{\ell(w)}$ is the sign character of the Coxeter group $W$; $\ell(w)$ denotes the length of $w\in W$. This morphism is indeed an isomorphism since $\prod_{\substack{\alpha\in\CW(\Fg)\\\langle\lambda,\alpha\rangle<0}}\alpha$ is a generator of the $\BoQ[\Ft]^W$-module of anti-invariant polynomials.

\subsection{$G\curvearrowright \Fg$}
Let $G$ be a reductive group and $\Fg$ its Lie algebra with the adjoint action. Then, $\SP_{\Fg}$ is in one-to-one correspondence with the set of standard Levi subgroups of $G$ and $\SP_{\Fg}/W$ is in bijection with conjugacy classes of Levi subgroups of $\Fg$. In this case, $\BoQ[\Ft]^W$ is a polynomial algebra and for $\lambda\preceq\mu \in X_*(T)$, the induction map is
\[
 \begin{matrix}
  \Ind_{\lambda,\mu}&\colon&\BoQ[\Ft]^{W^{\lambda}}&\rightarrow &\BoQ[\Ft]^{W^{\mu}}\\
  &&f&\mapsto&\sum_{w\in W^{\mu}/W^{\lambda}}w\cdot f\,.
 \end{matrix}
\]
Therefore, the induction maps are surjective. This implies that $\CP_{\lambda}=0$ except if $\lambda$ is a generic cocharacter (i.e. $\Fg^{\lambda}$ is a torus), in which case $\CP_{\lambda}=\BoQ$. The cohomological integrality isomorphism is then
\[
 \begin{matrix}
  \BoQ[\Ft]^W&\rightarrow&\BoQ[\Ft]^W\\
  f&\mapsto&(\#W)\cdot f\,.
 \end{matrix}
\]

\subsection{Topology of the algebra of invariants}
Computing the polynomial invariants of binary forms is a long-standing problem in invariant theory, solved for binary forms with small degrees, and still challenging in higher degrees.

The strong version of the sheafified upgrade of our cohomological integrality theorem, which is the subject of a forthcoming work, would give an algorithm to compute the intersection cohomology of the GIT quotients $V\cms G$ for symmetric representations $V$ of a reductive group $G$. 
\begin{conjecture}[Strong integrality conjecture]
\label{conjecture:strongintegrality}
 Let $\lambda\in X_*(T)$. Then, there is a canonical identification $\CP_{\lambda}\cong \IH^*(V^{\lambda}\cms G^{\lambda})$ if a general closed orbit of $G^{\lambda}/G_{\lambda}$ inside $V^{\lambda}$ has finite stabilizer in $G^{\lambda}/G_{\lambda}$, and $\CP_{\lambda}=0$ otherwise.
\end{conjecture}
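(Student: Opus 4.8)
\emph{A proposed approach.} First, by the last assertion of Theorem~\ref{theorem:cohintabsolute} (spelled out in the paragraph following its statement), the space $\CP_{\lambda}$ attached to $(V,G)$ is the space $\CP_{0}$ attached to $(V^{\lambda},G^{\lambda})$, so one reduces at once to the case $\lambda=0$: the task becomes to identify $\CP_{0}\subset\HO^{*}(V/G)$ with $\IH^{*}(V\cms G)$ when a general closed $G/G_{0}$-orbit in $V$ has finite stabiliser in $G/G_{0}$, and with $0$ otherwise (here $G_{0}=Z(G)\cap\ker(G\to\GL(V))$ acts trivially on $V$, so $V\cms(G/G_{0})=V\cms G$ as varieties and the two intersection cohomologies coincide). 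The overall plan is to promote the cohomological integrality isomorphism of Corollary~\ref{corollary:cohintiso} from an isomorphism of graded vector spaces to an isomorphism of complexes of monodromic mixed Hodge modules on the good moduli space $V\cms G$, and then to compute the resulting summands geometrically.

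\emph{Step 1: sheafified cohomological integrality.} The sheafified induction \eqref{equation:sheafifiedinduction} already lives at the level of complexes on $V\cms G$; since $p_{\lambda}$ is proper and representable while $q_{\lambda}$ is smooth (Lemma~\ref{lemma:smoothnessqproperp}), the decomposition theorem (in the monodromic mixed Hodge module form used in \cite{davison2020cohomological}) makes every $(\imath_{\lambda})_{*}(\pi_{\lambda})_{*}\BoQ_{V^{\lambda}/G^{\lambda}}^{\vir}$ pure and semisimple, and I would first show that \eqref{equation:sheafifiedinduction} restricts to a split monomorphism on a canonical direct summand of its source. Iterating over the poset $\SP_{V}$ — the sheaf-level analogue of taking $J_{\lambda}$ to be the subobject generated by the images of all inductions from proper Levi subgroups — should yield a canonical isomorphism
\[
 \bigoplus_{\tilde{\lambda}\in\SP_{V}/W}\big((\imath_{\lambda})_{*}(\pi_{\lambda})_{*}\CK_{\lambda}\otimes\HO^{*}(\pt/G_{\lambda})\big)^{\varepsilon_{V,\lambda}}\;\longisoto\;\pi_{*}\BoQ_{V/G}^{\vir}\,,
\]
where $\CK_{\lambda}$ is a direct summand of $(\pi_{\lambda})_{*}\BoQ_{V^{\lambda}/\overline{G^{\lambda}}}^{\vir}$ supported on $V^{\lambda}\cms\overline{G^{\lambda}}=V^{\lambda}\cms G^{\lambda}$, with $\overline{G^{\lambda}}=G^{\lambda}/G_{\lambda}$, and whose derived global sections recover $\CP_{\lambda}$; taking derived global sections of the displayed isomorphism, together with freeness over $\HO^{*}(\pt/G_{\lambda})$, must reproduce Corollary~\ref{corollary:cohintiso}.

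\emph{Step 2: geometric identification of $\CK_{0}$.} By construction $\CK_{0}$ is the unique summand of $\pi_{*}\BoQ_{V/G}^{\vir}$ not supported on the image of any $\imath_{\lambda}$ with $\overline{\lambda}\neq\overline{0}$, and those images are precisely the deeper Luna strata of $V\cms G$; hence $\CK_{0}$ has full support, and one may write $\CK_{0}\cong\bigoplus_{i}\IC(V\cms G,L_{i})[n_{i}]$ for local systems $L_{i}$ on the open Luna stratum. If a general closed $G/G_{0}$-orbit has finite stabiliser, then over that stratum $\pi$ is étale-locally a finite-group quotient of a smooth variety, so $\pi_{*}\BoQ_{V/G}^{\vir}$ restricts there to $\IC(V\cms G)$; matching generic restrictions then forces $\CK_{0}\cong\IC(V\cms G)$, and taking global sections gives the canonical $\CP_{0}\cong\IH^{*}(V\cms G)$. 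If instead a general closed $G/G_{0}$-orbit has positive-dimensional stabiliser $S$, then $\pi_{*}\BoQ_{V/G}^{\vir}$ restricts over the open stratum to $\HO^{*}(\pt/S)$ tensored with a shifted local system, which is unbounded above; consequently the decomposition of $\pi_{*}\BoQ_{V/G}^{\vir}$ into intersection complexes has no summand of full support, so $\CK_{0}=0$ and $\CP_{0}=0$. This last case refines the $\lambda=0$ instance of the Proposition proved above (its hypothesis $\ker(G\to\GL(V))^{\circ}\not\subset Z(G)$ being exactly the situation where the extra stabiliser contains a central torus), and combinatorially it should come from the inductions $\Ind_{\mu}$ with $\mu\prec0$ subordinate to $S$ generating $J_{0}^{W}$ up to finite codimension.

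\emph{The main obstacle.} The hard part will be Step~1. The proof of cohomological integrality in this paper is genuinely a statement about $\HO^{*}$: it proceeds by passing to derived global sections and then working inside $\HO^{*}_{T}(\pt)$ and its localisations, exploiting the ring structure, the explicit shuffle formula (Proposition~\ref{proposition:explicitformulainduction}) and the non-zero-divisor argument of Lemma~\ref{lemma:nonzerodivisor}, none of which survives at the level of complexes on $V\cms G$. Sheafifying it will require replacing these localisation arguments by a support-decomposition argument in the spirit of Davison--Meinhardt: stratify $V\cms G$ by stabiliser type (equivalently, by $\SP_{V}$), prove that the $\tilde{\lambda}$-summand is supported exactly on the closure of the corresponding stratum, establish enough relative purity for the decomposition to be canonical, and track the twisting characters $\varepsilon_{V,\lambda}$ and the possibly disconnected groups $G_{\lambda}$ sheaf-theoretically. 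Controlling those supports — and, relatedly, identifying $\CK_{\lambda}$ on its open stratum with the intersection complex of a GIT quotient — is the crux, and is the content of the forthcoming work \cite{hennecart2024cohomological2}.
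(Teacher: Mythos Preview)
This statement is a \emph{conjecture} in the paper; no proof is given, and the author explicitly defers it to the companion work \cite{hennecart2024cohomological2}. So there is no proof here to compare against, only your strategy to assess. Your overall plan --- sheafify the integrality isomorphism over $V\cms G$ and then identify $\CK_0$ support-theoretically via the Luna stratification --- is the natural one and presumably what the companion paper carries out, and you are right that Step~1 is the crux and that the localisation and shuffle arguments of the present paper do not survive at the sheaf level.

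There is, however, a genuine logical gap in your Step~2 in the positive-dimensional stabiliser case. The inference ``$\pi_*\BoQ^{\vir}$ restricts over the open stratum to something unbounded above, hence its decomposition has no summand of full support'' is a non-sequitur: an unbounded restriction means the decomposition has \emph{infinitely many} full-support IC summands (one shifted copy for each degree of $\HO^*(\pt/S)$), not none. What you actually need is that all of these arise from inductions $\Ind_\mu$ with $\overline{\mu}\neq\overline{0}$, so that the residual piece $\CK_0$ vanishes. Concretely, the positive-dimensional stabiliser $S\subset G/G_0$ contains a torus not coming from $G_0$, hence some cocharacter $\mu$ with $\overline{\mu}\neq\overline{0}$ for which $\imath_\mu$ is surjective, and one must prove that the corresponding inductions exhaust the full-support part; your closing parenthetical gestures at this, but the argument as written asserts the wrong implication. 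A related slip: in Step~2 you pass freely between $\pi_*\BoQ_{V/G}^{\vir}$ and the pushforward from $V/(G/G_0)$ in which your $\CK_0$ actually sits. These differ by a factor of $\HO^*(\pt/G_0)$, and the former is unbounded over the open stratum even in the finite-stabiliser case whenever $\dim G_0>0$, so boundedness alone does not separate the two cases.
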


In particular, for $G=\rmSL_2(\BoC)$, it gives a computation of the intersection cohomology of $V_{d}\cms \rmSL_2$ where $V_d$ is the $d$-dimensional irreducible representation of $\rmSL_2(\BoC)$. In this section, we verify this strong integrality conjecture for some representations of $\rmSL_2(\BoC)$. More precisely, we check that for small values of $d$, when a general closed orbit in $V_d$ has finite stabilizers, then $\dim \IH(V_d\cms \rmSL_2(\BoC))=\dim \CP_0$, where $\CP_0$ is the vector space given by Theorem~\ref{theorem:cohintabsolute} for the trivial cocharacter $\lambda=0=\lambda_0$.

In our situation, motivated by integrality results, we also want to determine the \emph{stable} locus inside $V_{d}$, that is the open subset of closed $\rmSL_2(\BoC)$-orbits with finite stabilizer.

\begin{remark}
 While this paper was under refereeing process, the work \cite{bu2025cohomology} appears. The authors solve Conjecture~\ref{conjecture:strongintegrality} under an orthogonality assumption for the representation $V$ of $G$: they assume that there is a nondegenerate symmetric $G$-invariant bilinear on $V$. This condition is stronger than symmetricity (and a fortiori, weak symmetricity) we assume in the present paper, as demonstrated for example by the natural representation $\BoC^{2n}$ of the symplectic group $\mathrm{Sp}(2n)$. This example can be treated by a direct approach. In combination with \cite{knop2007invariant}, it is possible to prove Conjecture~\ref{conjecture:strongintegrality} for any symmetric representation $V$ of $G$ such that $\BoC[V]^G=\BoC$, that is $V\cms G=\pt$.
\end{remark}

\subsubsection{Invariants of linear binary forms}
Let $V_2=\BoC^2$ be the natural representation of $\rmSL_2(\BoC)$. Then, we calculate easily $V_2\cms\rmSL_2(\BoC)=\pt$ since the action of $\rmSL_2(\BoC)$ on $V_2$ has two orbits, $\{0\}$ and $V_2\setminus\{0\}$, of those only $\{0\}$ is closed. The stabilizer of the closed orbit $\{0\}$ is $\rmSL_2(\BoC)$, which is not finite. Therefore, the stable locus is empty. One expects that $\CP_{\lambda_0}=0$. This is confirmed by \S\ref{subsection:SL2C2}.

\subsubsection{Invariants of binary quadratic forms}
We let $V_3$ be the $3$-dimensional irreducible representation of $\rmSL_2(\BoC)$. There is  single polynomial invariant for binary quadratic forms $ax^2+bxy+cy^2$, which is the discriminant
\[
 D=b^2-4ac\,,
\]
see \cite[Satz~1.9, 2.8]{schur2013vorlesungen}. Therefore, we have $V_2\cms\rmSL_2(\BoC)\cong\BoC$. Since $\dim V_3/\rmSL_2(\BoC)=0$ and $\dim\BoC=1$, the stable locus is empty. Therefore, we expect that $\CP_{\lambda_0}=0$, and this is confirmed by \S\ref{subsection:SL2Cd}.

\subsubsection{Invariants of binary cubic forms}
We let $V_4$ be the $4$-dimensional irreducible representation of $\rmSL_2(\BoC)$. There is one polynomial invariant for binary cubic forms $ax^3+3bx^2y+3cxy^2+dy^3$, which is the discriminant
\[
 D=3b^2c^2+6abcd-4b^3d-4c^3a-a^2d^2\,,
\]
see \cite[Satz~2.8]{schur2013vorlesungen}. Therefore, $V_4\cms\rmSL_2(\BoC)\cong\BoC$. Moreover, since $\dim V_4/\rmSL_2(\BoC)=1=\dim\BoC$, the stable locus is non-empty. By $\BoC^*$-equivariance, the stable locus is the open locus of binary cubic forms of non-zero discriminant.

We notice that $\dim\IH(\BoC)=1=\dim\CP_{\lambda_0}=1$ where $\CP_{\lambda_0}$ is given in \S\ref{subsection:SL2Cd} for $d=4$.

\subsubsection{Invariants of binary quartic forms}
Let $V_5$ be the irreducible $5$-dimensional representation of $\rmSL_2(\BoC)$. The ring of invariants of binary quartic forms has two algebraically independent generators $i,j$ \cite[Satz~2.9]{schur2013vorlesungen}. Therefore, $V_5\cms\rmSL_2(\BoC)\cong\BoC^2$. Moreover, $\dim V_5/\rmSL_2(\BoC)=2=\dim\BoC^2$ and therefore, the stable locus is non-empty. In this case, we expect that $P_{\lambda_0}$ is of dimension $1=\dim\IH^2(\BoC^2)$, and this is again confirmed by \S\ref{subsection:SL2Cd} for $d=5$.

\section*{Competing interests}
The author has no competing interest to declare.
\printbibliography
\end{document}